\theoremstyle{plain}
\newtheorem{proposition}[equation]{Proposition}
\newtheorem{theorem}[equation]{Theorem}
\newtheorem{lemma}[equation]{Lemma}
\newtheorem{conjecture}[equation]{Conjecture}
\newtheorem{corollary}[equation]{Corollary}
\theoremstyle{definition}
\newtheorem{definition}[equation]{Definition}
\theoremstyle{remark}
\newtheorem{remark}[equation]{Remark}
\newtheorem{example}[equation]{Example}
\numberwithin{equation}{section}
\DeclareMathOperator{\coh}{\mathrm{H}}
\DeclareMathOperator{\Aut}{\mathrm{Aut}}
\DeclareMathOperator{\Ad}{\mathrm{Ad}}
\DeclareMathOperator{\ad}{\mathrm{ad}}
\DeclareMathOperator{\rank}{\mathrm{rk}}
\DeclareMathOperator{\Fl}{\mathbf{Fl}}
\DeclareMathOperator{\PFl}{\mathbf{PFl}}
\DeclareMathOperator{\Aff}{\mathbf{Aff}}
\DeclareMathOperator{\Hom}{\mathrm{Hom}}
\DeclareMathOperator{\Ext}{\mathrm{Ext}}
\DeclareMathOperator{\End}{\mathrm{End}}
\DeclareMathOperator{\Isom}{\mathrm{Isom}}
\DeclareMathOperator{\Sym}{\mathrm{Sym}}
\DeclareMathOperator{\Proj}{\mathrm{Proj}}
\DeclareMathOperator{\Sch}{\mathbf{Sch}}
\DeclareMathOperator{\Grpd}{\mathbf{Grpd}}
\DeclareMathOperator{\cHom}{\mathcal{H}\mathit{om}}
\DeclareMathOperator{\Gr}{\mathbf{Gr}}
\DeclareMathOperator{\height}{\mathrm{ht}}
\newcommand\tang{T}
\newcommand\PGL{\mathbf{PGL}}
\newcommand\GL{\mathbf{GL}}
\newcommand\SL{\mathbf{SL}}
\newcommand\pgl{\mathfrak{pgl}}
\newcommand\rs{{\mathrm{rs}}}
\newcommand\grs{\mathfrak{g}^{\mathrm{rs}}}
\newcommand\lra{\longrightarrow}
\newcommand\PP{\mathbb{P}}
\newcommand{\beq}{\begin{equation}}
\newcommand{\eeq}{\end{equation}}
\newcommand{\id}{\mathrm{id}}
\newcommand{\cB}{\mathcal{B}}
\newcommand{\cE}{\mathcal{E}}
\newcommand{\cF}{\mathcal{F}}
\newcommand{\cG}{\mathcal{G}}
\newcommand{\cL}{\mathcal{L}}
\newcommand{\cO}{\mathcal{O}}
\newcommand{\cP}{\mathcal{P}}
\newcommand{\cQ}{\mathcal{Q}}
\newcommand{\cV}{\mathcal{V}}
\newcommand{\cW}{\mathcal{W}}
\newcommand{\cX}{\mathcal{X}}
\newcommand{\fg}{\mathfrak{g}}
\newcommand{\ft}{\mathfrak{t}}
\newcommand{\tcG}{\widetilde{\cG}}
\newcommand\tgrs{\PP(\fg\oplus\fg)^\rs}
\newcommand{\fM}{\mathfrak{M}}
\newcommand{\fMFano}{\fM^{\mathrm{Fano}}}
\newcommand{\bk}{\Bbbk} 
\begin{document}

\title[Deformations of Hessenberg varieties]{Automorphisms and deformations of regular semisimple Hessenberg varieties}
\author[Brosnan]{Patrick Brosnan}
\address{Department of Mathematics\\
  University of Maryland\\
  College Park, MD USA}
\email{pbrosnan@math.umd.edu}
\author[Escobar]{Laura Escobar}
\address{Department of Mathematics\\
University of California\\
Santa Cruz, CA 95064\\
USA}
\email{lauraescobar@ucsc.edu}
\author[Hong]{Jaehyun Hong}
\address{Center for Complex Geometry\\Institute for Basic Science\\Daejeon 34126\\Republic of Korea}
\email{jhhong00@ibs.re.kr}
\author[Lee]{Donggun Lee}
\address{Center for Complex Geometry\\Institute for Basic Science\\Daejeon 34126\\Republic of Korea}
\email{dglee@ibs.re.kr}
\author[Lee]{Eunjeong Lee}
\address{Department of Mathematics\\Chungbuk National University\\Cheongju 28644\\Republic of
Korea}
\email{eunjeong.lee@chungbuk.ac.kr}
\author[Mellit]{Anton Mellit}
\address{Faculty of Mathematics\\
University of Vienna\\
Oskar-Morgenstern-Platz 1, 1090 Vienna, Austria}
\email{anton.mellit@univie.ac.at}
\author[Sommers]{Eric Sommers}
\address{Department of Mathematics \& Statistics\\
University of Massachusetts\\
Lederle Graduate Research Tower\\
Amherst, MA 01003
}
\email{esommers@math.umass.edu}
\begin{abstract}
   We show that regular semisimple Hessenberg varieties can have moduli.
   To be precise, suppose $X$ is a regular semisimple Hessenberg variety of
   codimension $1$ in the flag variety $G/B$, where $G$ is a simple algebraic
   group of rank $r$ over $\mathbb{C}$ and $B$ is a Borel subgroup. 
   We show that the space~$\coh^1(X,TX)$ of first order deformations of
   $X$ has dimension $r-1$ except in type $A_2$. (In type $A_2$, the Hessenberg
   varieties in question are all isomorphic to the
   permutohedral toric surface, 
   and $\dim\coh^1(X,TX) = 0$.)
   Moreover, we show that the Kodaira--Spencer map $\mathfrak{g}\to \coh^1(X,TX)$ is onto,  
   that the identity component of the automorphism group of $X$ is a
   maximal torus of $G$, and that $\coh^i(X,TX) = 0$ for $i \geq 2$.
   Along the way, we prove several theorems of independent
   interest about the cohomology of homogeneous vector bundles on~$G/B$.

   In type $A$, we can give an even more precise statement determining
   when two codimension $1$ regular semisimple Hessenberg varieties in
   $G/B$ are isomorphic.
   We also compute the automorphism groups explicitly in type~$A_{n-1}$ in the terms
   of stabilizer subgroups of the action of the symmetric group $S_{n}$ on the
   moduli space $M_{0,n+1}$ of smooth genus $0$ curves with $n + 1$ marked
   points.
   Using this, we describe the moduli stack of the regular semisimple Hessenberg
   varieties $X$ explicitly as a quotient stack of $M_{0,n+1}$.

   We prove several analogous results for Hessenberg varieties in generalized flag varieties $G/P$, 
   where $P$ is a parabolic subgroup of $G$.
   In type $A$, these results are used in the proofs of the results for $G/B$, but they are
   also of independent interest because the associated moduli stacks are
   related directly to the action of $S_n$ on $M_{0,n}$.
   In arbitrary types, our results for $G/P$ require vanishing theorems for homogeneous vector 
   bundles on $G/P$, which, like the vanishing theorems we prove for $G/B$, may be of independent interest.
\end{abstract}

\maketitle
\tableofcontents

\section{Introduction}\label{s.intro}
Fix an algebraically closed field $\bk$ of characteristic $0$ and a reductive 
group $G$ over $\bk$ whose adjoint group is simple.
Hessenberg varieties are certain subvarieties $\mathcal{B}(M,s)$ of the variety
$\mathcal{B}$ of Borel subgroups of $G$, associated to a subset~$M$ of the
root system of~$G$ called a \emph{Hessenberg subset} 
and an element $s$ in the Lie algebra $\mathfrak{g}$ of $G$.
When $s$ lies in the Zariski open subset $\grs$ of $\mathfrak{g}$ consisting of
regular semisimple elements (those whose centralizer $Z_G(s)$ is a
maximal torus), the variety $\mathcal{B}(M,s)$ is smooth and preserved by the natural action
of the maximal torus $T(s)=Z_G(s)$.
By abuse of terminology, $\mathcal{B}(M,s)$ is called a \emph{regular semisimple}
Hessenberg variety when $s$ is regular semisimple.

In several respects, regular semisimple Hessenberg varieties behave analogously
to the generalized flag varieties that they live in. 
For example, we have $\mathcal{B}(M,s)^{T(s)}=\mathcal{B}^{T(s)}$.
It therefore follows from the theorem of Bia{\l}ynicki-Birula that, similarly
to $\mathcal{B}$
itself, 
for $s\in\grs$, $\mathcal{B}(M,s)$ is cellular,
with cells in bijection with the elements of the Weyl group $W=N_{G}(T(s))/T(s)$ of $G$~\cite{Bia73}. 
We refer to the foundational work of de Mari, Procesi and Shayman~\cite{demari-procesi-shayman} 
for this and several other important facts about the Hessenberg varieties
$\mathcal{B}(M,s)$.

When the Hessenberg subset $M$ is adapted to a conjugacy class $\mathcal{P}$
of parabolic subgroups of $G$,
the story above can be generalized to the case of \emph{generalized Hessenberg
varieties} $\mathcal{P}(M,s)$, which are subvarieties of $\mathcal{P}$. 
The canonical morphism $\mathcal{B}\to\mathcal{P}$ sending a Borel subgroup $B$ to 
the unique subgroup $P\in\mathcal{P}$ containing it restricts
to a morphism $\mathcal{B}(M,s)\to\mathcal{P}(M,s)$.
In fact, we have 
\[\mathcal{B}(M,s)=\mathcal{P}(M,s)\times_{\mathcal{P}} \mathcal{B}.\]
Thus, $\mathcal{B}(M,s)$ is fibered over $\mathcal{P}(M,s)$, with fiber over
$B\in\mathcal{B}$ isomorphic to the homogeneous variety $P/B$.

In~\cite[Theorem 2]{demazure}, Demazure computed the automorphism group of 
$\mathcal{P}$ and proved that $\coh^i(\mathcal{P}, T\mathcal{P}) = 0$ for $i>0$, 
where $T\mathcal{P}$ denotes the tangent bundle of $\mathcal{P}$.
In particular, 
the case $i=1$
shows that
the deformation space of $\cP$
is trivial.

In this paper, we study the automorphism groups and deformations of  
regular semisimple Hessenberg varieties $\mathcal{B}(M,s)$ when they have 
codimension $1$ in $\mathcal{B}$.
In this case, they are connected smooth divisors associated with a semi-ample line bundle. 
For certain conjugacy classes~$\mathcal{P}$ of 
parabolic subgroups of $G$, 
the
generalized Hessenberg varieties $\mathcal{P}(M,s)$
are also well-defined inside
the corresponding generalized flag variety $\mathcal{P}$, and we consider some of
these spaces as well.

Roughly speaking, we find that  the 
following properties typically hold for the Hessenberg varieties $\mathcal{B}(M,s)$.
\begin{enumerate}
  \item Typically (outside of a few exceptional cases), they move in families of
    dimension $\rank G -1$, where $\rank G$ denotes the semisimple rank of $G$; 
  \item Typically the identity component $\Aut^\circ\mathcal{B}(M,s)$ 
    of $\Aut\mathcal{B}(M,s)$ is the maximal torus $ Z_{G^{\ad}}(s)$  in the
    adjoint group $G^{\ad}$ of $G$.
\end{enumerate}

In particular, we see that, unlike the flag varieties they live in, Hessenberg varieties
generally have nontrivial deformation spaces. Indeed, the variety 
$\mathcal{B}(M,s)$ can move in positive-dimensional families as the Lie algebra element $s$ varies.

Any element $g\in G$ gives rise to an isomorphism 
\begin{equation}
  \label{e.adjiso}
  \varphi(g):\mathcal{B}(M,s)\xrightarrow{\;\cong\;}\mathcal{B}(M, \Ad(g)s)
\end{equation}
induced by the natural action of $G$ on $\mathcal{B}$.
From this, it is easy to see that each variety $\mathcal{B}(M,s)$ is isomorphic
to one where $s$ is taken to be in a fixed Cartan subalgebra of the Lie algebra of the derived subgroup of \(G\). 
Since $\mathcal{B}(M,s)=\mathcal{B}(M,as)$ for $a\in \bk^{\times}$, this makes it
clear that the regular semisimple Hessenberg varieties $\mathcal{B}(M,s)$ are
determined by \emph{at most}  $\rank G - 1$ parameters.   

Similarly, the centralizer $Z_{G^{\ad}}(s)$ acts on $\mathcal{B}(M,s)$.  
Since the action is faithful (except in a few degenerate cases), 
this makes it clear that (except in these cases), for $s\in\grs$, the maximal
torus $Z_{G^{\ad}}(s)$ is contained in $\Aut^\circ\mathcal{B}(M,s)$. 

\subsection{Deformations in type \texorpdfstring{$A$}{A}}\label{ss-defa} 
In type $A$ (i.e., for $G=\GL_n$), our theorems are easy to state precisely
without too much notation.  
Our results are also stronger in type $A$ because, while we are able to
determine the dimensions of the automorphism group and the deformation space in
all types, in type $A$, we are able to determine exactly when two such Hessenberg
varieties are isomorphic.
Moreover, although the automorphism group itself depends on $s$, we are 
able to 
determine it precisely.
For this reason, we state our theorems in this case first.

Recall that, for $G=\GL_n$, the Lie algebra $\mathfrak{g}$ is equal
to the space of $n\times n$-matrices over $\bk$, and $\grs$ is the Zariski
open subset consisting of matrices with $n$ distinct eigenvalues.
The variety $\mathcal{B}$ of Borel subgroups  is simply the variety $\Fl_n$ of 
complete flags $F_{\bullet}$ in $\bk^n$ given by subspaces $F_0\subset F_1\subset \cdots \subset F_n$ 
with $\dim F_i = i$. 
Write $\PFl_n$ for the variety of partial flags $F_{\bullet}$ given by 
subspaces $F_1\subset F_{n-1}$ of $\bk^n$ with $\dim F_i =i$.
Equivalently, if $\mathbb{P}$ denotes the projective space of lines in $\bk^n$
and $\mathbb{P}^{\vee}$ denotes the dual projective space of hyperplanes in $\bk^n$,
then $\PFl_n$ is the variety consisting of pairs $(\ell,h)\in \mathbb{P}\times \mathbb{P}^{\vee}$ 
such that $\ell\subset h$.
We have an obvious morphism $\Fl_n\to\PFl_n$ given by forgetting the $F_i$ 
for $i\neq 1$, $n-1$.
The morphism is smooth with fiber the variety of complete flags in 
$F_{n-1}/F_1$.  

Now, for $s\in\mathfrak{g}$, set  
\begin{align*}
  X(s)&:= \{F_{\bullet}\in\Fl_n: sF_1\subset F_{n-1}\},\\
  Y(s)&:= \{F_{\bullet}\in\PFl_n: sF_1\subset F_{n-1}\}.
\end{align*}
Write $\Aff$ for the group of affine transformations of $\mathbb{A}^1$.
We can view $\Aff$ as the subgroup of $\GL_2$ 
given by matrices of the form
$\displaystyle\begin{pmatrix} a & b \\ 0 & 1\end{pmatrix}$. 

We first give our results about deformations.
\begin{theorem}\label{t.isoA}
Let $s,s'\in\grs$ and suppose $n\geq 4$. Then the following hold.
   \begin{enumerate}
     \item\label{t.isoAX} The varieties $X(s)$ and $X(s')$ are isomorphic if and only if there exist 
        $g\in G$ and  $\displaystyle\begin{pmatrix} a & b \\ 0 &
    1\end{pmatrix}\in\Aff$ such that
        \[
        gs'g^{-1}= as + b.
        \]
      \item\label{t.isoAY} The varieties $Y(s)$ and $Y(s')$ are isomorphic if and only if there exist
      $g\in G$ and $\displaystyle\begin{pmatrix} a & b \\ c &
    d\end{pmatrix}\in\GL_2$ such that 
     $cs+d$ is invertible and 
     \[
     gs'g^{-1} =
(as+b)(cs+d)^{-1}.
\]
\end{enumerate}
\end{theorem}
Here, we write $b$ for $b\cdot \id$, omitting the identity matrix from the notation.
By the discussion around \eqref{e.adjiso}, one readily sees that
$gs'g^{-1}=as+b$ implies $X(s)\cong X(s')$, since $X(s)=X(s+b)$ for $b\in \bk$.
The content of \eqref{t.isoAX} is that the converse also holds.

The converse is deduced from \eqref{t.isoAY}.
The main difficulties lie in
\begin{itemize}
	\item showing that any isomorphism of $X(s)$ descends to $Y(s)$ (Proposition~\ref{prop:unique.ext.X});
	\item determining which isomorphisms of $Y(s)$ admit lifts to $X(s)$ (Theorem~\ref{thm:XtoY}).
\end{itemize}
The statement~\eqref{t.isoAY} itself follows from essentially linear-algebraic arguments,
since $Y(s)$ can be realized as a complete intersection of two $(1,1)$-divisors
in $\PP\times \PP^\vee$.

These results show that any isomorphism of $X(s)$
is induced by an automorphism of the flag variety $\Fl_n$,
and similarly any isomorphism of $Y(s)$
is induced by an automorphism of $\PP\times\PP^\vee$.

The same approach also applies to the study of automorphisms.

\subsection{Automorphisms in type \texorpdfstring{$A$}{A}}\label{subsection_intro_1.2}
To state our results on automorphisms in type~$A$, first note that, 
for $n>2$, the nontrivial automorphism of the Dynkin diagram 
induces an outer automorphism of $G$. 
We can describe this outer automorphism explicitly and lift it to an involution 
$\iota$ of $G$, by picking a nondegenerate quadratic form $Q$ on
$\bk^n$. 
Then we can set $\iota_Q(g) = (g^{t})^{-1}$, where $g^t$ denotes the 
transpose of $g$ with respect to $Q$.
When the nondegnerate quadratic form $Q$ is fixed, we write $\iota$ for $\iota_Q$.
The involution $\iota$  of $G$ induces an involution, which we will also call
$\iota$, of $\mathcal{B}$, sending a Borel subgroup $B$ to $\iota(B)$.
We can describe it explicitly as follows: 
if $B$ is the Borel subgroup stabilizing a complete flag $F_{\bullet}$,
then  $\iota(B)$ is the Borel subgroup
stabilizing the complete flag $F'_{\bullet}$ 
given by $F'_i = F_{n-i}^{\perp}$. 
Similarly, we get an involution $\iota$ of $\PFl_n$ given by
$(\ell,h)\mapsto (h^{\perp},\ell^{\perp})$. 
Although $\iota$ depends on the choice of $Q$, any two such choices give conjugate involutions. In~fact, $\Aut \Fl_n = \PGL_n \rtimes \langle \iota \rangle$, where $\langle \iota \rangle$ is the cyclic group of order~$2$ generated by~$\iota$.

Before stating our main result, we record two preparatory lemmas.

\begin{lemma}\label{l.inv}
Let $n>1$ and  $s\in\fg$.
There exists a nondegenerate quadratic form~$Q$ such that 
$s^t=s$. 
Moreover, if $\iota = \iota_Q$ is the corresponding involution of $\mathcal{B}$,
then $\iota(X(s))=X(s)$ and $\iota(Y(s))=Y(s)$.
In particular, $\iota$ defines a nontrivial involution of both $X(s)$ 
and $Y(s)$. 
\end{lemma}

Define subsets $H(s)$ and $K(s)$ of $G$ by 
\begin{align*}
  H(s)&:=\left\{g\in G: 
    \text{there exists~}\begin{pmatrix} a & b \\ 0 & 1\end{pmatrix}
    \in\Aff~\text{such that }
    gsg^{-1} = as+b\right\};\\
  K(s)&:=\bigg\{g\in G: 
    \text{there exists~}\begin{pmatrix}a & b\\ c & d \end{pmatrix}
                    \in\GL_2 
                    ~\text{such that }  \\
    & \qquad \qquad \qquad \qquad \quad cs+d \text{ is invertible \;and }\; 
                  gsg^{-1} = (as+b)(cs+d)^{-1} \bigg\}.
\end{align*}

\begin{lemma}\label{l.HandK}
  Let $s\in\grs$ and let $T(s)=Z_G(s)$ be the maximal torus of $G$. 
  Then $H(s)$ and $K(s)$ are 
  subgroups of $N_{G}(T(s))$ containing $T(s)$. 
  Moreover, $\iota(H(s))=H(s)$ and $\iota (K(s))= K(s)$ for the involution $\iota$ in Lemma~\ref{l.inv}.
\end{lemma}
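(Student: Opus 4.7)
The plan is to verify four claims in turn for $K(s)$, noting that $H(s)$ is the subset of $K(s)$ obtained by restricting to affine Möbius matrices (those with $c=0$, $d=1$), and every step of the argument for $K(s)$ restricts to $H(s)$. The four claims are (i) $T\subseteq K(s)$, (ii) $K(s)$ is closed under products and inverses, (iii) $K(s)\subseteq N_G(T)$, and (iv) $\iota(K(s))=K(s)$.

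The key observation driving everything is that conjugation by $g\in G=\GL_n$ is an automorphism of the associative algebra $M_n(\bk)$, hence commutes with Möbius evaluation:
\[
\Ad(g)\bigl(\phi_M(s)\bigr)=\phi_M\bigl(\Ad(g)s\bigr),
\]
where $\phi_M(x) := (ax+b)(cx+d)^{-1}$. Claim (i) is immediate, taking $M=I$. For (ii), given $g_1,g_2\in K(s)$ with associated matrices $M_1,M_2$, iterating the identity above yields $\Ad(g_1g_2)s=\phi_{M_2M_1}(s)$, and one checks that the denominator of $\phi_{M_2M_1}(s)$ factors as $\Ad(g_1)(c_2s+d_2)\cdot(c_1s+d_1)$, a product of two invertible elements. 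Similarly, applying $\Ad(g^{-1})$ to $\Ad(g)s=\phi_M(s)$ gives $\Ad(g^{-1})s=\phi_{M^{-1}}(s)$; the new denominator $a-cs$ is invertible because the identity $a-c\Ad(g)s=(\det M)(cs+d)^{-1}$ is invertible (using $\det M\neq 0$) and $\Ad(g)$ preserves invertibility.

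For (iii), observe that $\Ad(g)s=\phi_M(s)$ lies in the commutative subalgebra of $M_n(\bk)$ generated by $s$ and $(cs+d)^{-1}$, so it is centralized by all of $T=Z_G(s)$. Thus $T\subseteq Z_G(\Ad(g)s)=gTg^{-1}$, and since both sides are maximal tori they must coincide, proving $g\in N_G(T)$. For (iv), with $Q$ chosen so that $s^t=s$, transposing $\Ad(g)s=\phi_M(s)$ and using $\iota(g)=(g^t)^{-1}$ gives $\Ad(\iota(g))s=(gsg^{-1})^t=\phi_M(s)^t$; since $s^t=s$, each polynomial in $s$ is self-transpose, and the factors $(as+b)$ and $(cs+d)^{-1}$ commute as both lie in $\bk[s,(cs+d)^{-1}]$, so $\phi_M(s)^t=\phi_M(s)$. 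Hence $\iota(g)\in K(s)$ with the same matrix $M$, and $\iota$-invariance follows from $\iota^2=\id$.

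The main obstacle is the bookkeeping for (ii): verifying that the denominators of the composed and inverted Möbius transformations remain invertible when evaluated at $s$. This is handled by the two explicit factorizations noted above; once they are in hand, everything else is an immediate consequence of the compatibility of $\Ad(g)$ with rational functions in $s$.
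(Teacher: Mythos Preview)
Your proof is correct and covers strictly more than the paper's own argument: the paper verifies only that $T\subseteq K(s)\subseteq N_G(T)$ and $\iota$-invariance, leaving closure under products and inverses implicit, whereas you work this out carefully (and your factorization of the composed denominator as $\Ad(g_1)(c_2s+d_2)\cdot(c_1s+d_1)$ is exactly the right bookkeeping). For step~(iii) the paper argues slightly differently: it observes that $\Ad(g)s$ lies in $\mathfrak{t}$ and has the same eigenvalues as $s$, hence equals $\Ad(\sigma)s$ for some $\sigma\in N(T)$, so $g\sigma^{-1}\in Z(s)=T$; your route via the containment $T\subseteq Z_G(\Ad(g)s)=gTg^{-1}$ of maximal tori is cleaner and avoids choosing a Weyl element. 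For step~(iv) both arguments are the same in substance.
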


The proofs of Lemmas~\ref{l.inv} and~\ref{l.HandK} are given in
Subsections~\ref{proof_Lem_l.inv} and~\ref{ss.proof.isoAX}, respectively.
We now state our main theorem on automorphisms in type~$A$.

\begin{theorem}\label{t.autsA}
  Let $G=\GL_n$, let $\mathbb{G}_m$ denote its center, and let $s\in\mathfrak{g}^{\mathrm{rs}}$ with $n\geq 4$. 
  Then the following statements hold:
  \begin{enumerate}
    \item\label{t.autsAX} $\Aut X(s) = (H(s)/\mathbb{G}_m)\rtimes \langle\iota\rangle$, 
    \item\label{t.autsAY} $\Aut Y(s) = (K(s)/{\mathbb{G}_m})\rtimes \langle\iota\rangle$.  
  \end{enumerate}
\end{theorem}
By Lemma~\ref{l.HandK}, the above theorem can be restated as follows:
\begin{itemize}
	\item The identity components of the automorphism groups are
	\[\Aut^\circ X(s)=\Aut^\circ Y(s)=T(s)/\mathbb{G}_m.\]
	\item The groups of connected components are 
	\[\begin{split}
	&\pi_0\Aut X(s)=\bar{H}(s)\times \langle \iota\rangle,\\
	&\pi_0\Aut Y(s)=\bar{K}(s)\times \langle \iota\rangle,
\end{split}\]
where $\bar{H}(s):=H(s)/T(s)$ and $\bar{K}(s):=K(s)/T(s)$ are subgroups of the Weyl group $W=N_{G}(T(s))/T(s)\cong S_n$, the symmetric group.
\end{itemize}
Thus $\Aut X(s)$ and $\Aut Y(s)$ are completely determined by the subgroups $\bar{H}(s)$ and $\bar{K}(s)$ of $S_n$.
In the next subsection, we describe these subgroups in detail, and relate them to the moduli space $M_{0,n}$ of genus $0$ curves with $n$ marked points.

\medskip
Finally, in Subsections~\ref{ss:group.actions} and~\ref{ss:group.actions.Y}, we place the above results into a uniform framework of group actions.
More precisely, we introduce the groups
\[
\begin{aligned}
	&\cG := \Aut G \times \Aff \;\cong\; \Aut \Fl_n \times \Aff,
	&&\text{acting on } \fg,\\
	&\tcG := \Aut \PP\times \PP^\vee \times \PGL_2,
	&&\text{acting on } \PP(\fg\oplus\fg),
\end{aligned}
\]
so that, in both cases, isomorphism classes correspond to orbits and
automorphism groups correspond to stabilizers.
See Corollaries~\ref{cor:interpret.AX1} and~\ref{cor:interpret.AY1}.

Moreover, there are equivalent interpretations in terms of 
divisors and pencils.
For $X(s)$, the relevant action is the $\Aut \Fl_n$-action on the space of
Hessenberg divisors in $\Fl_n$, while for $Y(s)$ it is the
$\Aut \PP\times \PP^\vee$-action on the space of pencils of $(1,1)$-divisors
in $\PP\times \PP^\vee$.
In both settings, isomorphism classes are identified with orbits and
automorphism groups with stabilizers; see
Corollaries~\ref{cor:interpret.AX} and~\ref{cor:interpret.AY}.

\medskip

\subsection{Relation to \texorpdfstring{$M_{0,n}$}{M0n}}\label{ss:M0n.intro} 

Let $M_{0,n}$ denote the moduli space of genus $0$ curves with $n$ distinct 
marked points.  By definition, it is isomorphic to the quotient variety 
\[M_{0,n}\cong ((\PP^1)^n-\Delta)/\PGL_2, \quad n\geq3\]
by the (free) diagonal action of $\Aut \PP^1\cong \PGL_2$, where $\Delta \subset (\PP^1)^n$ denotes the union of the pairwise diagonals,  that is, 
$\Delta = \bigcup_{i<j}\{(x_1,\ldots,x_n)\in(\PP^1)^n \mid x_i = x_j\}$.
It has a natural action of $S_n$ by permuting the $n$ points.

Let $s\in\grs$, and write 
$\lambda_1,\ldots, \lambda_n$ for the necessarily distinct eigenvalues of $s$.
Associated to the ordered $n$-tuple $(\lambda_1,\ldots, \lambda_n)$, we get a point
$(\lambda_1,\ldots, \lambda_n, \infty)\in M_{0,n+1}$ by adding an extra point in $\PP^1$ at the infinity. 
So, taking the quotient by the $S_n$-action permuting 
its first $n$ coordinates, 
we get a point 
$\phi_X(s)\in M_{0,n+1}/S_n$.
In this language, Theorem~\ref{t.isoA}(\ref{t.isoAX}) says that, for
$s,s'\in\grs$,  
\beq \label{1}X(s)\cong X(s') \iff \phi_X(s)=\phi_X(s').\eeq 

Omitting the extra point at infinity,
an ordering of the eigenvalues of $s\in\grs$  
gives us a point 
$(\lambda_1,\ldots, \lambda_n)\in M_{0,n}$. 
Forgetting the ordering gives us a point $\phi_Y(s)\in M_{0,n}/S_n$.
Theorem~\ref{t.isoA}(\ref{t.isoAY}) shows that 
\[Y(s)\cong Y(s') \iff  
\phi_Y(s)=\phi_Y(s').\] 

In the same vein,
Theorem~\ref{t.autsA} admits the following interpretation in terms of the
stabilizers in $S_n$ of the points $(\lambda_1,\ldots, \lambda_n,\infty)\in
M_{0,n+1}$ and $(\lambda_1,\ldots, \lambda_n)\in M_{0,n}$.  

\begin{theorem} \label{t.HKStab}
Let $s\in \grs$ with eigenvalues $\lambda_1,\ldots,\lambda_n$,
and identify $S_n$ 
with the 
subgroup of permutations in $G$. 
\begin{enumerate}
\item 
$\bar{H}(s)$
is conjugate in $S_n$ to the 
stabilizer of $(\lambda_1,\ldots,\lambda_n,\infty) \in M_{0,n+1}$;
\item 
$\bar{K}(s)$ is conjugate in $S_n$ to
  the stabilizer of $(\lambda_1,\ldots, \lambda_n) \in M_{0,n}$.
\end{enumerate}
\end{theorem}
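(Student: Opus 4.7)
The plan is to translate the conditions defining $H(s)$ and $K(s)$ directly into statements about Möbius transformations permuting eigenvalues, and then to recognize these as stabilizer conditions on $M_{0,n+1}$ and $M_{0,n}$. Fix $s = \mathrm{diag}(\lambda_1,\ldots,\lambda_n)$, let $T$ be the diagonal torus, and identify $W = N_G(T)/T$ with $S_n$ via the standard embedding of $S_n$ as permutation matrices. For $g \in N_G(T)$ with image $\sigma \in S_n$, a direct computation gives $\Ad(g)s = \mathrm{diag}(\lambda_{\sigma^{-1}(1)},\ldots,\lambda_{\sigma^{-1}(n)})$.

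For part (1), Lemma~\ref{l.HandK} gives $H(s) \subseteq N_G(T)$, so $\sigma \in \bar H(s)$ if and only if there exist $a\in\bk^\times$ and $b\in\bk$ with $\lambda_{\sigma^{-1}(i)}=a\lambda_i+b$ for every $i$. Reading this as the assertion that the affine map $\phi(x)=ax+b$ carries $\lambda_i \mapsto \lambda_{\sigma^{-1}(i)}$ and noting $\phi(\infty)=\infty$, this is exactly the condition that $\sigma$ fixes the point $(\lambda_1,\ldots,\lambda_n,\infty) \in M_{0,n+1}$ under the $S_n$-action permuting the first $n$ coordinates. Part (2) is the same argument with $(as+b)(cs+d)^{-1}$ in place of $as+b$: the defining condition unwinds to the existence of a Möbius transformation $\phi \in \PGL_2$ sending $\lambda_i \mapsto \lambda_{\sigma^{-1}(i)}$, which is precisely the stabilizer condition for $(\lambda_1,\ldots,\lambda_n) \in M_{0,n}$.

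The phrase ``conjugate in $S_n$'' enters because the identification $W \cong S_n$ depends on the chosen section $S_n \hookrightarrow N_G(T)$ (equivalently, on how we match the diagonal slots $1,\ldots,n$ of $s$ with the abstract marked points labeled by $1,\ldots,n$); any two such choices differ by conjugation by an element of $S_n$, and this conjugation correspondingly twists $\bar H(s)$ and $\bar K(s)$. The main bookkeeping obstacle will be fixing conventions consistently (left versus right actions, $\sigma$ versus $\sigma^{-1}$, action on positions versus on labels) so that the $S_n$-action on the marked points and the action of $N_G(T)/T$ on diagonal matrices are matched compatibly; once that is done, the theorem follows tautologically from the definitions of $H(s)$, $K(s)$, and the stabilizer conditions, with no further essential ingredient required.
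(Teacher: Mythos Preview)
Your proposal is correct and follows essentially the same route as the paper: the paper (in Corollaries~\ref{cor:main.M0n.X}(2) and~\ref{cor:main.M0n.Y}(2)) also writes $\Ad(g)s$ for $g$ with image $\sigma$ as the diagonal matrix with entries $\lambda_{\sigma^{-1}(i)}$, and then reads the defining conditions for $H(s)$ and $K(s)$ as exactly the stabilizer conditions under $\Aff$ and $\PGL_2$ respectively. The paper nominally cites Theorem~\ref{t.autsA}, but, as you observed, only the definitions of $H(s)$ and $K(s)$ together with Lemma~\ref{l.HandK} are actually used.
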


There are simple classifications of such subgroups of $S_n$, see Corollary~\ref{cor:classification.H(s)} and Examples~\ref{e.Zn action} and~\ref{e.Dn action}.
 
An important consequence of Theorem~\ref{t.HKStab}(1) is that, for $n\geq 4$, Tymoczko's dot 
action on the cohomology $\coh^*(X(s))$ of $X(s)$, defined in~\cite{tymoczko} via GKM theory \cite{GKM}, does not lift to an action on the variety $X(s)$ itself. See Corollary~\ref{no-dot.body}.  
\begin{corollary}[No geometric lift of Tymoczko's dot action]
\label{no-dot}
Let $n\geq 4$ and $s\in \grs$. 
Any $S_n$-representation on $\coh^*(X(s))$ that is induced by an $S_n$-action on $X(s)$
	is a direct sum of copies of the trivial representation and the sign representation.
	
	In particular, there is no $S_n$-action on $X(s)$ that induces Tymoczko's dot action. 
\end{corollary}

\subsection{Moduli stacks and compactifications}
Building on the interpretation of Theorems~\ref{t.isoA} and~\ref{t.autsA} in terms
of group actions, we give a moduli-theoretic description of regular semisimple
Hessenberg varieties.
More precisely, the description of isomorphism classes and automorphism groups of
$X(s)$ in terms of the $\cG$-action on $\grs$ naturally suggests that the moduli stack
$\fM_n$ of regular semisimple Hessenberg varieties should be given by the quotient
stack $[\grs/\cG]$.
In Section~\ref{s.moduli}, we prove that this expectation is indeed correct, by
realizing $\fM_n$ as an open substack of the moduli stack of smooth Fano varieties.

We further identify $\fM_n$ with a quotient stack of $M_{0,n+1}$.
By restricting the $\cG$-action to a subgroup preserving a Cartan subalgebra of
$\fg$, and using the freeness of the $\Aff$-action, we obtain an equivalent
presentation of $[\grs/\cG]$ as a quotient of $M_{0,n+1}$ by an explicit extension of a maximal torus $T/\mathbb{G}_m\subset G/\mathbb{G}_m$
by the symmetric group $S_n$ and the involution $\mu_2=\langle\iota\rangle$.
This provides a concrete description of $\fM_n$ in terms of $M_{0,n+1}$. See Theorems~\ref{thm:moduli.stack} and~\ref{thm:moduli.stack.open}.

\begin{theorem}\label{tmsi}
    The moduli stack $\fM_n$  
    is represented by the quotient stack
    \[
    [\grs/\cG] \;\cong \;
   \bigl [M_{0,n+1}\big/\bigl((T/\mathbb{G}_m)\rtimes(S_n\times \mu_2)\bigr)\bigr].\]
    In particular, $\fM_n$ admits the good moduli space $M_{0,n+1}/S_n$.
\end{theorem}

Theorem~\ref{thm:moduli.stack.Y} gives a parallel description of the moduli stack $\fM_n^Y$ of generalized Hessenberg varieties $Y(s)$ with $s\in\grs$, realizing it as a quotient stack of $M_{0,n}$ by the same group, with good moduli space $M_{0,n}/S_n$.

Both moduli spaces admit natural GIT compactifications, namely the GIT moduli spaces of divisors in $\Fl_n$ and pencils of divisors in $\PP\times \PP^\vee$. See Subsection~\ref{ss:compact}.

\medskip
Since $X(s)$ and $Y(s)$ are Fano varieties (see Lemma~\ref{lem:XFano}), there may be another natural compactification provided by the theory of K-stability \cite{xu-book,xu-survery}.	

It is now known that for several classes of Fano hypersurfaces, complete intersections, and log Fano pairs, the corresponding K-moduli spaces coincide with classical GIT quotients.
Prominent examples include cubic surfaces, threefolds, and fourfolds \cite{odaka-spotti-sun,liu-xu,liu},
as well as del Pezzo surfaces of degree $4$, whose K-moduli spaces are identified with the GIT moduli space of pencils of quadrics in $\PP^4$ \cite{mukai-mabuchi,odaka-spotti-sun}.

In the log setting, it was shown that for pairs $(\PP^n,\epsilon D)$ with $\epsilon>0$ sufficiently small, the K-moduli spaces coincide with the GIT moduli spaces of hypersurfaces \cite{ascher-devleming-liu}
(cf.~\cite{gallardo-martinez-garcia-spotti}).
More generally, this was extended to log Fano pairs $(X,\epsilon D)$ with $X$ K-polystable and $D$ lying in a multiple of the anticanonical linear system \cite{zhou}.

Even when the two moduli spaces do not coincide, their relationship can often be described by explicit wall crossings or birational transformations; 
see, for instance,
\cite{abban-cheltsov-kasprzyk-liu-petracci,ascher-devleming-liu-quadric,ascher-devleming-liu-K3,ascher-devleming-liu,cheltsov-duarte-guerreiro-fujita-krylov-martinez-garcia,cheltsov-thompson,devleming-ji-kennedy-hunt-quek,kim-liu-wang,liu-zhao,martinez-garcia-papazachariou-zhao,odaka-spotti-sun,papazachariou} for a non-exhaustive list of related works.

The moduli spaces considered in this paper lie outside the scope of the results discussed above.
Indeed, the varieties $X(s)$ are Fano hypersurfaces in flag varieties, but they do not belong to linear systems proportional to the anticanonical divisor.
On the other hand, the varieties $Y(s)$ arise as complete intersections of two $(1,1)$-divisors in $\PP\times\PP^\vee$, which likewise fall outside the classes of examples previously treated in the literature.

This naturally raises the question of how the GIT compactifications appearing in this paper relate to K-stability.
Motivated by this perspective, we propose the following conjecture.
\begin{conjecture}\label{conj:K-polystability}
The Fano varieties $X(s)$ and $Y(s)$, as well as the Fano pairs 
$(\Fl_n, c X(s))$ with $0<c<1$, are K-polystable for all $s\in \grs$.
\end{conjecture}

We remark that the K-polystability of the pair $(\PFl_n,\epsilon Y(s))$ is known
for sufficiently small $\epsilon>0$ and $s\in\grs$, as it is equivalent to the
GIT-polystability of $Y(s)$ as a divisor in $\PFl_n$ by \cite[Theorem~1.1]{zhou}.
However, the associated K-moduli stack parametrizing these pairs is $\fM_n$ rather than
$\fM_n^Y$ (see Theorem~\ref{thm:XtoY}), and therefore does not describe the moduli of
the varieties $Y(s)$ themselves.

\medskip

\subsection{Hessenberg varieties in types \texorpdfstring{$A$--$G$}{A-G}}\label{ss.gentype}
To explain our results in types other than type $A$, we first recall the definition
of a Hessenberg subspace and the equivalent notion of a Hessenberg subset.
Here we follow the definitions in Section II of the foundational paper on
Hessenberg varieties by de Mari, Procesi and
Shayman~\cite{demari-procesi-shayman}.
However, we generalize the definitions in the paper slightly to include 
the case of Hessenberg subvarieties of $\mathcal{P}$ (rather than just $\mathcal{B}$).

Let $G$ be a  reductive group over $\bk$ whose adjoint group is simple.
Fix 
a maximal torus $T$ of $G$ and  
a Borel subgroup $B$ 
containing $T$,
and let  $\Phi^+$, $\Phi^-$   be the corresponding set of positive, negative roots of $G$, respectively.
We write $\mathfrak{t}$, $\mathfrak{b}$ and $\mathfrak{g}$ for the 
Lie algebras of $T$, $B$ and $G$ respectively. 
We assume that $\mathfrak b$ is generated by $\mathfrak t$ and the root spaces $\mathfrak g_{\alpha}$ with $\alpha \in \Phi^-$.

A parabolic subgroup $P\subset G$ is said to be \emph{standard} if $B\subset P$.
Each conjugacy class $\mathcal{P}$ of parabolic subgroups contains exactly one standard parabolic subgroup.
Moreover, if we write $\Delta\subset \Phi^+$ for the set of simple roots, 
then there is a well-known one-to-one correspondence between subsets $\Sigma \subset\Delta$ and standard
parabolic subgroups $P$ of $G$. 
This correspondence sends $P$ to $\Sigma_P :=\{\alpha\in\Delta: \mathfrak{g}_{\alpha} \subset \mathfrak{p}
\}$ 
 for the Lie algebra $\mathfrak{p}$ of $P$.
We have $\Sigma_B=\emptyset$ and $\Sigma_G=\Delta$.  

The next lemma is a slight generalization of~\cite[Lemma 1, page 530]{demari-procesi-shayman}  
which deals with the
case where $P=B$. 

\begin{lemma}
  \label{hess-spaces} 
  Suppose $P$ is a standard parabolic subgroup of $G$. 
  Then there is a one-to-one correspondence between the following two sets:
\[
\begin{split}
\mathcal{H} &:=\{H \subset \mathfrak{g} : H \text{ is an $\Ad(P)$-invariant subspace containing $\mathfrak{p}$}\}; \\
\mathcal{M} &:= \{ M \subset \Phi^+ :\text{if }\beta\in M, \alpha\in (-\Delta)\cup \Sigma_P, \text{ and }
\beta+\alpha\in \Phi^+, \text{ then }\beta+\alpha\in M\}.
\end{split}
\]
  This correspondence sends $H\in\mathcal{H}$ to $\{\alpha\in \Phi^+ : \mathfrak{g}_{\alpha}\subset H\} \in \mathcal{M}$.\footnote{Our convention is opposite to that in \cite{demari-procesi-shayman} so that $M$ is a subset of $\Phi^+$ rather than $\Phi^-$.} 
\end{lemma}

A subspace $H\in\mathcal{H}$ is called a \emph{Hessenberg subspace of $\mathfrak{g}$ for $P$} and a subset $M\in\mathcal{M}$ 
is called a \emph{Hessenberg subset for $P$}.
Note that any Hessenberg subspace for a standard parabolic subgroup $P$ of $G$ is automatically a Hessenberg subspace for 
the Borel subgroup $B$. 

Let $M$ be a Hessenberg subset for $P$ with corresponding 
Hessenberg subspace $H$. 
Let $P$ act on $G\times H$ by $p(g,h)=(gp^{-1},\Ad(p)h)$. 
Write $G\times^P H$ for the quotient, and let $\mathcal{P}$ denote the conjugacy class of the parabolic $P$. 
We then get morphisms
\[
\begin{tikzcd}
& G \times^P H \arrow[ld, "\pi_1"'] \arrow[rd, "\pi_2"]\\
\mathcal{P} && \mathfrak{g} 
\end{tikzcd}
\]
given by $\pi_1(g,h)=\Ad(g)P$ and $\pi_2(g,h)=\Ad(g)h$ for $g\in G$ and $h\in H$. 
We write 
\[\mathcal{P}(M,s) := \pi_2^{-1}(s)\]
for the scheme-theoretic fiber of $\pi_2$ over $s\in \fg$.
The restriction of the map $\pi_1:G\times^P H\to\mathcal{P}$ 
to $\mathcal{P}(M,s)$ is a closed immersion. 
So we can view $\mathcal{P}(M,s)$ as a closed subscheme of~$\mathcal{P}$. 
When $\mathcal P=\mathcal B$, we write $\mathcal B(M,s):=\mathcal P(M,s)$.

When $s\in \grs$ is regular semisimple, $\mathcal{P}(M,s)$ is smooth of pure codimension $\lvert \Phi^+ - M\rvert$ in $\mathcal P$ (\cite{demari-procesi-shayman}). 
This is the regular semisimple Hessenberg variety in $\cP$ associated to $M$ and $s$. When $\cP\neq \mathcal{B}$, 
we sometimes refer to $\cP(M,s)$ as a generalized Hessenberg variety. 
The variety $\cP(M,s)$ is connected if and only if $\Delta \subset M$ by~\cite[Corollary~9]{demari-procesi-shayman} (cf.~\cite[Proposition~A.1]{anderson-tymoczko}).

\medskip 
 Since the adjoint group $G^{\mathrm{ad}}$ of $G$ is simple, $\Phi$ has a unique maximal root $\theta$. See, for example, \cite[Lemma A, p.~52]{HuLie}. 
The set \[M := \Phi^+-\{\theta\}\]  is clearly a Hessenberg
subset, and, for 
  $s\in\grs$, the variety 
  \[X := \mathcal{B}(M,s)\] is a 
  divisor in $\mathcal{B}$.
Let $TX$ denote the tangent bundle of $X$.
  
Restricting $\pi_2$ to $\grs\subset \fg$ defines a family of regular semisimple Hessenberg varieties.
At a base point $s\in\grs$, this family induces the Kodaira--Spencer map 
\beq\label{eq:KSmap.intro}\fg\cong \tang\grs|_s\lra \coh^1(X,TX),\eeq
whose image consists of first-order deformations of $X$ obtained by varying $s$.

\begin{theorem}\label{t_comp}
  Suppose $G$ is a simple adjoint group of rank $r$, 
  which is neither of type~$A_1$ nor $A_2$,  
  Then, with the above notation, 
 \[
         \dim\coh^i(X,TX) = 
         \begin{cases}
            r  & \text{ if }i = 0;\\
          r-1 &  \text{ if }i =1;\\
            0  & \text{ otherwise.}
         \end{cases}
 \] 
 Moreover, 
 the Kodaira--Spencer map~\eqref{eq:KSmap.intro} is surjective.
\end{theorem}

Let 
  $P$ be the maximal proper parabolic subgroup of $G$ such that
  the Hessenberg subspace corresponding to $M$ is $\Ad(P)$-invariant. 
Our final theorem is an analogue of Theorem~\ref{t_comp} for
  Hessenberg subvarieties $\mathcal P(M,s)$  in $\mathcal P$.   
  
 Let $s\in\grs$
  and set 
  \[Y := \mathcal{P}(M,s).\] 
  Restricting $\pi_2$ to $\grs$ yields the Kodaira--Spencer map
	\beq\label{eq:KSmap.p.intro}
	\fg \cong T_s\grs \longrightarrow \coh^1(Y,TY).
	\eeq
\begin{theorem}\label{t_comp p}
  Let $G$ be as in Theorem~\ref{t_comp}.  Then,  %
  we get the following results. 
  
  If $G$ is neither of type $A_r$ with $r \geq 2$ nor of type $C_r$, 
  then
  \[
    \dim\coh^i (Y,TY)=
    \begin{cases}
      r & \text{ if } i=0;\\
      r-1 & \text{ if } i=1;\\
      0& \text{ otherwise.}
    \end{cases}
  \]
  
  If $G$ is of type $A_r$ with $r \geq 2$, then   \[
    \dim\coh^i (Y,TY)=
    \begin{cases}
      r & \text{ if }i=0;\\
      r-2 & \text{ if }i=1;\\
      0& \text{ otherwise.}
    \end{cases}
  \]
  
  If $G$ is of type $C_r$, then
    \[
    \dim\coh^i (Y,TY)=
    \begin{cases}
       r(2r-1)  & \text{ if } i=0;\\
      0& \text{ otherwise.}
    \end{cases}
  \]
 Moreover, the Kodaira--Spencer map \eqref{eq:KSmap.p.intro} is surjective.
\end{theorem}

Our proof of Theorem~\ref{t_comp} starts by using adjunction to reduce the
computation of $\coh^i(X,TX)$ to a cohomology computation on the flag variety
$\mathcal{B}$ itself, namely the vanishing of
$\coh^i(\mathcal B, T\mathcal B \otimes \mathcal I_X)$,
where $\mathcal I_X$ is the ideal sheaf of $X$ in $\mathcal B$.

We then give two independent proofs of the required vanishing.
The first, stated as Theorem~\ref{prop: cohomology vanishing},
is proved in Subsection~\ref{ss_van}.
It uses the Borel--Weil--Bott theorem together with information about root
systems taken from Akhiezer's book~\cite{Ak}.

The second proof can be gotten by combining  
Theorems~\ref{prop: alt cohom vanishing short} 
and~\ref{prop: alt cohom vanishing long}, which are stated and proved in 
Section~\ref{alt_coho}. 
These results in fact do more than prove the vanishing of
$\coh^i(\mathcal{B}, T\mathcal{B}\otimes \mathcal{I}_X)$.
They amount to complete computations of
$\coh^i(\mathcal{B}, S^*T\mathcal{B}\otimes \mathcal{I}_X)$
and, more generally,
$\coh^i(\mathcal{B}, S^*T\mathcal{B}\otimes\mathcal{L})$,
where $\mathcal{L}$ is a line bundle associated to a negative root of $G$
and $S^nT\mathcal B$ denotes the $n$-th symmetric power of $T\mathcal B$.
The method of proof of Theorems~\ref{prop: alt cohom vanishing short}
and~\ref{prop: alt cohom vanishing long}  
is inspired by Broer's paper~\cite{broer}.

Our proof of Theorem~\ref{t_comp p} is similar in that vanishing
results, this time for $\coh^i(\mathcal{P}, T\mathcal{P}\otimes\mathcal{I}_Y)$,
are crucial.
Our first version of the required vanishing theorem, which works in all types, is
 Theorem~\ref{prop: cohomology vanishing p}. 
In type $A$, Theorem~\ref{prop: type A parabolic cohomology} 
is a generalization of Theorem~\ref{prop: cohomology vanishing p},
which computes 
$\coh^i(\mathcal{P}, S^*T\mathcal{P}\otimes \mathcal{I}_Y)$
explicitly.

In this paper, we consider the case of codimension one only, but  
we expect that the results presented here will serve as a first step toward understanding automorphisms and deformations of all regular semisimple Hessenberg varieties.

\subsection{Overview of the paper}\label{overview}
Sections~\ref{sec-pb},~\ref{sec-autsa},~\ref{s:M0n} and~\ref{s.moduli} concern
Hessenberg varieties in 
type $A$, and the remainder of the paper, 
Sections~\ref{s:general} 
and~\ref{alt_coho}
contain results which are valid in all types. 

The complete type $A$ flag variety $\Fl_n$ comes equipped with two
distinguished morphisms to projective space: one morphism 
$\Fl_n\to\mathbb{P} = \mathbb{P}^{n-1}$ sending the flag~$F_{\bullet}$ to $F_1$, and the other 
$\Fl_n\to\mathbb{P}^{\vee}$ to the dual projective space,
which sends $F_{\bullet}$ to 
$F_{n-1}$.
Pulling back the tautological line bundle on projective space,
we get two distinguished line bundles on $\Fl_n$, 
which we call $\mathcal{L}_1$ and $\mathcal{L}_n$, respectively.
Moreover, the variety $Y(s)$ is the image of the Hessenberg
variety $X(s)$ under the morphism 
$\Fl_n\to\mathbb{P}\times\mathbb{P}^{\vee}$.
So we abuse notation and write 
$\mathcal{L}_1 = \mathcal{O}_{\mathbb{P}\times\mathbb{P}^{\vee}}(1,0)|_{Y(s)}$
and $\mathcal{L}_n = \mathcal{O}_{\mathbb{P}\times\mathbb{P}^{\vee}}(0,1)|_{Y(s)}$.
In Section~\ref{sec-pb}, we prove Lemma~\ref{l.pullback}, which shows
that isomorphisms $X(s)\to X(s')$ between type $A$ Hessenberg varieties in
the complete flag variety either fix the line bundles $\mathcal{L}_1$
and $\mathcal{L}_n$ or interchange them.
The lemma is proved by showing that $\mathcal{L}_1$
and $\mathcal{L}_n$ are the only line bundles with Hilbert 
polynomial $\displaystyle p(t) = \binom{n + t -1 }{n - 1}$.  

In Section~\ref{sec-autsa}, we use Lemma~\ref{l.pullback}  to prove 
Theorem~\ref{t.isoA} on the isomorphism classes of type~$A$
Hessenberg varieties and Theorem~\ref{t.autsA} on their
automorphisms.
In particular, Theorem~\ref{t.isoA}\eqref{t.isoAY} and Theorem~\ref{t.autsA}\eqref{t.autsAY},
which concern the varieties $Y(s)$, are both proved in 
Subsection~\ref{ss:unique.ext.Y}.
The main idea for the argument here is to show that an isomorphism 
$\varphi:Y(s)\to Y(s')$ fixing $\mathcal{L}_1$ and $\mathcal{L}_{n}$ must arise  
from a pair of elements 
$(g_1,g_2)\in G\times G$ acting on $\mathbb{P}\times \mathbb{P}^{\vee}$.
Here, $g_1$ and $g_2$ are determined by the action of $\varphi$ 
on the $\bk$-vector spaces $\coh^0(Y(s),\mathcal{L}_1)$ and 
$\coh^0(Y(s),\mathcal{L}_n)$, respectively.

Theorems~\ref{t.isoA}(1) and \ref{t.autsA}\eqref{t.autsAX} are proved in~Subsection~\ref{ss.proof.isoAX}.
What makes these theorems challenging to prove is that
the deformation space for $X(s)$ is larger than that for $Y(s)$. 
So we need to find an obstruction explaining why two varieties
$X(s)$ and $X(s')$ can be non-isomorphic even when $Y(s)$ and $Y(s')$ are isomorphic. 
Roughly speaking, the obstruction we found has to do with the isomorphism classes
of certain vector bundles on $Y(s)$. See Lemma~\ref{l.hom} for the precise statement.

A second key technical input is a general result on isomorphisms of flag bundles.
In Subsection~\ref{ss.proof.isoAX}, we show that any isomorphism between flag bundles
over a fixed base arises from an isomorphism of the underlying vector bundles,
up to tensoring by a line bundle or dualization.
Together with Lemma~\ref{l.hom}, this allows us to characterize precisely when an
isomorphism of $Y(s)$ lifts to an isomorphism of $X(s)$. 
We also include a generalization to partial flag bundles, which allows us to treat
the corresponding generalized Hessenberg varieties $X_I(s)$ in Subsection~\ref{ss:partial.A}.

In Section~\ref{s:M0n}, we prove Theorem~\ref{t.HKStab} and Corollary~\ref{no-dot}.
Subsections~\ref{ss:group.actions}--\ref{ss:ModCurve.main} interpret our main theorems in type $A$
in terms of group actions 
and
the moduli space 
$M_{0,n}$ of genus $0$ curves with $n$ marked points, respectively.
We prove Corollary~\ref{no-dot} in Subsection~\ref{ss:Tymoczko}, 
which was one of the initial motivations for this work. 

Section~\ref{s:general} proves Theorems~\ref{t_comp} and~\ref{t_comp p}. Subsection~\ref{s.adj} uses adjunction to reduce the computation 
of $\coh^i(X,TX)$ for $X = \mathcal B(M,s) \subset \mathcal{B}$ to the vanishing of 
$\coh^i(\mathcal B, T\mathcal B \otimes \mathcal I_X)$, 
which is the content of Theorem~\ref{prop: cohomology vanishing}. 
Our first proof of this vanishing is given in Subsection~\ref{ss_van}.
What makes the computation in Subsection~\ref{ss_van} feasible is that we are able
to filter the vector bundles on the flag variety whose cohomology groups
determine $\coh^i(\mathcal B, T\mathcal B \otimes \mathcal I_X)$ by line
bundles in such a way that the vast majority of the line bundles have vanishing
cohomology.

The proof of Theorem~\ref{t_comp p} uses the same general method as that of Theorem~\ref{t_comp},
but the answer is different   
due to the following properties:
The cohomology $\coh^1(\mathcal P, T\mathcal P \otimes \mathcal I_Y)$ does not vanish when $G$
is  of type $A_r$ and   
the inclusion $\mathfrak g\subset \coh^0(\mathcal P, T \mathcal P)$ is proper  when $G$ is \label{homogeneous_space_proper}
of type $C_r$ (Subsection~\ref{s.partial}). 

In Section~\ref{alt_coho}, we prove Theorems~\ref{prop: alt cohom vanishing short}
and~\ref{prop: alt cohom vanishing long}, which 
compute  
$\coh^i(\mathcal B, S^nT\mathcal B \otimes \mathcal{I}_X)$, 
and Theorem~\ref{prop: type A parabolic cohomology}, which 
computes 
$\coh^i(\mathcal P, S^nT\mathcal P \otimes \mathcal I_Y)$ for $G$ of type $A$.
From the theorems, it is clear that most of the cohomology groups vanish.
So these theorems yield alternate proofs of Theorem~\ref{t_comp} in general types and
Theorem~\ref{t_comp p} in type~$A$.

Section~\ref{s.moduli} proves theorems relating the moduli stacks
of the type $A$ Hessenberg varieties $X(s)$ and $Y(s)$ to certain 
stack-theoretic quotients of the moduli space of genus $0$ 
curves with marked points.
So it upgrades the results proved at the end of Section~\ref{sec-autsa} to 
theorems about the moduli stacks of Hessenberg varieties.
It appears at the end of the paper because it relies in part on 
the surjectivity of the Kodaira--Spencer map \eqref{eq:KSmap.intro} 
which is
contained in Theorem~\ref{t_comp}.
We also discuss natural GIT compactifications of their good moduli spaces.

\subsection{Acknowledgments}   Brosnan thanks the Simons Foundation for
partial support (Travel Grant \#: 638366) during the preparation of this
manuscript 
and the Center for Complex Geometry at the Institute for Basic
Science in Daejeon for hospitality during the summer of 2023. 
He also thanks Jeffrey Adams for pointers on how to use the Atlas of Lie Groups
and Representations software to calculate cohomology groups on $G/B$. 
(In the end, this paper does not use any computer computations, but the Atlas
was still a very useful tool.)
Escobar was supported by the National Science Foundation (NSF) Career grant
DMS-2142656.  Hong and
D. Lee were supported by the Institute for Basic Science (IBS-R032-D1). They
also thank the Department of Mathematics at the University of Maryland and the Brin
Mathematics Research Center for hospitality during the spring of 2024. 
D.~Lee thanks Hyeonjun Park for helpful discussions on the proofs of
Theorems~\ref{thm:moduli.stack} and~\ref{thm:moduli.stack.Y} 
and Kenneth Ascher, Harold Blum and Chuyu Zhou for kindly answering his questions about moduli stacks of polarized schemes and Fano varieties, especially regarding the proof of Theorem~\ref{thm:MFano}. 
He also thanks Peter Crooks and Young-Hoon Kiem for helpful comments.
E. Lee was supported by the National Research Foundation of Korea(NRF) grant funded by the Korea government(MSIT) (No.\ RS-2022-00165641, RS-2023-00239947) and by POSCO Science Fellowship of POSCO TJ Park Foundation. 
Hong and E. Lee thank Mikiya Masuda for valuable discussions on automorphism groups when they visited Himeji. 

This project began as part of a problem session in the Banff International
Research Station (BIRS) Workshop 
``Interactions between Hessenberg Varieties, Chromatic Functions, 
and LLT Polynomials (22w5143)."
We heartily thank both BIRS and the organizers of the workshop.


\bigskip

\section{Pullbacks of line bundles}\label{sec-pb}
In this section, we restrict ourselves to type $A_{n-1}$ for $n\geq 4$. Let $G=\GL_n$ and let $B$ be the Borel subgroup of $G$ consisting of lower triangular matrices. Then the flag variety $\mathcal{B}=G/B$ is isomorphic to the variety of complete flags in $\bk^n$
\[
\Fl_n=\{F_\bullet=(F_1\subset \cdots \subset F_{n-1} \subset \bk^n)~:~ \dim  F_i=i ~\text{ for all }i\}.
\]

For each regular semisimple element $s\in \grs$, the associated regular semisimple Hessenberg variety $X(s)$ of codimension one is the subvariety defined by
\[X(s)=\{F_\bullet \in \Fl_n~:~ sF_1\subset F_{n-1}\}.\]
This is a smooth connected divisor in $\Fl_n$ by the general results in \cite{demari-procesi-shayman}. 
So, as $\Fl_n$ is smooth, it is the Cartier divisor associated to a line bundle on $\Fl_n$. 
To describe this line bundle,
consider the natural projection map 
\begin{equation}\label{eq: projection}
	\Fl_n\longrightarrow \mathbb{P}\times \mathbb{P}^\vee, \quad F_\bullet \mapsto (F_1,F_{n-1}),
\end{equation}
where $\PP$ (resp. $\PP^{\vee}$) denotes the space of lines in $\bk^n$ (resp. $(\bk^n)^*$) 
as in~\ref{ss-defa}.
Let $\cL_1$ and $\cL_n$ be the line bundles on $\Fl_n$ obtained as the pullbacks
of the ample generators $\mathcal{O}_{\mathbb{P}}(1)$ and
$\mathcal{O}_{\mathbb{P}^\vee}(1)$ on $\mathbb{P}$ and $\mathbb{P}^\vee$
by the map obtained by composing~\eqref{eq: projection} with the first and second projections onto
$\PP$ or $\PP^\vee$ respectively. 
Then, the dual $\cL_1^\vee$ of $\cL_1$ is the tautological line bundle on
$\Fl_n$ whose fiber at a point representing the flag $(F_1\subset\cdots\subset
F_{n-1}\subset\bk^n)$ is $F_1$. Similarly, $\cL_n$ is the tautological line
bundle with fiber $\bk^n/F_{n-1}$.
The Hessenberg variety $X(s)$ is then the scheme-theoretic vanishing locus of the homomorphism $\cL_1^\vee\to \cL_n$ on $\Fl_n$, which is the composition 
\[F_1\hookrightarrow \bk^n\xrightarrow{~s~}\bk^n\twoheadrightarrow \bk^n/F_{n-1}\]
on the fibers.
 Here the map $s$ in the middle is the matrix multiplication by $s$. In particular, $X(s)$ is a divisor associated to the line bundle $\cL_1\otimes \cL_n$ on $\Fl_n$, and {hence $(\mathcal{L}_1 \otimes \mathcal{L}_n)^{\vee}$ is its ideal sheaf.}\label{X_is_a_divisor}

To simplify the notation, we write $\cL_1$ and $\cL_n$ for the restrictions $\cL_1|_{X(s)}$ 
and $\cL_n|_{X(s)}$ to $X(s)$. 
As the pullback $\mathrm{Pic}\Fl_n \to \mathrm{Pic}X(s)$ 
is an isomorphism for $n\geq 4$ (\cite[Theorem 6.1]{ayzenberg-masuda-sato}), this abuse of 
notation is relatively harmless.

\medskip

In~\ref{subsection_intro_1.2}, we stated Lemma~\ref{l.inv} 
without proof.  
That Lemma proved the existence of an involution
$\iota$ of $G=\mathbf{GL}_n$ preserving $X(s)$ and $Y(s)$. 
We need the existence of $\iota$ for the statement of the main result of this
section, which is Lemma~\ref{l.pullback} below.
So, before stating Lemma~\ref{l.pullback}, we provide a proof of Lemma~\ref{l.inv}.

\subsection{Symmetric matrices and the proof of Lemma~\ref{l.inv}}\label{proof_Lem_l.inv}
\begin{lemma}\label{symover}   
  Let $V$ be an $n$-dimensional $\bk$-vector space and let $s\in\End V$.
  Then there exists a nondegenerate symmetric bilinear form $B$ on 
  $V$ such that, with respect to $B$, we have $s = s^t$.
\end{lemma}
\begin{proof}
    To show this, we use a slight variant of the argument from the MathOverflow
    answer~\cite{ggsymm}, which we sketch for the convenience of the reader.
    (The argument in \cite{ggsymm} is actually more general in that it deals
    with non-algebraically closed fields of arbitrary characteristic.)

     First note that, as is easily seen by considering orthogonal sums, it
     suffices to consider the case when $s$ has only one Jordan block. Next,
     assuming that $s$ has only one Jordan block, we assume, as we can, that
     the unique eigenvalue of that Jordan block is $0$.

     We can then replace $V$ with the vector space $V = \bk[x]/(x^n)$ and
     assume that $s$ is given by multiplication by $x$ on $V$. 
     (Assume also that $n > 0$ to avoid thinking about trivialities.)
     If $\lambda\in V^*$ is a linear functional, we get a symmetric bilinear
     form $B = B_{\lambda}$ on $V$ by setting $B(a,b) = \lambda (ab)$ for $a, b \in V$.
     Moreover, since $B(xa,b) = \lambda(xab) = \lambda(axb) = B(a,xb)$, $s$ is
     symmetric with respect to the bilinear form $B$.

     So it suffices to find $\lambda$ such that $B_{\lambda}$ is nondegenerate.
     For this, pick $\lambda$ to be the form given by  
     $\lambda(f) = f^{(n-1)}(0)/(n-1)!$.
     This makes sense because, if $f\in x^{n}\bk[x]$, then $f^{(n-1)}(0) = 0$.
     Equivalently, $\lambda$ is the unique form determined by setting
     \[
      \lambda(x^i) = 
        \begin{cases}
            0, & 0\leq i < n-1;\\
            1, & i = n-1.
        \end{cases}
     \]

     With $B = B_{\lambda}$, 
     the matrix $b_{ij} = B(x^i,x^j)$ (with $0\leq i,j< n$) is then given by 
     \begin{equation*} 
      b_{ij} = 
        \begin{cases}
            1, & \text{if $i+j=n-1$;}\\
            0, & \text{otherwise.}
        \end{cases}
      \end{equation*}
    In other words, $b_{ij}$ is the anti-diagonal matrix.  
    It is not hard to see by induction that $\displaystyle\det (b_{ij}) = (-1)^{\binom{n}{2}}$.
    In particular, the determinant is nonzero. 
    So $B$ is nondegenerate.
\end{proof}

\begin{remark}
    Since $\bk$ has characteristic not equal to $2$, the map sending a symmetric 
    bilinear form $B$ to the quadratic form given by $Q(v) = B(v,v)$ is a one-one
    correspondence. 
    Since any two quadratic forms over $\bk$ are isometric and quadratic forms
    correspond bijectively to symmetric bilinear forms, 
    Lemma~\ref{symover} is equivalent to the statement that any $n\times n$-matrix
    $s$ over $\bk$ is conjugate to a symmetric matrix.
    This is the subject of the MathOverflow question~\cite{ggsymm}.
\end{remark}

To make the proof of Lemma~\ref{l.inv} as clear as possible, 
it will help to have another elementary linear algebra lemma.

\begin{lemma}\label{perps}
   Suppose $V$ is a finite dimensional $\bk$-vector space with a nondegenerate symmetric bilinear 
   form $B$, a subspace $U\subseteq V$ and an endomorphism 
   $s\in\End V$.
   Then 
   \[
       (sU)^{\perp} = (s^{t})^{-1}U^{\perp},
   \]
   where $(s^{t})^{-1}U^{\perp}$  denotes the \emph{preimage} of 
   $U^{\perp}$ under the (possibly noninvertible) endomorphism $s^t$ of $V$. 
\end{lemma}

\begin{proof}
   Suppose $v\in (sU)^{\perp}$ and $u\in U$.
   Then $B(s^tv, u) = B(v,su) = 0$.
   So $s^tv\in U^{\perp}$.  
   Therefore, $v\in (s^t)^{-1} U^{\perp}$.

   On the other hand, suppose $v\in (s^t)^{-1} U^{\perp}$ and $u\in U$.
   Then $s^t v \in U^{\perp}$. 
   So $B(v, su) = B(s^t v, u) = 0$.
   Therefore, $v\in (sU)^{\perp}$. 
\end{proof}

\begin{proof}[Proof of Lemma~\ref{l.inv}]
   Pick $s\in\fg$.
   Using Lemma~\ref{symover} and the identification of symmetric bilinear forms 
   over $\bk$ with quadratic forms, 
   we can find a nondegenerate quadratic form $Q$ such that, with respect to $Q$, 
   we have $s = s^t$.
   It remains to show that $\iota$ preserves the Hessenberg varieties $X(s)$ and $Y(s)$, 
   where $\iota$ is the involution of $\Fl_n$ corresponding to $Q$ in the case of $X(s)$ and
   of $\PFl_n$ in the case of $Y(s)$.

   To handle the case of $X(s)$, pick $F_\bullet\in X(s)$. 
   We have $sF_1\subseteq F_{n-1}$.
   Therefore, $F_{n-1}^{\perp}\subseteq (sF_1)^{\perp}$.
   So, using Lemma~\ref{perps}, we see that 
   \begin{align*}
      sF_{n-1}^{\perp} &\subseteq s (sF_1)^{\perp} =  s(s^t)^{-1} F_1^{\perp} \\
                       &= s(s^{-1}) F_1^{\perp} \subseteq F_1^{\perp}.
    \end{align*} 

   This shows that $\iota(X(s)) \subseteq X(s)$.
   But then, since $\iota$ is an involution, it also shows that $X(s)\subseteq \iota(X(s))$,
   and, therefore, that $\iota(X(s)) = X(s)$.
   Essentially the same argument shows that $\iota(Y(s))=Y(s)$. 
\end{proof}

By abuse of notation, we continue to write $\iota$ for the restriction
to $X(s)$ or $Y(s)$ of the involution $\iota$ in Lemma~\ref{l.inv}.

\smallskip
We now state our main result of this section, which is analogous to the fact
that inner automorphisms of $\Fl_n$ preserve both $\cL_1$ and $\cL_n$, while
 $\iota$ interchanges them.
Consequently, any automorphism of $\Fl_n$ preserves the unordered set
$\{\cL_1,\cL_n\}$.

\begin{lemma}[Pullback lemma] \label{l.pullback}
Let $s,s'\in \grs$ be two regular semisimple elements. Let $\varphi:X(s)\to
X(s')$ be an isomorphism. 
Then, we have either
\begin{enumerate}[label=(\roman*),  ref=\roman*] 
 \item $(\varphi^*\cL_1,\varphi^*\cL_n) = (\cL_1,\cL_n)$, or \label{item:pullback_1}
 \item $(\varphi^*\cL_1,\varphi^*\cL_n) =  (\cL_n, \cL_1)$ \label{item:pullback_2}
\end{enumerate}
on $X(s)$. 
That is, the set $\{\cL_1,\cL_n\}$ is preserved by $\varphi^*$. In particular, 
	\beq\label{eq:pullback}\Aut X(s)=\Aut^+ X(s)\rtimes \langle \iota \rangle \eeq
where $\Aut^+ X(s) \trianglelefteq \Aut X(s)$ is the normal subgroup consisting of
automorphisms of $X(s)$ satisfying~\eqref{item:pullback_1}, and $\iota$ is any involution in Lemma~\ref{l.inv}.

\end{lemma}
This lemma is essential in our proofs of Theorems~\ref{t.isoA} and \ref{t.autsA} in the next section.

\begin{proof}
The first assertion is a direct consequence of the cohomological characterization of $\cL_1$ and $\cL_n$
provided in Lemma~\ref{l.CohomChar} below, which says that these line bundles are uniquely determined on $X(s)$ by the dimensions of the cohomology groups of their sufficiently high tensor powers. 
The identification~\eqref{eq:pullback} is immediate,
since $\Aut^+ X(s)$ is preserved under
conjugation by $\iota$, and every automorphism satisfying~\eqref{item:pullback_2} differs
from an element of $\Aut^+ X(s)$ by composition with $\iota$.
\end{proof}

To complete the proof of Lemma~\ref{l.pullback}, we are left to prove the following lemma.
\begin{lemma}
\label{l.CohomChar}
For a line bundle $L\in \mathrm{Pic}\Fl_n$, the following are equivalent.
	\begin{enumerate}
		\item $L=\cL_1$ or $\cL_n$.
		\item For a sufficiently large integer $k$,
		\[ \dim \,\coh^i(X(s),L^k)=\begin{cases}
			\displaystyle\binom{n+k-1}{k} &\text{ for }i=0;\\
			0 &\text{ otherwise.}
		\end{cases}\]
		\item $L$ is globally generated on $\Fl_n$, and for every~$k$,
		\[\chi(X(s),L^k)=\binom{n+k-1}{k}\]
		where $\chi$ denotes the holomorphic Euler characteristic.
	\end{enumerate}
\end{lemma}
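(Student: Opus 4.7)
\bigskip

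\noindent\textbf{Proof plan.} The plan is to establish the cyclic chain $(1)\Rightarrow(2)\Rightarrow(3)\Rightarrow(1)$, with essentially all the geometric content in the last implication.

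For $(1)\Rightarrow(2)$, I would observe that $X\subset\Fl_n$ is the zero locus of the section of $\cL_1\otimes\cL_n$ induced by the bundle map $F_1\hookrightarrow\bk^n\xrightarrow{s}\bk^n\twoheadrightarrow\bk^n/F_{n-1}$; in particular $\cO_{\Fl_n}(X)\cong\cL_1\otimes\cL_n$. Tensoring the Koszul resolution
\begin{equation*}
0\to\cL_1^{-1}\otimes\cL_n^{-1}\to\cO_{\Fl_n}\to\cO_X\to 0
\end{equation*}
by $\cL_1^k$ reduces $\coh^\ast(X,\cL_1^k)$ to the cohomology of the two homogeneous line bundles $\cL_1^k$ and $\cL_1^{k-1}\otimes\cL_n^{-1}$ on $\Fl_n$. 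By Borel-Weil the dominant weight $k\omega_1$ gives $\coh^0(\Fl_n,\cL_1^k)=\binom{n+k-1}{n-1}$ with vanishing in higher degrees, while the twist $(k-1)\omega_1-\omega_{n-1}$ has $\rho$-shift with two equal coordinates and is thus singular, so Borel-Weil-Bott forces its cohomology to vanish entirely. The case $L=\cL_n$ is symmetric.

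For $(2)\Rightarrow(3)$, the Euler characteristic $\chi(X,L^k)$ is a polynomial in $k$ that coincides with $\binom{n+k-1}{n-1}$ for all sufficiently large $k$, hence as polynomials. Using the Picard isomorphism $\mathrm{Pic}(\Fl_n)\cong\mathrm{Pic}(X)$ valid for $n\geq 4$, let $\tilde L=\cL_\lambda$ denote the lift of $L$ to $\Fl_n$. To see that $\tilde L$ is dominant, note that if $\lambda$ were non-dominant, Borel-Weil-Bott would make $\coh^\ast(\Fl_n,\tilde L^k)$ either vanish or concentrate in a positive degree; the long exact sequence associated to the Koszul resolution then forces $\coh^0(X,L^k)$ to grow strictly slower than $\binom{n+k-1}{n-1}$ for large $k$, contradicting~(2).

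The crux is $(3)\Rightarrow(1)$, which I prove by a Hilbert-polynomial rigidity argument. Writing $\tilde L=\cL_\lambda$ for the dominant lift, let $P_\lambda\supseteq B$ be the parabolic stabilizing the highest weight line in the irreducible representation $V_\lambda$. The morphism $\phi_L$ defined by $L$ factors as $X\hookrightarrow\Fl_n\twoheadrightarrow G/P_\lambda\hookrightarrow\mathbb{P}(V_\lambda^\vee)$, and the degree in $k$ of $\chi(X,L^k)$ equals $\dim\phi_L(X)$, which by~(3) must be $n-1$. Since $X$ is a divisor in $\Fl_n$ and $\Fl_n\to G/P_\lambda$ is a fiber bundle, $\phi_L(X)$ has codimension at most one in $G/P_\lambda$, so $\dim G/P_\lambda\in\{n-1,n\}$. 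For type $A_{n-1}$ with $n\geq 4$, a short enumeration of the dimensions of partial flag varieties shows that $\dim G/P_\lambda=n-1$ forces $P_\lambda\in\{P_1,P_{n-1}\}$, whereas $\dim G/P_\lambda=n$ occurs only in the sporadic case $(n,P_\lambda)=(4,P_2)$. In this last case, an explicit fiber analysis of $X\to\mathrm{Gr}(2,4)$, $F_\bullet\mapsto F_2$, shows that the map is surjective with generic fiber $\mathbb{P}^1$, so its image has dimension $4\neq n-1=3$. Hence $L\cong\cL_1^c$ or $\cL_n^c$ for some $c\geq 1$, and matching leading coefficients of $\binom{n+ck-1}{n-1}$ against $\binom{n+k-1}{n-1}$ forces $c=1$.

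The principal obstacle I foresee is the parabolic case analysis in $(3)\Rightarrow(1)$, specifically ruling out the exceptional case $(n,P_\lambda)=(4,P_2)$, which requires a concrete geometric computation of the Hessenberg fibration $X\to\mathrm{Gr}(2,4)$ rather than mere dimension counting. A secondary wrinkle is the global-generation step of $(2)\Rightarrow(3)$, which needs a careful Borel-Weil-Bott analysis to exclude non-dominant lifts.
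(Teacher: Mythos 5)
Your $(1)\Rightarrow(2)$ matches the paper exactly (same Koszul sequence, same Borel--Weil--Bott singularity check). Your $(3)\Rightarrow(1)$ is a genuinely different and appealing route: the paper runs a direct comparison of Weyl dimension polynomials $A(k)=\dim V(k\lambda)$ and $B(k)=\dim V(k\lambda-e_1+e_n)$ and squeezes the degree identity $n-1=N(\lambda)$ or $N(\lambda)-1$ into the two admissible $\lambda$'s, whereas you interpret the Hilbert polynomial geometrically via the Stein factorization through $G/P_\lambda$ and then classify the low-dimensional partial flag varieties. Your exclusion of the sporadic $(n,P_\lambda)=(4,\mathrm{Gr}(2,4))$ case by the explicit $\mathbb{P}^1$-fibration is correct. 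However, the opening step of your $(3)\Rightarrow(1)$ — ``the degree in $k$ of $\chi(X,L^k)$ equals $\dim\phi_L(X)$'' — is not a general fact about semiample line bundles and needs argument. The Snapper polynomial degree equals $\dim\phi_L(X)$ only when the leading coefficient does not cancel; in general it can drop (e.g.\ $\cO(1,0)$ on $\mathbb{P}^1\times E$ has $\chi\equiv 0$). In this setting the equality does hold, but establishing it amounts to the same case split on whether $\cL_1\otimes\cL_n$ is trivial along the fibers of $\Fl_n\to G/P_\lambda$ that the paper's Weyl-dimension comparison performs algebraically, so the justification should be spelled out.

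There is a genuine gap in your $(2)\Rightarrow(3)$. You claim that if $\lambda$ is non-dominant then ``$\coh^0(X,L^k)$ grows strictly slower than $\binom{n+k-1}{n-1}$,'' but this is not true, and in any case it does not follow from the observation that $\coh^0(\Fl_n,L^k)=0$. What the Koszul long exact sequence gives is an injection $\coh^0(X,L^k)\hookrightarrow\coh^1(\Fl_n,L^k\otimes(\cL_1\otimes\cL_n)^\vee)$, and the latter group can have polynomial growth of degree $n-1$ or higher for non-dominant $\lambda$ (already for $\lambda=-\alpha_1$ it has degree exactly $n-1$). The paper's actual argument is representation-theoretic: from hypothesis~(2), the middle term of the induced short exact sequence $0\to\coh^0(X,L^k)\to V(\mu)^\vee\to V(\mu+\alpha)^\vee\to 0$ is nonzero, and the paper shows (by a short case analysis on the simple reflection $s_r$) that the right-hand term is also nonzero; but a $G$-module surjection between nonisomorphic irreducibles $V(\mu)^\vee\twoheadrightarrow V(\mu+\alpha)^\vee$ is impossible, since the highest weight of the target exceeds that of the source. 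Without this (or an equivalent) argument, non-dominant $\lambda$ has not been excluded, and your step from~(2) to~(3) does not close.
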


The rest of this section is devoted to the proof of Lemma~\ref{l.CohomChar}. In the first subsection, we recall the Borel--Weil--Bott theorem and the Weyl dimension formula in type $A$. Using this, we prove  Lemma~\ref{l.CohomChar} in the second subsection.

\subsection{Borel--Weil--Bott theorem and Weyl dimension formula in type \texorpdfstring{$A$}{A}}

The Borel--Weil--Bott theorem provides a complete classification of the sheaf cohomology of line bundles on $\Fl_n$ as explicit irreducible $G$-modules.
See, for example,~\cite[p.~103]{Ak} for the statement of the Borel--Weil--Bott
theorem and~\cite[p.~402]{fulton-harris} for the Weyl dimension formula.

\smallskip

Let $T\cong (\mathbb{G}_m)^n$ be the maximal torus in $B$.
Let $\lambda$ be an integral weight of~$T$. Under the natural isomorphism $\Hom(T,\mathbb{G}_m)\cong \mathbb{Z}^n$ as abelian groups, $\lambda$ 
can be regarded as an element of $\mathbb{Z}^n$. So we write $\lambda=(\lambda_1,\dots ,\lambda_n)$. To each integral weight~$\lambda$, associate a one-dimensional $B$-representation $\bk_{\lambda}$ with character $\chi_\lambda:B\to \mathbb{G}_m$, obtained by composing $\lambda$ with the canonical projection $B\to T$.
We denote by 
\[\cL(\lambda):=G\times ^B\bk_{\lambda}\]
the line bundle on $\Fl_n\cong G/B$ associated to $\lambda$, which is obtained as the quotient of $G\times \bk_{\lambda}$ by the $B$-action $b\times (g,v):=(gb^{-1},\chi_\lambda(b)v)$. 

Every line bundle on $\Fl_n$ is of this form. 
Moreover, $\cL(\lambda+\mu)\cong \cL(\lambda)\otimes \cL(\mu)$, and $\cL(\mathbbm{1})\cong \cO_{\Fl_n}$ for $\mathbbm{1}=(1,\dots,1)$.
The map $\lambda\mapsto \cL(\lambda)$ induces an isomorphism $\mathbb{Z}^n/\langle\mathbbm{1}\rangle\cong \mathrm{Pic}\Fl_n$.
Hence, we often consider $\lambda$ as an element in $\mathbb{Z}^n/\langle\mathbbm{1}\rangle$.

Let $e_i-e_j$ with $1\leq i<j\leq n$ be positive roots, where $e_i$ denotes the $i$-th standard basis vector $(0,\dots,1,\dots,0)$ with the $i$-th component 1 and 0 elsewhere. 
We say that $\lambda=(\lambda_1,\dots, \lambda_n)$ is \emph{dominant} (resp. \emph{regular}) if $\lambda_i\geq \lambda_{i+1}$  for all $1\leq i<n$ (resp. $\lambda_i$ are mutually distinct). In particular, $\lambda$ is regular dominant if and only if $\lambda_i>\lambda_{i+1}$ for all $1\leq i<n$. \label{def_regular_dominant} It is well known that $\cL(\lambda)$ is ample (resp. globally generated) 
if and only if $\lambda$ is regular dominant (resp. dominant).
We say that $\lambda$ is \emph{singular} if it is not regular.

We denote $\rho=(n-1,\dots, 1, 0)$. This is equal to the half of the sum of all the positive roots $e_i-e_j$ with $1\leq i<j\leq n$ modulo $\langle\mathbbm{1}\rangle$.

For every integral weight $\lambda$
with $\lambda+\rho$ regular, there exists a unique permutation $w\in S_n$ such that $w*\lambda:=w(\lambda+\rho)-\rho$ is dominant. Here, for $w \in S_n$ and $\lambda = (\lambda_1,\lambda_2,\dots,\lambda_n)$, we set $w(\lambda) = (\lambda_{w^{-1}(1)},\lambda_{w^{-1}(2)},\dots,\lambda_{w^{-1}(n)})$. Let $\ell(\lambda)$ denote the length $\ell(w)$ of $w$, which is by definition  the minimal number $\ell$ such that $w=s_{i_1}\cdots s_{i_\ell}$, where $s_i:=(i,i+1)$ denotes the $i$-th simple transposition for $1\leq i<n$.
\begin{theorem}[Borel--Weil--Bott theorem] \label{t.bwb}
	Let $\lambda$ be an integral weight of $T$. Then,
	\begin{enumerate}
		\item if $\lambda+\rho$ is singular, then $\coh^i(\Fl_n,\cL(\lambda))=0$ for all $i$;
		\item if $\lambda+\rho$ is regular, then $\coh^i(\Fl_n,\cL(\lambda))=0$ for all $i\neq \ell(\lambda)$ and
                \[\coh^{\ell(\lambda)}(\Fl_n,\cL(\lambda))\;\cong\; \coh^0(\Fl_n,\cL(w*\lambda))\;\cong\; V(w*\lambda)\]
		as a $G$-module, where $V(w*\lambda)$ is the irreducible
        $G$-module with the highest weight $w*\lambda$. 
	\end{enumerate}
	In particular, the sheaf cohomology of $\cL(\lambda)$ on $\Fl_n$ 
        is nonzero in at most one degree.
\end{theorem}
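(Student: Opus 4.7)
The plan is to prove this classical statement by reducing to $\mathfrak{sl}_2$-calculations on $\mathbb{P}^1$-fibers via the minimal-parabolic fibrations, following the standard Demazure approach.

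First, I would handle the dominant case (which is Borel--Weil): when $\lambda$ is dominant, Frobenius reciprocity identifies $\coh^0(\Fl_n, \cL(\lambda))$ with the induced module $\mathrm{Ind}_B^G(\bk_{-\lambda})$, which in turn is $V(\lambda)^\vee$ by the theory of highest weights. The vanishing $\coh^i(\Fl_n, \cL(\lambda)) = 0$ for $i > 0$ when $\lambda$ is dominant is Kempf vanishing; for regular dominant $\lambda$ this is immediate from Kodaira vanishing applied to the ample line bundle $\cL(\lambda)$, and the general dominant case reduces to the regular one by twisting with a sufficiently positive line bundle and taking a limit.

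For a general integral weight $\lambda$, the key tool is the fibration $\pi_i \colon \Fl_n \to G/P_i$ associated to each simple root $\alpha_i$, whose fibers are isomorphic to $\mathbb{P}^1$. On each fiber the line bundle $\cL(\lambda)$ restricts to $\cO_{\PP^1}(d_i)$ with $d_i = \langle \lambda, \alpha_i^\vee\rangle$, and the Leray spectral sequence for $\pi_i$ gives two critical cases. When $d_i = -1$, equivalently $\langle \lambda+\rho, \alpha_i^\vee\rangle = 0$ (the singular case), both $R^0\pi_{i,*}\cL(\lambda)$ and $R^1\pi_{i,*}\cL(\lambda)$ vanish, forcing $\coh^*(\Fl_n, \cL(\lambda)) = 0$; choosing any $\alpha_i$ perpendicular to $\lambda+\rho$ yields part (1). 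When $d_i \leq -2$, one has $R^0\pi_{i,*}\cL(\lambda) = 0$, while $R^1\pi_{i,*}\cL(\lambda)$ can be identified $G$-equivariantly with the line bundle $\cL(s_i * \lambda)$ descended through $\pi_i$, so the Leray spectral sequence degenerates to the degree-shift isomorphism $\coh^j(\Fl_n, \cL(\lambda)) \cong \coh^{j-1}(\Fl_n, \cL(s_i * \lambda))$.

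Part (2) then follows by induction on $\ell(w)$: given $\lambda+\rho$ regular and $w$ the unique Weyl element with $w * \lambda$ dominant, if $\ell(w) > 0$ one can find a simple reflection $s_i$ with $\ell(s_iw) < \ell(w)$, which is equivalent to $d_i \leq -2$ for $\lambda$. Iterating the degree-shift identification $\ell(w)$ times produces $\coh^i(\Fl_n, \cL(\lambda)) \cong \coh^{i-\ell(w)}(\Fl_n, \cL(w*\lambda))$, which by the dominant case is concentrated in degree $i = \ell(w)$ and equals $V(w*\lambda)^\vee$.

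The main obstacle is the $G$-equivariant identification $R^1\pi_{i,*}\cL(\lambda) \cong \cL(s_i * \lambda)$ in the case $d_i \leq -2$. This requires careful tracking of the $-\rho$ shift coming from Serre duality along the $\mathbb{P}^1$-fibers: the relative dualizing sheaf of $\pi_i$ contributes the twist by $\alpha_i$, which is precisely what generates the dot action $s_i * \lambda = s_i(\lambda+\rho) - \rho$. One must perform this computation $G$-equivariantly in order to upgrade the fiberwise identification of sheaves on $\mathbb{P}^1$ to an isomorphism of $G$-equivariant line bundles on $G/P_i$; once this is in hand, the rest of the argument is a routine Leray spectral sequence computation.
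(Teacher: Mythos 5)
The paper does not prove Theorem~\ref{t.bwb} at all: it is stated as classical background and cited to Akhiezer~\cite{Ak} (with \cite{fulton-harris} for the companion Weyl dimension formula). There is therefore no internal argument to compare your proposal against; what you have written is a sketch of the standard Demazure induction, and the overall strategy is correct.

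Two steps need more care. In part (1), singularity of $\lambda+\rho$ only means it is orthogonal to \emph{some} positive coroot, not necessarily a simple one, so you cannot directly choose $\alpha_i$ with $\langle\lambda+\rho,\alpha_i^\vee\rangle=0$. You must first apply the degree-shift isomorphism from the $d_i\le -2$ case finitely many times (using that $s_i*$ conjugated by $\lambda\mapsto\lambda+\rho$ is the ordinary $W$-action, hence preserves singularity) until $\lambda+\rho$ lands on a simple wall. Second, in the degree-shift step, when $d_i\le-2$ the sheaf $R^1\pi_{i,*}\cL(\lambda)$ is a homogeneous \emph{vector bundle} on $G/P_i$ of rank $-d_i-1$, not ``the line bundle $\cL(s_i*\lambda)$ descended through $\pi_i$''; no such descent exists unless $d_i=-2$. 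The correct $G$-equivariant identification, obtained from relative Serre duality along the $\PP^1$-fibers together with the $\omega_{\pi_i}=\cL(-\alpha_i)$ computation you indicate, is $R^1\pi_{i,*}\cL(\lambda)\cong\pi_{i,*}\cL(s_i*\lambda)$, and the desired isomorphism $\coh^j(\Fl_n,\cL(\lambda))\cong\coh^{j-1}(\Fl_n,\cL(s_i*\lambda))$ then follows because $R^{>0}\pi_{i,*}\cL(s_i*\lambda)=0$ when $\langle s_i*\lambda,\alpha_i^\vee\rangle=-d_i-2\ge 0$. With these repairs your argument goes through and reproduces the classical proof the paper is relying on.
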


The Weyl dimension formula is a formula which computes the dimension of a given irreducible $G$-module.
\begin{theorem}[Weyl dimension formula] \label{t.weyl}
	Let $\lambda=(\lambda_1,\dots,\lambda_n)$ be an integral dominant weight of $T$. Then,
	\[\dim V(\lambda)= \prod_{1\leq i<j\leq n} \left(1+\frac{\lambda_i-\lambda_j}{j-i}\right).\]
\end{theorem}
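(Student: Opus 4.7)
The plan is to derive the formula as a specialization of the Weyl character formula, which is the classical approach here. Writing $V(\lambda)$ for the irreducible $G$-module of highest weight $\lambda$, the Weyl character formula reads
\[
\chi_\lambda = \frac{\sum_{w\in S_n}\mathrm{sgn}(w)\,e^{w(\lambda+\rho)}}{\sum_{w\in S_n}\mathrm{sgn}(w)\,e^{w\rho}},
\]
and $\dim V(\lambda)$ is the evaluation of $\chi_\lambda$ at the identity of $T_c$. Since numerator and denominator both vanish there, the evaluation has to be performed as a limit.

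To carry this out in type $A_{n-1}$, I would introduce variables $x_i := e^{e_i}$ and identify $\sum_{w\in S_n}\mathrm{sgn}(w)\,e^{w\mu} = \det(x_i^{\mu_j})_{1\leq i,j\leq n}$ for any weight $\mu = (\mu_1,\ldots,\mu_n)$. The numerator then becomes $\det(x_i^{\lambda_j+\rho_j})$ and the denominator becomes the Vandermonde $\det(x_i^{\rho_j}) = \det(x_i^{n-j}) = \prod_{i<j}(x_i - x_j)$, so $\chi_\lambda$ is the Schur polynomial $s_\lambda(x)$. Passing to the specialization $x_i \to 1$ by way of the principal substitution $x_i = q^{n-i}$ with $q\to 1$ (or equivalently by factoring out the correct powers of $(x_i-x_j)$ from the numerator determinant and applying L'H\^opital), one recovers the general Weyl dimension formula
\[
\dim V(\lambda) = \prod_{\alpha\in\Phi^+}\frac{\langle\lambda+\rho,\alpha\rangle}{\langle\rho,\alpha\rangle}.
\]

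The final step is the specialization to type $A_{n-1}$. The positive roots are $\alpha_{ij} = e_i - e_j$ for $1\leq i<j\leq n$, and with $\rho=(n-1,n-2,\ldots,0)$ one computes $\langle\rho,\alpha_{ij}\rangle = j-i$ and $\langle\lambda+\rho,\alpha_{ij}\rangle = \lambda_i-\lambda_j + (j-i)$. Substituting yields
\[
\dim V(\lambda) = \prod_{1\leq i<j\leq n}\frac{\lambda_i-\lambda_j+(j-i)}{j-i} = \prod_{1\leq i<j\leq n}\left(1+\frac{\lambda_i-\lambda_j}{j-i}\right),
\]
which matches the stated formula once the display's $\sum$ is read as $\prod$ (the displayed $\sum$ is evidently a typographical error, since the right-hand side is readily seen to be multiplicative in Young-diagram factors rather than additive). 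The main obstacle is the careful control of the $0/0$ indeterminacy when passing from the character formula to the dimension; this is most cleanly handled via the principal-specialization argument above. An alternative approach more in the spirit of this paper would be to invoke Borel--Weil--Bott (Theorem~\ref{t.bwb}) to identify $\dim V(\lambda)$ with $\chi(\Fl_n,\cL(\lambda))$ and then compute via Hirzebruch--Riemann--Roch on the flag variety, again reducing to the same product.
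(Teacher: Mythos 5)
Your derivation is correct, and it is the standard proof of the Weyl dimension formula: pass from the Weyl character formula to the alternant quotient, identify it as a Schur polynomial, take the principal specialization $x_i = q^{n-i}$, let $q\to 1$ to obtain the product over positive roots, and then plug in the type-$A$ data $\langle\rho,\,e_i - e_j\rangle = j-i$ and $\langle\lambda+\rho,\,e_i-e_j\rangle = \lambda_i-\lambda_j + (j-i)$. The paper itself does not prove Theorem~\ref{t.weyl} at all; it simply cites it as classical (to Fulton--Harris, p.~402) and moves on, so there is no paper proof to compare against. Your reading of the displayed $\sum$ as a typographical slip for $\prod$ is also correct, and the paper's own use of the formula confirms it: in the proof of Lemma~\ref{l.CohomChar} the authors write $\dim V(k\lambda)=\prod_{1\leq i<j\leq n}\bigl(1+\tfrac{\lambda_i-\lambda_j}{j-i}k\bigr)$, a product, exactly as your computation predicts. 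The only caveat is that your proof begins by assuming the Weyl character formula, so it is not self-contained; but since the paper treats the dimension formula itself as citable background, invoking the character formula as known is entirely appropriate. The alternative you sketch---using Borel--Weil--Bott (Theorem~\ref{t.bwb}) to identify $\dim V(\lambda)$ with $\chi(\Fl_n,\mathcal{L}(\lambda))$ and then computing via Hirzebruch--Riemann--Roch---is also valid and is closer in spirit to the cohomological methods used elsewhere in the paper, but leads to the same product once the intersection numbers are evaluated.
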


\medskip

\subsection{Proof of Lemma~\ref{l.CohomChar}} \label{ss.proof_of_Lemma_l.CohomChar}
We now prove Lemma~\ref{l.CohomChar}. Throughout the proof, we let $\lambda=(\lambda_1,\dots, \lambda_n)$ be an integral weight of $T$ such that $L=\cL(\lambda)$.

Let $\theta:=e_1-e_n$. Since $\cL_1=\cL(e_1)$ and $\cL_n=\cL(-e_n)$ by definition, we have
\[\cL_1\otimes \cL_n=\cL(\theta).\]
Since $(\mathcal L_1 \otimes \mathcal L_n)^{\vee}=\cL(-\theta)$ is the ideal sheaf of $X(s)$,  
we get  
the long exact sequence
\beq\label{les.pullback}
\cdots \longrightarrow\coh^i(\Fl_n,L^k\otimes \cL(-\theta))\longrightarrow \coh^i(\Fl_n,L^k)\longrightarrow \coh^i(X(s),L^k)\longrightarrow \cdots\eeq
of cohomology groups induced by the short exact sequence
\beq \label{ses:closed}
0\longrightarrow \cL(-\theta) \longrightarrow \cO_{\Fl_n}\lra \cO_{X(s)}\lra 0.\eeq
Using Theorems~\ref{t.bwb} and \ref{t.weyl} and \eqref{les.pullback}, we  will prove $(1)\implies (2)\implies (3)\implies (1)$.

The implication $(1)\implies (2)$ directly follows from \eqref{les.pullback}. 
Indeed, if $\lambda=e_1$ or $-e_n$, then the associated weight $k\lambda -\theta $ of $L^k\otimes \cL(-\theta)$
is $(k-1)e_1+e_n$ or $-e_1-(k-1)e_n$ respectively. 
In either case, $(k\lambda -\theta )+\rho$ is singular. 
Hence, by Theorem~\ref{t.bwb},
$\coh^i(\Fl_n,L^k\otimes \cL(-\theta) )=0$ for all $i,k\geq 0$, and
\[\coh^i(X(s),L^k)\cong \coh^i(\Fl_n,L^k)\cong \begin{cases}
	V(ke_1) ~\text{ or }~V(ke_n) &\text{ if }i=0;\\
	0 &\text{ otherwise.}
\end{cases}\]
By Theorem~\ref{t.weyl}, we then have $\dim V(ke_1)=\dim V(ke_n)=\binom{n+k-1}{k}$.

\smallskip

For the implication $(2)\implies (3)$,  assuming that (2) holds, we will 
show that $\lambda$ is dominant (i.e. $\lambda_1\geq \cdots \geq \lambda_n)$. 
This implies that (3) holds, since the cohomology of $L^k$ vanishes in positive degrees for $k\geq 0$ by Theorem~\ref{t.bwb}, and since $k\in \mathbb{Z}\mapsto \chi(X(s),L^k)$ is a polynomial function due to the Hirzebruch--Riemann--Roch.

Suppose to the contrary that $\lambda$ is not dominant. This means that  $\lambda_i<\lambda_{i+1}$ for some $i<n$ and $k \lambda+\rho$ is not regular dominant for $k\geq 1$. So, we have $\coh^0(\Fl_n,L^k)=0$ by Theorem~\ref{t.bwb}, and in \eqref{les.pullback} we have a short exact sequence
\beq \label{ses:pullback} 
0\lra \coh^0(X(s),L^k)\lra \coh^1(\Fl_n, L^k\otimes \cL(-\theta))\lra \coh^1(\Fl_n,L^k)\lra 0
\eeq
for sufficiently large $k$,
because  $\coh^1(X(s), L^k)$  vanishes for sufficiently large $k$ by the assumption~(2). 
Moreover, since $\coh^0(X(s),L^k)\neq0 $ by (2), $\coh^1(\Fl_n,L^k\otimes \cL(-\theta))$ in the middle is also nonzero. 
So, by Theorem~\ref{t.bwb}, we have $\ell(k\lambda-\theta)=1$ and  there exists a simple transposition $s_r=(r,r+1)$ with $1\leq r<n$ such that
\beq\label{eq:mu}
\mu:=s_r*(k\lambda -\theta)
\eeq
is dominant and the cohomology group \(\coh^1(\Fl_n, L^k\otimes \cL(-\theta))\) is isomorphic to $V(\mu)$ as $G$-modules. Moreover, $r$ does not depend on $k$ whenever $k$ is sufficiently large.

By a general fact that  $s_r*(\lambda'+\lambda'')=s_r*\lambda'+s_r (\lambda'')$ for any weights $\lambda'$ and $\lambda''$, we can rewrite \eqref{eq:mu} as
\[
s_r*(k\lambda)=\mu + \alpha, \quad \text{ where }~\alpha:= s_r(\theta), 
\]
and $\mu$ is by definition dominant for sufficiently large $k$.

We claim that $s_r*(k\lambda)$ is also dominant for sufficiently large $k$. Indeed, when $2\leq r\leq n-2$, it is immediate from $\alpha=\theta$ being dominant.
When $r=1$, comparing
	\begin{equation*}
		\begin{split}
			\mu&=(k\lambda_2-1,~k\lambda_1,\;\;\;\;\;\;\;~k\lambda_3,~\dots,~ k\lambda_{n-1},~k\lambda_n+1) \quad \text{ and}\\
			s_r*(k\lambda)&=(k\lambda_2-1,~k\lambda_1+1,~k\lambda_3,~\dots,~k\lambda_{n-1},~k\lambda_n),
		\end{split}
	\end{equation*}
	one can see that  the dominance of $\mu$ 
    implies the dominance of $s_r*(k\lambda)$ for sufficiently large $k$. Similarly, when $r=n-1$, 
    comparing
	\begin{equation*}
		\begin{split}
			\mu&=(k\lambda_1-1,~k\lambda_2,~k\lambda_3,~\dots,~ k\lambda_{n},~\;\;\;\;\;k\lambda_{n-1}+1)\quad \text{ and}\\
			s_r*(k\lambda)&=(k\lambda_1,~\;\;\;\;\;\;k\lambda_2,~k\lambda_3,~\dots,~k\lambda_{n}-1,~k\lambda_{n-1}+1),
		\end{split}
	\end{equation*}
	one can see that the dominance of $\mu$ implies the dominance of $s_r*(k\lambda)$ for sufficiently large $k$. This proves the claim.

However, this contradicts the surjection given by \eqref{ses:pullback}
\[
V(\mu) \lra V(\mu+\alpha)\lra 0
\]
which is equivariant under the natural action of the maximal torus $T(s)=Z_G(s)$.
Indeed, the highest weight $\mu+\alpha$ of $V(\mu+\alpha)$ 
is higher than that of $V(\mu)$ since $\alpha$ is positive. 
Hence it cannot be surjective. Therefore, $\lambda$ is dominant and we obtain (3) as explained at the beginning of the proof of $(2) \implies (3)$.

\smallskip
For the implication $(3)\implies (1)$, we apply Theorem~\ref{t.weyl} to the equality
\beq \label{eq:EulerChar} \chi(X(s),L^k)=\chi(\Fl_n, L^k)-\chi(\Fl_n,L^k\otimes \cL(-\theta))\eeq
induced by \eqref{ses:closed}. To compute $\chi(\Fl_n,L^k\otimes \cL(-\theta))$ for $k>0$, observe that
$k\lambda+\rho -\theta$ is dominant since both $\lambda$ and $\rho-\theta$ are dominant,
    and moreover, it is regular if and only if $\lambda_1>\lambda_2$ and $\lambda_{n-1}>\lambda_n$.  Thus, by Theorem~\ref{t.bwb},
\[\chi(\Fl_n, L^k\otimes \cL(-\theta))=\begin{cases}
		\dim V(k\lambda -\theta) &\text{ if }\lambda_1>\lambda_2 \text{ and }\lambda_{n-1}>\lambda_n;\\
		0 &\text{ otherwise}
	\end{cases}\]
for $k>0$. The equality \eqref{eq:EulerChar}  now reads as 
\[\binom{n+k-1}{k}=\begin{cases}
		\dim V(k\lambda)-\dim V(k\lambda -\theta )&\text{if } \lambda_1>\lambda_2 ~\text{ and }~ \lambda_{n-1}>\lambda_n;\\
		\dim V(k\lambda) &\text{otherwise.}
	\end{cases} \]
The terms in the right-hand side can be computed by Theorem~\ref{t.weyl}: 
	\begin{equation*}
		\begin{split}
			\dim V(k\lambda)&=\prod_{1\leq i<j\leq n}\left(1+\frac{\lambda_i-\lambda_j}{j-i}k\right) \qquad \qquad \;\;=: A(k),\\
			\dim V(k\lambda -\theta )&=\prod_{1\leq i<j\leq n}\left(1+\frac{\lambda_i-\lambda_j}{j-i}k-
			\frac{\epsilon_{ij}}{j-i}\right) \quad =:B(k),
		\end{split}
	\end{equation*}
	where $\epsilon_{ij}$ are defined by 
	\[\epsilon_{ij}=\begin{cases}
		2 &\text{if }~(i,j)=(1,n);\\
		1 & \text{if }~1=i<j<n ~\text{ or }~1<i<j=n;\\
		0 & \text{otherwise}.
	\end{cases}\]
 As a polynomial in $k$, the degree of $A(k)$ is equal to
\[N(\lambda):=\#\{(i,j)~:~1\leq i<j\leq n,~\lambda_i>\lambda_j\}.\]

Assume that $\lambda_1>\lambda_2$ and $\lambda_{n-1}>\lambda_n$.
Then $B$ also has degree
$N(\lambda)$ with the same leading coefficient as $A(k)$,
since $\frac{\lambda_i-\lambda_j}{j-i}k=0$ implies $\epsilon_{ij}=0$.
However, the second leading coefficients, that is, the coefficients of
$k^{N(\lambda)-1}$ in $A(k)$ and $B(k)$ are not the same. 
This is because $\epsilon_{ij}\geq 0$ for all $i,j$ and 
$\epsilon_{1,2} = 1 >0$, $\lambda_1 > \lambda_2$ by the assumption. In particular, the degree of $A(k)-B(k)$ is $N(\lambda)-1$.
Consequently, 
\[
n-1=\deg_k \binom{n+k-1}{n-1} 
 =\begin{cases}
		N(\lambda)-1&\text{if } \lambda_1>\lambda_2 ~\text{ and }~ \lambda_{n-1}>\lambda_n;\\
		N(\lambda) \ &\text{otherwise.}\end{cases}
\]	

	The first case implies $n=N(\lambda)\geq 2n-3$ since $\lambda_1>\lambda_i$ and $\lambda_j>\lambda_n$ for $i\neq 1$ and $j\neq n$. This contradicts our assumption that $n>3$. In the second case, there exists $i$ with $\lambda_i>\lambda_{i+1}$ since otherwise $n-1=N(\lambda)=0$, a contradiction. If $\lambda_i>\lambda_{i+1}$ for some $i$, then we have $n-1=N(\lambda)\geq i(n-i)$, which forces either
	\[\lambda_1>\lambda_2=\cdots =\lambda_n \quad \text{ or }\quad \lambda_{1}=\cdots =\lambda_{n-1}>\lambda_n.\]
	In either case, we have
	\[\binom{n+k-1}{n-1}=A(k)\;\geq\; \prod_{i=1}^{n-1}\left(1+\frac{\lambda_1-\lambda_n}{i}k\right)=\binom{(\lambda_1-\lambda_n)k+n-1}{n-1}\]
	for sufficiently large $k$. So, $\lambda_1-\lambda_n=1$, or equivalently $L=\cL_1$ or $\cL_n$. 
    \qed

\bigskip

\section{Automorphisms and deformations in type \texorpdfstring{$A$}{A}}\label{sec-autsa}
We prove Theorems~\ref{t.isoA} and \ref{t.autsA} 
by analyzing the generalized Hessenberg variety
\[Y(s)=\{(F_1 \subset F_{n-1} \subset \bk^n)\in \PFl_n~:~sF_1 \subset F_{n-1}\},\]
which is a subvariety of the generalized flag variety $\PFl_n$ parametrizing partial flags
$F_1\subset F_{n-1}\subset \bk^n$, and by exploiting its relationship with $X(s)$.
For general results on generalized Hessenberg varieties in type~$A$ and their relation to Hessenberg varieties, we refer to~\cite{kiem-lee}.

In the codimension one case, the generalized Hessenberg variety in $\PFl_n$ is precisely $Y(s)$.
Moreover, it can be realized as the complete intersection of two $(1,1)$-divisors in
$\PP \times \PP^{\vee}$, cut out by the conditions $F_1\subset F_{n-1}$ and $sF_1 \subset F_{n-1}$.
This concrete description allows us to study isomorphisms of $Y(s)$ via their extensions to
automorphisms of the ambient space $\PP \times \PP^\vee$.

\medskip

The first part of this section is devoted to understanding isomorphisms and automorphisms of $Y(s)$.
In Subsection~\ref{ss:cohomY}, we compute the cohomology of line bundles on $Y(s)$.
In Subsection~\ref{ss:unique.ext.Y}, we show that every isomorphism $Y(s)\to Y(s')$ uniquely extends to an automorphism of $\PP\times\PP^\vee$ (Lemma~\ref{lem:unique.ext.Y}), and we use this to prove Theorems~\ref{t.isoA}\eqref{t.isoAY} and~\ref{t.autsA}\eqref{t.autsAY}.

\medskip

To study $X(s)$,
we consider the natural projection
\beq\label{eq:projection.pi}\pi:X(s)\longrightarrow Y(s), \quad F_\bullet \mapsto (F_1 \subset F_{n-1}),\eeq
induced by \eqref{eq: projection}, 
which is a fibration with fibers $\Fl(F_{n-1}/F_1)\cong \Fl_{n-2}$.
Our goal is to determine which isomorphisms of $Y(s)$ admit lifts to
isomorphisms of $X(s)$.

By the Pullback lemma,
every isomorphism $\varphi:X(s)\to X(s')$ induces a unique isomorphism $\varphi_Y:Y(s)\to Y(s')$ compatible with  $\pi$ 
(Proposition~\ref{prop:unique.ext.X}).
Thus the key problem becomes charactering those isomorphisms $\varphi_Y$
that admit such a lift. 

\medskip

We show that this lifting problem is governed by whether an isomorphism of $Y(s)$
preserves the incidence variety $\PFl_n$ as an automorphism of $\PP\times\PP^\vee$.
In Subsection~\ref{ss Lemma}, we make this criterion precise by computing
certain $\Hom$ spaces of vector bundles on $Y(s)$ (Lemma~\ref{l.hom}),
which shows that only those isomorphisms preserving the incidence variety
can lift to isomorphisms of $X(s)$.

Finally, in Subsection~\ref{ss.proof.isoAX}, we combine these results with
a general description of isomorphisms of flag bundles to complete the proofs
of Theorems~\ref{t.isoA}\eqref{t.isoAX} and~\ref{t.autsA}\eqref{t.autsAX}.

\medskip
Throughout this section, we continue to assume $n\geq 4$.

\subsection{Cohomology of line bundles on \texorpdfstring{$Y(s)$}{Y(s)}} \label{ss:cohomY}

By abuse of notation, we denote by $\cL_1$ and $\cL_n$ the line
bundles on $X(s)$, $Y(s)$, $\Fl_n$, and $\PFl_n$ obtained by pulling back
$\cO_\PP(1)$ and $\cO_{\PP^\vee}(1)$ along the natural projections to
$\PP$ and $\PP^\vee$, respectively.
In particular, we have $\cL_1=\pi^*\cL_1$ and $\cL_n=\pi^*\cL_n$ on $X(s)$.

In this subsection, we compute the cohomology of the line bundles
$\cL_1$, $\cL_n$, $\cL_1^\vee$, $\cL_n^\vee$,
as well as their tensor products
$\cL_1\otimes \cL_n$ and $\cL_1^\vee\otimes \cL_n^\vee$,
on $X(s)$ and $Y(s)$.
This will be used throughout this section.

\begin{lemma}[{\cite[Lemma~3.3.2]{BrionKumar}}]\label{lem:projection.formula} 
    Let $f: X\to Y$ be a morphism of algebraic varieties with $f_*\cO_X=\cO_Y$ and $R^if_*\cO_X=0$ for $i>0$. Then, 
    the pullback homomorphism $\coh^i(Y,\cV)\to\coh^i(X,f^*\cV)$ is an isomorphism
    for any vector bundle $\cV$ on $Y$ and for all $i \geq 0$.
\end{lemma}
\begin{proof}
   By the projection formula (\cite[Ex.~II.5.1.(d)]{hartshorne}), we have $f_*f^*\cV=\cV$ and $R^if_*f^*\cV=0$ for $i>0$. The assertion then holds by the Leray spectral sequence.
\end{proof}

For elements $x_1, \dots, x_r \in \mathfrak g$, denote by $\langle x_1, \dots, x_r \rangle$ the $\bk$-span of $x_1, \dots, x_r$ in $\mathfrak g$. 
\begin{lemma}\label{l.cohomY}
Let $n\geq 4$. Let $s\in\grs$ and let $Z=X(s)$ or $Y(s)$. Then:
\begin{enumerate}
  \item $\coh^i(Z,\cL_1^\vee)=\coh^i(Z,\cL_n^\vee)=0$ for all $i$.
  \item $\coh^i(Z,\cL_1^\vee\otimes\cL_n^\vee)=0$ for all $i$.
  \item $\coh^0(Z,\cL_1)\cong (\bk^n)^\vee$ and $\coh^0(Z,\cL_n)\cong \bk^n$ canonically.
  \item $\coh^0(Z,\cL_1\otimes\cL_n)\cong \End(\bk^n)/\langle \id,s\rangle$ canonically.
\end{enumerate}
\end{lemma}

\begin{proof}
The fibration $\pi:X(s)\to Y(s)$ in~\eqref{eq:projection.pi} satisfies the
assumptions of Lemma~\ref{lem:projection.formula}. Hence, it is enough to prove
each statement for one of $X(s)$ or $Y(s)$. We prove (1)--(3) on $X(s)$ using
\eqref{ses:closed} and Theorem~\ref{t.bwb}, and we prove (4) on $Y(s)$ using that
$Y(s)\subset \PP\times\PP^\vee$ is a complete intersection of two $(1,1)$-divisors.

\smallskip
For (1) and (2), let $\cW=\cL_1^\vee$, $\cL_n^\vee$, or $\cL_1^\vee\otimes\cL_n^\vee$ on $\Fl_n$. 
Let $\eta\in\{-e_1,\,e_n,-\theta\}$ (where $\theta=e_1-e_n$) so that $\cW=\cL(\eta)$. Then
$\cW\otimes\cL_1^\vee\otimes\cL_n^\vee\cong \cL(\eta-\theta)$.
For $n\ge 4$, both $\eta+\rho$ and $\eta-\theta+\rho$ are singular, so
Theorem~\ref{t.bwb} gives $\coh^i(\Fl_n,\cW)=0=\coh^i(\Fl_n,\cW\otimes\cL_1^\vee\otimes\cL_n^\vee)$ for all $i$.
Applying $R\Gamma(\Fl_n,\cW\otimes -)$ to~\eqref{ses:closed} yields
$\coh^i(X(s),\cW|_{X(s)})=0$ for all $i$, proving (1) and (2) for $X(s)$.

\smallskip
For (3), tensor \eqref{ses:closed} with $\cL_1$ and $\cL_n$ and take cohomology to obtain
\[\begin{split}
\coh^0(\Fl_n,\cL_n^\vee)\lra \coh^0(\Fl_n,\cL_1)\lra \coh^0(X(s),\cL_1)\lra \coh^1(\Fl_n,\cL_n^\vee),\\
\coh^0(\Fl_n,\cL_1^\vee)\lra \coh^0(\Fl_n,\cL_n)\lra \coh^0(X(s),\cL_n)\lra \coh^1(\Fl_n,\cL_1^\vee).
\end{split}\]
By Lemma~\ref{lem:projection.formula} applied to 
$\Fl_n\to\PP$ and $\Fl_n\to\PP^\vee$, 
we have canonical identifications
\beq\label{eq:O1}\begin{split}
	&\coh^0(\Fl_n,\cL_1)\cong \coh^0(\PP,\cO(1))=(\bk^n)^\vee,\\ 
	&\coh^0(\Fl_n,\cL_n)\cong \coh^0(\PP^\vee,\cO(1))=\bk^n,
\end{split}
\eeq 
and also
$\coh^i(\Fl_n,\cL_1^\vee)=0=\coh^i(\Fl_n,\cL_n^\vee)$ for all $i$.
Therefore the middle maps are isomorphisms, proving (3) for $X(s)$.

\smallskip
For (4), consider the exact sequences on $\PP\times\PP^\vee$ and on $\PFl_n$:
\beq \label{s eq}\begin{split}
0\lra \cO_{\PP\times\PP^\vee}(-1,-1)&\xrightarrow{~\id~}\cO_{\PP\times\PP^\vee}
\lra \cO_{\PFl_n}\lra 0,\\
0\lra \cL_1^\vee\otimes\cL_n^\vee&\xrightarrow{~s~}\cO_{\PFl_n}\lra \cO_{Y(s)}\lra 0.
\end{split}\eeq
Twisting the first sequence by $\cO(1,1)$ and taking $H^0$, we obtain
\beq\label{eq:pgln}\coh^0(\PFl_n,\cL_1\otimes\cL_n)\cong
\coh^0(\PP\times\PP^\vee,\cO(1,1))/\langle \id\rangle
\cong \End(\bk^n)/\langle \id\rangle.\eeq
The second sequence then shows that the further restriction to $Y(s)$ has kernel
$\langle s\rangle$. Hence $\coh^0(Y(s),\cL_1\otimes\cL_n)\cong \End(\bk^n)/\langle \id,s\rangle$,
proving (4) for $Y(s)$. 
\end{proof}

By Lemma~\ref{lem:projection.formula} applied to $\pi:X(s)\to Y(s)$, the cohomological characterization of the line bundles $\cL_i$
(Lemma~\ref{l.CohomChar}) remains valid after replacing $X(s)$ by $Y(s)$.
Consequently, the Pullback lemma  also holds for $Y(s)$,
although in this case it can also be proved directly using the fact that
$Y(s)$ is the complete intersection of two $(1,1)$-divisors in $\PP\times\PP^\vee$, and hence that its ample cone is generated by $\cL_1$ and $\cL_n$.

\begin{lemma}[Pullback lemma for $Y(s)$] \label{l.pullback.Y}
Let $s,s'\in \grs$ be two regular semisimple elements. Let $\varphi:Y(s)\to
Y(s')$ be an isomorphism. 
Then, we have either
\begin{enumerate}
 \item[(i)]	$(\varphi^*\cL_1,\varphi^*\cL_n) = (\cL_1,\cL_n)$, or 
 \item[(ii)]    $(\varphi^*\cL_1,\varphi^*\cL_n) =  (\cL_n, \cL_1)$
\end{enumerate}
on $Y(s)$. 
That is, the set $\{\cL_1,\cL_n\}$ is preserved by $\varphi^*$. In particular, 
	\[\Aut Y(s)=\Aut^+ Y(s)\rtimes \langle \iota \rangle \]
where $\Aut^+ Y(s) \trianglelefteq \Aut Y(s)$ is the normal subgroup consisting of
automorphisms of $Y(s)$ satisfying (i). \qed
\end{lemma}

\medskip

\subsection{Unique extension of isomorphisms}\label{ss:unique.ext.Y} 

In this subsection, we prove Theorems~\ref{t.isoA}\eqref{t.isoAY} and \ref{t.autsA}\eqref{t.autsAY}. The following lemma is the key ingredient. Note that 
\[\Aut \PP\times \PP^\vee = (\Aut \PP\times \Aut \PP^\vee)\rtimes \langle \iota \rangle\] for any involution $\iota:(F_1,F_{n-1})\mapsto (F_{n-1}^\perp, F_1^\perp)$ (with respect to any choice of a nondegenerate quadratic form $Q$). We write $\Aut^+ \PP\times\PP^\vee:=\Aut \PP\times\Aut \PP^\vee$. 

\begin{lemma}
\label{lem:unique.ext.Y}
	Let $s,s'\in \grs$. 
	Any isomorphism $Y(s)\to Y(s')$ extends uniquely to an automorphism of $\PP\times \PP^\vee$. 
	In particular, there exists an injective map
			\beq\label{eq:isom.ext.Y}\Isom(Y(s),Y(s')) \;\hookrightarrow\; \Aut \PP\times \PP^\vee.\eeq  
	When $s=s'$, the map~\eqref{eq:isom.ext.Y} is a group homomorphism
	\beq\label{eq:aut.ext.Y}\Aut Y(s) \;\hookrightarrow\; \Aut \PP\times \PP^\vee,\eeq
	which sends $\Aut^+ Y(s)$ into $\Aut^+ \PP\times \PP^\vee$.
\end{lemma}

\begin{proof}
	The inclusion $Y(s)\hookrightarrow \PP\times \PP^\vee$ is given by the complete linear systems of $\cL_1$ and $\cL_n$ by Lemma~\ref{l.cohomY}(3). 
	Hence, by Lemma~\ref{l.pullback.Y}, any isomorphism $\varphi:Y(s)\to Y(s')$ induces an automorphism $\psi$ of $\PP\times \PP^\vee$ such that the diagram 
	\[ 
	\begin{tikzcd}
		Y(s) \arrow[r,hook]\arrow[d,"\varphi"']&\PP\times \PP^\vee\arrow[d,"\psi"]\\
		Y(s')\arrow[r,hook]&\PP\times \PP^\vee
	\end{tikzcd}
	\]
	commutes. This is induced by the pullback isomorphisms $\varphi^*:\coh^0(Y(s'),\cL_i)\xrightarrow{\cong}\coh^0(Y(s),\varphi^*\cL_i)$ for $i=1,n$. 
	Composing with $\iota$ if necessary, we may assume that
	\beq \label{eq:isom.g1g2} 
	\psi(F_1,F_{n-1})=(g_1F_1,g_2F_{n-1})\eeq
	 for some $g_1, g_2\in G$, or equivalently, $\psi\in \Aut^+\PP\times\PP^\vee$.
	 
	 Since $Y(s)$ surjects onto $\PP$ and $\PP^\vee$ respectively, $g_1$ and $g_2$ are uniquely determined by $\varphi$ up to scaling, so $(g_1,g_2)$ is unique as an element of $\Aut \PP\times \PP^\vee $. 
	 This shows that
	 $\varphi$ extends uniquely to an automorphism $\psi$ of $\PP\times\PP^\vee$,
	 and therefore the map \eqref{eq:isom.ext.Y} is well-defined. 
	 Injectivity follows immediately.
	
	When $s=s'$, 
    it follows from the uniqueness of $\psi$ and the form \eqref{eq:isom.g1g2} that the map \eqref{eq:aut.ext.Y} is a group homomorphism with respect to composition. Moreover, 
    it is clear that $\varphi\in \Aut^+Y(s)$ if and only if $\psi\in \Aut^+\PP\times\PP^\vee$.
\end{proof}

\medskip

Let $\Isom^+(Y(s),Y(s'))$ denote the preimage of $\Aut^+\PP\times\PP^\vee$ under \eqref{eq:isom.ext.Y}. Every element of this set is of the form \eqref{eq:isom.g1g2}. We characterize this set as follows.

\begin{proposition}\label{prop:ext.Y}
Let $s,s'\in \grs$, and let $\psi$ be an automorphism of $\PP\times\PP^\vee$
of the form \eqref{eq:isom.g1g2}.
Then $\psi\in \Isom^+(Y(s),Y(s'))$ if and only if there exists
$\begin{pmatrix} a & b \\ c & d \end{pmatrix}\in \GL_2$
such that $cs+d$ is invertible and
\[g_1 = g_2(cs+d), \qquad g_2^{-1}s'g_2 = (as+b)(cs+d)^{-1}\]
as $n\times n$ matrices.
\end{proposition}

\begin{proof} 
	The subvariety $Y(s)\subset \PP\times \PP^\vee$ is cut out by the condition
	$\langle \id, s\rangle F_1 \subset F_{n-1}$.
	Its image under the automorphism \eqref{eq:isom.g1g2} is therefore cut out by
	\[g_2\langle \mathrm{id}, s\rangle g_1^{-1} F_1 \subset F _{n-1},\]
    where, as before, $\langle x_1,\dots,x_r \rangle$ denotes the $\bk$-span of $x_1,\dots,x_r$ in $\mathfrak{g}$. 
	Hence, this image coincides with $Y(s')$ if and only if $\langle\mathrm{id}, s'\rangle=g_2\langle \mathrm{id}, s\rangle g_1^{-1}$, or equivalently,
    \[ 
    \langle\mathrm{id}, s\rangle=g_2^{-1}\langle \mathrm{id}, s'\rangle g_1=\langle g_2^{-1}g_1,~ g_2^{-1}s'g_1 \rangle\] 
    in $\mathrm{End}(\bk^n)$.
	This is equivalent to the existence of $\begin{pmatrix} a & b \\ c & d \end{pmatrix}\in \GL_2$
	such that
	\[ g_2^{-1}g_1 = cs+d \quad\text{and}\quad g_2^{-1}s'g_1 = as+b,\]
	that is, to the stated condition.
\end{proof}

We now prove Theorems~\ref{t.isoA}\eqref{t.isoAY} and \ref{t.autsA}\eqref{t.autsAY}.

\begin{proof}[Proof of Theorem~\ref{t.isoA}\eqref{t.isoAY}]
	The `if' direction is an immediate consequence of the `if' direction of  Proposition~\ref{prop:ext.Y}.
	
	Conversely, suppose that $Y(s)\cong Y(s')$.
	After composing with the involution~$\iota$ from Lemma~\ref{l.inv} if necessary,
	this isomorphism extends to an automorphism of $\PP\times\PP^\vee$
	of the form~\eqref{eq:isom.g1g2}.
	Proposition~\ref{prop:ext.Y} then shows that
	\[g_2^{-1}s'g_2 = (as+b)(cs+d)^{-1}\]
	for some $\begin{pmatrix} a & b \\ c & d \end{pmatrix}\in \GL_2$.
	This is exactly the desired condition.
\end{proof}

\begin{proof}[Proof of Theorem~\ref{t.autsA}\eqref{t.autsAY}]
	We show that $\Aut^+ Y(s)\cong K(s)/\mathbb{G}_m$.
	
	Consider the composition
	\[
	\Aut^+Y(s)\hookrightarrow \Aut^+(\PP\times\PP^\vee)=\Aut\PP\times\Aut\PP^\vee \xrightarrow{\,\mathrm{pr}_2\,} \Aut\PP^\vee=G/\mathbb{G}_m.
	\]
	By Proposition~\ref{prop:ext.Y}, the image of this map is precisely $K(s)/\mathbb{G}_m$.
	Thus it suffices to show that the composition is injective.
	
	Let $\varphi\in\Aut^+Y(s)$ be in the kernel.
	Then $\varphi$ is of the form
	\[\varphi(F_1,F_{n-1})=(g_1F_1,F_{n-1})\]
	for some $g_1\in G$.
	By Proposition~\ref{prop:ext.Y}, there exist $a,b,c,d\in\bk$ such that
	\[g_1 = cs+d \quad\text{and}\quad sg_1 = as+b.\]
	Since $n\geq 3$ and $s$ is regular, the elements $\id$, $s$, and $s^2$ are linearly independent in $\End(\bk^n)$.
	It follows that $a=d$ and $b=c=0$, hence $g_1$ is a scalar.
	Therefore $\varphi$ is trivial, proving injectivity.
\end{proof}

By the same line of argument as in the proof of Lemma~\ref{lem:unique.ext.Y}, we obtain the following result, which will be used
in the proofs of Theorems~\ref{t.isoA}\eqref{t.isoAX} and \ref{t.autsA}\eqref{t.autsAX}.

\begin{proposition}
\label{prop:unique.ext.X}
	Let $s,s'\in \grs$. For any isomorphism $\varphi:X(s)\to X(s')$, there exist a unique isomorphism $\varphi_Y\colon Y(s)\to Y(s')$ and a unique automorphism $\psi$ of $\PP\times \PP^\vee$ which make the diagram 
	\beq\label{eq:diagram.XY}
	\begin{tikzcd}
		X(s) \arrow[r,"\pi"]\arrow[d,"\cong","\varphi"'] 
                &Y(s) \arrow[r,hook]\arrow[d,"\cong", "\varphi_Y"']
                &\PP\times \PP^\vee\arrow[d,"\cong","\psi"']\\
		X(s')\arrow[r,"\pi"]&Y(s')\arrow[r,hook]&\PP\times \PP^\vee
	\end{tikzcd}
	\eeq
	commute. In particular, there is a natural map
	\beq\label{eq:isom.ext.X}\Isom(X(s),X(s')) \;\lra\; \Isom(Y(s),Y(s')).\eeq 
	When $s=s'$, the map \eqref{eq:isom.ext.X} is a group homomorphism 
	\beq \label{eq:aut.ext.X}
	\Aut X(s)\;\lra\;  \Aut Y(s)\subset  \Aut \PP\times \PP^\vee, \eeq
	which sends $\Aut^+X(s)$ into $\Aut^+Y(s)$,
	where the inclusion on the right is \eqref{eq:aut.ext.Y}.
\end{proposition}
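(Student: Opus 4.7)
The plan is to mirror the proof of Proposition~\ref{prop:unique.ext.Y}, preceded by an extra descent step that converts $\varphi$ to a morphism $Y(s) \to Y(s')$ via the projection formula. The Pullback Lemma~\ref{l.pullback} applies to $\varphi$ directly, so after composing with $\iota$ if necessary we may assume $\varphi^*\mathcal{L}_i \cong \mathcal{L}_i$ for $i \in \{1, n\}$. By Lemma~\ref{l.cohomY}(3) (which holds on both $X(s)$ and $Y(s)$, with the sheaf cohomologies there identified by Lemma~\ref{lem:projdction.formula}), we have $\coh^0(X(s), \mathcal{L}_1) \cong (\bk^n)^\vee$ and $\coh^0(X(s), \mathcal{L}_n) \cong \bk^n$, and the morphism $X(s) \to \mathbb{P} \times \mathbb{P}^\vee$ defined by the complete linear systems of $\mathcal{L}_1$ and $\mathcal{L}_n$ factors as $X(s) \xrightarrow{\pi} Y(s) \hookrightarrow \mathbb{P} \times \mathbb{P}^\vee$.

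Applying the same factorization to $X(s')$, the composition
\[
X(s) \xrightarrow{\varphi} X(s') \xrightarrow{\pi'} Y(s') \hookrightarrow \mathbb{P} \times \mathbb{P}^\vee
\]
is the morphism induced by the complete linear systems of $\varphi^*\mathcal{L}_1$ and $\varphi^*\mathcal{L}_n$; by the reduction above these are $\mathcal{L}_1$ and $\mathcal{L}_n$, so the composition too factors through $\pi$, producing a morphism $\bar{\varphi} \colon Y(s) \to Y(s')$ filling the left square of the desired diagram. It is an isomorphism because the same construction applied to $\varphi^{-1}$ supplies its inverse. The action of $\varphi^*$ on $\coh^0(X(s), \mathcal{L}_1) \oplus \coh^0(X(s), \mathcal{L}_n)$ yields a pair $(g_1, g_2) \in \PGL_n \times \PGL_n$ whose induced automorphism of $\mathbb{P} \times \mathbb{P}^\vee$ makes the right square commute and agrees with the extension of $\bar{\varphi}$ supplied by Proposition~\ref{prop:unique.ext.Y}.

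Uniqueness of $\bar{\varphi}$ follows from surjectivity of $\pi$, and uniqueness of the automorphism of $\mathbb{P} \times \mathbb{P}^\vee$ is exactly Proposition~\ref{prop:unique.ext.Y} applied to $\bar{\varphi}$. The map \eqref{eq:isom.ext.X} and the homomorphism property when $s = s'$ are then immediate from the functoriality of all the constructions above. The one genuinely new step beyond Proposition~\ref{prop:unique.ext.Y} is the descent $\pi' \circ \varphi = \bar{\varphi} \circ \pi$, and this is exactly what the projection-formula identification of $\coh^0(X(s), \mathcal{L}_i)$ with $\coh^0(Y(s), \mathcal{L}_i)$ enforces once we have reduced to $\varphi^*\mathcal{L}_i \cong \mathcal{L}_i$; I expect this to be the main (and essentially only) conceptual obstacle.
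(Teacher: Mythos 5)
Your proof is correct and takes essentially the same route as the paper's: both rely on the Pullback Lemma to control $\varphi^*$ on $\{\cL_1,\cL_n\}$, note that the inclusion $X(s)\hookrightarrow\PP\times\PP^\vee$ is the map of complete linear systems of $\cL_1,\cL_n$ (which factors through $\pi$), and deduce both the commutativity of the big square and the induced middle vertical arrow $\bar\varphi$ by factoring through images, with uniqueness coming from surjectivity of $\pi$ and Proposition~\ref{prop:unique.ext.Y}. The paper's writeup is more compact but carries the same content; you have just unpacked the linear-system bookkeeping, including the (inessential but harmless) intermediate reduction to $\varphi\in\Aut^+$ via composition with $\iota$.
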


\begin{proof}
    We first note that $\varphi_Y$ and $\psi$, if they exist, are uniquely determined by the commutativity of~\eqref{eq:diagram.XY}, since the morphism $\pi$ and the projections maps from $Y(s)$ to $\PP$ and $\PP^\vee$ are all surjective.
    Moreover, by definition, $\varphi\in \Aut^+ X(s)$ if and only if $\varphi_Y\in \Aut^+ Y(s)$. Thus it remains to prove the existence of $\varphi_Y$.

	Since $\cL_1$ and $\cL_n$ are globally generated, their complete linear systems define morphisms $X(s)\to \PP\times \PP^\vee$ and $X(s')\to \PP\times\PP^\vee$ by Lemma~\ref{l.cohomY}(3). 
	By the Pullback lemma (Lemma~\ref{l.pullback}), the isomorphism $\varphi:X(s)\to X(s')$ induces an automorphism $\psi\in \Aut\PP\times \PP^\vee$ such that the diagram
	\[
	\begin{tikzcd}
		X(s) \arrow[r]\arrow[d,"\cong","\varphi"'] &\PP\times \PP^\vee\arrow[d,"\cong","\psi"']\\
		X(s')\arrow[r]&\PP\times \PP^\vee
	\end{tikzcd}
	\]
	commutes.
	Since the restriction maps $\coh^0(\Fl_n,\cL_i)\to \coh^0(X(s),\cL_i)$ are isomorphisms for $i=1,n$, the morphism $X(s)\to \PP\times\PP^\vee$ is the restriction of the natural map $\Fl_n\to \PP\times\PP^\vee$ in \eqref{eq: projection}. In particular,
	it factors through $\pi$, and its image is precisely $Y(s)$. The same holds for $X(s')$, whose image is $Y(s')$.  
	Therefore, $\psi$ restricts to the desired isomorphism $\varphi_Y:Y(s)\to Y(s')$, completing the proof.
\end{proof}

By the same argument applied to $\Fl_n$, we obtain a  group homomorphism
\beq \label{eq:aut.ext.Fl} \Aut \Fl_n \;\lra \; \Aut \PP\times \PP^\vee. \eeq
Since $\Aut \Fl_n=(G/\mathbb{G}_m)\rtimes \langle \iota\rangle$, it follows that \eqref{eq:aut.ext.Fl} is, up to the involution $\iota$, induced by the diagonal map of $G/\mathbb{G}_m$. In particular, \eqref{eq:aut.ext.Fl} is injective,
and its image consists precisely of those automorphisms of $\PP\times \PP^\vee$ that preserve the incidence variety $\PFl_n\subset \PP\times \PP^\vee$.

Later, we will see that an analogous statement holds for $\Aut X(s)$: the homomorphism \eqref{eq:aut.ext.X} is injective, and its image consists exactly of those automorphisms of $Y(s)$ that preserve $\PFl_n$.
See Theorem~\ref{thm:XtoY} and Remark~\ref{rem:XtoY}.

\begin{remark} \label{rem A}
	Proposition~\ref{prop:ext.Y} shows that an isomorphism $\varphi_Y:Y(s)\to Y(s')$ 
	extends to an automorphism of $\PFl_n\subset \PP\times\PP^\vee$ if and only if $c=0$.
	In the next subsections, we show that this condition is also necessary for $\varphi_Y$ to admit a lift to an isomorphism $\varphi:X(s)\to X(s')$ in~\eqref{eq:diagram.XY}.
	See Lemma~\ref{l.hom} and Theorem~\ref{thm:XtoY}.
\end{remark}

\bigskip

\subsection{$\mathrm{Hom}$ spaces of vector bundles on $Y(s)$} \label{ss Lemma} 
Let $\mathcal F_1$ and $\mathcal F_{n-1}$ be the tautological vector bundles  on $\PFl_n$ whose fibers at $(F_1\subset F_{n-1})\in \PFl_n$ 
are $F_1$ and $F_{n-1}$, respectively.  We use the same notation for their restrictions to $Y(s)$. In particular,
\beq\label{eq:taut}\cL_1^\vee\cong \cF_1 \quad \text{ and }\quad \cL_n\cong \bk^n/\cF_{n-1}\eeq 
where $\bk^n$ denotes the trivial vector bundle of rank $n$.  

The following lemma will be crucial in the proofs of Theorems~\ref{t.isoA}\eqref{t.isoAX} and  \ref{t.autsA}\eqref{t.autsAX}. 
Let $\cE$ denote the quotient bundle $\cF_{n-1}/\cF_1$ on $\PFl_n$, or its restriction to $Y(s)$.

\begin{definition}
	We say that an isomorphism $\varphi_Y:Y(s)\to Y(s')$ \emph{preserves $\PFl_n$} if its
(unique) extension to an automorphism of $\PP\times\PP^\vee$ sends $\PFl_n$ to itself.
\end{definition}

\begin{lemma} \label{l.hom}
	Let $s,s'\in \grs$, and let $\varphi_Y\in \Isom^+(Y(s),Y(s'))$. Then
	\[\mathrm{Hom}_{Y(s)}\big(\cE,\;\varphi_Y^*\cE\big)=\begin{cases}
		\bk & \text{ if }\varphi_Y \text{ preserves }\PFl_n,\\ 
		0 & \text{ otherwise.}
	\end{cases}\]
	In particular, $\cE\cong \varphi_Y^*\cE$ if and only if $\varphi_Y$ preserves $\PFl_n$. 
	Moreover,  $\cE$ is simple. 
\end{lemma}
\begin{proof}
	By Proposition~\ref{prop:ext.Y},
	$\varphi_Y$ is
	of the form
	\[\varphi_Y(F_1, F_{n-1})=(g(cs+d)F_1, gF_{n-1}).\]
	As the diagonal action of $g\in G$ preserves $\cE$ and $\PFl_n$,
we may assume $g=\id$. 

Let $\alpha:=cs+d$. Then $\alpha \cF_1\subset \cF_{n-1}$ on $Y(s)$. Let $\cE_\alpha:=\cF_{n-1}/\alpha \cF_1$ denote the corresponding quotient. 
It suffices to show the following.
	\[\mathrm{Hom}_{Y(s)}(\cE,\;\cE_\alpha)=\begin{cases}
		\bk & \text{ if }\alpha\in \langle \id\rangle, \\
		0 & \text{ otherwise,}
	\end{cases}\]
	and $\cE$ and $\cE_\alpha$ are isomorphic if and only if $\alpha\in \langle \id \rangle $.
The last assertion follows immediately from the computation of $\Hom_{Y(s)}(\cE,\cE_\alpha)$.

In the remainder of the proof, all $\Hom$-spaces are taken over $Y(s)$.

We compute the $\Hom$-space using the long exact sequences of $\Ext$-groups associated to the
short exact sequences
	\beq \label{ses:hom1} 0 \lra \cF_{n-1}\lra \bk^n \lra \cL_n\lra 0, \eeq 
	\beq \label{ses:hom2} 0 \lra \cL_1^\vee \lra \cF_{n-1}\lra \cE\lra 0, \eeq
	\beq \label{ses:hom3} 0 \lra \cL_1^\vee \xrightarrow{~\alpha~} \cF_{n-1}\lra \cE_{\alpha}\lra 0 \eeq
	on $Y(s)$.
	Applying $\Hom(-, \cE_{\alpha})$ to \eqref{ses:hom2}, we obtain
	\beq \label{les1}
	0\lra \Hom(\cE,\cE_{\alpha})\lra \Hom(\cF_{n-1},\cE_{\alpha})\lra \Hom(\cL_1^\vee,\cE_{\alpha}).
	\eeq
	
	We claim that:
	\begin{enumerate}
		\item[(i)] $\Hom(\cF_{n-1},\cE_{\alpha}) \cong \bk$, generated by the canonical quotient map $\cF_{n-1}\to \cE_\alpha$; 
		\item[(ii)] there is a canonical isomorphism $\Hom(\cL_1^\vee,\cE_{\alpha})\cong\langle \id,s\rangle/\langle \alpha\rangle$; so that 
		\item[(iii)] under these identifications, the last map in \eqref{les1} sends the generator of $\Hom(\cF_{n-1},\cE_{\alpha})$ to the class of $\id$. 
	\end{enumerate}
	
It follows that $\Hom(\cE,\cE_\alpha)$ is nonzero if and only if
$\langle\alpha\rangle=\langle\id\rangle$, in which case it is one-dimensional.
This completes the proof.
	\medskip

	(i) To show that $\Hom(\cF_{n-1},\cE_{\alpha})=\bk$, 
	we apply $\Hom(\cF_{n-1},-)$ to \eqref{ses:hom3} and obtain the exact sequence
	\[\Hom(\cF_{n-1},\cL_1^\vee)\lra \Hom(\cF_{n-1},\cF_{n-1})\lra \Hom(\cF_{n-1},\cE_{\alpha})\lra \Ext^1(\cF_{n-1},\cL_1^\vee).\]
	Then it suffices to show that $\Ext^i(\cF_{n-1},\cL_1^\vee)=0$ for all $i$ and $\Hom(\cF_{n-1},\cF_{n-1})=\bk$.
	We first show the vanishing. Applying $R\Hom(-,\cL_1^\vee)$ to \eqref{ses:hom1}, we obtain
	\[\Ext^i(\bk^n,\cL_1^\vee) \lra \Ext^i(\cF_{n-1},\cL_1^\vee)\lra \Ext^{i+1}(\cL_n,\cL_1^\vee).\]
	Both outer terms vanish for all $i$ 
	by Lemma~\ref{l.cohomY}, thus $\Ext^i(\cF_{n-1},\cL_1^\vee)=0$ for all $i$.
	
	Next, to compute $\Hom(\cF_{n-1},\cF_{n-1})$, we apply $\Hom(\cF_{n-1},-)$ to \eqref{ses:hom1} and get
	\[\begin{split}
		0&\lra \Hom(\cF_{n-1},\cF_{n-1})\lra \Hom(\cF_{n-1},\bk^n)\lra \Hom(\cF_{n-1},\cL_n).
	\end{split}
	\]
	Thus it suffices to show that the last map identifies the natural surjection
	\beq\label{eq: surj} \mathrm{End}(\bk^n)\lra \mathrm{End}(\bk^n)/\langle\id\rangle.\eeq
	To see that $\Hom(\cF_{n-1},\cL_n)\cong \mathrm{End}(\bk^n)/\langle \id \rangle$, we take $\Hom(-,\cL_n)$ to \eqref{ses:hom1}:
	\[0\lra \Hom(\cL_n,\cL_n)\lra \Hom(\bk^n,\cL_n)\lra \Hom(\cF_{n-1},\cL_n)\lra \Ext^1(\cL_n,\cL_n)\]
	which then reads as
	\[0\lra \bk\cdot \id\lra \mathrm{End}(\bk^n)\lra \Hom(\cF_{n-1},\cL_n)\lra 0 \]
	since $\Hom(\bk^n,\cL_n)\cong \mathrm{End}(\bk^n)$  
	and $\Ext^i(\cL_n,\cL_n)$ is $\bk$ for $i=0$ and $0$ otherwise.
	
	To see that $\Hom(\cF_{n-1},\bk^n)\cong \mathrm{End}(\bk^n)$, we apply $\Hom(-,\bk^n)$ to \eqref{ses:hom1} and get
    \beq\label{les2} 
    \begin{tikzcd}[row sep = 0.2cm, column sep = 1.5em]
        \Hom(\cL_n,\bk^n) \rar \arrow[d, equal] & \Hom(\bk^n,\bk^n) \rar & \Hom(\cF_{n-1},\bk^n) \rar & \Ext^1(\cL_n,\bk^n) \arrow[d, equal] \\
        0 &&& 0
    \end{tikzcd}
    \eeq
	where the terms at the left and right ends vanish by Lemma~\ref{l.cohomY}(1). It follows that
	\[\Hom(\cF_{n-1},\bk^n)\;\cong\;\End(\bk^n).\]
	Combining this with the identification
	$\Hom(\cF_{n-1},\cL_n)\cong \End(\bk^n)/\langle \id\rangle$,
	we see that $\Hom(\cF_{n-1},\cF_{n-1})$ is the kernel of
	\eqref{eq: surj}, and hence is one-dimensional.
	This proves
	\beq \label{eq:hom} \Hom(\cF_{n-1},\cE_{\alpha})\cong \Hom(\cF_{n-1},\cF_{n-1})=\bk,\eeq
	which is generated by the canonical quotient map $\cF_{n-1}\to \cF_{n-1}/\alpha \cF_1$.
	This completes the proof of (i).

	\medskip
	
	(ii) Applying $\Gamma(\cL_1\otimes -)$ to \eqref{ses:hom3}, we obtain the exact sequence
	\beq \label{22}0\lra \coh^0(Y(s),\cO_{Y(s)})\xrightarrow{~\alpha~} \coh^0(Y(s),\cL_1\otimes \cF_{n-1})\lra \coh^0(Y(s),\cL_1\otimes \cE_{\alpha})\lra 0.\eeq
	Thus, it suffices to show that there is a canonical isomorphism
	\beq\label{eq:hom2} \coh^0(Y(s),\cL_1\otimes \cF_{n-1} )\cong \langle \id, s\rangle \subset \mathrm{End}(\bk^n)\eeq
	under which $\coh^0(\cO_{Y(s)})=\bk$ embeds as the subspace $\langle \alpha\rangle$. 
	
	Applying $\Gamma(\cL_1\otimes -)$ to \eqref{ses:hom1}, we obtain
	\[0\lra \coh^0(Y(s),\cL_1\otimes \cF_{n-1})\lra \coh^0(Y(s),\cL_1\otimes \bk^n) \lra \coh^0(Y(s),\cL_1\otimes \cL_n),\]
	where the last map is canonically identified, by Lemma~\ref{l.cohomY}, with
	\[\mathrm{End}(\bk^n)\lra \mathrm{End}(\bk^n)/\langle \id, s\rangle.\]
	Therefore, the kernel $\coh^0(\cL_1\otimes \cF_{n-1})$ is canonically isomorphic to $\langle \id, s\rangle\subset \mathrm{End}(\bk^n)$.
	
	(iii) Consider the canonical identification
	\beq \label{20}\End(\bk^n)\xrightarrow{~\cong~}\Hom (\cF_{n-1},\bk^n)\xrightarrow{~\cong~}\Hom(\cL_1^\vee,\bk^n)\eeq
	induced by the inclusions $\cL_1^\vee\cong \cF_1 \subset \cF_{n-1}\subset \bk^n$. Here the first map is an isomorphism by~\eqref{les2}, and the composition is an isomorphism by Lemma~\ref{l.cohomY}(3). 
	 In particular, the second isomorphism preserves the subspace $\langle\id\rangle\subset \End(\bk^n)$.
	  
	 By \eqref{eq:hom} and \eqref{eq:hom2}, under the identification~\eqref{20} we have
	 \[\Hom(\cF_{n-1},\cE_\alpha)=\langle \id\rangle  \quad \text{ and } \quad \Hom(\cL_1^\vee,\cF_{n-1})=\langle\id,s\rangle.\] 
	 Therefore, the image of the natural inclusion
	\beq \label{21}\Hom(\cF_{n-1},\cE_\alpha)\cong \Hom(\cF_{n-1},\cF_{n-1})\lra \Hom(\cL_1^\vee,F_{n-1})\eeq
	is precisely the subspace $\langle \id\rangle$.
	Finally, the rightmost map in~\eqref{les1} factors as
	\[\Hom(\cF_{n-1},\cE_\alpha)\xrightarrow{\eqref{21}}\Hom(\cL_1^\vee,\cF_{n-1})\lra \Hom(\cL_1^\vee,\cE_\alpha)\cong \langle \id,s\rangle/\langle\alpha\rangle\]
	where the second map is the third map in \eqref{22}. Hence its image is generated by the class of $\id$, as claimed.
	\end{proof}

\subsection{Flag bundles and their isomorphisms}  
\label{ss.proof.isoAX} 
We begin by recalling some basic facts about flag bundles and their isomorphisms.
 
For a vector bundle $\cV$ of rank $r$ over a scheme $Z$, we write
\[\pi_\cV:\Fl(\cV)\lra Z\]
for the associated flag bundle, whose fiber over a point $z\in Z$
is the flag variety $\Fl(\cV|_z)\cong \Fl_r$.
(See~\cite[Section~4.2]{AndersonFulton} or \cite[Chapter~14]{fulton-intersection-theory}.)
For example, $X(s)\to Y(s)$ in \eqref{eq:projection.pi} is the flag bundle associated to the vector bundle $\cE=\cF_{n-1}/\cF_1$ on $Y(s)$.
\smallskip

The projection map $\pi_\cV$ admits two natural sequences of projective bundles
\[\Fl(\cV)\lra \cdots\lra \PP(\cV)\lra Z \quad \text{and}\quad \Fl(\cV)\lra \cdots \lra \PP(\cV^\vee)\lra Z\]
by successively forgetting the subspaces of rank $r-1,r-2\dots,1$ and $1,2,\dots,r-1$ respectively. We denote by 
\[\cL_1(\cV):=\cO_{\PP(\cV)/Z}(1)|_{\Fl(\cV)} \quad \text{and} \quad \cL_r(\cV):=\cO_{\PP(\cV^\vee)/Z}(1)|_{\Fl(\cV)}\]
the line bundles on $\Fl(\cV)$ that are obtained as the pullbacks of $\cO_{\PP(\cV)/Z}(1)$ and $\cO_{\PP(\cV^\vee)/Z}(1)$ respectively.
 Among the universal subbundles and quotient bundles over $\Fl(\cV)$,  $\cL_1(\cV)^\vee$ and $\cL_r(\cV)$ are precisely those of rank one.
These line bundles recover $\cV$ in the sense that there are canonical isomorphisms
\beq\label{eq:pushforward.flag}\pi_{\cV*}\cL_1(\cV)\cong\cV^\vee \quad \text{and}  \quad\pi_{\cV*}\cL_r(\cV)\cong\cV,\eeq 
generalizing \eqref{eq:O1}. More generally, we have the following. Let $\cF_j(\cV)$ be the universal subbundles of $\pi_\cV^*\cV$ of rank $j$ and let $\cQ_j(\cV):=\pi_\cV^*\cV/\cF_{r-j}(\cV)$ be the universal quotient bundles of rank $j$. Then there are canonical isomorphisms
\beq\label{eq:pushforward.flag2}\pi_{\cV*}(\cF_j(\cV)^\vee)\cong \cV^\vee\quad\text{and}\quad\pi_{\cV*}\cQ_j(\cV)\cong \cV\eeq
for any $1\leq j\leq r$,
due to the Borel--Weil--Bott theorem iteratedly applied to the short exact sequences 
\[
\begin{split}
	&0\lra(\cF_j(\cV)/\cF_{j-1}(\cV))^\vee\lra \cF_j(\cV)^\vee\lra \cF_{j-1}(\cV)^\vee\lra 0 \quad \text{ and}\\
	&0\lra \cF_{r-j+1}(\cV)/\cF_{r-j}(\cV)\lra \cQ_j(\cV)\lra \cQ_{j-1}(\cV)\lra 0,
\end{split}\]
where $\cF_j(\cV)/\cF_{j-1}(\cV)$ is a line bundle isomorphic to $\cL(-e_j)$ on fibers $\Fl_r$. Indeed, one can immediately check that $\coh^i(\Fl_r,\cL(e_j))=0=\coh^i(\Fl_r,\cL(-e_{r-j+1}))$ for all $i$ and all $j>1$, and $\coh^0(\Fl_r,\cL(e_1))=(\bk^r)^\vee$ and $\coh^0(\Fl_r,\cL(-e_r))\cong \bk^r$ canonically. 
The isomorphisms in \eqref{eq:pushforward.flag} are then precisely those in \eqref{eq:pushforward.flag2} when $j=1$.

\medskip
Given two vector bundles $\cV$ and $\cV'$ of rank $r$ over $Z$, there are two
natural ways in which an isomorphism $\Fl(\cV)\to \Fl(\cV')$ over $Z$ arises.

First, any isomorphism $f:\cV\to \cV'\otimes L$ for a line bundle $L$ on $Z$
induces an isomorphism $\varphi_f:\Fl(\cV)\to \Fl(\cV')$ over $Z$, obtained
fiberwise by applying $f$ to each subspace of a flag.

Second, any isomorphism $f:\cV^\vee\to \cV'\otimes L$ induces an isomorphism
$\varphi_f^\vee:\Fl(\cV)\to \Fl(\cV')$, obtained by sending a flag to the flag
of orthogonal complements with respect to the corresponding nondegenerate pairing $\cV\otimes \cV'\to L^\vee$.

We now show that these are the only possible ways in which an isomorphism
$\Fl(\cV)\to \Fl(\cV')$ over $Z$ can arise, using the above properties of
$\cL_1(\cV)$ and $\cL_r(\cV)$.

\begin{proposition}\label{prop:flagbundles}
	Let $Z$ be a connected reduced locally Notherian scheme. Let $\cV$ and $\cV'$ be vector
	bundles of rank $r$ over $Z$.
	Then there exists an isomorphism
	\[\varphi:\Fl(\cV)\lra \Fl(\cV')\] over $Z$
	if and only if there exist a line bundle $L$ on $Z$ and an isomorphism
	\beq\label{eq:isom.vb}
	f:\cV\lra\cV'\otimes L \quad \text{or} \quad f: \cV^\vee\lra \cV'\otimes L\eeq
	that induces $\varphi$,
	that is, $\varphi=\varphi_f$ or $\varphi=\varphi_f^\vee$.
\end{proposition}

\begin{proof}
	As the `if' direction is vacuous, we prove the `only if' direction. 
	
	Suppose that we have an isomorphism $\varphi:\Fl(\cV)\lra \Fl(\cV')$ over $Z$. 
    For each point $z\in Z$, $\varphi$ restricts to an isomorphism $\Fl(\cV|_z) \to \Fl(\cV'|_z)$.
	
	By definition, the line bundle $\cL_1(\cV')$ restricts to $\cL_1$
	on the flag variety $\Fl(\cV'|_z)=\Fl_r$.
	Therefore, the pullback $\varphi^*\cL_1(\cV')$ restricts to either
	$\cL_1$ or $\cL_r$ on each fiber $\Fl(\cV|_z)=\Fl_r$, since any
	automorphism of $\Fl_r$ preserves the set
	$\{\cL_1,\cL_r\}$.
	
	Since $Z$ is connected, this choice is constant over $Z$.
	It then follows from the general fact that $\mathrm{Pic}(X)\cong \mathrm{Pic}(Y)\oplus \mathbb{Z}\cdot \cO_\rho(1)$ for a projective bundle $\rho:X\to Y$ (see \cite[Exercise~25.1.K]{vakil}) 
	that there exists a line bundle $L$ on $Z$ such that either
	\[
	\varphi^*\cL_1(\cV') \cong \cL_1(\cV)\otimes \pi_\cV^*L
	\quad \text{or} \quad
	\varphi^*\cL_1(\cV') \cong \cL_r(\cV)\otimes \pi_\cV^*L.
	\]
	
	Pushing forward to $Z$ and using \eqref{eq:pushforward.flag}, we obtain
	an isomorphism $f$ as in \eqref{eq:isom.vb}.
	Let $\psi$ be the induced isomorphism $\varphi_f$ or $\varphi_f^\vee$.
	By construction, $\varphi$ and $\psi$ agree on every geometric fiber of $\Fl(\cV)\to Z$,
	and hence on all geometric points of $\Fl(\cV)$.
	Since $\Fl(\cV')$ is separated over $Z$, the equalizer of $\varphi$ and $\psi$
	is a closed subscheme of $\Fl(\cV)$ (\cite[\href{https://stacks.math.columbia.edu/tag/01KM}{Tag 01KM}]{stacks-project}).
	As $\Fl(\cV)$ is reduced and the equalizer contains all geometric points,
	it follows that $\varphi=\psi$ (cf.~\cite[Exercise~11.3.B]{vakil}).
\end{proof}

We use Lemma~\ref{l.hom} and Proposition~\ref{prop:flagbundles} to characterize the map \eqref{eq:isom.ext.X}.

\begin{theorem}\label{thm:XtoY}
	Let $s,s'\in \grs$. The map $\Isom(X(s),X(s'))\to \Isom(Y(s),Y(s'))$ defined in \eqref{eq:isom.ext.X} is injective, and its image consists precisely of those isomorphisms $Y(s)\to Y(s')$ that preserves $\PFl_n$.
	In other words, we have the fiber diagram
	\[
	\begin{tikzcd}
		\Isom(X(s),X(s')) \arrow[r,hook]\arrow[d,hook'] &\Aut\Fl_n \arrow[d,hook']\\
		\Isom(Y(s),Y(s'))\arrow[r,hook]&\Aut\PP\times \PP^\vee.
	\end{tikzcd}
	\]
	Moreover, given an isomorphism $\varphi:X(s)\to X(s')$, there exists a unique automorphism of $\Fl_n$ that restricts to $\varphi$.
\end{theorem}
\begin{proof}
	Let $\Isom^+(X(s),X(s'))$ be the preimage of $\Isom^+(Y(s),Y(s'))$. 
	It suffices to prove the theorem for the induced map 
	\beq\label{eq:isom.ext+}\Isom^+(X(s),X(s'))\to \Isom^+(Y(s),Y(s'))\eeq 
	since the general case follows by composing with $\iota$.
	
	Let $\varphi_Y$ be an element in the image of \eqref{eq:isom.ext+}. This means that there exists $\varphi\in \Isom^+(X(s),X(s'))$ fitting into a commutative diagram
	\[\begin{tikzcd}
		X(s) \arrow[d,"\pi"']\arrow[r,"\varphi"] 
                &X(s') \arrow[d,"\pi"]\\
		Y(s)\arrow[r,"\varphi_Y"]&Y(s')
	\end{tikzcd}\]
	where $\pi$ are the flag bundle maps in \eqref{eq:projection.pi}.
	Let $\cE=\cF_{n-1}/\cF_1$ as before. 
	Pulling back along $\varphi_Y$ and using the above diagram,
	we obtain an isomorphism $\Fl(\cE)\to \Fl(\varphi_Y^*\cE)$ of flag bundles over $Y(s)$. 
	By Proposition~\ref{prop:flagbundles}, we have either
	\[
	\cE \;\cong\; \varphi_Y^*\cE \otimes L
	\quad \text{or} \quad
	\cE^\vee \;\cong\; \varphi_Y^*\cE \otimes L
	\]
	for some line bundle $L$ on $Y(s)$.

	On the other hand, $\varphi_Y^*\cE$ and $\cE$ have the same Chern classes.
	By \eqref{eq:taut}, we have
	\beq\label{eq:chern}
	c_1(\cE) = c_1(\cL_1)-c_1(\cL_n).
	\eeq
	If $\cE^\vee \cong \varphi_Y^*\cE \otimes L$, then comparing first Chern classes yields
	\[-2c_1(\cE) = \rank(\cE)\cdot c_1(L)= (n-2)\cdot c_1(L).\]
	Since $n \ge 4$ and $c_1(\cE)$ is primitive by \eqref{eq:chern}, this forces $n=4$.
	In that case, $\cE$ has rank two, and hence
	$\cE^\vee \cong \cE \otimes \det(\cE)^\vee$.
	Therefore, whether $n=4$ or $n>4$, we may assume that
	$\cE \cong \varphi_Y^*\cE \otimes L$.

	Comparing first Chern classes again, we conclude $c_1(L)=0$.
	Since $Y(s)$ is cellular, this implies that $L$ is trivial.
	Thus we obtain $\cE\cong \varphi_Y^*\cE$.
	By Lemma~\ref{l.hom}, this implies that $\varphi_Y$ preserves $\PFl_n$.
	Moreover, Lemma~\ref{l.hom} also shows that $\cE$ is simple, so the isomorphism
	$\cE\cong \varphi_Y^*\cE$ is unique up to scaling.

	Therefore, by Proposition~\ref{prop:flagbundles}, the induced isomorphism
	$\Fl(\cE)\to \Fl(\varphi_Y^*\cE)$ is unique, and consequently the lift $\varphi$
	is unique. This proves that \eqref{eq:isom.ext+} is injective and that its image
	is contained in the set of $\varphi_Y$ preserving $\PFl_n$.

	Conversely, suppose that $\varphi_Y:Y(s)\to Y(s')$ preserves $\PFl_n$.
	Then $\varphi_Y$ extends to an automorphism $\hat\varphi$ of $\PFl_n$, hence it lies in the image of \eqref{eq:aut.ext.Fl}.
	Let $\widetilde\varphi\in\Aut\Fl_n$ be the unique automorphism inducing $\hat\varphi$,
	so that $\pi\circ\widetilde\varphi=\hat\varphi\circ\pi$.
	Since $X(s)=\pi^{-1}(Y(s))$ and $X(s')=\pi^{-1}(Y(s'))$, it follows that
	$\widetilde\varphi(X(s))=X(s')$.
	So, restricting $\widetilde\varphi$ to $X(s)$ yields an isomorphism
	$X(s)\to X(s')$, which induces $\varphi_Y$. 

	Finally, the last assertion follows from the surjectivity of $X(s)\to \PP$, since
	an isomorphism $\varphi\in \Isom^+(X(s),X(s'))\subset \Aut^+\Fl_n=G/\mathbb{G}_m$ is determined by the image of $F_1$
	for $F_\bullet\in X(s)$.	
\end{proof}
\begin{remark}\label{rem:XtoY}
	When $s=s'$, this shows that $\Aut X(s)\to \Aut Y(s)$ in \eqref{eq:aut.ext.X} is an injective group homomorphism that fits into the fiber diagram
	\[
	\begin{tikzcd}
		\Aut X(s) \arrow[r,hook]\arrow[d,hook'] &\Aut\Fl_n \arrow[d,hook']\\
		\Aut Y(s)\arrow[r,hook]&\Aut\PP\times \PP^\vee.
	\end{tikzcd}
	\]
\end{remark}

\medskip

Now we prove Theorems~\ref{t.isoA}\eqref{t.isoAX}~and~\ref{t.autsA}\eqref{t.autsAX}.

\begin{proof}[Proof of Theorem~\ref{t.isoA}\eqref{t.isoAX}] 
The `if' direction follows from the discussion immediately following Theorem~\ref{t.isoA}.
For the `only if' direction, let $\varphi\colon X(s) \to X(s')$ be an isomorphism. 
Composing $\varphi$ with $\iota$ if necessary, we may assume that $\varphi\in \Isom^+(X(s),X(s'))$. 

By Theorem~\ref{thm:XtoY}, the induced isomorphism $\varphi_Y:Y(s)\to Y(s')$ preserves $\PFl_n$.
By Remark~\ref{rem A}, this means that in the description of
Proposition~\ref{prop:ext.Y} we must have $c=0$.
Hence $g^{-1}s'g=as+b$ for some $g\in G$ and $(a,b)\in \Aff$.
\end{proof}

\medskip

\begin{proof}[Proof of Theorem~\ref{t.autsA}\eqref{t.autsAX}]
By Theorem~\ref{thm:XtoY}, $\Aut X(s)\to \Aut Y(s)$ is injective.
It suffices to identify its image under the identification
$\Aut Y(s)=(K(s)/\mathbb{G}_m)\rtimes\langle \iota\rangle$.

By Lemma~\ref{l.hom}, the image is contained in $(H(s)/\mathbb{G}_m)\rtimes \langle\iota\rangle$.
On the other hand, by the discussion immediately following Theorem~\ref{t.isoA} and
by Lemma~\ref{l.inv}, restriction defines a homomorphism
\[
\mathrm{res}:(H(s)/\mathbb{G}_m)\rtimes \langle \iota \rangle \lra \Aut X(s),
\qquad \varphi \mapsto \varphi|_{X(s)},
\]
whose composition with \eqref{eq:aut.ext.X} is the inclusion into $\Aut Y(s)$.
Therefore the homomorphism $\mathrm{res}$ is injective, and the image of \eqref{eq:aut.ext.X}
is exactly $(H(s)/\mathbb{G}_m)\rtimes \langle\iota\rangle$.
\end{proof}

These complete the proofs of Theorems~\ref{t.isoA} and~\ref{t.autsA}.

\subsection{Extension to other generalized flag varieties}\label{ss:partial.A}
Our main results for $X(s)$ and $Y(s)$ extend to regular semisimple Hessenberg varieties of codimension one in other generalized (or partial) flag varieties.
For the interested reader, we describe such extended results in this subsection.
We fix $n\geq4$ throughout this subsection.

Let $I\subset[n-1]=\{1,\dots,n-1\}$ be a nonempty subset. Explicitly, we write
\[I=\{i_1,\dots, i_m\} \quad \text{with }~1\leq i_1<\cdots<i_m<n.\]
We denote the corresponding partial flag variety by 
\[\Fl_I:=\{F_{i_1}\subset\cdots \subset F_{i_m}\subset \bk^n:\dim F_{i}=i ~\text{ for }i\in I\}.\]
In our notation, $\Fl_I=\Fl_n$ when $I=[n-1]$ and $\Fl_I=\PFl_n$ when $I=\{1,n-1\}$. In general, when $I\subset J\subset[n-1]$, there is a natural projection $\Fl_J\to \Fl_I$. 

The Picard group $\Fl_I$ can be identified with a subgroup of $\mathrm{Pic}\Fl_n$ via the pullback homomorphism, that consists of $\cL(\lambda)$ with $\lambda=(\lambda_1,\dots,\lambda_n)$ such that
\[\lambda_i=\lambda_{i+1} \quad \text{ for }~i\in [n-1]-I.\]
In other words,
$\mathrm{Pic}\Fl_I= \left\{\cL(\lambda):s_i(\lambda)=\lambda ~\text{ for }i\in [n-1]-I\right\}$.

We say that $I$ is \emph{Dynkin-symmetric} if the following holds:
\[i\in I \quad \text{if and only if}\quad n-i\in I.\]
Then, by the work of Demazure \cite{demazure}, we know that
\[\Aut \Fl_I =\begin{cases}
	(G/\mathbb{G}_m) \rtimes \langle\iota\rangle &\text{if $I$ is Dynkin-symmetric}\\
	G/\mathbb{G}_m &\text{otherwise,}
\end{cases}\]
where $G/\mathbb{G}_m$ consists of the inner automorphisms and $\iota$ sends the partial flag $F_{i_1}\subset \cdots \subset F_{i_m}\subset \bk^n$ to $F_{i_m}^\perp\subset \cdots \subset F_{i_1}^\perp\subset \bk^n$. Here, the orthogonal complement is taken with respect to any given nondegenerate bilinear form.
In particular, in the latter case, any automorphism of $\Fl_I$ preserves every line bundle on $\Fl_I$.

When $I$ is Dynkin-symmetric, $\iota^*$ interchanges the line bundles $\cL(\lambda_1,\dots,\lambda_n)$ and $\cL(-\lambda_n,\dots,-\lambda_1)$.
In particular, when $1,n-1\in I$, it interchanges $\cL_1$ and $\cL_n$.  

When $I$ is Dynkin-symmetric and $i,n-i\in I$, one can also check that the pullback $\iota^*$ interchanges the dual of the universal subbundle $\cF_i$ of rank $i$ and the universal quotient bundle $\cQ_i=\bk^n/\cF_{n-i}$ of rank $i$.

From now on, we suppose that $1,n-1\in I$.
For $s\in \fg$, the associated Hessenberg variety in $\Fl_I$ of codimension one is
defined as
\[X_I(s):=\{F_\bullet\in \Fl_I:sF_1\subset F_{n-1}\}.\]
This variety is a Cartier divisor in $\Fl_I$ with the associated line bundle $\cL(\theta)$ and is smooth when $s\in \grs$, as $X(s)$ is in $\Fl_n$. When $I$ is Dynkin-symmetric, the involution $\iota\in \Aut\Fl_I$ can be chosen to preserve $X_I(s)$ (cf.~Lemma~\ref{l.inv}).

\smallskip

We prove the following generalization of Theorems~\ref{t.isoA}\eqref{t.isoAX} and~\ref{t.autsA}\eqref{t.autsAX}.
\begin{theorem}\label{thm:partial.A}
Let $n\geq4$ and let $I\subset[n-1]$ be a subset properly containing $\{1,n-1\}$, that is, $\{1,n-1\}\subsetneq I$. Let $s,s'\in \grs$. Then, the varieties $X_I(s)$ and $X_I(s')$ are isomorphic if and only if there exist $g\in G$ and $(a,b)\in \Aff$ such that \[gs'g^{-1}=as+b.\]
Moreover, every isomorphism $X(s)\to X(s')$ extends uniquely to an automorphism of $\Fl_I$.
In particular, the automorphism group of $X_I(s)$ is
	\[\Aut X_I(s)=\begin{cases}
		(H(s)/\mathbb{G}_m)\rtimes\langle\iota\rangle &\text{if $I$ is Dynkin-symmetric}\\
		H(s)/\mathbb{G}_m &\text{otherwise}.
	\end{cases}\]
\end{theorem}
The proof is structurally the same as those of Theorems~\ref{t.isoA}\eqref{t.isoAX} and~\ref{t.autsA}\eqref{t.autsAX}. We need generalizations of Lemma~\ref{l.pullback} and Proposition~\ref{prop:unique.ext.X} to $X_I(s)$, and of Proposition~\ref{prop:flagbundles} to partial flag bundles.
We state and prove them below.

Let $\Aut^+X_I(s)\subset \Aut X_I(s)$ denote the subgroup consisting of automorphisms that preserve $\cL_1$ and $\cL_n$. The following lemma generalizes Lemmas~\ref{l.pullback} and~\ref{l.pullback.Y}.
\begin{lemma}\label{lem:pullback.partial}
	Let $n\geq4$ and let $\{1,n-1\}\subset I\subset [n-1]$. Let $s\in\grs$. Then,
	\[\Aut X_I(s)=\begin{cases}
		\Aut^+ X_I(s)\rtimes \langle\iota\rangle &\text{if $I$ is Dynkin-symmetric}\\
		\Aut^+ X_I(s)&\text{otherwise.}
	\end{cases}\]
\end{lemma}
\begin{proof}
	The proof is essentially the same as that of Lemma~\ref{l.pullback.Y}.
	
	The Picard groups of $X_I(s)$ and $\Fl_I$ are isomorphic via the pullback, and they are embedded into $\mathrm{Pic}X(s)=\mathrm{Pic}\Fl_n$ via pullbacks. Moreover, by Lemma~\ref{lem:projection.formula} applied to $X(s)\to X_I(s)$, there is a canonical isomorphism 
	\beq\label{eq:cohom.XtoXI}\coh^i(X_I(s),\cL(\lambda))\cong \coh^i(X(s),\cL(\lambda))\eeq 
	for any $i$ and line bundle $\cL(\lambda)\in \mathrm{Pic}X_I(s)=\mathrm{Pic}\Fl_I$.
	Hence the assertion follows immediately from the cohomological characterization of $\cL_1$ and $\cL_n$ in Lemma~\ref{l.CohomChar}, by the same argument as in the proof of Lemma~\ref{l.pullback}.
\end{proof}

The following is a generalization of Proposition~\ref{prop:unique.ext.X}. We denote by $\pi_I:X_I(s)\to Y(s)$ the natural projection map $(F_{i_1}\subset \cdots \subset F_{i_m})\mapsto (F_1\subset F_{n-1})$. 
\begin{proposition}\label{prop:unique.ext.XI}
	Let $n\geq4$  and let $\{1,n-1\}\subset I\subset [n-1]$. Let $s,s'\in \grs$. For an isomorphism $\varphi:X_I(s)\to X_I(s')$, there exist a unique isomorphism $\varphi_Y\colon Y(s)\to Y(s')$ and a unique automorphism $\psi$ of $\PP\times \PP^\vee$ which make the diagram 
	\[
	\begin{tikzcd}
		X_I(s) \arrow[r,"\pi_I"]\arrow[d,"\cong","\varphi"'] 
                &Y(s) \arrow[r,hook]\arrow[d,"\cong", "\varphi_Y"']
                &\PP\times \PP^\vee\arrow[d,"\cong","\psi"']\\
		X_I(s')\arrow[r,"\pi_I"]&Y(s')\arrow[r,hook]&\PP\times \PP^\vee
	\end{tikzcd}
	\]
	commute. In particular, there is a natural map
	\beq\label{eq:isom.ext.XI}
	\Isom(X_I(s),X_I(s')) \;\lra\; \Isom(Y(s),Y(s')).\eeq 
	When $s=s'$, the map \eqref{eq:isom.ext.XI} 
	is a group homomorphism 
	\[
	\Aut X_I(s)\;\lra\;  \Aut Y(s)\subset  \Aut \PP\times \PP^\vee, \]
	which sends $\Aut_I^+X(s)$ into $\Aut^+Y(s)$,
	where the inclusion on the right is \eqref{eq:aut.ext.Y}.
\end{proposition}
\begin{proof}
	Recall that we have canonical isomorphisms \eqref{eq:cohom.XtoXI}. 
	In particular, we have $\coh^0(X_I(s),\cL_1)\cong (\bk^n)^\vee$ and $\coh^0(X_I(s),\cL_n)\cong \bk^n$ canonically, inducing the commutative diagram. The rest of the assertions also follow from the same argument as in the proof of Proposition~\ref{prop:unique.ext.X}.
\end{proof}
We also need a generalization of Proposition~\ref{prop:flagbundles} to partial flag bundles. Let $\cV$ be a vector bundle of rank $r$ over a scheme $Z$. We denote by 
\[\pi_\cV^I:\Fl_I(\cV)\lra Z\] 
the associated partial flag bundle over $Z$ with fibers $\Fl_I$. This is an iterated Grassmann bundle. See \cite[Section~4.2]{AndersonFulton} or \cite[Chapter~14]{fulton-intersection-theory}. Let $\cF_j(\cV)$ and $\cQ_j(\cV)$ denote the universal subbundles and quotient bundles of $(\pi_\cV^I)^*\cV$ of rank $1\leq j\leq r$, respectively. Then, as in the case of the complete flag bundle $\Fl(\cV)$ in \eqref{eq:pushforward.flag2}, there are natural isomorphisms (cf.~\eqref{eq:pushforward.flag2})
\beq\label{eq:pushforward.flag3}\begin{split}
	\pi_{\cV*}^I(\cF_j(\cV)^\vee)&\cong \cV^\vee  \quad \text{when }j\in I\\
	\pi_{\cV*}^I\cQ_j(\cV)&\cong \cV \quad\;\; \text{when }n-j\in I.
\end{split}\eeq
Using these, one can prove the following generalization of Proposition~\ref{prop:flagbundles}.
\begin{proposition}\label{prop:flagbundles.partial}
	Let $Z$ be a connected reduced locally Noetherian scheme. Let $\cV$ and $\cV'$ be vector bundles of rank $r$ over $Z$. Let $I\subset [r-1]$ be a nonempty subset. 
	Then, there exists an isomorphism $\varphi:\Fl_I(\cV)\to \Fl_I(\cV')$ over $Z$ if and only if there exist a line bundle $L$ on $Z$ and an isomorphism 
	\[f:\cV\to \cV'\otimes L\quad\text{or}\quad g:\cV^\vee\to \cV'\otimes L\] that induces $\varphi$, that is, $\varphi=\varphi_f$ or $\varphi=\varphi_g^\vee$, where $\varphi_f$ and $\varphi_g^\vee$ are defined analogously.
	
	When $I$ is not Dynkin-symmetric, only the isomorphism $f$ (not $g$) is allowed.
\end{proposition}
\begin{proof}
	As the `if' direction is vacuous, we prove the `only if' direction.

	Pick any $j\in [r-1]$ with $r-j\in I$, so that $\cQ_{r-j}(\cV')$ is well-defined. 
	We know that any automorphism of $\Fl_r$ pulls back $\cQ_j$ to either $\cF_j^\vee$ or $\cQ_j$. 
	Since $Z$ is connected, this choice is constant. In the first case, we set $\cW:=\cF_j^\vee$ and, in the second case, we set $\cW:=\cQ_j$. We claim that $\varphi^*\cQ_j(\cV')\cong \cW\otimes (\pi_\cV^I)^*L$ 
	for some line bundle $L$ on $Z$. To prove this, 
	observe that the vector bundle $\pi_{\cV*}^I\cHom(\cW,\varphi^*\cQ_j(\cV'))$
	has fibers $\End(F_j^\vee)$ or $\End(Q_j)$ respectively, which are spanned by the identity by Lemma~\ref{lem:simple.tautological} below and Lemma~\ref{lem:projection.formula}.
	By \cite[Theorem~25.1.6]{vakil} (cf.~\cite[Exercise~25.1.K]{vakil}), we have
	\[\pi_{\cV*}^I\cHom(\cW,\varphi^*\cQ_j(\cV'))\cong L\] for some line bundle $L$ on $Z$. By adjunction, there is a canonical homomorphism
	\[(\pi_\cV^I)^*L \lra \cHom(\cW,\varphi^*\cQ_j(\cV'))\]
	which is $\bk\to \End(\cF_j^\vee)$ or $\bk\to \End(\cQ_j)$ sending 1 to the identity. Therefore this forces $\cW\otimes (\pi_\cV^I)^*L\to \varphi^*\cQ_j(\cV')$ to be an isomorphism. This proves the claim. 
	
	The first assertion then follows from \eqref{eq:pushforward.flag3} and the same argument as in the proof of Proposition~\ref{prop:flagbundles}
	
	The last assertion follows immediately from choosing $j\in [r-1]-I$ with $r-j\in I$, whose existence is guaranteed by the assumption that $I$ is not Dynkin-symmetric. In this case, there does not exist $\cF_j(\cV)$, hence it is only possible to have $\varphi^*\cQ_j(\cV)\cong \cQ_j(\cV)\otimes (\pi_\cV^I)^*L$, which induces an isomorphism $f$.
\end{proof}
\begin{lemma}\label{lem:simple.tautological}
	Let $1\leq j<r$. 
	Let $\cF_j$ and $\cQ_j:=\bk^r/\cF_j$ be the universal subbundle and quotient bundle of rank $j$ on the flag variety $\Fl_r$. Then, $\cF_j$ and $\cQ_j$ are simple.
\end{lemma}
\begin{proof}
	Since $\iota^*\cQ_j=\cF_j$, it suffices to show that $\End\cF_j=\bk\cdot \id$. 
	
	We first recall that the universal subbundles fit into short exact sequences
	\[0\lra \cL(e_j)\lra \cF_j^\vee\lra \cF_{j-1}^\vee\lra 0.\]
	Iterating this sequence and 
	using the fact that $\coh^i(\Fl_r,\cL(e_j))=0$ for all $i$ and all $j>1$ by the Borel--Weil--Bott theorem, we obtain canonical identifications
	\[\coh^0(\Fl_r,\cF_j^\vee)=\cdots =\coh^0(\Fl_r,\cF_1^\vee)=\coh^0(\Fl_r,\cL_1)=(\bk^r)^\vee.\]
	
	Next, tensoring the universal exact sequence
	$0\to \cF_j\to \bk^r\to \cQ_{r-j}\to 0$ with $\cF_j^\vee$ and taking global sections yields a left exact sequence
	\[0\lra \End(\cF_j)\lra \End(\bk^r)\lra \Hom(\cF_j,\cQ_{r-j}).\]
	Under this identification, an endomorphism $A\in \End(\bk^r)$ maps to the homomorphism $\cF_j\to \bk^r/\cF_j$ sending a subspace $F_j$ to $AF_j$ modulo $F_j$.
	
	Therefore, $\End(\cF_j)$ consists precisely of those linear maps
	$A\in\End(\bk^r)$ such that $A(F)\subset F$ for every $j$-dimensional subspace
	$F\subset \bk^r$. This condition forces $A$ to be a scalar multiple of the identity. Hence $\End(\cF_j)=\bk\cdot\id$, as claimed.
\end{proof}

Using Lemma~\ref{lem:pullback.partial} and Propositions~\ref{prop:unique.ext.XI} and~\ref{prop:flagbundles.partial}, we prove Theorem~\ref{thm:partial.A}.
\begin{proof}[Proof of Theorem~\ref{thm:partial.A}]
	Let $\Isom^+(X_I(s),X_I(s'))$ denote the subset of isomorphisms preserving $\cL_1$ and $\cL_n$, that is, the preimage of $\Isom^+(Y(s),Y(s'))$ under the map \eqref{eq:isom.ext.XI}.
	It suffices to show that the map 
	\[\Isom^+(X_I(s),X_I(s'))\lra \Isom^+(Y(s),Y(s'))\] 
	is injective and its image is the same as that of \eqref{eq:isom.ext+}. This in particular shows that $\Isom^+(X(s),X(s'))=\Isom^+(X_I(s),X_I(s'))$ as a subset of $\Aut^+\Fl=\Aut^+\Fl_I=G/\mathbb{G}_m$.
	But the injectivity and the assertion on the image now follow from the same argument as in the proofs of Theorems~\ref{t.isoA}\eqref{t.isoAX} and~\ref{t.autsA}\eqref{t.autsAX}.
\end{proof}

We end this section with the proof of Lemma~\ref{l.HandK}.

\begin{proof}[Proof of Lemma~\ref{l.HandK}] 
It is clear that both $H(s)$ and $K(s)$ contain the centralizer $T(s)$ of~$s$. We prove that $H(s)$ and $K(s)$ are contained in the normalizer $N_G(T(s))$ of~$T(s)$. Since $H(s)\subset K(s)$, it suffices to prove that $K(s)\subset N_G(T(s))$. 

Let $g\in K(s)$. By definition, there exists $\begin{pmatrix}
	a&b\\c&d
\end{pmatrix}\in \GL_2$ such that 
\beq \label{eq:cond3}g^{-1}sg=(as+b)(cs+d)^{-1}.\eeq 
Here the right-hand side is well-defined, as we identify $\fg\cong \End(\bk^n)$ with the space of $n\times n$ matrices.

Since $as+b$ and $cs+d$ lie in the Lie algebra of the maximal torus $T(s)$, it follows from \eqref{eq:cond3} that $g^{-1}sg$ also lies in the Lie algebra of $T(s)$.  
On the other hand, $g^{-1}sg$ has the same eigenvalues as $s$. 
Therefore, there exists $\sigma \in N_G(T(s))$ such that $g^{-1}sg=\sigma^{-1}s\sigma$. This implies that $g\sigma^{-1}\in T(s)$, and hence $g\in N_G(T(s))$.

Finally, since the involution $\iota$ is chosen so as to preserve $s$, 
we have $(\iota g)^{-1} s(\iota g)=(as+b)(cs+d)^{-1}$ by \eqref{eq:cond3}. Thus $\iota g\in K(s)$, and consequently $\iota (K(s))=K(s)$. 
The same argument with $c=0$ shows that $\iota(H(s))=H(s)$. 
\end{proof}

\bigskip

\section{Relation to $M_{0,n}$}\label{s:M0n}

In this section, we interpret our main results in type~$A$,
namely Theorems~\ref{t.isoA} and~\ref{t.autsA},
in terms of group actions
(Corollaries~\ref{cor:interpret.AX1},~\ref{cor:interpret.AX},~\ref{cor:interpret.AY1} and~\ref{cor:interpret.AY})
and pointed rational curves
(Corollaries~\ref{cor:main.M0n.X} and~\ref{cor:main.M0n.Y}).

We further show that Tymoczko's dot action of $S_n$ on the cohomology
$\coh^*(X(s))$ does not lift to an action on the variety $X(s)$
(Corollary~\ref{no-dot.body}).

\smallskip
Throughout this section, we continue to assume that $n\geq 4$.
Moreover, using the identification
$\Aff \cong \mathbb{G}_m \ltimes \mathbb{G}_a \cong \bk^\times \ltimes \bk$,
we will write elements of $\Aff$ as $(a,b)$.

\subsection{Main theorems of $X(s)$ in terms of group actions}\label{ss:group.actions}
We first interpret the main results for $X(s)$ in terms of group actions.
Recall that
\[
\Aut G=(G/\mathbb{G}_m)\rtimes \mu_2,
\]
where $G/\mathbb{G}_m$ is the group of inner automorphisms of $G$
and $\mu_2$ is generated by an involution $\iota$ arising from the Dynkin diagram.
Let
\[
\cG:=\Aut G\times \Aff = \big((G/\mathbb{G}_m)\rtimes \mu_2\big)\times \Aff.
\]
This group acts naturally on $\fg$:
$\Aut G$ acts via the differential of automorphisms (for example, $g\cdot s=gsg^{-1}$ and $\iota\cdot s=-s^t$ for $g\in G$ and $s\in \fg$), while $\Aff$ acts by
\[
(a,b)\cdot s = as+b \qquad \text{for } (a,b)\in \Aff \text{ and } s\in \fg.
\]
This action preserves $\grs$ and $\fg-\langle\id\rangle$.

\begin{corollary}\label{cor:interpret.AX1}
	Let $s,s'\in\grs$. Then the following hold.
	\begin{enumerate}
		\item $X(s)\cong X(s')$ if and only if $s$ and $s'$ lie in the same $\cG$-orbit.
		\item $\Aut X(s)$ is isomorphic to the stabilizer of $s$ in $\cG$.
\end{enumerate}
\end{corollary}
\begin{proof}
(1) By Theorem~\ref{t.isoA}\eqref{t.isoAX}, $X(s)\cong X(s')$ if and only if
there exist $g\in G$ and $(a,b)\in\Aff$ such that $gs'g^{-1}=as+b$. This is precisely the condition that $s$ and $s'$ lie in the same $\cG$-orbit in $\grs$.

\smallskip

(2) The statement follows directly from Theorem~\ref{t.autsA}\eqref{t.autsAX}.
\end{proof}

\medskip

By Theorem~\ref{thm:XtoY}, every isomorphism $X(s)\to X(s')$ extends uniquely
to an automorphism of $\Fl_n$.
Thus, the isomorphism classes and automorphism groups of $X(s)$ can equivalently
be read off from the natural $\Aut \Fl_n$-action on
$\PP\coh^0(\Fl_n,\cL_1\otimes\cL_n)$, the space of Hessenberg divisors.
Let $[s]$ denote the class of the divisor $X(s)$.

\begin{corollary}\label{cor:interpret.AX}
	Let $s,s'\in\grs$. Then the following hold.
	\begin{enumerate}
		\item $X(s)\cong X(s')$ if and only if $[s]$ and $[s']$ lie in the same $\Aut \Fl_n$-orbit.
		\item $\Aut X(s)$ is isomorphic to the stabilizer of $[s]$ in $\Aut \Fl_n$.
	\end{enumerate}
\end{corollary}

Corollaries~\ref{cor:interpret.AX1} and~\ref{cor:interpret.AX} are indeed equivalent,
since both describe the same orbits and stabilizers, expressed via two different but
canonically equivalent group actions.

Note that the $\Aff$-action on $\fg - \langle \id \rangle$ is free, with quotient
\[
\PP(\pgl_n) = (\fg - \langle \id \rangle)/\Aff,
\]
where  $\pgl_n$ denotes the Lie algebra of $\PGL_n = G/\mathbb{G}_m$.
Consequently, the $\cG$-action on $\fg - \langle \id \rangle$ descends to an
$\Aut G$-action on $\PP(\pgl_n)$.
This action can be identified with the natural $\Aut \Fl_n$-action on the complete
linear system $\PP \coh^0(\Fl_n,\cL_1\otimes \cL_n)$.
Indeed, the natural homomorphism $\Aut G\to \Aut \Fl_n$ is an isomorphism, and the
canonical identification
\[
\coh^0(\Fl_n,\cL_1\otimes \cL_n)
\;\cong\;
\End(\bk^n)/\langle \id \rangle
\;\cong\;
\pgl_n,
\]
given by Lemma~\ref{lem:projection.formula} and~\eqref{eq:pgln}, is equivariant with
respect to these actions.

Let
\[
\PP \coh^0(\Fl_n,\cL_1\otimes \cL_n)^{\rs}
\;\subset\;
\PP \coh^0(\Fl_n,\cL_1\otimes \cL_n)
\]
denote the open subset corresponding to $\grs/\Aff \subset \PP(\pgl_n)$.
Under the above identifications, the $\cG$-action on $\grs$ and the $\Aut \Fl_n$-action
on $\PP \coh^0(\Fl_n,\cL_1\otimes \cL_n)^{\rs}$ have canonically identified orbits and
stabilizers.

Equivalently, the corresponding quotient stacks are canonically isomorphic:
\beq\label{eq:quot.stack.X}[\grs/\cG]\;\cong\;[\PP \coh^0(\Fl_n,\cL_1\otimes \cL_n)^{\rs}/\Aut \Fl_n].\eeq

\medskip

\subsection{Main theorems of $Y(s)$ in terms of group actions}\label{ss:group.actions.Y}
We now give a parallel interpretation for $Y(s)$.
In this case, the role of $\cG$ is played by the automorphism group of $\PP\times\PP^\vee$,
together with the natural $\PGL_2$-action on pairs of $(1,1)$-divisors.

Let 
\[\tcG:= \Aut \PP\times\PP^\vee\times \PGL_2=\left((G/\mathbb{G}_m)^2\rtimes \mu_2\right) \times \PGL_2,\]
where $\Aut \PP\times \Aut\PP^\vee=(G/\mathbb{G}_m)^2$ and $\mu_2$ acts by the involution $(g,h)\mapsto (\iota(h),\iota(g))$, corresponding to the Dynkin involution exchanging $\PP$ and $\PP^\vee$.

Recall that there are natural isomorphisms
\beq\label{eq:fg}\coh^0(\PP\times\PP^\vee,\cO(1,1))\cong \End(\bk^n)\cong \fg.\eeq

Under this identification, $\PP(\fg\oplus \fg)$ parametrizes pairs of $(1,1)$-divisors in $\PP\times\PP^\vee$.
The group $\tcG$ acts on this space as follows.
The factor $(G/\mathbb{G}_m)^2$ acts by 
\[(g_1,g_2)\cdot [s:u]=[g_2sg_1^{-1}:g_2ug_1^{-1}]\]
for $g_1,g_2\in G$ and $s,u\in\fg$, 
while $\mu_2$ acts by \[[s:u]\mapsto [s^t:u^t].\]
Finally, $\PGL_2$ acts by linear recombination of two components:
\[
\begin{pmatrix}a&b\\ c&d\end{pmatrix}\cdot[s:u]
=[as+bu:cs+du] \qquad \text{for }\begin{pmatrix}a&b\\ c&d\end{pmatrix}\in \PGL_2 .
\]
 
We denote by $\PP(\fg\oplus \fg)^{\rs}\subset \PP(\fg\oplus \fg)$ the open subset 
consisting of points lying in the $\tcG$-orbits of elements of the form $[s:\id]$ with $s\in\grs$.

\begin{corollary}\label{cor:interpret.AY1}
	Let $s,s'\in \grs$. Then, the following hold.
	\begin{enumerate}
		\item $Y(s)\cong Y(s')$ if and only if $[s:\id]$ and $[s':\id]$ lie in the same $\tcG$-orbit.
		
		\item $\Aut Y(s)$ is isomorphic to the stabilizer of $[s:\id]$ in $\tcG$.
	\end{enumerate}
\end{corollary}
\begin{proof}
(1) By Theorem~\ref{t.isoA}\eqref{t.isoAY}, we have $Y(s)\cong Y(s')$ if and only if
there exist $g_1,g_2\in G$ and
$A=\begin{pmatrix} a & b \\ c & d \end{pmatrix}\in \PGL_2$
such that
\[g_2s'g_1^{-1}=(as+b)(cs+d)^{-1}.\]
Equivalently, this can be written as
$[g_2s'g_1^{-1} : \id] = A\cdot [s:\id]$,
which is precisely the condition that $[s:\id]$ and $[s':\id]$ lie in the same
$\tcG$-orbit in $\PP(\fg\oplus\fg)^{\rs}$.

\smallskip

(2) This follows analogously from Theorem~\ref{t.autsA}\eqref{t.autsAY}.
\end{proof}

Let $\Gr_2(-)$ denote the Grassmannian of two-dimensional linear subspaces.
Consider the natural $\Aut\PP\times\PP^\vee$-action on
\[
\Gr_2\bigl(\coh^0(\PP\times \PP^\vee,\cO(1,1))\bigr)
\;\cong\;
\Gr_2(\fg),
\]
which parametrizes pencils of $(1,1)$-divisors, or equivalently,
complete intersections of two such divisors in $\PP\times\PP^\vee$. Here the isomorphism is given by \eqref{eq:fg}.

Since every isomorphism $Y(s)\to Y(s')$ is induced by an automorphism of
$\PP\times \PP^\vee$ (Lemma~\ref{lem:unique.ext.Y}),
the isomorphism classes and automorphism groups of $Y(s)$ can also be read off
directly from this action,
without passing through the $\tcG$-action.

\begin{corollary}\label{cor:interpret.AY}
	Let $s,s'\in \grs$. Then, the following hold.
	\begin{enumerate}
		\item $Y(s)\cong Y(s')$ if and only if $\langle s,\id\rangle$ and $\langle s',\id\rangle$ lie in the same
		$\Aut\PP\times\PP^\vee$-orbit.
		
		\item
		$\Aut Y(s)$ is isomorphic to the stabilizer of $\langle s,\id\rangle$
		in $\Aut\PP\times\PP^\vee$.
	\end{enumerate}
    Here, as before, \(\langle x_1,\dots, x_r \rangle \) denotes the \(\bk\)-span of \(x_1,  \dots , x_r\) in \(\fg\).
\end{corollary}
Corollaries~\ref{cor:interpret.AY1} and~\ref{cor:interpret.AY} are equivalent,
as the $\tcG$-action on pairs $[s:u]$ in $\PP(\fg\oplus\fg)^{\rs}$ and the $\Aut\PP\times\PP^\vee$-action
on the associated pencils $\langle s,u\rangle$ have canonically identified
orbits and stabilizers.

\smallskip

We first identify the set of such associated pencils.
For any $[s:u]\in \PP(\fg\oplus \fg)^{\rs}$, the elements  $s$ and $u$ are linearly independent in $\fg$. 
Hence there is a natural morphism
\[\PP(\fg\oplus \fg)^{\rs}\lra \Gr_2(\fg), \quad [s:u]\mapsto \langle s,u\rangle.\]
We denote the image of this morphism by 
\[
\Gr_2(\coh^0(\PP\times \PP^\vee,\cO(1,1)))^{\rs} 
\]
as a subvariety of $\Gr_2(\coh^0(\PP\times \PP^\vee,\cO(1,1)))\cong \Gr_2(\fg)$.

This image is nothing but the quotient by the free action of $\PGL_2$ on
$\PP(\fg\oplus \fg)^{\rs}$, which acts by changing the ordered basis of the
two-dimensional subspace $\langle s,u\rangle$.
Consequently, the $\tcG$-action on $\PP(\fg\oplus \fg)^{\rs}$ descends to an
$\Aut\PP\times\PP^\vee$-action on the quotient
$\Gr_2(\coh^0(\PP\times \PP^\vee,\cO(1,1)))^{\rs}$.
Under the identification~\eqref{eq:fg}, this induced action coincides with the
natural action obtained by transforming $(1,1)$-divisors.

In short, the corresponding quotient stacks are canonically isomorphic:
\beq\label{eq:quot.stack.Y}[\PP(\fg\oplus\fg)^{\rs}/\tcG]\;\cong\;[\Gr_2(\coh^0(\PP\times\PP^\vee,\cO(1,1)))^{\rs}/\Aut \PP\times\PP^\vee].\eeq

\medskip

It is worth noting that there exist natural morphisms relating \eqref{eq:quot.stack.X} and \eqref{eq:quot.stack.Y}:
\beq\label{eq:inclusion.pgln}\grs \lra \PP(\fg\oplus\fg)^{\rs} \quad \text{and}\quad \PP\coh^0(\Fl_n,\cL_1\otimes\cL_n)^{\rs}\lra \Gr_2(\coh^0(\PP\times\PP^\vee,\cO(1,1)))^{\rs}\eeq
which send $s$ to $[s:\id]$, and the class of $X(s)$ to that of $Y(s)$, respectively. These maps are equivariant under the actions of $\cG$, $\tcG$, $\Aut \Fl_n$, and $\Aut \PP\times\PP^\vee$, compatibly with the natural group homomorphisms $\cG\to \tcG$ and $\Aut \Fl_n\to \Aut \PP\times\PP^\vee$.

\medskip

\subsection{Main theorems in terms of pointed rational curves (Theorem~\ref{t.HKStab})}\label{ss:ModCurve.main}
We provide an interpretation of our main theorems in terms of pointed rational curves. 
The moduli space $M_{0,n}$ of $n$-pointed smooth rational curves is the quotient
\beq \label{eq:M0n.def}
M_{0,n}:=\left((\PP^1)^n-\Delta\right) /\Aut \PP^1
\eeq
by the diagonal action of $\Aut \PP^1=\PGL_2$, 
where $\Delta$ is the set of $n$ points $(p_1,\dots,p_n)$ with $p_i=p_j$ for some $i\neq j$.
Note that the action is free on $(\PP^1)^n-\Delta$, since any projective linear transform of $\PP^1$ fixing three distinct points must be trivial.

By fixing the last point at the infinity, we get an isomorphism
\beq \label{eq:M0n+1.Aff}M_{0,n+1}\cong \left( \mathbb{A}^n-\Delta\right)/\Aff.
\eeq
Here, by abuse of notation, $\Delta$ is the subset where at least two coordinates coincide. 
Then, the natural action of the symmetric group $S_n$ on $M_{0,n+1}$ permuting the first $n$ points coincides with the action on $\left( \mathbb{A}^n-\Delta\right)/\Aff$ 
permuting the coordinates. 

Combining this description with the interpretation of
Subsection~\ref{ss:group.actions}, we obtain the following identification.
\begin{proposition}\label{prop:isom.quotients}
	$\grs/\cG\cong M_{0,n+1}/S_n$.
\end{proposition}
\begin{proof}
	This is obtained from the isomorphism
\beq \label{eq:isom.quotients2}\grs/(G/\mathbb{G}_m)\cong \mathfrak{t}^\rs/(N_G(T)/\mathbb{G}_m)=\mathfrak{t}^\rs/W \cong (\mathbb{A}^n-\Delta)/S_n\eeq
by further taking quotients of the both sides by the induced $\mu_2\times\Aff$-actions.
Here, $T\subset G$ is a maximal torus with Lie algebra $\mathfrak t$, 
$N_G(T)\subset G$ is the normalizer subgroup of $T$, $W\cong S_n$ is the Weyl group, and $\mathfrak{t}^{\rs}:=\grs \cap \mathfrak{t}$. The first isomorphism in \eqref{eq:isom.quotients2} is given by diagonalization and the last isomorphism is given by an isomorphism $\mathfrak{t}\cong \mathbb{A}^n$ choosing coordinates. 
Moreover, we may choose any $\mu_2$ that preserves $\mathfrak{t}^{\rs}$ among its conjugates. Then the induced $\mu_2$-actions on the quotients are trivial.
\end{proof}

The quotient $\grs/\cG$ classifies the isomorphism classes of $X(s)$ for $s\in \grs$ by Corollary~\ref{cor:interpret.AX1}(1). 
Hence, Proposition~\ref{prop:isom.quotients} 
reads as the bijection
\[
\{\text{isomorphism classes of }X(s):s\in \grs\}\;\lra\; M_{0,n+1}/S_n\]
	which 
	associates to the isomorphism class of $X(s)$ the set of eigenvalues of $s$ modulo the $\Aff$-action, 
	that is, 
	$[(\lambda_1,\dots,\lambda_n,\infty)]\in M_{0,n+1}/S_n$ for the eigenvalues $\lambda_i$ of $s$. 
	
	\smallskip
	
	Further, $\bar{H}(s)$ 
	is the stabilizer of $(\lambda_1,\dots,\lambda_n,\infty)\in M_{0,n+1}$  under the $S_n$-action.	

\begin{corollary}[Theorem~\ref{t.HKStab}(1)]\label{cor:main.M0n.X}
	Let $n\geq 4$, and let $s\in \grs$ with eigenvalues $\lambda_1,\dots,\lambda_n$.
	Then $\bar{H}(s)$ is conjugate in $S_n$ to the stabilizer of $(\lambda_1,\dots,\lambda_n,\infty)\in M_{0,n+1}$ under the $S_n$-action permuting the first $n$ marked points.
\end{corollary}

\begin{proof}
	By Lemma~\ref{l.HandK} and Theorem~\ref{t.autsA}\eqref{t.autsAX}, $\bar{H}(s)$ is conjugate to the subgroup of $S_n$ consisting of $\sigma\in S_n$ such that 
	$(\lambda_{\sigma^{-1}(1)},\dots, \lambda_{\sigma^{-1}(n)})=(\lambda_1,\dots, \lambda_n)$ modulo affine transformations.
	This is precisely the stabilizer of the point represented by $(\lambda_1,\dots, \lambda_n)$ under the $S_n$-action on $(\mathbb{A}^n-\Delta)/\Aff\cong M_{0,n+1}$.
\end{proof}

We next explain how the results for $Y(s)$ fit into the same framework.
For $Y(s)$, we have an analogous result.
\begin{proposition}
	$\PP(\fg\oplus\fg)^{\rs}/\tcG\;\cong \; M_{0,n}/S_n$.
\end{proposition}
\begin{proof}
	As in the case of $X(s)$, we restrict to a Cartan subalgebra and then pass to the quotients by the Weyl group $W\cong S_n$.
	Let $\PP(\ft\oplus\ft)^{\rs}:=\PP(\fg\oplus \fg)^{\rs}\cap \PP(\ft\oplus \ft)$.
Then we obtain a chain of natural isomorphisms
\[\PP(\fg\oplus \fg)^{\rs}/(G/\mathbb{G}_m)^2\cong \PP(\ft\oplus\ft)^{\rs}/(N_G(T)/\mathbb{G}_m)=\PP(\ft\oplus\ft)^{\rs}/W\cong ((\PP^1)^n-\Delta)/S_n \]
analogous to \eqref{eq:isom.quotients2},
where $N_G(T)$ is the normalizer in the diagonal $G\subset G\times G$. The desired isomorphism then follows by passing to the quotients by the induced
$\mu_2\times \PGL_2$-actions on both sides.
\end{proof}

Consequently, for $Y(s)$, we obtain an analogous bijection
\[
\{\text{isomorphism classes of }Y(s):s\in \grs\}\;\lra\; M_{0,n}/S_n.\] 
Under this correspondence, the isomorphism class of $Y(s)$ is sent to the
configuration of eigenvalues of $s$ modulo the action of $\Aut \PP^1=\PGL_2$, that is, to the point $[(\lambda_1,\dots,\lambda_n)]\in M_{0,n}/S_n$,
where $\lambda_1,\dots,\lambda_n$ are the eigenvalues of $s$.
Moreover, $\bar{K}(s)$ is identified with the stabilizer in $S_n$ of the
corresponding point of $M_{0,n}$.

\begin{corollary}[Theorem~\ref{t.HKStab}(2)]\label{cor:main.M0n.Y}
	Let $n\geq 4$, and let $s\in \grs$ with eigenvalues $\lambda_1,\dots,\lambda_n$.
	Then $\bar{K}(s)$ is conjugate in $S_n$ to the stabilizer of $(\lambda_1,\dots,\lambda_n)\in M_{0,n}$ under the $S_n$-action permuting the $n$ marked points.
\end{corollary}
\begin{proof}
	The proof is the same as that of Corollary~\ref{cor:main.M0n.X}. By Lemma~\ref{l.HandK} and Theorem~\ref{t.autsA}\eqref{t.autsAY}, $\bar{K}(s)$ is conjugate to the subgroup of $S_n$ consisting of $\sigma \in S_n$ such that $(\lambda_{\sigma^{-1}(1)},\dots, \lambda_{\sigma^{-1}(n)})=(\lambda_1,\dots, \lambda_n)$ 
	modulo linear fractional transformations. This is precisely the stabilizer of the point $(\lambda_1,\dots, \lambda_n)\in M_{0,n}$ under the $S_n$-action.
\end{proof}

\begin{remark}
    By the corollaries in this section, it is natural to expect that
\begin{itemize}
	\item the moduli stack of $X(s)$ is isomorphic to the quotient stack \eqref{eq:quot.stack.X}
		and it is a $((T/\mathbb{G}_m)\rtimes \mu_2)$-gerbe over $[M_{0,n+1}/S_n]$ for a maximal torus $T\subset G$;
	\item the moduli stack of $Y(s)$ is isomorphic to the quotient stack \eqref{eq:quot.stack.Y}
		and it is a $((T/\mathbb{G}_m)\rtimes \mu_2)$-gerbe over $[M_{0,n}/S_n]$. 
\end{itemize}
These will be established Theorems~\ref{thm:moduli.stack} and~\ref{thm:moduli.stack.Y} respectively, in Section~\ref{s.moduli}.
\end{remark}

\medskip

\subsection{Nonexistence of a geometric lift of Tymoczko's dot action}\label{ss:Tymoczko}
An important consequence of Corollary~\ref{cor:main.M0n.X} is that $\bar{H}(s)$ is cyclic.

\begin{corollary}\label{cor:classification.H(s)}
	Let $n\geq 4$ and $s\in \grs$. Then $\bar{H}(s)$ is a cyclic subgroup of $S_n$. Moreover, $\bar{K}(s)$ is a subgroup of $S_n$ isomorphic to one of the following:
	\begin{enumerate}
		\item the cyclic group $C_k$ of order $k$;
		\item the dihedral group $D_k$ of order $2k$;
		\item one of the groups $A_4$, $S_4$, or $A_5$.
	\end{enumerate}
	Here $k$ is a positive integer, and $A_k$ denotes the alternating subgroup of $S_k$.
\end{corollary}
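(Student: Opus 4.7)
By Corollaries~\ref{cor:main.M0n.X}(2) and~\ref{cor:main.M0n.Y}(2), it suffices to analyze the stabilizers $\Stab_{S_n}(\lambda_1,\dots,\lambda_n,\infty)$ and $\Stab_{S_n}(\lambda_1,\dots,\lambda_n)$ sitting inside $S_n$, where the $\lambda_i$ are the (distinct) eigenvalues of $s$ viewed as points of $\PP^1$. My plan is to show that each such permutation is realized by a unique Möbius transformation, so the stabilizers embed into $\PGL_2$, and then invoke the classical classification of finite subgroups of $\PGL_2(\bk)$.

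First, I would observe that if $\sigma \in S_n$ preserves the configuration $(\lambda_1,\dots,\lambda_n)$ in $M_{0,n}$, there exists $\varphi \in \Aut(\PP^1) = \PGL_2$ with $\varphi(\lambda_i) = \lambda_{\sigma^{-1}(i)}$; since $n \geq 4$ and a Möbius transformation is determined by its values on three points, $\varphi$ is unique, so $\sigma \mapsto \varphi$ defines an injective group homomorphism $\bar{K}(s) \hookrightarrow \PGL_2$. The same argument applied to the configuration $(\lambda_1,\dots,\lambda_n,\infty)$ shows $\bar{H}(s) \hookrightarrow \PGL_2$ with image in the stabilizer of $\infty$, i.e.\ in the affine subgroup $\Aff \subset \PGL_2$.

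Next, for $\bar{H}(s)$: since $\Aff = \mathbb{G}_m \ltimes \mathbb{G}_a$ and $\mathbb{G}_a$ is torsion-free in characteristic $0$, any finite subgroup of $\Aff$ injects into $\mathbb{G}_m$ and is therefore cyclic. To bound its order, I would note that a nontrivial finite-order element of $\Aff$ is conjugate to $z \mapsto \zeta z$ for some primitive root of unity $\zeta$, hence has exactly one fixed point in $\mathbb{A}^1$ and acts freely elsewhere. Consequently, if the generator has order $k$, the $n$ points $\lambda_1,\dots,\lambda_n$ are partitioned into free orbits of size $k$, together with at most one fixed point; thus $k$ divides $n$ or $n-1$, and in particular $k \leq n$.

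For $\bar{K}(s)$: the classification of finite subgroups of $\PGL_2(\bk)$ (for $\bk$ algebraically closed of characteristic zero) lists exactly the cyclic groups $C_k$, dihedral groups $D_k$, and the exceptional groups $A_4$, $S_4$, $A_5$. The bound $k \leq n$ for the cyclic and dihedral cases follows by the same orbit-counting argument as above: any nontrivial finite-order element of $\PGL_2$ is conjugate to $z \mapsto \zeta z$ and has exactly two fixed points on $\PP^1$, so the cyclic subgroup generated by it partitions the $\lambda_i$ into orbits of size $k$ plus at most two fixed points, forcing $k \leq n$; since $D_k \supset C_k$, the same bound holds in the dihedral case. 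There is no real obstacle here—the only mild care needed is in verifying the uniqueness of the Möbius transformation realizing $\sigma$, which is exactly where the hypothesis $n \geq 4$ enters.
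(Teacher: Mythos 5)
Your approach is the same as the paper's: both embed $\bar{H}(s)$ (resp.\ $\bar{K}(s)$) into $\Aff$ (resp.\ $\PGL_2$) using the uniqueness of a M\"obius map through $n\ge 3$ points, deduce that finite subgroups of $\Aff$ are cyclic via the projection $\Aff\to\mathbb{G}_m$ killing the torsion-free $\mathbb{G}_a$, and invoke the classical classification of finite subgroups of $\PGL_2$. So the skeleton matches.

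Where you genuinely differ is the bound $k\le n$. The paper simply asserts that $\bar{H}(s)$ ``has order at most $n$'' because it is a cyclic subgroup of $S_n$, and similarly that $C_k,D_k\le S_n$ forces $k\le n$. As stated this is not true of abstract subgroups of $S_n$: the cyclic group $\langle(1\,2)(3\,4\,5)\rangle\le S_5$ has order $6>5$, and one can check that $D_6$ (of order $12$) also embeds in $S_5$. What actually forces $k\le n$ is precisely the geometric input you supply: a nontrivial finite-order affine transformation has exactly one fixed point in $\mathbb{A}^1$ (and a nontrivial element of $\PGL_2$ exactly two on $\PP^1$), so a generator of order $k$ partitions the $n$ distinct eigenvalues into orbits of size $k$ together with at most one (resp.\ two) fixed points, whence $k$ divides $n$ or $n-1$ (resp.\ $n$, $n-1$, or $n-2$) and in any case $k\le n$; and $D_k\supset C_k$ imports the same bound. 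Your orbit-counting step is therefore not just a stylistic variant---it is the correct way to obtain the bound, and it fills in a step that the paper's phrasing elides. The rest of your write-up (uniqueness of $\varphi$ needing three distinct points, hence $n\ge 4$ is more than enough; the image of $\bar{H}(s)$ lying in the stabilizer of $\infty$) is accurate and matches the paper.
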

\begin{proof}
	Any subgroup of $S_n$ fixing a point in $M_{0,n+1}$ (resp. $M_{0,n}$) is a finite subgroup of $\Aff$ (resp. $\PGL_2$). Indeed, if $\sigma \in S_n$ stabilizes a point in $M_{0,n+1}$ (resp. $M_{0,n}$) represented by $n$ distinct points $\lambda_1,\dots,\lambda_n$ in $\mathbb{A}^1$ (resp. $\PP^1$), then there exists a unique element $\phi$ in $\Aff$ (resp. $\PGL_2$) such that $\lambda_{\sigma^{-1}(i)}=\phi(\lambda_i)$ for $1\leq i\leq n$. This induces an injective homomorphism from the stabilizer group to $\Aff$ (resp. $\PGL_2$).
	The assertion on $\bar{K}(s)$ then follows from the classical fact that any finite subgroup of $\PGL_2$ is among (1)--(3) (\cite[\S1.1]{dolgachev}). 
	
	Moreover, any finite subgroup of $\Aff$ embeds into $\mathbb{G}_m$ by composing 
	with the projection $\Aff\to \mathbb{G}_m$, which sends $(a,b)$ to $a$. Indeed, the kernel of this projection is the additive group $\mathbb{G}_a\cong \bk$, which has no elements of finite order. This shows that $\bar{H}(s)$ is a finite subgroup of $\mathbb{G}_m\cong \bk^\times$, implying that it is a cyclic group.
\end{proof}

\begin{remark}
When $\bar H(s)$ or $\bar K(s)$ is cyclic of order $k>1$, one can say more about
the possible values of $k$, following \cite[Lemma~2.4]{moon-swinarski}.

If $\bar H(s)\cong C_k$ with $k>1$, then $\bar H(s)$ is generated by a permutation
of the form $\sigma_1\cdots\sigma_\ell$, where the $\sigma_i\in S_n$ are mutually
disjoint cycles, each of length $k$, and satisfy $n-k\ell\leq 1$.
In particular, $k$ divides $n$ or $n-1$.

If $\bar K(s)\cong C_k$ with $k>1$, an analogous statement holds, except that
the inequality becomes $n-k\ell\leq 2$.
In particular, $k$ divides $n$, $n-1$, or $n-2$.

Moreover, any group appearing in (1)--(3) of Corollary~\ref{cor:classification.H(s)} occurs as $\bar K(s)$ for some $n$ and some $s\in\grs$ \cite[Theorem~71]{WuXu}.
\end{remark}

From this, one can confirm
that Tymoczko's dot action (\cite{tymoczko}; cf.~\cite[Section~8]{AHTMMS20}) on the cohomology $\coh^*(X(s))$ of $X(s)$
does not lift to an action on $X(s)$ itself. 

\begin{corollary}[Corollary~\ref{no-dot}] 
\label{no-dot.body}
	Let $n\geq 4$ and $s\in \grs$.
	Any $S_n$-representation on $\coh^*(X(s))$ that is induced by an $S_n$-action on $X(s)$
	is a direct sum of copies of the trivial representation and the sign representation.
	
	In particular, there is no $S_n$-action on $X(s)$ that induces Tymoczko's dot action. 
\end{corollary}
\begin{proof}
	Let $\eta: S_n \to \Aut X(s)$ be a group homomorphism, inducing an $S_n$-action on the cohomology $\coh^*(X(s))$.
	Since $T(s)=Z_G(s)$ acts trivially on $\coh^*(X(s))$, by the $T(s)$-invariant cellular decomposition provided by the theorem of Bia{\l}ynicki-Birula, the induced $S_n$-action on cohomology factors through the finite quotient
	\[\Aut X(s)/(T(s)/\mathbb{G}_m)=\bar{H}(s)\times \mu_2.\]
	By Corollary~\ref{cor:classification.H(s)}, we have $\bar{H}(s)\cong C_k$.
	Let $H_\eta$ denote the image of the composition
	\[S_n \xrightarrow{\;\eta\;} \Aut X(s) \longrightarrow C_k \times \mu_2.\]
	Since $H_\eta$ is a quotient of $S_n$, it is either trivial or isomorphic to $S_n/A_n\cong C_2$.
	For $n\geq 5$, this follows from the fact that $A_n$ is the unique nontrivial proper normal subgroup of $S_n$.
	When $n=4$, although $S_3$ may also occur as a quotient of $S_4$, it cannot embed into $C_k \times \mu_2$.

	This shows that any $S_n$-representation on $\coh^*(X(s))$ arising from such an action $\eta$ decomposes as a direct sum of the trivial and sign representations.

	On the other hand, the $S_n$-representation on $\coh^*(X(s))$ given by Tymoczko's dot action contains the $(n-1)$-dimensional standard representation of $S_n$, which is irreducible and is neither trivial nor sign \cite[Corollary~4.6]{ahm} (cf.~\cite[Example~5.12]{kiem-lee}).
	Therefore, the dot action cannot arise from any action of $S_n$ on the variety $X(s)$.
\end{proof}

In contrast, restricting Tymoczko's dot action along $C_n\subset S_n$ yields a $C_n$-action on $\coh^*(X(s))$ which does lift to an action on $X(s)$ for certain choices of $s\in \grs$, as illustrated in the following example.

\begin{example}[A $C_n$-action on $X(s)$] \label{e.Zn action}
	Let $\xi$ be a primitive $n$-th root of unity. Let $s=\mathrm{diag}(\xi,\xi^2,\dots,\xi^i,\dots,1)$ be the $n\times n$ diagonal matrix with (ordered) diagonal entries $\xi, \xi^2,\dots,\xi^i,\dots,\xi^{n}=1$. 
	Then, $\bar{H}(s)\cong C_n$ generated by $\sigma=(1 2 \dots n)\in S_n$.  Indeed,
	the permutation matrix 
	$g=g_\sigma\in G$ associated to $\sigma$ 
	satisfies 
	\[gsg^{-1}=\mathrm{diag}(1,\xi,\dots,\xi^{n-1})=\xi^{-1}s.\] 
	In particular, $g\in H(s)$ and $\sigma \in \bar{H}(s)$. 
	Conversely, suppose that $\tau\in \bar{H}(s)$. Then, there exists $(a,b)\in \Aff$ such that $\tau s \tau^{-1}=as+b$, that is, 
	\[\xi^{\tau^{-1}(i)}=a\xi^{i}+b \quad \text{ for }~1\leq i\leq n.\]
	By summing (resp. multiplying) these up for $1\leq i\leq n$, we conclude $b=0$ (resp. $a^n=1$). In particular, $a=\xi^j$ for some $j$, and $\tau\in \langle \sigma\rangle=C_n$.

	Consider the action of $C_n$ on $X(s)$ sending $F_\bullet$ to $gF_{\bullet}$, which shifts the coordinates of $\bk^n$. One can easily check that this action does lift the restriction (to $C_n$) of Tymoczko's dot action on $\coh^*(X(s))$. In other words, the induced $C_n$-action on $\coh^*(X(s))$ coincides with the restriction of Tymoczko's dot action of $S_n$ to $C_n$.
\end{example}
For the definition of Tymoczko's dot action for generalized Hessenberg varieties and its relation to the dot action for Hessenberg varieties, see \cite[\S2]{kiem-lee}.

 \begin{example} [A $D_n$-action on $Y(s)$] \label{e.Dn action}
 Let $s$ and $g_{\sigma}$ be as in Example~\ref{e.Zn action}. Then $g_{\sigma}$ is contained in $K(s)$ and acts on $Y(s)$ by mapping $(F_1, F_{n-1})$ to $(g_{\sigma}F_1, g_{\sigma}F_{n-1})$. Let $\tau$ be the permutation   mapping $i$ to $n-i$ for $1 \leq i \leq n-1$ and fixing $n$, and $g_{\tau}  $ be the element  in $G$ associated to $\tau$.   Then  
 \[  g_{\tau} s g_{\tau}^{-1} = s^{-1}.
 \] 
Thus $g_{\tau}$ is contained in $K(s)$ and acts on $Y(s)$ by sending $(F_1, F_{n-1})$ to $(g_{\tau}sF_1, g_{\tau}F_{n-1})$. 
 The subgroup generated by  $g_{\sigma} $ and $g_{\tau}$ defines an action of the dihedral group $D_n$ on $Y(s)$.     In fact, $\bar{K}(s)$ is $D_n$ (\cite[Theorem 22]{WuXu}). 
\end{example}

The following holds in general. 

\begin{lemma}\label{lem:restriction.Tymoczko}

Let $s\in \grs$. 
The action of $H(s)\subset \Aut X(s)$ on $X(s)$ induces a natural $\bar H(s)$-action on the $T(s)$-equivariant cohomology $\coh^*_{T(s)}(X(s))$. 
This $\bar H(s)$-action descends to an action on $\coh^*(X(s))$, which is precisely Tymoczko's dot action on $\coh^*(X(s))$ restricted to the subgroup $\bar H(s)\subset S_n$.

An analogous statement holds for $Y(s)$, with $\bar H(s)$ replaced by $\bar K(s)$.
\end{lemma}

\begin{proof}
	Since $T(s)$ is conjugate to $T$, the maximal torus in $B$, we replace $s$ by a suitable conjugate and assume $T=T(s)$.
	Write $H:=\bar H(s)$ for simplicity.
	
	Let $ET\to BT$ be the universal principal $T$-bundle over the classifying  space $BT$.
	The $T\rtimes H$-action on $X(s)$ induces an $H$-action on $ET\times^TX(s)$, and hence an $H$-action on the equivariant cohomology $\coh^*_T(X(s))= \coh^*(ET\times^TX(s))$. Since the inclusion $ET\times^TX(s)^T\hookrightarrow ET\times^TX(s)$ is $H$-equivariant,
	the restriction map 
	\[\coh^*_T(X(s)) ~ \hookrightarrow ~\coh^*_T(X(s)^T) \cong \coh^*(BT)\otimes \coh^*(X(s)^T)\]
	is $H$-equivariant. 
	On the other hand, the $H$-actions on 
	\[\coh^*(BT)\cong \mathbb{Q}[t_1,\dots,t_n] ~ \text{ and } ~  \coh^*(X(s)^T)\cong \bigoplus_{\sigma \in S_n}\mathbb{Q}\]
	coincide with the restrictions of the canonical $S_n$-actions used in the definition of Tymoczko's dot action on $\coh^*_T(X(s))$. It follows that the $H$-action on $\coh^*_T(X(s))$ coincides with the restriction of Tymoczko's dot action. Consequently, the same holds for $\coh^*(X(s))\cong \coh^*_T(X(s))/(t_1,\dots,t_n)\coh^*_T(X(s))$.
	
	The same argument applies to $Y(s)$ and $\bar{K}(s)$, in which case $Y(s)^T$ is isomorphic to $S_n/(S_1\times S_{n-2}\times S_1)$. 		
\end{proof}

\begin{remark}
	The assertion on $X(s)$ in Lemma~\ref{lem:restriction.Tymoczko} remains valid when $X(s)$ is replaced by any regular semisimple generalized Hessenberg variety of arbitrary codimension. 
	It also holds in other types, provided that one defines $H(s)$ as the set of $g\in G$ such that $\Ad(g^{-1})s\in \langle s\rangle$ in the Lie algebra of $G^{\ad}$, and defines $\bar H(s)$ as the image of $H(s)$ in the Weyl group $W$ under the projection $N_G(T(s))\to W$.
 \end{remark}

\begin{remark}
    It is proved in \cite{Aut-GKM} that for a connected regular semisimple Hessenberg variety $X\subsetneq \Fl_n$ of type $A$, the reductive part of $\Aut^\circ X$ is $T(s)/\mathbb{G}_m$, and $\pi_0(\Aut X)=\Aut X/\Aut^\circ X$ is finite, by studying the GKM graph of $X$. 
\end{remark}

\bigskip

\section{Hessenberg varieties in arbitrary types}\label{s:general}
In this  section  and the next,  let  $G$ be a simple algebraic  group over $\bk$. 
Take a Borel subgroup $B$ of
$G$  and the maximal torus $T$   in it.  
We write $\mathfrak{g}$, $\mathfrak{b}$ and $\mathfrak{t}$ for the Lie algebras of $G$, $B$ and $T$ respectively.   Let $\Phi$  (resp. $\Phi^+$, $\Phi^-$)   be the    set of (resp. positive, negative) roots of $G$. Let $\Delta=\{\alpha_1, \dots, \alpha_r\} \subset \Phi$ denote the set of simple roots. 
We assume that $\mathfrak b$ is generated by $\mathfrak t$ and the root spaces $\mathfrak g_{\alpha}$ with $\alpha \in \Phi^-$.

A subspace $H \subset \mathfrak g$ is said to be a \emph{Hessenberg space} if $H$ is $\mathrm{Ad}(B)$-invariant and contains $\mathfrak b$. Then it corresponds a \emph{Hessenberg subset}, a subset $M \subset \Phi^+$ satisfying that for any $\beta \in M$ and $\alpha \in -\Delta$ with $\beta+\alpha \in \Phi^+$, we have $\beta+\alpha \in M$. The correspondence satisfies $H=\mathfrak b \oplus \bigoplus_{\alpha \in M}\mathfrak g_{\alpha}$.

Write $G\times^B H$ for the quotient of $G \times H$ by the diagonal $B$-action,
and let $\mathcal{B}$ denote the conjugacy class of $B$. 
We then get a morphism $\pi_2:G\times^B H\to\mathfrak{g}$ given by $\pi_2(g,h)=\Ad(g)h$. 
We denote the scheme-theoretic fiber over $s\in \mathfrak g$ by
\[\mathcal{B}(M,s) := \pi_2^{-1}(s).\]
Equivalently, $\mathcal B(M,s)$ is the scheme-theoretic vanishing locus of the section of the homogeneous vector bundle $G\times^B(\fg/H)\to \mathcal B$ given by $gB\mapsto [g,\Ad(g^{-1})s]$.

When $s \in \grs$ is regular semisimple, 
it is proved in \cite{demari-procesi-shayman} that this section intersects transversely with the zero section, so
$\mathcal B(M,s)$ is a smooth subvariety of $\mathcal B$ of pure codimension $\dim\,\fg/H = \lvert \Phi^+- M\rvert$. 
In this case, we call $\mathcal B(M,s)$ the \emph{regular semisimple Hessenberg variety associated with} $M$ (or $H$) and $s$.  
The variety $\mathcal{B}(M,s)$ is connected if and only if $\Delta\subset M$ (\cite[Corollary~9]{demari-procesi-shayman} or \cite[Proposition~A.1]{anderson-tymoczko}).

\subsection{Tangent and normal sequences}
\label{s.adj} 
From now on, let $M$ be the Hessenberg subset consisting of 
all the nonzero positive roots except for the maximal root $\theta$, let $H$ be the corresponding Hessenberg space, and $X:= {\mathcal{B}}(M,s)$ for $s\in \fg- \{0\}$. 

For each character $\lambda$ of the maximal torus $T$, we write $\cL(\lambda):=G\times^B\bk_\lambda$ for the corresponding line bundle on $\mathcal B$.
Then $X$ is a divisor in $\mathcal B$ associated to the line bundle $\cL(\theta)$. So, its ideal sheaf is $\mathcal{I}_X=\mathcal{L}(-\theta)$
and its normal bundle is $\mathscr{N}=\mathcal{L}(\theta)|_{X}$.

Write $\tang \mathcal{B}$ and $\tang X$ for the tangent bundles of $\mathcal{B}$ and $X$, respectively.
We get three related exact sequences:
\begin{align}
   &0\to \mathcal{O}_{\mathcal{B}}\to \mathcal{L}(\theta)\to\mathscr{N}\to 0;\label{e.nr}\\
   &0\to \tang X \to \tang \mathcal{B}|_{X} \to \mathscr{N}\to 0;\label{e.tn}\\
   &0\to \tang \mathcal{B}\otimes\mathcal{I}_X\to \tang \mathcal{B}\to\tang \mathcal{B}|_{X}\to 0.
   \label{e.tr}
\end{align}
Here, the sequences \eqref{e.nr} and \eqref{e.tr} arise from $0 \to \mathcal I_X \to\mathcal O_{\mathcal B} \to \mathcal O_X \to 0$, the former by tensoring with $\mathcal L(\theta)$ and the latter by tensoring with $T\mathcal B$. 
We view each of the sheaves above as sheaves on $\mathcal{B}$ and, for each such
sheaf $\mathcal{F}$, we write
\[
h^i(\mathcal{F}) := \dim\coh^i(\mathcal{B},\mathcal{F}).
\]

\begin{proposition}\label{seqs}
   Write $D$ for the dimension of the image of the map
   $\coh^0(X,\mathscr{N})\to\coh^1(X,\tang X)$ induced by the exact sequence~\eqref{e.tn}.
   Then we have
   \[
     D = h^0(\tang X) + h^0(\tang \mathcal{B}\otimes\mathcal{I}_X)
     - h^1(\tang \mathcal{B}\otimes\mathcal{I}_X) - 1.
   \]
\end{proposition}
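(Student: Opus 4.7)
The plan is a straightforward diagram chase using the three exact sequences~\eqref{e.nr}, \eqref{e.tn}, \eqref{e.tr} together with two well-known ingredients: Demazure's vanishing $\coh^i(\mathcal{B},\tang\mathcal{B})=0$ for $i>0$, and Borel--Weil--Bott, which identifies $\coh^0(\mathcal{B},\mathcal{L}(\theta))$ with the dual of the adjoint representation.

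First I would take the long exact sequence in cohomology associated to~\eqref{e.tn} on $X$:
\[ 0\to \coh^0(X,\tang X)\to \coh^0(X,\tang\mathcal{B}_{|X})\to \coh^0(X,\mathscr{N})\xrightarrow{\delta}\coh^1(X,\tang X)\to\cdots \]
By definition $D=\dim\operatorname{im}(\delta)=h^0(\mathscr{N})-h^0(\tang\mathcal{B}_{|X})+h^0(\tang X)$.  Next, the long exact sequence coming from~\eqref{e.nr}, combined with $\coh^0(\mathcal{O}_\mathcal{B})=\bk$ and $\coh^1(\mathcal{O}_\mathcal{B})=0$, gives $h^0(\mathscr{N})=h^0(\mathcal{L}(\theta))-1$.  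Finally, the long exact sequence coming from~\eqref{e.tr}, combined with Demazure's theorem $\coh^1(\mathcal{B},\tang\mathcal{B})=0$, yields
\[ h^0(\tang\mathcal{B}_{|X})=h^0(\tang\mathcal{B})-h^0(\tang\mathcal{B}\otimes\mathcal{I}_X)+h^1(\tang\mathcal{B}\otimes\mathcal{I}_X). \]
Substituting these two identities into the expression for $D$ gives
\[ D=h^0(TX)+h^0(\tang\mathcal{B}\otimes\mathcal{I}_X)-h^1(\tang\mathcal{B}\otimes\mathcal{I}_X)+h^0(\mathcal{L}(\theta))-h^0(\tang\mathcal{B})-1. \]

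The only remaining point is the identity $h^0(\mathcal{L}(\theta))=h^0(\tang\mathcal{B})$, which I expect to be the main (mild) obstacle.  Since $\theta$ is the highest root, Borel--Weil--Bott identifies $\coh^0(\mathcal{B},\mathcal{L}(\theta))$ with the dual of the irreducible representation of highest weight $\theta$, namely the adjoint representation $\mathfrak{g}$, so $h^0(\mathcal{L}(\theta))=\dim\mathfrak{g}$.  On the other hand, $\coh^0(\mathcal{B},\tang\mathcal{B})$ is the Lie algebra of $\Aut^0\mathcal{B}=G^{\mathrm{ad}}$, which under the standing hypothesis that $G$ is simple adjoint (as in Theorem~\ref{t_comp}) also has dimension $\dim\mathfrak{g}$.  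With this cancellation the formula reduces to the one stated.
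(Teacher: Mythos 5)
Your proposal is correct and follows essentially the same approach as the paper: expand the long exact sequences for \eqref{e.tn}, \eqref{e.nr}, and \eqref{e.tr}, use $\coh^{>0}(\mathcal{B},\mathcal{O}_\mathcal{B})=0$ and Demazure's $\coh^{>0}(\mathcal{B},\tang\mathcal{B})=0$, and invoke the identification $h^0(\mathcal{L}(\theta))=h^0(\tang\mathcal{B})=\dim\mathfrak{g}$. The only cosmetic difference is that you defer evaluating $h^0(\mathcal{L}(\theta))$ and $h^0(\tang\mathcal{B})$ until the end, while the paper substitutes the value $\dim G$ for both immediately via Demazure's computation (its equation \eqref{e.Dcalc}).
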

\begin{proof}
  By~\cite{demazure}, we know that
  \begin{equation}\label{e.Dcalc}
    h^i(\mathcal{L}(\theta)) = h^i(\tang \mathcal{B}) =
    \begin{cases}
      \dim G & \text{ if }  i =0;\\
      0 &       \text{ otherwise}.
    \end{cases}
  \end{equation}
  From the long exact sequence for~\eqref{e.tn}, we then get that
  \begin{equation}\label{e.Dim}
    D = h^0(\mathscr{N}) - h^0(\tang \mathcal{B}|_{X}) + h^0(\tang X).
  \end{equation}
  From the exact sequence~\eqref{e.nr}, the computation in~\eqref{e.Dcalc}, and the fact that
  $h^i(\mathcal{O}_{\mathcal{B}}) = 0$ for $i>0$, we get that
  \begin{equation}\label{e.Ncalc}
  h^i(X,\mathscr{N}) =
  \begin{cases}
    \dim G -1 & \text{ if }i=0;\\
    0 &\text{ otherwise.}
  \end{cases}
  \end{equation}
  And from the exact sequence~\eqref{e.tr} and the computation in~\eqref{e.Dcalc}, we get that
  \begin{align*}
    h^0(\tang \mathcal{B}|_{X}) &= h^1(\tang \mathcal{B}\otimes\mathcal{I}_X) +
    h^0(\tang \mathcal{B}) - h^0(\tang \mathcal{B}\otimes\mathcal{I}_X)\\
      &= h^1(\tang \mathcal{B}\otimes\mathcal{I}_X) +
    \dim G - h^0(\tang \mathcal{B}\otimes\mathcal{I}_X). 
  \end{align*}
Substituting the last equation as well as~\eqref{e.Dcalc}
and \eqref{e.Ncalc} into~\eqref{e.Dim}, yields the desired result.
\end{proof}

\begin{theorem}\label{prop: cohomology vanishing}
  Let $G$ and $X$ be as above. Then, we have the following.
  \[
    \coh^i(\mathcal{B}, \tang \mathcal{B}\otimes\mathcal{I}_X) \cong
    \begin{cases}
      \bk & \text{ if $G$ of type $A_1$ and $i=0$};\\
      \bk & \text{ if $G$ of type $A_2$ and $i=1$};\\
      0 & \text{ otherwise.}
    \end{cases}
  \]
\end{theorem}

We prove Theorem~\ref{prop: cohomology vanishing} in Subsection~\ref{ss_van}
below.
Assuming this cohomology vanishing  we compute the terms appearing in
Proposition~\ref{seqs} obtaining a proof of 
Theorem~\ref{t_comp} from the introduction.

Before starting the proof of Theorem~\ref{t_comp}, we make a few observations about the spaces and bundles involved.

Let us recall how~\eqref{e.Dcalc} was proved in~\cite{demazure}.
Consider the exact sequence
\begin{equation}\label{txtheta}
  0\to E_{\theta}\to T\mathcal{B}\to \mathcal L(\theta)\to 0
\end{equation}
that appears in the
proof (on page 184) of 
\cite[Proposition 2]{demazure}. 
Demazure shows that $\coh^*(\mathcal{B},E_{\theta})=0$ and 
that $\coh^*(\mathcal{B}, \mathcal L(\theta))$
is concentrated in degree $0$ with $ \coh^0(\mathcal{B}, \mathcal L(\theta))=\mathfrak g$ as a $G$-representation. 
In particular, $\coh^0(\mathcal B, \tang \mathcal B)$, $\coh^0(\mathcal B, \cL(\theta))$ and $\fg$ are all isomorphic as $G$-representations. Since these are irreducible, the isomorphisms among them are unique up to scalar multiplication by Schur's lemma. 

Consequently, there exists a $G$-equivariant isomorphism $\fg \cong \coh^0(\cB, \tang\cB)$, which is unique up to scalar. We now give an explicit description of such an isomorphism.
 
\begin{lemma}\label{c_act}
Consider the map $\mathfrak{g} \to \coh^0(\mathcal{B}, T\mathcal{B})$ that sends each $s\in\mathfrak{g}$ to the vector field on~$\mathcal{B}$ generated by~$s$ under the adjoint action of~$G$. 
Then this map is an isomorphism of $G$-representations, where both sides carry the adjoint action.
\end{lemma}
\begin{proof}
The map is nonzero and $G$-equivariant.
Since both sides are irreducible $G$-representations,
it is an isomorphism by Schur's lemma.
\end{proof}

\begin{lemma}
  \label{c_div}
  Write $\varphi:\mathfrak{g}\to\coh^0(\mathcal{B}, \mathcal L(\theta))$.
  Then, for $s \in \mathfrak{g}$, $\mathcal{B} (M,s)$
  is precisely the vanishing locus
  of the section $\varphi(s)$.
\end{lemma}
\begin{proof} 
The map $\mathfrak g\to \coh^0(\mathcal B, \tang \mathcal B)$ sends $s$ to the section of $\tang \mathcal B=G\times^B(\fg/\mathfrak{b})$ given by $gB\mapsto [g,\Ad(g^{-1})s]$, hence $\varphi$ sends $s$ to the section of $\mathcal L(\theta)=G\times^B(\mathfrak g/H)$ of the same form.
Thus the vanishing locus of $\varphi(s)$ is $\mathcal B(M,s)$ by definition.
\end{proof}

\begin{lemma}\label{ss_fact}
  Suppose $x,s\in\mathfrak{g}$ with $s$ semisimple, and suppose $[x,s]=c s$ for some $c\in\bk$.
  Then $c=0$.
\end{lemma}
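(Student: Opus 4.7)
The plan is to use the eigenspace decomposition of $\ad(y)$ on $\mathfrak{g}$ and extract the conclusion by comparing components on the two sides of the hypothesis. Rewriting $[x,y]=\lambda y$ as $\ad(y)(x)=-\lambda y$, the statement becomes a constraint on how $x$ decomposes into $\ad(y)$-eigenvectors, and the zero-eigenspace component will furnish the desired identity.

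Concretely, since $y$ is semisimple, the operator $\ad(y)\colon\mathfrak{g}\to\mathfrak{g}$ is diagonalizable, so there is a direct sum decomposition
\[
\mathfrak{g}=\bigoplus_{\mu}\mathfrak{g}_{\mu}
\]
into $\ad(y)$-eigenspaces. Because $[y,y]=0$, the element $y$ itself lies in the zero-eigenspace $\mathfrak{g}_{0}$. Writing $x=\sum_{\mu}x_{\mu}$ with $x_{\mu}\in\mathfrak{g}_{\mu}$, I would compute
\[
\ad(y)(x)=\sum_{\mu}\mu\, x_{\mu},
\]
so the hypothesis reads $\sum_{\mu}\mu\, x_{\mu}=-\lambda y$. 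On the left, every summand with $\mu\neq 0$ lies in $\mathfrak{g}_{\mu}$ and the $\mu=0$ summand is zero; on the right, $-\lambda y$ lies in $\mathfrak{g}_{0}$. Independence of the eigenspaces then forces each nonzero-$\mu$ term to vanish and simultaneously $-\lambda y=0$. Hence $\lambda=0$ whenever $y\neq 0$ (the case $y=0$ being vacuous).

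There is no real obstacle here: the entire argument is a single eigenspace projection once one knows that $\ad(y)$ is semisimple, which is the standard consequence of $y$ being a semisimple element of $\mathfrak{g}$.
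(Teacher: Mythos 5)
Your proof is correct and rests on the same key fact the paper uses: since $y$ is semisimple, $\ad(y)$ is diagonalizable. The paper proceeds by applying $\ad(y)$ once more to get $\ad^2(y)x=0$, and then uses $\ker(\ad^2(y))=\ker(\ad(y))$ for a semisimple operator to conclude $[x,y]=0$; you instead decompose $x$ into $\ad(y)$-eigenvectors and read off $\lambda y=0$ by projecting onto the zero eigenspace — the same argument executed in a slightly more direct way, and you are a bit more careful than the paper in flagging the vacuous case $y=0$.
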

\begin{proof}
By assumption, $\ad(s)x=-cs$, hence $\ad^2(s)x=0$.
Thus $x$ is in the generalized $0$-eigenspace of $\ad(s)$.
Since $s$ is semisimple, $\ad(s)$ is semisimple, and therefore $\ad(s)x=0$. This forces $c=0$.
\end{proof}

From now on, we assume that $s\in \grs$ is regular semisimple, as in Theorem~\ref{t_comp}.
\begin{proposition}\label{p_tan}
  Let $G$ and $X$ be as in Theorem~\ref{t_comp}.
  Then, $h^0(X,TX)= \rank G$.
\end{proposition}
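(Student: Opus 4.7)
The plan is to compute $H^0(X,TX)$ by chasing the long exact sequences associated to \eqref{e.tn} and \eqref{e.tr}, using Theorem~\ref{prop: cohomology vanishing} to kill nuisance terms and then explicitly identifying the resulting map of $\mathfrak{g}$-modules.

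First I would use Theorem~\ref{prop: cohomology vanishing} (which applies because $G$ is not of type $A_1$ or $A_2$) to get $H^i(\mathcal{B}, T\mathcal{B}\otimes\mathcal{I}_X)=0$ for all $i$. Combining this with the long exact sequence of \eqref{e.tr} and Demazure's computation \eqref{e.Dcalc}, one obtains canonical isomorphisms $H^0(X,T\mathcal{B}|_X)\cong H^0(\mathcal{B},T\mathcal{B})\cong \mathfrak{g}$ (the last isomorphism is the adjoint-action map of Claim~\ref{c_act}) and $H^i(X,T\mathcal{B}|_X)=0$ for $i\geq 1$.

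Next I would feed this into the long exact sequence of \eqref{e.tn}. Since $H^i(X,T\mathcal{B}|_X)=0$ for $i\geq 1$, it gives
\[
0\to H^0(X,TX)\to \mathfrak{g}\xrightarrow{\ \delta\ } H^0(X,\mathscr{N})\to H^1(X,TX)\to 0,
\]
and $H^i(X,TX)=0$ for $i\geq 2$. By~\eqref{e.nr} and~\eqref{e.Dcalc}, $H^0(X,\mathscr{N})$ is canonically identified with $H^0(\mathcal{B},\mathcal{L}(\theta))/\bk\!\cdot\!\varphi(s) \cong \mathfrak{g}/\langle s\rangle$, the quotient being by the line spanned by $s$ under the $G$-equivariant identification of Claim~\ref{c_act}.

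The main step is to identify $\delta$ explicitly. Since all the maps in sight are $G$-equivariant and the identifications of $H^0(T\mathcal{B})$ and $H^0(\mathcal{L}(\theta))$ with $\mathfrak{g}$ are both normalized by the adjoint $G$-action, I expect $\delta$ to be the map $\xi\mapsto [\xi,s]\bmod \langle s\rangle$. To verify this, I would note that a vector field $V_\xi$ coming from $\xi\in\mathfrak{g}$ acts on the section $\varphi(s)\in H^0(\mathcal{B},\mathcal{L}(\theta))$ via the Lie derivative, which by $G$-equivariance of $\varphi$ equals $\varphi([\xi,s])$; restricting this derivative to $X=\{\varphi(s)=0\}$ is precisely the connecting map to the normal bundle. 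This identification is the only place where a small computation is required.

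Granting that identification, $H^0(X,TX)$ is the set of $\xi\in\mathfrak{g}$ with $[\xi,s]\in \bk\!\cdot\!s$. By Lemma~\ref{ss_fact}, for such $\xi$ we must in fact have $[\xi,s]=0$, so $H^0(X,TX)=Z_{\mathfrak{g}}(s)$. Since $s\in\mathfrak{g}^{\mathrm{rs}}$, its centralizer is a Cartan subalgebra of $\mathfrak{g}$, whose dimension equals $\rank G = r$. This yields $h^0(X,TX)=r$, as desired. The only real obstacle is the $G$-equivariant identification of $\delta$ above; once that is in hand, the argument is a short diagram chase combined with Lemma~\ref{ss_fact}.
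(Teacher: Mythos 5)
Your proof is correct and reaches the same endpoint as the paper's, but you have linearized the key step in a way that turns the paper's ``Sketch'' into a cleaner argument. The paper also uses Theorem~\ref{prop: cohomology vanishing} to lift a vector field $V\in\coh^0(X,TX)$ to $\mathfrak{g}$; but it then \emph{exponentiates} to $g=e^h\in(\Aut X)^0$, invokes Claim~\ref{c_div} to conclude $g(X)=X$ forces $\Ad(g)s\in\bk^\times s$, differentiates to get $\ad(h)s\in\bk s$, and finishes with Lemma~\ref{ss_fact}. It also needs a separate effectivity argument (via the moment graph) to get the inclusion $\mathfrak{t}\subseteq\coh^0(X,TX)$. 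You instead identify the map $\mathfrak{g}\cong\coh^0(X,T\mathcal{B}|_X)\to\coh^0(X,\mathscr{N})\cong\mathfrak{g}/\bk s$ directly as $\xi\mapsto[\xi,s]\bmod\bk s$ by differentiating the defining section $\varphi(s)$ along $V_\xi$, which is the correct infinitesimal form of Claim~\ref{c_div}; then $\ker\delta=\mathfrak{t}$ follows from Lemma~\ref{ss_fact} in one stroke, giving both inclusions at once. This is tighter: it avoids the handwave about exponentiating regular vector fields and does not require the effectivity of the $T$-action as a separate input. The small sign ambiguity in $L_{V_\xi}\varphi(s)=\pm\varphi([\xi,s])$ is immaterial since you only use the kernel. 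As a bonus your exact sequence $0\to\coh^0(TX)\to\mathfrak{g}\to\mathfrak{g}/\bk s\to\coh^1(TX)\to 0$ also recovers $h^1(X,TX)=r-1$ directly, though the proposition does not require it.
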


\begin{proof}
  Recall that we have $X= {\mathcal{B}}(M,s)$ for $s$ a regular semisimple element of $\mathfrak{g}$.
  Set $T=Z_G(s)$, the centralizer of $s$ in $G$, and let $\mathfrak{t}$ denote the Lie algebra of $T$, which is the
  centralizer of $s$ in $\mathfrak{g}$.
  Then $T$ acts on $X$ and the derivative of that action gives a map $\mathfrak{t}\to\coh^0(X,TX)$. 
Considering the moment graph of $X$ (see~\cite{AHTMMS20}), the weights appearing on the edges  of the moment graph of $X$ span $\mathfrak{t}$. We have only to check this at the vertex corresponding to the identity. This is the case  because $M$ contains every simple root. 
  Therefore, the action of $T$ on $X$ is effective.
  In other words, the homomorphism $T\to \Aut X$ is injective.
  So we can regard $\mathfrak{t}$ as a subspace of $\coh^0(X,TX)$.

  Now, we have an isomorphism $\mathfrak{g}=\coh^0(\mathcal{B},T\mathcal{B})$ by Lemma~\ref{c_act}, and, from
  Theorem~\ref{prop: cohomology vanishing}, we have another isomorphism
  $\coh^0(\mathcal{B},T\mathcal{B})\cong\coh^0(X,T\mathcal{B}|_{X})$.
  It follows that any vector field in $\coh^0(X,TX)$ is the restriction of a
  vector field in $\coh^0(\mathcal{B},T\mathcal{B})=\mathfrak{g}$.

  So suppose $g=e^h\in \Aut^\circ X$ is the exponential of a vector field $h\in\mathfrak{g}$.
  Then, viewing $g$ as an automorphism of $\mathcal{B}$, we see that $g(X)=X$.
  But then it follows from Lemma~\ref{c_div} that $\Ad(g)s$ must be a multiple of $s$.
  So $\ad(h)s$ is a multiple of~$s$.
  But, since $s$ is semisimple, this implies that $\ad(h)s=0$ by Lemma~\ref{ss_fact}, thus
  that $h\in\mathfrak{t}$.
  This shows that $\coh^0(X,TX)=\mathfrak{t}$, which proves Proposition~\ref{p_tan} as $\rank G =\dim\mathfrak{t}$.
\end{proof}

\begin{proof}[Proof of Theorem~\ref{t_comp}]
  From Theorem~\ref{prop: cohomology vanishing}, it follows that the map
  $\coh^i(\mathcal{B},T\mathcal{B})\to \coh^i(\mathcal{B},T\mathcal{B}|_{X})$ is an isomorphism for all $i$.
  But we
  know from~\cite[Proposition~2 (on page 182)]{demazure} that $\coh^i(\mathcal{B},T\mathcal{B})=0$ for
  $i>0$.
  It then follows from the short exact sequence \eqref{e.tn} that the connecting homomorphism
  \beq\label{eq:connecting.homo}\coh^i(X, \mathscr N)\lra\coh^{i+1}(X,TX)\eeq
  is surjective for $i=0$ and an isomorphism for $i>0$.
  Using the computation in~\eqref{e.Ncalc}, we conclude that $h^i(X,TX)=0$ for $i>1$.

  We now show that the Kodaira--Spencer map~\eqref{eq:KSmap.intro} is surjective.
	By construction, it factors as
  \[\fg\xrightarrow{~\varphi~}\coh^0(\cB,\cL(\theta))\lra \coh^0(X,\mathscr{N})\lra \coh^1(X,\tang X)\]
  where the second map is the restriction induced by the short exact sequence~\eqref{e.nr},
  and the third map is the connecting homomorphism~\eqref{eq:connecting.homo}.
  Indeed, by Lemma~\ref{c_div}, the map $\varphi$ is precisely the Kodaira--Spencer map
  associated to the family of divisors in $\mathcal{B}$ given by $\pi_2$.
  The connecting homomorphism~\eqref{eq:connecting.homo} corresponds to the natural map from embedded deformations to abstract deformations.

	The restriction map
	$\coh^0(\mathcal{B},\mathcal{L}(\theta))\to \coh^0(X,\mathscr N)$ is surjective,
	since $\coh^1(\mathcal{B},\cO_\cB)=0$.
	We have already shown that $\varphi$ is an isomorphism and  the connecting homomorphism is surjective.
	Therefore, their composition, which coincides with the Kodaira--Spencer map
	\eqref{eq:KSmap.intro}, is surjective. 

  Finally Proposition~\ref{p_tan}, together with Propositions~\ref{seqs} and Theorem~\ref{prop: cohomology vanishing}, implies that $h^1(X,TX)=\rank G-1$.
  This completes the proof of Theorem~\ref{t_comp}.
\end{proof}

It remains to prove Theorem~\ref{prop: cohomology vanishing}. 
When $G$ is of type $A_1$, then $\mathcal{B}$ is $\mathbb P^1$ and $\tang \mathcal{B} \otimes \mathcal{I}_X$ is a trivial vector bundle of rank one. Thus
 \[
    \coh^i(\mathcal{B}, \tang \mathcal{B}\otimes\mathcal{I}_X) \cong
    \begin{cases}
        \bk &  \text{ if } i=0;\\
        0 & \text{ otherwise.}
    \end{cases}
  \]
So, from now on, we will assume that $G$ is not of type $A_1$.  
 
\subsection{Regular weights} \label{s.notations}

We assume that $\Phi$ is irreducible (as implied by the hypotheses on $G$ in
Theorem~\ref{prop: cohomology vanishing}).
For $\alpha \in \Phi$, let $\alpha^{\vee}$ be the coroot associated with~$\alpha$. 
We denote by $\langle \,,\,\rangle$ the pairing between the weight lattice and the coroot lattice. The Weyl group $W$ of $G$ acts on the weight lattice: For a simple reflection $s_{\alpha}$ and a weight $\lambda$, $s_{\alpha}(\lambda)=\lambda - \langle \lambda, \alpha^{\vee} \rangle \alpha$. 
Denote
by $\rho $ the half sum of positive roots. Then $\langle \rho,\alpha^{\vee}\rangle =1$ for any simple root $\alpha$. 

We say that an integral weight $\lambda$ is \emph{dominant} (\emph{regular}, respectively) if $\langle\lambda, \alpha^{\vee} \rangle \geq 0$ ($\langle\lambda, \alpha^{\vee} \rangle\not= 0$, respectively) for all $\alpha \in \Phi^+$. 
We call a weight $\lambda$ \emph{singular} if it is not regular. 
For a regular dominant weight $\lambda$, denote by $V_{\lambda}$ the irreducible representation of $G$ with highest weight $ \lambda$.

\begin{theorem}[{Borel--Weil--Bott theorem for $\mathcal B$ \cite[Theorem in Section 4.3]{Ak}}]  \label{thm BWB} 
Let $\lambda$ be an integral weight of $G$. If $\lambda + \rho$ is   regular, then there is a unique $w \in W$ such that $w(\lambda + \rho)$ is regular dominant and  
\[
\coh^{i}(\mathcal B, \mathcal L(\lambda)) \cong
\begin{cases}
V_{w(\lambda+\rho)-\rho} & \text{ if }i=\ell(w); \\
0 & \text{ otherwise.}
\end{cases}
\]
Otherwise, $\coh^i(\mathcal B, \mathcal L(\lambda))$ vanishes for all $i$.  

\end{theorem}
  
Recall that  $\theta$  denotes the maximal root of the root system $\Phi$. 
When $\Phi$ has roots of different lengths, let $\theta^+$ denote the maximal
short root, the maximal root among all short roots.
By Proposition 6 in Section 4.7 of \cite{Ak}, there is a unique $k$ such that
$\theta^+ + \alpha_k$ is a root. 
Set $\theta^{++}=s_k(\theta^+ +
\alpha_k)$.
Then $\theta^+, k, \theta^{++}$ are given as follows.

\begin{center}
\begin{tabular}{c||c|c|c}
\toprule
 $\Delta$ & $\theta^+$ & $k$ & $\theta^{++}$  \\
\midrule
  $B_r$ & $\alpha_1 + \alpha_2 + \dots + \alpha_{r-1} + \alpha_r$ & $r$ 
    & $\alpha_1 + \alpha_2 + \dots + \alpha_{r-1}$ \\
  \hline
  $C_r$ & $\alpha_1 + 2 \alpha_2 \dots + 2 \alpha_{r-1} + \alpha_r$ & $1$ 
    & $2 \alpha_2 + \dots + 2 \alpha_{r-1} + \alpha_r$ \\
   \hline
  $F_4$ & $\alpha_1 + 2 \alpha_2 + 3 \alpha_3 + 2 \alpha_4$ & $3$ 
    & $\alpha_1 + 2 \alpha_2 + 2 \alpha_3 + 2\alpha_4$ \\
   \hline
  $G_2$ & $2 \alpha_1 + \alpha_2$ & $1$ &$\alpha_2$ \\
\bottomrule
\end{tabular}
\end{center}
\noindent
We follow the conventions in \cite{bourbaki-lie-4-6} for the indices of simple roots. See Table~\ref{table_Dynkin_diagram}.
\begin{table}[ht]
\tikzstyle{Dnode}=[draw, circle, inner sep = 0.07cm]
\tikzstyle{double line} = [
decoration={
    markings,
    mark=at position 0.55 with {\arrow[line width = 0.5pt,scale=1]{angle 90}}},
	double distance = 1.5pt, 
	double=\pgfkeysvalueof{/tikz/commutative diagrams/background color},
	postaction={decorate}
]

\tikzstyle{triple line} = [
decoration={
    markings,
    mark=at position 0.55 with {\arrow[line width = 0.5pt,scale=1]{angle 90}}},
	double distance = 2pt, 
	double=\pgfkeysvalueof{/tikz/commutative diagrams/background color},
	postaction={decorate}
]

\begin{center}
\begin{tabular}{l|l}
\toprule
$\Phi$ & Extended Dynkin diagram \\
\midrule
$A_{r}$ $(r \geq 2)$  &
\begin{tikzpicture}[scale=.5, baseline=-.5ex]
\tikzset{every node/.style={scale=0.7}}
\node[Dnode, label=below:$1$] (1) {};
\node[Dnode, label=below:$2$] (2) [right = of 1] {};
\node[Dnode, label=below:$3$] (3) [right = of 2] {};
\node[Dnode, label=below:$r-1$] (4) [right =of 3] {};
\node[Dnode, label=below:$r$] (5) [right =of 4] {};			
\node[Dnode] (6) [above =of 3] {};			

\draw (1)--(2)--(3)
(4)--(5)
(1)--(6)--(5);
\draw[dotted] (3)--(4);

\end{tikzpicture}  \\

$B_2$ & 
\begin{tikzpicture}[scale=.5, baseline=-.5ex]
\tikzset{every node/.style={scale=0.7}}
    \node[Dnode] (0) {};
    \node[Dnode, label=below:$2$] (2) [right = of 0] {}; 
    \node[Dnode, label=below:$1$] (1) [right = of 2] {};
    \draw[double line] (0)--(2); 
    \draw[double line] (1)--(2);
\end{tikzpicture}
\\ 

$B_{r}$ $(r \geq 3)$  &
\begin{tikzpicture}[scale=.5, baseline=-.5ex]
\tikzset{every node/.style={scale=0.7}}

\node[Dnode, label=below:$2$] (1) {};
\node[Dnode, label=below:$3$] (2) [right = of 1] {};
\node[Dnode,label=below:$r-2$] (3) [right = of 2] {};
\node[Dnode,label=below:$r-1$] (4) [right =of 3] {};
\node[Dnode, label=below:$r$] (5) [right =of 4] {};
\node[Dnode] (6) [below left = 0.6cm and 0.6cm of 1] {};
\node[Dnode, label=right:$1$] (7) [above left = 0.6cm and 0.6cm of 1] {};			

\draw (1)--(2)
(3)--(4)
(6)--(1)--(7);
\draw [dotted] (2)--(3);
\draw[double line] (4)--(5);
\end{tikzpicture}  \\[2em] 	

$C_{r}$ $(r \geq 2)$ & 
\begin{tikzpicture}[scale=.5, baseline=-.5ex]
\tikzset{every node/.style={scale=0.7}}

\node[Dnode, label = below:$1$] (1) {};
\node[Dnode, label = below:$2$] (2) [right = of 1] {};
\node[Dnode, label = below:$r-2$] (3) [right = of 2] {};
\node[Dnode, label = below:$r-1$] (4) [right =of 3] {};
\node[Dnode, label = below:$r$] (5) [right =of 4] {};			
\node[Dnode] (6) [left =of 1] {};
\draw (1)--(2)
(3)--(4);
\draw [dotted] (2)--(3);
\draw[double line] (5)--(4); 
\draw[double line] (6)--(1);
\end{tikzpicture}  \\[2em] 	

$D_{r}$ $(r \geq 4)$ & 
\begin{tikzpicture}[scale=.5, baseline=-.5ex]
\tikzset{every node/.style={scale=0.7}}

\node[Dnode, label=below:$2$] (1) {};
\node[Dnode, label=below:$3$] (2) [right = of 1] {};
\node[Dnode, label=below:$r-3$] (3) [right = of 2] {};
\node[Dnode, label=right:$r-2$] (4) [right =of 3] {};			

\node[Dnode, label = right:$r$] (5) [below right = 0.6cm and 0.6cm of 4] {};
\node[Dnode, label = right:$r-1$] (6) [above right=  0.6cm and 0.6cm of 4] {};

\node[Dnode, label=right:$1$] (7) [above left = 0.6cm and 0.6cm of 1] {};
\node[Dnode] (8) [below left=0.6cm and 0.6cm of 1] {};

\draw(1)--(2)
(3)--(4)--(5)
(4)--(6)
(7)--(1)--(8);
\draw[dotted] (2)--(3);
\end{tikzpicture}
\\ 
$E_6$ & 

\begin{tikzpicture}[scale=.5, baseline=-.5ex]
\tikzset{every node/.style={scale=0.7}}

\node[Dnode, label=below:$1$] (1) {};
\node[Dnode, label=below:$3$] (3) [right=of 1] {};
\node[Dnode, label=below:$4$] (4) [right=of 3] {};
\node[Dnode, label=right:$2$] (2) [above=of 4] {};
\node[Dnode, label=below:$5$] (5) [right=of 4] {};
\node[Dnode, label=below:$6$] (6) [right=of 5]{};
\node[Dnode] (7) [above=of 2]{};

\draw(1)--(3)--(4)--(5)--(6)
(7)--(2)--(4);
\end{tikzpicture}			
\\[1em]
$E_7$ & 
\begin{tikzpicture}[scale=.5, baseline=-.5ex]
\tikzset{every node/.style={scale=0.7}}

\node[Dnode, label=below:$1$] (1) {};
\node[Dnode] (8) [left=of 1] {};
\node[Dnode, label=below:$3$] (3) [right=of 1] {};
\node[Dnode, label=below:$4$] (4) [right=of 3] {};
\node[Dnode, label=right:$2$] (2) [above=of 4] {};
\node[Dnode, label=below:$5$] (5) [right=of 4] {};
\node[Dnode, label=below:$6$] (6) [right=of 5]{};
\node[Dnode, label=below:$7$] (7) [right=of 6]{};

\draw (8)--(1)--(3)--(4)--(5)--(6)--(7)
(2)--(4);
\end{tikzpicture}	
\\[1em]
$E_8$ & 
\begin{tikzpicture}[scale=.5, baseline=-.5ex]
\tikzset{every node/.style={scale=0.7}}

\node[Dnode, label=below:$1$] (1) {};
\node[Dnode, label=below:$3$] (3) [right=of 1] {};
\node[Dnode, label=below:$4$] (4) [right=of 3] {};
\node[Dnode, label=right:$2$] (2) [above=of 4] {};
\node[Dnode, label=below:$5$] (5) [right=of 4] {};
\node[Dnode, label=below:$6$] (6) [right=of 5]{};
\node[Dnode, label=below:$7$] (7) [right=of 6]{};
\node[Dnode, label=below:$8$] (8) [right=of 7]{};
\node[Dnode] (9) [right=of 8]{};

\draw(1)--(3)--(4)--(5)--(6)--(7)--(8)--(9)
(2)--(4);
\end{tikzpicture}
\\
$F_4$ 
&
\begin{tikzpicture}[scale = .5, baseline=-.5ex]
\tikzset{every node/.style={scale=0.7}}

\node[Dnode] (1) {};
\node[Dnode, label=below:$1$] (2) [right = of 1] {};
\node[Dnode, label=below:$2$] (3) [right = of 2] {};
\node[Dnode, label=below:$3$] (4) [right =of 3] {};
\node[Dnode, label=below:$4$] (5) [right =of 4] {};

\draw (1)--(2)
(2)--(3)
(4)--(5);
\draw[double line] (3)--(4);
\end{tikzpicture}  \\
$G_2$
&
\begin{tikzpicture}[scale =.5, baseline=-.5ex]
\tikzset{every node/.style={scale=0.7}}

\node[Dnode, label=below:$1$] (1) {};
\node[Dnode, label=below:$2$] (2) [right = of 1] {};
\node[Dnode] (3) [right=of 2] {};

\draw[triple line] (2)--(1); 
\draw (1)--(2);
\draw (2)--(3);

\end{tikzpicture}\\
\bottomrule
\end{tabular}
\end{center}
\caption{Extended (completed) Dynkin diagrams of root systems}\label{table_Dynkin_diagram}
\end{table}

\begin{proposition} [Theorem in Section 4.7 of \cite{Ak}, Lemma 2 of \cite{demazure}]  \label{prop: regular
  weight 1} Let $\alpha \in \Phi$ be a root such that $\alpha+ \rho$ is regular.
  Then $\alpha$ is one of the following:
  \[
    \theta , \quad   \theta^+ , \quad
    \theta^{++} , \quad   -\alpha_i    , \text{ where } 1 \leq i \leq r .
  \]
  The unique element $w \in W$ such that  $w(\alpha+ \rho)$ is regular dominant is  
$ 
   id ,   id , 
    s_k ,    s_i     $, where $1 \leq i \leq r,   
$ 
  respectively, and the corresponding  $w(\alpha+\rho) - \rho$ is 
$ 
\theta ,    \theta^+,  
    \theta^+ ,     0  $, 
     respectively. 
 \end{proposition}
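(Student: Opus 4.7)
My plan is to split the argument into sufficiency (checking regularity for each of the four listed cases and computing $w$ and $w*\alpha$) and necessity (ruling out every other root). For sufficiency, each case is a direct computation. If $\alpha$ is a dominant root — namely $\theta$, or also $\theta^+$ in non-simply-laced types — then $\langle \alpha + \rho, \alpha_i^\vee \rangle \geq 1$ for every simple root, so $\alpha + \rho$ is already strictly dominant and $w = \mathrm{id}$. If $\alpha = -\alpha_i$, then $s_i(-\alpha_i + \rho) = (\rho - \alpha_i) + \alpha_i = \rho$, yielding $w = s_i$ and $w*\alpha = 0$. If $\alpha = \theta^{++}$, the defining identity $\theta^{++} = s_k(\theta^+ + \alpha_k)$ rearranges to $s_k \theta^{++} = \theta^+ + \alpha_k$, and combining with $s_k \rho = \rho - \alpha_k$ gives $s_k(\theta^{++} + \rho) = \theta^+ + \rho$, so $w = s_k$ and $w*\alpha = \theta^+$.

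For necessity, I would, for every root $\alpha$ not in the list, exhibit a positive coroot $\beta^\vee$ witnessing $\langle \alpha + \rho, \beta^\vee \rangle = 0$. The workhorse is the identity $\langle \alpha + \rho, \alpha_i^\vee \rangle = \langle \alpha, \alpha_i^\vee \rangle + 1$, which vanishes precisely when $\langle \alpha, \alpha_i^\vee \rangle = -1$. In the simply-laced case this disposes of everything cleanly. If $\alpha > 0$ is non-dominant then some $\langle \alpha, \alpha_i^\vee\rangle$ is negative, and for $\alpha \neq \pm \alpha_i$ in simply-laced this value is forced to lie in $\{-1,0,1\}$, so it equals $-1$. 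If $\alpha = -\beta$ with $\beta$ positive and non-simple, then $\|\beta\|^2 = \sum c_i \langle \beta, \alpha_i\rangle > 0$ forces some simple $\alpha_j$ with $\langle \beta, \alpha_j^\vee \rangle > 0$, and since $\beta \neq \alpha_j$ the same normalization argument gives $\langle \beta, \alpha_j^\vee\rangle = 1$, hence $\langle \alpha, \alpha_j^\vee\rangle = -1$ as required.

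The main obstacle is the non-simply-laced types, where $\langle \alpha, \alpha_i^\vee\rangle$ can equal $\pm 2$ or $\pm 3$, so the simple-coroot test no longer detects singularity automatically; indeed $\theta^{++}$ itself is a non-dominant positive root in these types yet is regular, which shows the simple-laced argument cannot extend verbatim. Here I would work type-by-type through $B_r$, $C_r$, $F_4$, $G_2$ using the explicit root data summarized in Section~\ref{s.notations}, pairing candidate roots against non-simple positive coroots $\beta^\vee$ for which $\langle \rho, \beta^\vee\rangle$ is $2$ or $3$, in order to locate singularities for all remaining short and long roots other than $\theta$, $\theta^+$, $\theta^{++}$, and $-\alpha_i$. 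Uniqueness of $w$ in each regular case is automatic from the simply transitive $W$-action on open chambers, and the formulas for $w*\alpha$ fall out of the sufficiency computations via $s_i \rho = \rho - \alpha_i$.
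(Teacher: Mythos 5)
The paper does not prove this proposition at all: it is cited as known from Akhiezer's book (Theorem, \S 4.7 of \cite{Ak}) and Demazure's Lemma~2 of \cite{demazure}, so there is no internal argument to compare against. That said, your structure is the right one and coincides in spirit with those references. Your sufficiency computations are correct and complete: $\theta$ and $\theta^+$ are dominant, the identity $s_i(-\alpha_i+\rho)=\rho$ handles $-\alpha_i$, and unwinding $\theta^{++}=s_k(\theta^++\alpha_k)$ with $s_k\rho=\rho-\alpha_k$ gives $s_k(\theta^{++}+\rho)=\theta^++\rho$. Your simply-laced necessity argument is also complete and clean: non-dominant positive roots pair to $-1$ against some simple coroot (since Cartan integers between non-proportional roots of the same length lie in $\{-1,0,1\}$), and negatives of non-simple positive roots likewise, so $\alpha+\rho$ is killed by a simple coroot. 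You also correctly identify, via the example of $\theta^{++}$, why this test must fail verbatim in $B$, $C$, $F_4$, $G_2$, and you correctly observe that any singularity witness $\beta^\vee$ must have coroot-height at most $3$ because $\langle\alpha,\beta^\vee\rangle\geq -3$ for any pair of roots.

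The genuine gap is that the non-simply-laced necessity is sketched but not executed. You say you would ``work type-by-type through $B_r$, $C_r$, $F_4$, $G_2$, pairing candidate roots against non-simple positive coroots with $\langle\rho,\beta^\vee\rangle\in\{2,3\}$,'' but nothing is actually checked. This is exactly where the content lies in the non-simply-laced case: for every positive root $\beta\ne\theta,\theta^+,\theta^{++}$ one must produce a coroot $\gamma^\vee$ of coroot-height $2$ or $3$ with $\langle\beta,\gamma^\vee\rangle=-\mathrm{ht}(\gamma^\vee)$, and for every negative of a non-simple positive root one must do the same (note that even this direction does not follow from the simple-coroot test, since a non-simple long $\beta$ can have $\langle\beta,\alpha_j^\vee\rangle\in\{2,3\}$ for a short simple $\alpha_j$). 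Until that finite verification is carried out, the necessity direction is incomplete outside the simply-laced types. The approach would succeed if you completed it, but as written it is an outline with an unfilled case analysis; the paper sidesteps all of this by citing the result.
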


We remark that  Proposition~\ref{prop: regular weight 1} is a main step in the proof of \eqref{e.Dcalc}. 
 For details, see \cite{Ak}, \cite{demazure}.
  Similarly, to compute $\coh^i(\mathcal{B}, T\mathcal{B} \otimes \mathcal I_X)$, we need to know the set of roots~$\alpha$ such that $\alpha -\theta+ \rho$ are  regular, which are given in Proposition~\ref{prop: regular weight 2}. Before stating it, we introduce some notations.

We define a subset $\Delta_0$ of $\Delta$  by
\[
\Delta_0:=\{\alpha  \in \Delta \mid \langle \theta,\alpha^{\vee} \rangle =0\}. 
\]
More explicitly:
\begin{itemize} 
\item If $\Delta$ is of type $A_r$ with $r \geq 2$, then   
$\Delta_{0}=\Delta \backslash \{\alpha_1, \alpha_r\}$.
 \item  If $\Delta$ is not of type $A_r$, then 
$\Delta_{0}=\Delta \backslash \{\alpha_{i_0}\}$, where $i_0$ is the index of the (unique) vertex adjacent to the non-indexed vertex in the extended Dynkin diagram. See Table~\ref{table_Dynkin_diagram}.
\end{itemize} 
For the   root system $ \Phi_0 $ generated by $\Delta_{0}$, define $\theta_{0}, \theta_{0}^+, \theta_{0}^{++}$ similarly. 
For the reader's convenience, we list up $\theta_0, \theta_0^+, \theta_0^{++}$. See Table~\ref{table theta0}. 

\begin{table}[h]
\begin{center}
\begin{tabular}{c||c|c}
\toprule
$\Delta$ & $i_0 $ & $\theta_{0}$   \\
\midrule 
$A_r$ &   &  $\alpha_2 + \dots + \alpha_{r-1}$  \\
$B_r$ $(r\geq 3) $ &  $2$ &  $(\alpha_1) + (\alpha_3 + 2 \alpha_4 + \dots + 2 \alpha_r)$ \\
$C_r$ ($r \geq 3$) & $1$ &   $2 \alpha_2 + \dots + 2 \alpha_{r-1} + \alpha_r$ \\
$C_2$  & $1$ &   $ \alpha_2$ \\
$D_r$ $(r \geq 4)$ &  $2$ &  $(\alpha_1) + (\alpha_3 + 2 \alpha_4 + \dots + 2 \alpha_{r-2} +\alpha_{r-1} + \alpha_r)$   \\
$E_6$ & $2$ &   $\alpha_1 + \alpha_3 + \alpha_4 + \alpha_5 + \alpha_6$  \\
$E_7$ & $1$&    $\alpha_2 + \alpha_3 + 2 \alpha_4 + 2 \alpha_5 + 2 \alpha_6 + \alpha_7$  \\
$E_8$ & $8$ &     $2 \alpha_1 + 2 \alpha_2 + 3 \alpha_3 + 4 \alpha_4 + 3 \alpha_5 + 2 \alpha_6 + \alpha_7$   \\
$F_4$ & $1$&  $\alpha_ 2 + 2 \alpha_3 + 2 \alpha_4$  \\
$G_2$ & $2$&   $\alpha_1$ \\
\bottomrule 
\end{tabular} \\
\medskip 
\begin{tabular}{c||c|c}
\toprule 
$\Delta$ &      $\theta_{0}^+$ & $\theta_{0}^{++}$ \\
\midrule 
$B_r$ $(r \geq 3)$ &    $\alpha_3 + \dots + \alpha_r$ & $\alpha_3 + \dots + \alpha_{r-1}$\\
$C_r$ $(r \geq 3)$ &     $\alpha_2 + 2 \alpha_3+\dots + 2 \alpha_{r-1} + \alpha_r$ 
    & $2 \alpha_3 + \dots + 2 \alpha_{r-1} + \alpha_r$\\
$F_4$ &   $\alpha_2 + 2 \alpha_3 + \alpha_4$ 
    & $\alpha_2 + 2 \alpha_3$\\
\bottomrule 
\end{tabular}
\end{center}
\caption{$\theta_0$, $\theta_0^+$, $\theta_0^{++}$ for $\Delta_0$}
\label{table theta0}
\end{table}

\begin{proposition} \label{prop: regular weight 2}
Let $\alpha \in \Phi$ be such that $\alpha - \theta + \rho$ is regular  
and let $w \in W$ be the unique element in $W$ such that $w(\alpha-\theta +\rho)$ is regular dominant. 
 We divide the cases into three:  
\begin{enumerate} 
\item $\alpha \in \Phi^+$ and $\langle\theta, \alpha^{\vee}\rangle >0$;
\item $\alpha \in \Phi^+$ and $\langle\theta, \alpha^{\vee}\rangle=0$;
\item $\alpha \in \Phi^-$. 
  \end{enumerate} 
We list $\alpha$  and   $w$ in Table~\ref{table regular}.

\begin{table}
\begin{center}
\begin{tabular}{c|c||c|c|c|c}
\toprule 
    & $\Delta$ & $\alpha$ & $w$ & $w(\alpha-\theta + \rho) - \rho$ & $\ell(w)$\\
\midrule 
 (1) & any type & $\theta$ & id & $0$ & $0$\\
   & $A_r$ $(r \geq 2)$  & $\theta-\alpha_1, \theta - \alpha_r$ & $s_1, s_r$  & $0$ &$1$\\
  & $\not=A_r$ & $\theta - \alpha_{i_0}$ & $s_{i_0}$ & $0$ & $1$\\
  \midrule
  (2) & $A_r$ $( r \geq 3 )$ & $\theta-\alpha_1-\alpha_r$ & $s_{1}s_{r}$ & $0$ & $2$\\
    & $B_r, D_r$  $( r \geq 4 )$ & $\theta_{0} $  & $s_3 s_2$ & $0$ & $2$\\
    & $C_r$ $( r \geq 3 )$  & $\theta_{0}^+$ & $s_2 s_1$ & $0$ & $2$\\
    & $C_r$ $( r \geq 3 )$ & $\theta_{0}^{++}$ & $s_1s_2s_1$ & $0$ &$3$\\
   \midrule
   (3)& $A_2$ & $-\alpha_1, -\alpha_2$   & $s_2s_1, s_1s_2$  & $0$ &$2$\\
    & $A_2$  & $-\theta$  & $s_1s_2s_1$  & $0$ & $3$ \\
     & $C_2$ &  $-\alpha_{1}$  & $s_2s_1$   & $0$ & $2$ \\
      & $C_2$  &  $-\theta$  &  $s_1s_2s_1$ & $0$ & $3$\\
\bottomrule 
\end{tabular}
\end{center}
\caption{$\alpha$ and $w$ such that $w(\alpha-\theta+\rho)$ is regular dominant}
\label{table regular}
\end{table}

\end{proposition}

For an element $w$ in the Weyl group~$W$, let $\Phi(w)=\{\alpha \in \Phi^+ \mid w^{-1}\alpha \in  \Phi^-\}$.
If $w=s_{ i_1 }\cdots s_{ i_{\ell} }$ is a reduced expression of $w$, then $\Phi(w)$ is given by
\begin{equation}\label{eq_Phi_w}
\{\alpha_{i_1}, s_{ i_1 }(\alpha_{i_2}), \dots, s_{ i_1 }s_{ i_2 }\cdots s_{ i_{\ell-1} }(\alpha_{i_{\ell}})\},
\end{equation}
where the roots in $\Phi(w)$ are pairwise distinct so that $|\Phi(w)| = \ell(w)$. (Corollary 2 to Proposition VI.1.17 of \cite{bourbaki-lie-4-6}). 
 
\begin{lemma} [Lemma 2.13 of \cite{Mac}] \label{lem: negative sum of positive roots}  Let $E$ be a subset of $\Phi^+$.  If $\rho- \sum_{\alpha \in E}\alpha$ is regular and $w$ is the unique element of $W$ such that  $w^{-1} \left(\rho- \sum_{\alpha \in E}\alpha \right)$ is regular dominant, then $ w^{-1} (\rho- \sum_{\alpha \in E}\alpha) =\rho $  and  $E=\Phi(w )$. 
\end{lemma}

\begin{proof}[Proof of Proposition~\ref{prop: regular weight 2}]  \label{proof_of_prop:regular weight 2}
Let $\alpha \in \Phi$. Assume that $\alpha - \theta + \rho$ is
  regular. We divide the cases as in the statement. 
  \\

\noindent {\sf Case (1)} If $\alpha \in \Phi^+$ and $\langle \theta, \alpha^{\vee} \rangle >0$, then $\theta-\alpha$ is either zero or a root. 
If $\theta - \alpha = 0$, then $\alpha = \theta$. Moreover, $\alpha - \theta  + \rho = \rho$ is a regular dominant weight. 
If $\theta - \alpha$ is a root, so is $\alpha-\theta$. Since $\alpha- \theta+\rho$ is regular by the assumption, applying Proposition~\ref{prop: regular weight 1}, we obtain that $\alpha -\theta$ is one of the following: $\theta ,   \theta^+ ,    \theta^{++} ,    -\alpha_i    , \text{ where } 1 \leq i \leq r .$ Since $\theta$ is maximal, $\alpha - \theta$ is $-\alpha_i$ for some $i$, and thus $ \theta - \alpha_i$ is a root. 
This can happen only when $i=1$ or $r$ if $\Delta$ is of type $A_r$, and when $i=i_0$ if $\Delta$ is of other type. 
Indeed, since $\theta+ \alpha_i$ is not a root while both $\theta$ and $\theta - \alpha_i$ are, 
it follows that $\langle \theta, \alpha_i^{\vee} \rangle  \geq 1$   
(Proposition 9 of Section 1 of Chapter VI of \cite{bourbaki-lie-4-6}) but $\langle \theta, \alpha_i^{\vee} \rangle =0$ for any $\alpha_i  \in \Delta_0$. 
\\

\noindent {\sf Case (2)} If $\alpha \in \Phi^+$ and $\langle \theta, \alpha^{\vee} \rangle =0$, then   $\alpha$ is a root in the root system $\Phi_{0}$.  Denote by $G_{0}$ the subgroup of $G$ with the root system $\Phi_{0}$ and by $W_{0}$ and $\rho_{0}$ the corresponding Weyl group and the half sum of positive roots.
We claim that  $\alpha   + \rho_{0}$ is a regular weight of~$G_{0}$.

Suppose, to the contrary, that $\alpha+\rho_{0}$ is a singular weight of $G_{0}$. Then
$\langle w(\alpha+\rho_{0}), \alpha_i^{\vee} \rangle =0$ for some $w \in W_{0}$ and $\alpha_i
\in \Delta_0$. 
Note that $\langle \theta, \beta^{\vee} \rangle = 0$ and $\langle \rho-\rho_0 , \beta^{\vee} \rangle =0 $ for any
$\beta \in \Phi_{0}$. The second follow from that fact that $\langle \rho, \alpha^{\vee} \rangle =1$ for any   $\alpha \in \Delta$ and $\langle \rho_0, \alpha^{\vee} \rangle =1$ for any $\alpha \in \Delta_0$. Thus we have  $\langle w(\alpha - \theta + \rho ), \alpha_i^{\vee}\rangle 
=\langle w(\alpha+ \rho_{0}), \alpha_i^{\vee} \rangle =0 $, contradicting to the assumption that
$\alpha - \theta + \rho$ is regular. Therefore, $ \alpha   + \rho_{0}$ is a
regular weight of $G_{0}$.

By Proposition~\ref{prop: regular weight 1},  $\alpha $ is either
$\theta_{0}$ or $\theta_{0}^+$  or $\theta_{0}^{++}$ or $-\alpha_i $
for $\alpha_i \in \Phi_0$. Among them, $\theta_0$, $\theta_0^+$, and $\theta_0^{++}$ are positive roots. The differences $\theta-\theta_0, \theta_0 - \theta_0^+, \theta_0^+ - \theta_0^{++}$ are given as follows:

\begin{center}
\begin{tabular}{c|  c|c|c}
\toprule 
$\Delta$ &     $\theta-\theta_{0}$ & $\theta_0-\theta_0^+$ & $\theta_0^+ -\theta_0^{++}$  \\
\midrule 
$A_r$ &     $\alpha_1 + \alpha_r$ & & \\
$B_r$ &     $2\alpha_2 + \alpha_3 $ & $\alpha_1 + (\alpha_4 + \dots + \alpha_r)$ & $\alpha_r$\\
$C_r$&      $2 \alpha_1$ & $\alpha_2$ & $\alpha_2$\\
$D_r$ &       $2\alpha_2 + \alpha_3$ &&\\
$E_6$ &     $\alpha_2 + \alpha_3 + 2 \alpha_4 + \alpha_5 $&&\\
$E_7$ &     $2 \alpha_1 + \alpha_2 + 2 \alpha_3 + 2 \alpha_4 + \alpha_5$&&\\
$E_8$ &      $\alpha_2 + \alpha_3 + 2 \alpha_4 + 2 \alpha_5 +2 \alpha_6 + 2 \alpha_7 + 2 \alpha_8$&& \\
$F_4$ &      $2 \alpha_1+2\alpha_2+2\alpha_3$ & $\alpha_4$ & $\alpha_4$\\
$G_2$ &       $2 \alpha_1 + 2 \alpha_2$&&\\
\bottomrule
\end{tabular}
\end{center}

We first consider the case when $\alpha= \theta_0$.
When $\Delta$ is of type $C_r$, we can see that $\rho - 2 \alpha_1$ is singular by considering 
\[
\langle s_2(\rho - 2 \alpha_1), \alpha_1^{\vee} \rangle = 
\langle \rho - 2 \alpha_1 - 3 \alpha_2, \alpha_1^{\vee} \rangle = 0.
\]
When $\Delta$ is not of type $C_r$, we can write $\theta- \theta_{0}$ as a sum $\beta+ \gamma$ of two distinct positive roots, $\beta, \gamma$. Then $\theta_{0} - \theta + \rho= \rho -(\beta + \gamma)$.   If $\theta_{0} - \theta + \rho$ is regular, then by Lemma~\ref{lem: negative sum of positive roots}, there is $w \in W$ such that $w(\rho) = \rho - (\beta+\gamma)$ and $\{\beta, \gamma\}=\Phi(w)$. Here, one of the positive roots, say $\beta$, should be a simple root, and $\gamma$ should be of the form $s_{\beta}(\delta)$ for some simple root $\delta \neq \beta$ as in~\eqref{eq_Phi_w}.
This can happen only when $\Delta$ is of type $A_r$ ($r \geq 3$) or of type $B_r,D_r$ ($r \geq 4$).

For the cases when $\alpha = \theta_0^+$ or $\theta_0^{++}$, one can see that each $\theta - \alpha$ can be written as a sum of distinct positive roots: 
\begin{itemize}
\item $B_r$:
\[
\begin{split}
\theta - \theta_0^+ & = 
(\alpha_1 + \alpha_2)+(\alpha_2 + \alpha_3  +  \alpha_4 + \cdots + \alpha_r),\\
\theta - \theta_0^{++} &= 
(\alpha_1 + \alpha_2)+(\alpha_2 + \alpha_3  +  \alpha_4 + \cdots + \alpha_r) + \alpha_r.
\end{split}
\]
\item $C_r$: 
\[
\begin{split}
\theta - \theta_0^+ &= 
\alpha_1+(\alpha_1+\alpha_2), \\
\theta - \theta_0^{++} &= 
\alpha_1 + (\alpha_1+ \alpha_2) + \alpha_2.
\end{split}
\]
\item $F_4$:
\[
\begin{split}
\theta - \theta_0^+ &= 
\alpha_1+(\alpha_1+2\alpha_2 + 2\alpha_3 + \alpha_4), \\
\theta - \theta_0^{++} &= 
\alpha_1+(\alpha_1+2\alpha_2 + 2\alpha_3 + \alpha_4) + \alpha_4.
\end{split}
\]
\end{itemize}
By the same arguments as in the previous case, $\alpha-\theta + \rho$ is regular only if $\Delta$ is of type $C_r$.
\medskip

\noindent {\sf Case (3)}  
If $\alpha \in \Phi^-$, then we have either $\alpha =-\theta$ or $\alpha\not=-\theta$. 
We consider two cases separately. 

Suppose that  
$\alpha = - \theta$. 
Since $\alpha - \theta + \rho = -2 \theta+ \rho$ is regular,   $\Delta$ is either of type $A_2$ or of type $C_2$. Indeed,
\begin{equation*}
\begin{array} {ll}
\langle s_3s_2s_1(-2\theta + \rho), \alpha_2^{\vee} \rangle =0 & \text{ if } \Delta \text{ is of type }C_3; \\
\langle s_3s_2s_1(-2\theta + \rho), \alpha_4^{\vee}\rangle =0 & \text{ if } \Delta \text{ is of type } C_r, r \geq 4; \\
\langle s_1(-2\theta+ \rho), \alpha_2^{\vee} \rangle =0  & \text{ if } \Delta \text{ is of type } A_r, r \geq 3; \\
\langle s_{i_1}(-2\theta+ \rho), \alpha_{i_0}^{\vee}\rangle =0  & \text{ otherwise }
\end{array} 
\end{equation*}
where $s_{i_1}$ is any simple reflection $s_{\alpha_{i_1}}$   with $\langle \alpha_{i_0}, \alpha_{i_1}^{\vee} \rangle \not=0$.

Suppose that  $\alpha \not=-\theta$. Write $\alpha = - \gamma$ for some positive root $\gamma$. 
Then $\gamma$ is a positive root not equal to $ \theta$. Since $\alpha - \theta + \rho = -\gamma-\theta+\rho$ is regular,   then $\gamma$ is one of the following:
\begin{enumerate}
\item [(a)] $\Delta$ is of type $A_2$ and $\gamma $ is $ \alpha_1$ or $\alpha_2$;
\item [(b)] $\Delta$ is of type $C_2$ and $\gamma$ is $\alpha_1$.
\end{enumerate}
To see this, applying  Lemma~\ref{lem: negative sum of positive roots} to $E=\{\gamma, \theta\}$, we get an element $w \in W$ such that  $\Phi(w)=\{\gamma, \theta\}$. Write $w=s_{i_1}s_{i_2}$. Then $\Phi(w)=\{\alpha_{i_1}, s_{i_1}(\alpha_{i_2})\}$. Thus $\theta = s_{i_1}(\alpha_{i_2})$. Therefore, $\Delta$ is of type $A_2$ or $C_2$. \\

From (1)--(3) we get Table~\ref{table regular}. 
\end{proof}

\subsection{Cohomology vanishing}
\label{ss_van}
Let $\underline{\mathfrak b}$, $\underline{\mathfrak g}$ and
$\underline {\mathfrak g/\mathfrak b}$ denote the  vector bundles $G \times^B
\mathfrak b$, $G \times^B \mathfrak g$ and $G \times^B (\mathfrak g/\mathfrak b)$ associated with 
$\mathfrak b$, $\mathfrak g$, and $\mathfrak g/\mathfrak b$, respectively.  Note that
$\underline{\mathfrak g/\mathfrak b}$ is the tangent bundle $T\mathcal{B}$ of $\mathcal{B}$. In the following computation of the cohomology groups of various sheaves $\mathcal E$ on $\mathcal B$, we will delete $\mathcal B$ in the notation $\coh^i(\mathcal B, \mathcal E)$. 

Before giving the proof of Theorem~\ref{prop: cohomology vanishing}, we explain how to use Proposition~\ref{prop: regular weight 2} in the computation of the cohomology groups of  $\underline{\mathfrak b}$ and $\underline{\mathfrak g/\mathfrak b}$. 
The vector bundle $\underline{\mathfrak b}$ has a filtration 
$0 \subset \mathcal D_0  \subset \mathcal D_1 \subset \mathcal D_2 \subset \dots \subset \mathcal D_m=\underline{\mathfrak b}$ where    $\mathcal D_0$ is the trivial vector bundle $\underline{\mathfrak t}$ and for each $1 \leq i\leq m= |\Phi^+|$, $\mathcal D_{i}/\mathcal D_{i-1}$ is   $\mathcal L(\alpha)$ for some positive root $\alpha$. 

For each $i$, the short exact sequence 
$$0 \rightarrow \mathcal D_{i-1} \rightarrow \mathcal D_i \rightarrow \mathcal D_i/\mathcal D_{i-1} \rightarrow 0$$
induces a long exact sequence  connecting the cohomology groups of $ \mathcal D_{i-1}, \mathcal D_{i}/\mathcal D_{i-1}$ and $\mathcal D_i$. In particular, if all the cohomology groups of  $ \mathcal D_{i-1}$ and $ \mathcal D_{i}/\mathcal D_{i-1}$ vanish, then all the cohomology groups of  $ \mathcal D_{i }$ vanish. 
To compute the cohomology group of $\mathcal D_i/\mathcal D_{i-1}$, we use Theorem~\ref{thm BWB} and  Proposition~\ref{prop: regular weight 2}. Then we get the cohomology groups of $\mathcal D_i$ recursively.  
The same method applies to the computation of the cohomology groups of $\underline{\mathfrak g/\mathfrak b}$.

When the cohomology group $\coh^k(\mathcal D_i/\mathcal D_{i-1})$ does not vanish for some $i$, to make the computation easy, we define a coarser filtration first,  for example, the filtration $(T^{(\bullet)}\otimes \mathcal I_X)$ of $\underline{\mathfrak g/\mathfrak b} \otimes \mathcal I_X$ in the proof of Theorem~\ref{prop: cohomology vanishing}, and then apply the method in the above to their quotients.

\begin{lemma} $\,$  \label{l.23} 

\begin{enumerate} 
\item When $\Delta$ is neither of type $A_2$ nor of type $C_2$, 
$\coh^i(\tang \mathcal B \otimes \mathcal I_X)=0$ for all $i$. 
\item  When $\Delta$ is of type $A_2$ or of type $C_2$, $\coh^i(\tang \mathcal B\otimes \mathcal I_X)=0$ for all $i \not=1$. 
\end{enumerate} 
\end{lemma}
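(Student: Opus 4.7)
The plan is to filter $T\mathcal{B} \otimes \mathcal{I}_X$ by line bundles and apply Borel--Weil--Bott, controlling the resulting spectral sequence by way of nonsplit extensions coming from $\mathfrak{g}/\mathfrak{b}$. With the convention that $B$ is the negative Borel, $\mathfrak{g}/\mathfrak{b}$ has $B$-weights $\Phi^+$, and any linear extension of the root partial order gives a $B$-stable filtration of $\mathfrak{g}/\mathfrak{b}$ whose successive quotients are the one-dimensional $B$-modules of weight $\alpha$, $\alpha \in \Phi^+$. Twisting the corresponding filtration of $T\mathcal{B}$ by $\mathcal{I}_X = \mathcal{L}(-\theta)$ produces line-bundle subquotients $\mathcal{L}(\alpha - \theta)$, reducing the computation to identifying which $\alpha \in \Phi^+$ give $\coh^\ast(\mathcal{L}(\alpha - \theta))\neq 0$ and understanding the boundary maps that link them.

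Combining Theorem~\ref{t.bwb} and Proposition~\ref{prop: regular weight 2} and restricting to $\alpha \in \Phi^+$ (items (1) and (2); item (3) produces only negative roots) shows that in every surviving case $w*(\alpha - \theta) = 0$, so the cohomology is the trivial representation $\bk$ in degree $\ell(w)$. Case by case, the surviving pairs $(\alpha, \ell(w))$ are: $(\theta, 0), (\theta-\alpha_{i_0},1)$ for $\Delta$ of type $B_r, D_r, E_6, E_7, E_8, F_4, G_2$; $(\theta, 0), (\theta-\alpha_1, 1), (\theta-\alpha_r, 1), (\theta_0, 2)$ for $A_r$ with $r\geq 3$; $(\theta, 0), (\theta-\alpha_1, 1), (\theta_0^+, 2), (\theta_0^{++}, 3)$ for $C_r$ with $r\geq 3$; $(\theta, 0), (\theta-\alpha_1, 1), (\theta-\alpha_2, 1)$ for $A_2$; and $(\theta, 0), (\theta-\alpha_1, 1)$ for $C_2$. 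These lists bound the cohomology of $T\mathcal{B} \otimes \mathcal{I}_X$ from above, and already the Euler characteristic matches what the lemma claims.

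To promote the upper bound to equality I would study, for each pair of consecutive surviving weights $\alpha > \alpha - \gamma$ in the filtration, the rank-two subquotient $\mathcal{F}_\gamma$ fitting in
\[
  0 \to \mathcal{L}(\alpha - \gamma - \theta) \to \mathcal{F}_\gamma \to \mathcal{L}(\alpha - \theta) \to 0.
\]
Its extension class lies in $\Ext^1(\mathcal{L}(\alpha - \theta), \mathcal{L}(\alpha - \gamma - \theta)) = \coh^1(\mathcal{B}, \mathcal{L}(-\gamma))$ and is determined by the Serre bracket $[e_{-\gamma}, e_\alpha] \in \mathfrak{g}/\mathfrak{b}$, which is a nonzero scalar multiple of $e_{\alpha - \gamma}$ because $\gamma$ and $\alpha - \gamma$ are both roots. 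When $\gamma$ is a simple root, $\coh^1(\mathcal{L}(-\gamma)) = \bk$, and nontriviality of the extension makes the corresponding boundary map between the one-dimensional cohomology groups of the two line bundles an isomorphism, cancelling both contributions. Iterating through the filtration then cancels all surviving contributions in the types outside $A_2$ and $C_2$: a single rank-two extension suffices outside $A_r$ and $C_r$; a diamond of three extensions through a rank-three intermediate subquotient handles $A_r$ with $r\geq 3$; and for $C_r$ with $r\geq 3$ the non-simple difference $\gamma = (\theta - \alpha_1) - \theta_0^+$ forces us to pass through a non-surviving intermediate line bundle of zero cohomology, after which the chained boundary reduces to a cup product of simple-root extension classes nonzero on the one-dimensional target.

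For $\Delta$ of type $A_2$ the absent partner $\theta_0$ (no longer a root for $r=2$) leaves one unpaired degree-one contribution, yielding the $\coh^1$ allowed by (ii); for $C_2$ the remaining rank-two extension still cancels both of its contributions, compatible with (ii). The main obstacle will be verifying the chained cancellations in types $A_r$ and $C_r$ with $r \geq 3$: one must confirm that each successive boundary map is an isomorphism of one-dimensional spaces, which amounts to a dimension count together with the observation that the relevant Serre brackets are nonzero -- handled by direct calculation in $\mathfrak{g}/\mathfrak{b}$ and, where needed, by the nondegeneracy of multiplication among the extension classes in $\coh^\ast(\mathcal{B}, \mathcal{L}(-\alpha_i))$ on the relevant one-dimensional pieces.
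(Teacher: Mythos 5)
Your approach is genuinely different from the paper's, and it is worth comparing the two.  The paper does \emph{not} filter $T\mathcal{B}$ directly.  Instead it uses the short exact sequence $0 \to \mathfrak{b} \to \mathfrak{g} \to \mathfrak{g}/\mathfrak{b} \to 0$ and the fact that $\underline{\mathfrak{g}} \otimes \mathcal{I}_X \cong \mathcal{L}(-\theta)^{\oplus \dim \mathfrak{g}}$ is acyclic (since $-\theta + \rho$ is singular) to obtain $\coh^i(\underline{\mathfrak{g}/\mathfrak{b}} \otimes \mathcal{I}_X) \cong \coh^{i+1}(\underline{\mathfrak{b}} \otimes \mathcal{I}_X)$.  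The payoff of this shift is that the weights of $\mathfrak{b} \otimes \bk_{-\theta}$ are $\lambda - \theta$ for $\lambda \in \{0\} \cup \Phi^-$, and Proposition~\ref{prop: regular weight 2}(3) shows that \emph{none} of these weights are BWB-regular outside types $A_2$ and $C_2$.  Thus in case (i) the filtration of $\underline{\mathfrak{b}} \otimes \mathcal{I}_X$ has line-bundle subquotients all of which are acyclic, and vanishing is automatic — there are no cancellations to track at all.  Your direct filtration of $\underline{\mathfrak{g}/\mathfrak{b}} \otimes \mathcal{I}_X$, by contrast, produces several subquotients with nonvanishing cohomology (as your table shows), and you are then forced to prove that the differentials of the filtration spectral sequence are isomorphisms between one-dimensional spaces.

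That is where your argument has a real gap.  You assert that the nontriviality of the differentials follows because the relevant Serre brackets $[e_{-\gamma}, e_\alpha]$ are nonzero; but a nonsplit $B$-module extension need not map to a nonzero class in the sheaf group $\Ext^1_{\mathcal{O}}(\mathcal{L}(\alpha - \theta), \mathcal{L}(\alpha - \gamma - \theta)) = \coh^1(\mathcal{B}, \mathcal{L}(-\gamma))$ — the natural map $\Ext^1_B(\bk_\nu,\bk_\mu) \to \Ext^1_{\mathcal{O}}(\mathcal{L}(\nu), \mathcal{L}(\mu))$ is not injective in general, and this step needs a separate argument.  You acknowledge that verifying the chained cancellations is ``the main obstacle'' for types $A_r$ and $C_r$ with $r \geq 3$, and the matching of Euler characteristics that you record is consistent with the claimed vanishing but does not establish it.  Unless you supply the nondegeneracy of the relevant boundary maps (or adopt a Koszul/Demazure-style reduction as in Lemma~\ref{lemma: past results cohom} and Theorem~\ref{prop: alt cohom vanishing short}, which the paper uses for a second proof along somewhat similar lines), the argument is incomplete precisely at the point you flag.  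The paper's $\mathfrak{b}$-shift sidesteps the issue entirely in case (i), and in case (ii) it intersects the degree bound from the $\mathfrak{b}$-side filtration (degrees $\geq 2$) with that from the $\mathfrak{g}/\mathfrak{b}$-side filtration (degrees $\leq 1$) to pin down degree $1$ without ever evaluating a differential.
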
 

\begin{proof} 
Recall that $\tang \mathcal B =\underline{\mathfrak g/\mathfrak b}$.
From the short exact sequence $$ 0 \rightarrow \mathfrak b \rightarrow \mathfrak g
\rightarrow \mathfrak g/ \mathfrak b \rightarrow 0,$$ we get $$0 \rightarrow
\underline{\mathfrak b} \otimes \mathcal I_X \rightarrow \underline{\mathfrak g}
\otimes \mathcal I_X \rightarrow \underline{\mathfrak g/\mathfrak b} \otimes \mathcal
I_X \rightarrow 0.$$ 

Since $\mathfrak g$ is a representation space of $G$, the vector bundle
$\underline{\mathfrak g}$ is a trivial vector bundle and thus $\underline{\mathfrak g}
\otimes \mathcal I_X$ is $\mathcal L(-\theta)^{\oplus N}$, where $N$ is the
dimension of $\mathfrak g$. Since $-\theta + \rho$ is singular (Proposition~\ref{prop: regular
  weight 1}),  
 we get $\coh^i(\underline{\mathfrak g} \otimes \mathcal I_X)=0$ for all $i$ (Theorem~\ref{thm BWB}).
Therefore, we have  
\begin{equation} \label{e negpos}
\coh^i(\underline{\mathfrak g/\mathfrak b} \otimes \mathcal I_X ) =
\coh^{i+1}(\underline{\mathfrak b} \otimes \mathcal I_X) \quad \text{ for all  } i. 
\end{equation}

By Theorem~\ref{thm BWB} and Proposition~\ref{prop: regular weight 2}, the following holds: 
\begin{itemize}
\item If $\Delta$ is neither of type $A_2$ nor of type $C_2$,   $ \coh^{i+1}(\underline{\mathfrak b} \otimes \mathcal I_X) $ vanishes  for all  $i$. Thus, by~\eqref{e negpos}, $\coh^i(\underline{\mathfrak g/\mathfrak b} \otimes \mathcal I_X ) =0$ for all $i$.

 \item

 If $\Delta$ is of type $A_2$,  then  $\coh^{i+1}(\underline{\mathfrak b} \otimes \mathcal I_X)$ vanishes for all $i  \not\in\{1,2\}$ and $\coh^i(\underline{\mathfrak g/\mathfrak b} \otimes \mathcal I_X )$ vanishes for all $i \not \in \{0,1\}$.
 Thus, by \eqref{e negpos},  $\coh^i(\underline{\mathfrak g/\mathfrak b} \otimes \mathcal I_X )$ vanishes for all $i \not=1$.

 \item

  If $\Delta$ is   of type $C_2$,  then  $\coh^{i+1}(\underline{\mathfrak b} \otimes \mathcal I_X)$ vanishes for all $i  \not\in\{1,2\}$ and $\coh^i(\underline{\mathfrak g/\mathfrak b} \otimes \mathcal I_X )$ vanishes for all $i  \not\in\{0,1 \}$. Thus, by \eqref{e negpos},  $\coh^i(\underline{\mathfrak g/\mathfrak b} \otimes \mathcal I_X )$ vanishes for all $i  \not=1 $.
 \end{itemize}
This completes the proof. 
\end{proof}

\begin{proof} [Proof of Theorem~\ref{prop: cohomology vanishing}] 
For $\alpha =\sum_{\alpha_i \in \Delta} n_i \alpha_i \in \Phi^+$, define $\height(\alpha)$ by the sum
$\sum_{\alpha_i \in \Delta} n_i$. Let  $m:= \height(\theta)$.  Consider the filtration
\[ 
\mathfrak g^{(0)}=\mathfrak b \;\subset\; \mathfrak g^{(1)} \;\subset\; \cdots \;\subset\; \mathfrak g^{(d)} \;\subset\; \mathfrak g^{(d+1)} \;\subset\; \cdots \;\subset\; \mathfrak g^{(m)} =\mathfrak g
\]
by $B$-invariant subspaces, where
\begin{eqnarray*}
\mathfrak g^{(d)}:= 
\mathfrak b  \oplus \left( \bigoplus_{\height(\alpha)\leq d}\mathfrak g_{\alpha}\right) &\text{ for } 0\leq d \leq m. 
\end{eqnarray*} 
This induces a filtration by homogeneous subbundles 
\begin{eqnarray*}
 0=T^{(0)} \;\subset\; T^{(1)} \;\subset\; \cdots \;\subset\; T^{(d)} \;\subset\; \cdots \;\subset\; T^{(m)} = \underline{\mathfrak g/\mathfrak b}.
\end{eqnarray*}
After tensoring with $\mathcal{I}_X$, we get
\begin{eqnarray*}
  T^{(1)} \otimes \mathcal{I}_X \;\subset\; \cdots \;\subset\; T^{(d)}\otimes \mathcal{I}_X \;\subset\; \cdots \;\subset\; T^{(m)}\otimes \mathcal{I}_X =\underline{\mathfrak g/\mathfrak b}\otimes \mathcal{I}_X.
\end{eqnarray*}

By Lemma~\ref{l.23}, it suffices to prove that  
\begin{enumerate}
\item[(i)] $\coh^1(\underline{\mathfrak g/\mathfrak b}\otimes\mathcal{I}_X) \cong\bk$  if $\Delta$ is of type $A_2$,

 \item[(ii)]  $ \coh^1(\underline{\mathfrak g/\mathfrak b}\otimes\mathcal{I}_X)=0$   if $\Delta$ is of type $C_2$.
 \end{enumerate}

(i) Assume that $\Delta$ is of type $A_2$. Then $\theta=\alpha_1 + \alpha_2$ and $m=2$, so we get $T^{(2)}  = \underline{\mathfrak g/\mathfrak b}$. Furthermore, $\{\theta-\alpha_1, \theta -\alpha_2\}=\{\alpha_2,\alpha_1\}$. 
By   Theorem~\ref{thm BWB} and Proposition~\ref{prop: regular weight 2},  
$$
\coh^0 \left(\left(T^{(2)}/T^{(1)}\right)\otimes \mathcal I_X \right) = \bk ,  \quad \coh^1(T^{(1)}\otimes \mathcal I_X)=\bk \oplus \bk
$$  and  $\coh^i(\left(T^{(2)}/T^{(1)}\right)\otimes \mathcal I_X) =0 $ for any $i \not=0$
 and $\coh^j(T^{(1)}\otimes \mathcal I_X)=0$ for any $j \not=1$. 
Applying this to the long exact sequence associated with the  short exact sequence  
$$0 \rightarrow T^{(1)}\otimes \mathcal I_X \rightarrow T^{(2)}\otimes \mathcal I_X \rightarrow \left(T^{(2)}/T^{(1)}\right)\otimes \mathcal I_X \rightarrow 0,$$
we get 
\[
\begin{tikzcd}[column sep = 1em, row sep = 0.2cm] 
\coh^0(T^{(2)}\otimes \mathcal I_X) \rar \rar \arrow[d, equal]
    & \coh^0\left(\left(T^{(2)}/T^{(1)}\right)\otimes \mathcal I_X \right) \rar \arrow[d, equal]
    & \coh^1(T^{(1)}\otimes \mathcal I_X) \rar \arrow[d, equal]
    & \coh^1(T^{(2)} \otimes \mathcal I_X) \rar
    &0. \\
0 & \bk & \bk \oplus \bk 
\end{tikzcd}
\]
Here, by Lemma~\ref{l.23}, $\coh^0(T^{(2)}\otimes \mathcal I_X) =0$, and thus we get  $ \coh^1(T^{(2)} \otimes \mathcal I_X) =\bk$. 

(ii) Assume that $\Delta$ is of type $C_2$. Then $\theta = 2\alpha_1 +\alpha_2$ and $m=3$ and $T^{(3)}= \underline{\mathfrak g/\mathfrak b}$. Furthermore, $\theta-\alpha_{i_0}=\alpha_1 + \alpha_2$, which is the only root of height $2$. 
By   Theorem~\ref{thm BWB} and Proposition~\ref{prop: regular weight 2},  
$$
\coh^0 \left(\left(T^{(3)}/T^{(2)}\right)\otimes \mathcal I_X \right) = \bk ,  \quad 
\coh^1\left(\left(T^{(2)}/T^{(1)}\right)\otimes \mathcal I_X \right)=\bk$$  
and  $\coh^i\left(\left(T^{(3)}/T^{(2)}\right)\otimes \mathcal I_X\right) =0 $ for any $i \not=0$
 and $\coh^j(\left(T^{(2)}/T^{(1)}\right)\otimes \mathcal I_X)=0$ for any $j \not=1$ and $\coh^k(T^{(1)} \otimes \mathcal I_X)=0$ for all $k$.  
Applying this to the long exact sequences associated with the  short exact sequences 
$$0 \rightarrow T^{(1)}\otimes \mathcal I_X \rightarrow T^{(2)}\otimes \mathcal I_X \rightarrow \left(T^{(2)}/T^{(1)}\right)\otimes \mathcal I_X \rightarrow 0 $$
and 
$$0 \rightarrow T^{(2)}\otimes \mathcal I_X \rightarrow T^{(3)}\otimes \mathcal I_X \rightarrow \left(T^{(3)}/T^{(2)}\right)\otimes \mathcal I_X \rightarrow 0,$$ 
we get $\coh^1(T^{(2)} \otimes \mathcal I_X)=\bk$ and  $\coh^k(T^{(2)} \otimes \mathcal I_X)=0$ for all $k \not=1$ and  
\[
\begin{tikzcd}[column sep = 1em, row sep = 0.2cm] 
\coh^0(T^{(3)}\otimes \mathcal I_X) \rar  \arrow[d, equal]
    & \coh^0 \left(\left(T^{(3)}/T^{(2)}\right)\otimes \mathcal I_X \right) \rar \arrow[d, equal]
    & \coh^1(T^{(2)}\otimes \mathcal I_X) \rar  \arrow[d, equal]
    & \coh^1(T^{(3)} \otimes \mathcal I_X) \rar 
    & 0. \\
0 & \bk & \bk
\end{tikzcd}
\]
Here, by Lemma~\ref{l.23}, $\coh^0(T^{(3)}\otimes \mathcal I_X) =0$, and thus we get  $ \coh^1(T^{(3)} \otimes \mathcal I_X) =0$. 
\end{proof} 
 
\subsection{Parabolic setting}
\label{s.partial}
In this subsection  we prove Theorem~\ref{t_comp p}, an analogue of Theorem~\ref{t_comp}, repeating the processes in Subsections~\ref{s.adj}--\ref{ss_van}. As in the proof  of Theorem~\ref{t_comp}, the main vanishing theorem is Theorem~\ref{prop: cohomology vanishing p} below. 

We use the same notations as in Subsection \ref{s.notations}, otherwise stated. 
Let $P$ be a   parabolic subgroup of $G$ containing $B$. Then it corresponds to a subset $\Sigma_P$ of $\Delta$. The correspondence satisfies that the Lie algebra $\mathfrak p$ of $P$ is generated by $\mathfrak b$ and the root spaces  $\mathfrak g_{\alpha}$ where $\alpha$ is a linear combination of simple roots in $\Sigma_P$.  Let $\mathcal P$ be the variety of conjugacy classes of $P$. 

A subspace $H \subset \mathfrak g$ is said to be a \emph{Hessenberg space for }$P$ if $H$ is $\mathrm{Ad}(P)$-invariant and contains $\mathfrak p$. Then it corresponds a \emph{Hessenberg subset}, a subset $M \subset \Phi^+$ satisfying that for any $\beta \in M$ and $\alpha \in (-\Delta) \cup \Sigma_P$ with $\beta+ \alpha \in \Phi^+$, we have $\beta+\alpha \in M$. The correspondence satisfies $H=\mathfrak b \oplus \bigoplus_{\alpha \in M}\mathfrak g_{\alpha}$. Similarly to $\mathcal B(M,s)$, we write 
\[\mathcal P(M,s):=\pi_2^{-1}(s)\]
for the scheme-theoretic fiber of $\pi_2:G\times^PH\to \fg$ over $s\in \fg$.

We say that an integral weight $\lambda$ is dominant with respect to $P$ if $\langle \lambda, \alpha^{\vee}\rangle \geq 0$ for any simple root $\alpha$ in $\Sigma_P$. For such $\lambda$, we write $E_{\lambda}$ for the corresponding representation of $P$ and 
$\mathcal{L}_P(\lambda)$ for the homogeneous  vector  bundle $G \times ^PE_{\lambda}$.

\begin{theorem}[{Borel--Weil--Bott theorem for $\mathcal P$ \cite[Theorem in Section 4.3]{Ak}}]  \label{thm BWB P} 
Let $P$ be a standard parabolic subgroup of $G$ and $\lambda$ be an integral weight of $G$, dominant with respect to $P$. If $\lambda + \rho$ is   regular, then there is a unique $w \in W$ such that $w(\lambda + \rho)$ is regular dominant and  
\begin{eqnarray*}
H^{i}(\mathcal P, \mathcal L_P(\lambda)) =
\begin{cases}
V_{w(\lambda+\rho)-\rho} & \text{ if }i=\ell(w); \\
0 & \text{ otherwise.}
\end{cases}
\end{eqnarray*}
\end{theorem}

From now on, let   $M$ be as in Theorem~\ref{t_comp}. 
Let  $P$ be the standard parabolic subgroup of $G$ with  $\Sigma_P=\Delta_0$, which is defined in Subsection~\ref{s.notations}, and let 
$Y= {\mathcal{P}}(M,s)$. We assume that $G$ is not of type $A_1$.  
Then $Y$ is a divisor in $\mathcal P$ with ideal sheaf $\mathcal{I}_Y=\mathcal{L}_P(-\theta)$
and with normal bundle $\mathscr{N}_{P}=\mathcal{L}_P(\theta)|_{Y}$. 
Write $\tang \mathcal P$ and $\tang Y$ for the tangent bundles of $\mathcal P$ and $Y$
respectively.
We get three related exact sequences:
\begin{align}
   &0\to \mathcal{O}_\mathcal P\to \mathcal{L}_P(\theta)\to\mathscr{N}_P\to 0;\label{e.nrp}\\
   &0\to \tang Y \to \tang \mathcal P|_{Y} \to \mathscr{N}_P\to 0;\label{e.tnp}\\
   &0\to \tang \mathcal P\otimes\mathcal{I}_Y\to \tang \mathcal P\to\tang \mathcal P|_{Y}\to 0
   \label{e.trp}
\end{align}
We view each of the sheaves above as sheaves on $\mathcal P$ and, for each such
sheaf $\mathcal{F}$, we write
$h^i(\mathcal{F}) = \dim\coh^i(\mathcal P,\mathcal{F})$.

Applying  the same argument as in the proof of Theorem~\ref{prop: cohomology vanishing} to the short exact sequence
\beq\label{e.ses.gp}0 \rightarrow \mathfrak p \rightarrow \mathfrak g \rightarrow \mathfrak g/\mathfrak p \rightarrow 0,\eeq
 we get the following result.   

\begin{theorem}\label{prop: cohomology vanishing p}
  Suppose $G$ is a simple group over $\bk$.  
  Then,
  \[
    \coh^i(\mathcal P, \tang \mathcal P\otimes\mathcal{I}_Y) \cong
    \begin{cases} 
      \bk & \text{if  $G$ is of type $A_r$ with $r \geq 2$ and $i=1$};\\
      0 & \text{otherwise.}
    \end{cases}
  \]
\end{theorem}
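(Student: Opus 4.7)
The plan is to follow the strategy of the proof of Theorem~\ref{prop: cohomology vanishing} \emph{verbatim}, using the $P$-equivariant short exact sequence $0 \to \mathfrak{p} \to \mathfrak{g} \to \mathfrak{g}/\mathfrak{p} \to 0$ in place of its Borel analogue. Twisting by the $P$-character $\bk_{-\theta}$ (well defined because $\theta$ is orthogonal to $\Delta_0$) and passing to associated bundles on $\mathcal{P}$ produces
\[
0 \to \underline{\mathfrak{p}} \otimes \mathcal{I}_Y \to \underline{\mathfrak{g}} \otimes \mathcal{I}_Y \to T\mathcal{P} \otimes \mathcal{I}_Y \to 0.
\]
Since $\underline{\mathfrak{g}}$ is trivial of rank $\dim G$ and $\mathcal{L}_P(-\theta)$ pulls back to $\mathcal{L}(-\theta)$ along $\pi \colon \mathcal{B} \to \mathcal{P}$, whose fibres are complete flag varieties of the Levi (so $R\pi_*\mathcal{O}_{\mathcal{B}} = \mathcal{O}_{\mathcal{P}}$), the middle term has cohomology $\coh^*(\mathcal{B}, \mathcal{L}(-\theta))^{\oplus \dim G}$, which vanishes by Borel--Weil--Bott because $-\theta+\rho$ is singular. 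The resulting long exact sequence identifies $\coh^i(\mathcal{P}, T\mathcal{P} \otimes \mathcal{I}_Y)$ with $\coh^{i+1}(\mathcal{P}, \underline{\mathfrak{p}} \otimes \mathcal{I}_Y)$ for every $i$, reducing the problem to the cohomology of $\underline{\mathfrak{p}} \otimes \mathcal{I}_Y$.

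Next I would pull $\underline{\mathfrak{p}} \otimes \mathcal{I}_Y$ back to $\mathcal{B}$ and filter $\mathfrak{p}$ by $B$-submodules according to the height of weights; $B$-stability follows because for $\gamma \in \Phi_0^+$ and $\alpha \in \Phi^+$, any positive root of the form $\gamma - \alpha$ still has zero coefficient at the simple root(s) removed to form $\Delta_0$, hence lies in $\Phi_0^+$. The composition factors are $\bk_0$ (multiplicity $r$), $\bk_{-\alpha}$ for $\alpha \in \Phi^+$, and $\bk_{+\alpha}$ for $\alpha \in \Phi_0^+$, yielding line bundles $\mathcal{L}(\gamma - \theta)$ in the associated graded. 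Borel--Weil--Bott combined with Proposition~\ref{prop: regular weight 2} pins down exactly which $\gamma$ yield a non-acyclic line bundle: (i) $\gamma = \theta_0$ in type $A_r$ with $r \geq 3$, contributing $\bk$ in degree $2$; (ii) $\gamma \in \{\theta_0^+, \theta_0^{++}\}$ in type $C_r$ with $r \geq 3$, contributing $\bk$ in degrees $2$ and $3$; and (iii) the Case~(3) weights of Proposition~\ref{prop: regular weight 2} in types $A_2$ and $C_2$. In any simple type outside the $A$ and $C$ series every graded piece is acyclic, so $\coh^*(\mathcal{P}, \underline{\mathfrak{p}} \otimes \mathcal{I}_Y) = 0$ and the theorem follows. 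Type $A_2$ has $\Delta_0 = \emptyset$, so $\mathcal{P} = \mathcal{B}$ and Theorem~\ref{prop: cohomology vanishing} applies directly. In type $A_r$ with $r \geq 3$, the contribution at $\gamma = \theta_0$ is isolated in its own filtration step and propagates to $\coh^2(\mathcal{P}, \underline{\mathfrak{p}} \otimes \mathcal{I}_Y) \cong \bk$, yielding the claimed $\coh^1(\mathcal{P}, T\mathcal{P} \otimes \mathcal{I}_Y) \cong \bk$.

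The hard part will be the type $C$ cases, where more than one graded piece carries non-zero cohomology but the target answer is complete vanishing. In $C_r$ with $r \geq 3$, the weights $\theta_0^{++}$ and $\theta_0^+$ sit at consecutive heights $h(\theta_0^{++}) = h(\theta_0^+) - 1$ and produce copies of $\bk$ in cohomological degrees $3$ and $2$, respectively; the theorem demands that these be killed by the connecting homomorphism in the long exact sequence of the filtration step
\[
0 \to \mathfrak{p}^{(h(\theta_0^{++}))} \otimes \mathcal{I}_Y \to \mathfrak{p}^{(h(\theta_0^+))} \otimes \mathcal{I}_Y \to \bigl(\mathfrak{p}^{(h(\theta_0^+))}/\mathfrak{p}^{(h(\theta_0^{++}))}\bigr) \otimes \mathcal{I}_Y \to 0.
\]
An analogous cancellation is required in $C_2$ for the Case~(3) contributions. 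The plan is to run this argument exactly as in the $C_2$ analysis at the end of the proof of Theorem~\ref{prop: cohomology vanishing}: use acyclicity of the other graded pieces to isolate a two-term exact sequence of one-dimensional groups, and then force the connecting map to be an isomorphism by feeding in an independent vanishing statement---for instance $\coh^0(\mathcal{P}, T\mathcal{P} \otimes \mathcal{I}_Y) = 0$, which in type $C_r$ follows directly from the identification $T\mathcal{P} \otimes \mathcal{I}_Y \cong T\mathbb{P}^{2r-1}(-2)$ and the twisted Euler sequence on $\mathbb{P}^{2r-1}$. Verifying this forcing rigorously and uniformly across the type $C$ family is the main technical step.
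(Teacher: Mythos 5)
Your proposal is correct in substance but routes the computation through a different filtration than the paper uses, and this makes the type~$C$ case harder before you rescue it with a shortcut. Both arguments begin identically, reducing to $\coh^i(\mathcal{P}, T\mathcal{P}\otimes\mathcal{I}_Y)\cong\coh^{i+1}(\mathcal{P},\underline{\mathfrak{p}}\otimes\mathcal{I}_Y)$, but there the paths diverge: the paper filters $\mathfrak{g}/\mathfrak{p}$ by the \emph{relative} height $h_P$ (a two-step filtration $T^{(1)}\subset T^{(2)}$ whose only non-acyclic graded pieces are $\theta$ and $\theta-\alpha_{i_0}$, resp.\ $\theta-\alpha_1,\theta-\alpha_r$), uses the $\underline{\mathfrak{p}}$-side only to establish $\coh^0(T^{(2)}\otimes\mathcal{I}_Y)=0$, and then reads off the answer from a four-term exact sequence uniformly for all types; whereas you filter $\underline{\mathfrak{p}}$ itself by \emph{absolute} height on $\mathcal{B}$ (after pulling back, using $R\pi_*\mathcal{O}_{\mathcal{B}}=\mathcal{O}_{\mathcal{P}}$), which gives more non-acyclic graded pieces (especially $\theta_0^+$ and $\theta_0^{++}$ in type $C_r$, contributing in degrees $2$ and $3$) and hence connecting maps that the long exact sequence alone cannot determine. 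Your inventory of the contributing $\gamma$ is correct (modulo one small omission: you should note explicitly that the torus factor $\gamma=0$ is acyclic because $-\theta+\rho$ is singular), and your type~$A_r$ and non-$A$, non-$C$ analyses go through. For type $C_r$ your pivot to $\mathcal{P}\cong\mathbb{P}^{2r-1}$, $\mathcal{I}_Y\cong\mathcal{O}(-2)$ and the twisted Euler sequence is a legitimate and pleasantly elementary shortcut that the paper does not use; but note that your stated plan of ``forcing'' the connecting map to be an isomorphism by feeding in only $\coh^0(T\mathcal{P}\otimes\mathcal{I}_Y)=0$ is logically insufficient---that one vanishing only controls injectivity of $\delta$, not the surjectivity you also need---so you should instead observe that the Euler sequence yields $\coh^i(T\mathbb{P}^{2r-1}(-2))=0$ for \emph{all} $i$, making the filtration bookkeeping in type~$C$ unnecessary entirely. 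What your route buys is the explicit Euler-sequence identification in type~$C$; what the paper's route buys is uniformity and a much shorter list of graded pieces, since filtering $\mathfrak{g}/\mathfrak{p}$ by $h_P$ has only two steps and never sees $\theta_0,\theta_0^+,\theta_0^{++}$.
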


\begin{proof}  
As in the previous subsection, denote by $\underline{\mathfrak g/\mathfrak p}$ and $ \underline{\mathfrak p}$  the associated vector bundles $G \times^P \mathfrak g/\mathfrak p$ and $G \times^P \mathfrak p$. Then $\underline{\mathfrak g/\mathfrak p}$ is the tangent bundle $T\mathcal P$.

From the short exact sequence~\eqref{e.ses.gp},
 we get
\begin{equation} \label{e.np}
\coh^i(\underline{\mathfrak g/\mathfrak p} \otimes \mathcal I_Y) = \coh^{i+1}(\underline{\mathfrak p} \otimes \mathcal I_Y) \text{ for all } i. 
\end{equation}

For $\alpha =\sum_{\alpha_i \in \Delta} n_i \alpha_i \in \Phi^+$, define $\height_P(\alpha)$ by the sum 
$\sum_{\alpha_i \in \Delta \backslash \Delta_0} n_i$. Then $\height_P(\theta)=2$ and we have  a filtration
$$\mathfrak g^{(-2)} \;\subset\;  \mathfrak g^{(-1)} \;\subset\; \mathfrak g^{(0)}= \mathfrak p \;\subset\; \mathfrak g^{(1)} \;\subset\; \mathfrak g^{(2)} =\mathfrak g,$$
where
\begin{eqnarray*}
\mathfrak g^{(d)}=\left\{
\begin{array}{ll}
\mathfrak b  \oplus \left( \bigoplus_{\height_P (\alpha)\leq d}\mathfrak g_{\alpha}\right) &\text{ for } 0\leq d \leq 2;  \\
\bigoplus_{\height_P(\alpha)\leq d}\mathfrak g_{\alpha}   &\text{ for } -2 \leq d \leq -1,
\end{array} \right.
\end{eqnarray*}
and the induced filtrations
\begin{eqnarray*}
 0  \;\subset\; T^{(1)} \;\subset\;   T^{(2)} =\underline{\mathfrak g/\mathfrak p}; \\
 0 \;\subset\;   V^{(-2)} \;\subset\; V^{(-1)} \subset V^{(0)} =\underline{\mathfrak p}.
\end{eqnarray*}
After tensoring with $\mathcal{I}_Y$ we get
\begin{eqnarray*}
  T^{(1)} \otimes \mathcal{I}_Y \;\subset\;   T^{(2)}\otimes \mathcal{I}_Y =\underline{\mathfrak g/\mathfrak p} \otimes \mathcal{I}_Y; \\
  V^{(-2)} \otimes \mathcal I_Y \;\subset\; V^{(-1)} \otimes \mathcal I_Y \;\subset\; V^{(0)}  \otimes \mathcal I_Y=\underline{\mathfrak p} \otimes \mathcal I_Y.
\end{eqnarray*}
 
As in the proof of Theorem~\ref{prop: cohomology vanishing}, we need to know for which root $\alpha$, the weight $\theta-\alpha +\rho$ is regular. 
For those $\alpha $ in Proposition~\ref{prop: regular weight 2}, $\ell(w)$ and  $\height_P(\alpha )$ are given as follows.
\begin{table}
\begin{center}
\begin{tabular}{c|c||c|c|c}
\toprule 
& $\Delta$ & $\alpha$ &  $\ell(w)$ &  $\height_P(\alpha)$\\
\midrule 
(1) &  any type &  $\theta$ & $0$&  $2$ \\
   & $A_r$ ($r \geq 2$) &  $\theta-\alpha_1, \theta-\alpha_r$ & $1$& $1$ \\
  & $\not=A_r$ &   $\theta- \alpha_{i_0}$ &  $1$&  $1$ \\
  \midrule
  (2) 
   & $\ast$  &  $\ast$ &  $\geq 2$& $0$ \\
   \midrule
   (3)& $A_2$ & $-\alpha_1, -\alpha_2$ & $2$  & $-1$  \\
   &  $A_2$ & $-\theta$  &  $3$ & $-2$  \\
     & $C_2$ & $-\alpha_1$   & $2$ &  $-1$  \\
     & $C_2$  & $-\theta$   &  $3$&   $-2$ \\
\bottomrule 
\end{tabular}
\end{center}
\caption{$\mathrm{ht}_P(\alpha)$ for $\alpha$ in Table~\ref{table regular}}
\label{table height}
\end{table}

By  Theorem~\ref{thm BWB P} and \eqref{e.np}, we get \begin{equation}\label{eq 0}
\coh^i(T^{(2)} \otimes \mathcal I_Y)=0  \text{ for any } i\not=1 .
\end{equation}
Indeed, by Theorem~\ref{thm BWB P}, those $\alpha$ in the item (1) (resp. the items (2) and (3)) may contribute nontrivially to the cohomology groups of $\underline{\mathfrak g/\mathfrak p}\otimes \mathcal I_Y$ (resp. $\underline{\mathfrak p}\otimes \mathcal I_Y$). Since $\ell(w) $ is greater than or equal to $2$ for every $\alpha$ in the items (2) and (3), $H^{i+1}(\underline{\mathfrak p} \otimes \mathcal I_Y)$ vanishes for any $i \leq 0$. By~\eqref{e.np}, $H^i(\underline{\mathfrak g/\mathfrak p}\otimes \mathcal I_Y)$ vanishes for $i \leq 0$. Since $\ell(w)$ is less than or equal to $1$ for every $\alpha$ in the item (1), $H^{i}(\underline{\mathfrak{g}/\mathfrak p} \otimes \mathcal I_Y)$ vanishes for any $i \geq 2$ by Theorem~\ref{thm BWB P}. Therefore, $\coh^i(T^{(2)} \otimes \mathcal I_Y) $ vanishes for any $i\not=1$.  

By Theorem~\ref{thm BWB P}, we also get  
  \begin{equation} \label{eq 1}
    \coh^i(T^{(2)}/T^{(1)} \otimes \mathcal I_Y) = 
    \begin{cases}
      \bk& \text{  if  }i=0 ;\\
      0&       \text{  if  }i=1, 
    \end{cases}
  \end{equation}
   and 
   \begin{equation} \label{eq 2}
    \coh^1(T^{(1)} \otimes \mathcal I_Y) = 
    \begin{cases}
      \bk \oplus \bk& \text{ if } G \text{ is of type } A_r \text{ with } r \geq 2;\\
      \bk&       \text{ otherwise}. 
    \end{cases}
  \end{equation}
 For \eqref{eq 1}, use that for $\alpha$ in the item (1) with $\height_P(\alpha)=2$, $\ell(w)$ is 0. For \eqref{eq 2}, note that $\mathfrak g^{(1)}/\mathfrak g^{(0)}$ is the direct sum of two irreducible representation of $P$ with highest weights $\theta-\alpha_1$ and $\theta-\alpha_r$ when $G$ is of type $A_r$, and is an irreducible representation of $P$ with highest weight $\theta-\alpha_{i_0}$ otherwise. From Table~\ref{table height}, we see that for those roots, we have $\ell(w)=1$. Then \eqref{eq 2} follows from Theorem \ref{thm BWB P}. 
 
 To compute $\coh^1(T^{(2)} \otimes \mathcal I_Y) $, consider the long exact sequence associated with 
 $$0 \rightarrow T^{(1)} \otimes \mathcal I_Y \rightarrow T^{(2)} \otimes \mathcal I_Y \rightarrow \left( T^{(2)}/T^{(1)}\right) \otimes \mathcal I_Y \rightarrow 0.$$
 Since $\coh^0(T^{(2)} \otimes \mathcal I_Y)\stackrel{\eqref{eq 0}}{=}0$ and $\coh^1\left(\left(T^{(2)}/T^{(1)}\right) \otimes \mathcal I_Y\right)\stackrel{\eqref{eq 1}}{=}0$, we get
 $$0 \rightarrow \coh^0\left(\left(T^{(2)}/T^{(1)}\right) \otimes \mathcal I_Y\right)   \rightarrow \coh^1(T^{(1)} \otimes \mathcal I_Y) \rightarrow \coh^1(T^{(2)} \otimes \mathcal I_Y) \rightarrow 0.$$ 
 From $\coh^0\left(\left(T^{(2)}/T^{(1)}\right) \otimes \mathcal I_Y\right) \stackrel{\eqref{eq 1}}{=}\bk$ and 
 \eqref{eq 2}
  it follows that 
   \begin{equation} 
    \coh^1(T^{(2)} \otimes \mathcal I_Y) = 
    \begin{cases}
      \bk &  \text{ if } G \text{ is of type } A_r \text{ with } r \geq 2;\\
      0&       \text{ otherwise}.
    \end{cases}
  \end{equation}
 This completes the proof of Theorem~\ref{prop: cohomology vanishing p}. 
 \end{proof}

  By~\cite{demazure}, we know that
  \begin{equation}\label{e.Dcalcp}
    h^i(\mathcal{L}_P(\theta)) = 
    \begin{cases}
      \dim G& \text{ if } i =0;\\
      0&       \text{ otherwise}.
    \end{cases}
  \end{equation}
  and 
  \begin{equation}\label{e.Dcalcpp}
      h^i(\tang \mathcal P) =
    \begin{cases}
      \dim G & \text{ if } G \text{ is not of type } C_r \text { and } i =0;\\
       \dim G' & \text{ if } G \text{ is of type } C_r \text{ and } i =0;\\
      0&       \text{ otherwise}.
    \end{cases}
  \end{equation} 
  where $G'$ is of type $A_{2r-1}$. 
  
\begin{proposition} \label{p P} The following statements hold.
\begin{enumerate}  
\item $h^0(\mathscr N_P) = \dim G -1$ and the map $\coh^0(Y, \mathscr N_P) \rightarrow \coh^1(Y, TY)$ is surjective. 
\item Moreover, 
\begin{equation*} 
h^0(Y,TY) - h^1(Y,TY) = 
\begin{cases} 
2  & \text{ if } G \text{ is of type } A_r \text{ with } r \geq 2;\\
\dim G' - \dim G +1 & \text{ if } G \text{ is of type } C_r ;\\
1 & \text{ otherwise.}
\end{cases} 
\end{equation*}
\end{enumerate} 
\end{proposition}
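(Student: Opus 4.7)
The plan is to mirror, in the parabolic setting, the adjunction argument of Proposition~\ref{seqs} and the proof of Theorem~\ref{t_comp}, chasing dimensions through the three short exact sequences \eqref{e.nrp}, \eqref{e.tnp}, and \eqref{e.trp}. The essential inputs are Demazure's computations \eqref{e.Dcalcp} and \eqref{e.Dcalcpp} together with the vanishing theorem~\ref{prop: cohomology vanishing p}. Once these are in hand, both (1) and (2) become formal dimension counts.

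For (1), I would first feed \eqref{e.nrp} into its long exact sequence and use $\coh^0(\mathcal P, \mathcal O_\mathcal P) = \bk$, $\coh^{>0}(\mathcal P, \mathcal O_\mathcal P) = 0$, and \eqref{e.Dcalcp} to read off
\[
h^0(Y, \mathscr N) = \dim G - 1, \qquad h^i(Y, \mathscr N) = 0 \text{ for } i \geq 1.
\]
For the surjectivity of the connecting homomorphism attached to \eqref{e.tnp}, it suffices to establish $\coh^1(Y, T\mathcal P|_Y) = 0$. I would obtain this by running the long exact sequence attached to \eqref{e.trp}, in which \eqref{e.Dcalcpp} forces $\coh^1(\mathcal P, T\mathcal P) = 0$ and Theorem~\ref{prop: cohomology vanishing p} forces $\coh^2(\mathcal P, T\mathcal P \otimes \mathcal I_Y) = 0$; sandwiched between two zeros, $\coh^1(Y, T\mathcal P|_Y)$ must vanish.

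The same pass through \eqref{e.trp} shows moreover that $\coh^i(Y, T\mathcal P|_Y) = 0$ for all $i \geq 1$, so the long exact sequence of \eqref{e.tnp} collapses to the four-term sequence
\[
0 \to \coh^0(TY) \to \coh^0(T\mathcal P|_Y) \to \coh^0(\mathscr N) \to \coh^1(TY) \to 0,
\]
with $\coh^i(TY) = 0$ for $i \geq 2$. Taking the alternating sum of dimensions yields
\[
h^0(TY) - h^1(TY) = h^0(T\mathcal P|_Y) - h^0(\mathscr N) = h^0(T\mathcal P) + h^1(T\mathcal P \otimes \mathcal I_Y) - (\dim G - 1),
\]
where the second equality comes from \eqref{e.trp} together with $\coh^0(\mathcal P, T\mathcal P \otimes \mathcal I_Y) = 0$ (Theorem~\ref{prop: cohomology vanishing p}) and $\coh^{\geq 1}(\mathcal P, T\mathcal P) = 0$ (by \eqref{e.Dcalcpp}). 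Substituting the explicit values of $h^0(T\mathcal P)$ from \eqref{e.Dcalcpp} (namely $\dim G$ outside type $C_r$ and $\dim G'$ in type $C_r$) and of $h^1(T\mathcal P \otimes \mathcal I_Y)$ from Theorem~\ref{prop: cohomology vanishing p} ($1$ in type $A_r$ with $r \geq 2$ and $0$ otherwise) then produces the three-case formula asserted in (2).

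There is no substantive obstacle here once Theorem~\ref{prop: cohomology vanishing p} and \eqref{e.Dcalcpp} are available: everything reduces to careful bookkeeping in long exact sequences, exactly parallel to the $G/B$ argument of Proposition~\ref{seqs}. The only wrinkle worth flagging is the exceptional type $C_r$ case, in which $\mathcal P$ is one of Demazure's exceptional flag varieties and hence $\coh^0(\mathcal P, T\mathcal P)$ is the Lie algebra of the larger automorphism group $G'$ rather than of $G$ itself; this is what leaves the residual correction $\dim G' - \dim G + 1$ in that case, rather than the uniform value $1$ seen in the generic case.
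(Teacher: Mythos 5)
Your argument is correct and follows the same route as the paper: derive $h^0(\mathscr N)=\dim G-1$ from \eqref{e.nrp} and \eqref{e.Dcalcp}, kill $\coh^1(T\mathcal P|_Y)$ by sandwiching it between $\coh^1(T\mathcal P)=0$ and $\coh^2(T\mathcal P\otimes\mathcal I_Y)=0$ in the long exact sequence of \eqref{e.trp}, and then take alternating sums in the resulting four-term sequence from \eqref{e.tnp}, substituting \eqref{e.Dcalcpp} and Theorem~\ref{prop: cohomology vanishing p} to get the three cases. This is exactly the paper's proof, with the case bookkeeping made a bit more explicit.
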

\begin{proof}
 (1)   
  From the long exact sequence for~\eqref{e.nrp}, we   get that
  \begin{equation} \label{e.nrpp} 
  0 \rightarrow \coh^0(\mathcal O_{\mathcal P})  \rightarrow \coh^0(\mathcal  L_P(\theta)) \rightarrow \coh^0(\mathscr N_P) \rightarrow \coh^1(\mathcal O_{\mathcal P})=0 \end{equation}
  and 
  $
  0= \coh^1(\mathcal L_P(\theta)) = \coh^1(\mathscr N_P) $.  Thus $h^0(\mathscr N_P) = \dim G -1$. 
    From the long exact sequence for~\eqref{e.tnp}, we then get that  
  \begin{equation} \label{e.tnpp}
  0 \rightarrow \coh^0(TY) \rightarrow \coh^0(T{\mathcal P}|_{Y}) \rightarrow \coh^0(\mathscr N_P) \rightarrow \coh^1(TY) \rightarrow \coh^1(T{\mathcal P}|_{Y}) \rightarrow 0
  \end{equation} 
  
   From the long exact sequence for~\eqref{e.trp}, we   get that 
   \begin{equation} \label{e.trpp} 
   0 \rightarrow \coh^0(T\mathcal P \otimes \mathcal I_Y) \rightarrow \coh^0(T\mathcal P) \rightarrow \coh^0(T\mathcal P|_{Y}) \rightarrow \coh^1(T\mathcal P \otimes \mathcal I_Y) \rightarrow \coh^1(T\mathcal P)=0
   \end{equation}
   and $\coh^1(T\mathcal P|_{Y}) \cong \coh^2(T\mathcal P \otimes \mathcal I_Y)$.

By Theorem~\ref{prop: cohomology vanishing p},~\eqref{e.trpp} becomes 
\begin{equation} \label{e.trppp}
 0   \rightarrow \coh^0(T\mathcal P) \rightarrow \coh^0(T\mathcal P|_{Y}) \rightarrow \coh^1(T\mathcal P \otimes \mathcal I_Y) \rightarrow 0
   \end{equation}
   and $\coh^1(T\mathcal P|_{Y}) =0$.  By \eqref{e.tnpp} the map $\coh^0(Y, \mathscr N_P) \rightarrow \coh^1(Y, TY)$ is surjective.
 
 (2)  Putting $\coh^1(T\mathcal P|_Y)   =0$ into  ~\eqref{e.tnpp} we get 
  \begin{equation} \label{e.tnppp}
  0 \rightarrow \coh^0(TY) \rightarrow \coh^0(T{\mathcal P}|_{Y}) \rightarrow \coh^0(\mathscr N_P) \rightarrow \coh^1(TY) \rightarrow 0
  \end{equation} 
  
 Combining  ~\eqref{e.nrpp},~\eqref{e.trppp}, and~\eqref{e.tnppp}, we get $h^0(TY) - h^1(TY) = \dim G' - \dim G + h^1(T\mathcal P \otimes \mathcal I_Y) +1$. 
\end{proof}

\label{proof_t_comp p}
\begin{proof}[Proof of Theorem~\ref{t_comp p}] 
The surjectivity of the Kodaira--Spencer map \eqref{eq:KSmap.p.intro}
follows from Proposition~\ref{p P}(1) by the same argument as in the proof of Theorem~\ref{t_comp}.

If $G$ is neither of   type   $A_r$ with $r \geq 2$ nor of type $C_r$, then 
by Theorem~\ref{prop: cohomology vanishing p} and \eqref{e.trp}, every vector field in $\coh^0(Y,TY)$ is the restriction of a vector field in $\coh^0(  T\mathcal P)\cong \mathfrak g$. By  the same arguments as in the proof of Theorem~\ref{t_comp}, we get $h^0(Y,TY)=r$, the rank of  $G$. 
Then the desired result for $h^i(Y,TY)$ follows by Proposition~\ref{p P}(2). 
On the other hand, if $G$ is of type $A_r$ or of type $C_r$, then we cannot apply the same arguments. In the rest of the proof, first, we explain why we cannot,  and then we prove the statement in a different way. 

  If $G$ is of type $A_r$ with $r \geq 2$, then  by Theorem \ref{prop: cohomology vanishing p}, $\coh^1(\mathcal P, T\mathcal P\otimes \mathcal I_Y)$ does not vanish,  and from \eqref{e.trp} we get $ \coh^0(T\mathcal P|_Y) =\coh^0(T \mathcal P) \oplus  \bk$. For a vector field  in $\coh^0(Y,TY)$ which can be extended to a vector field in $\coh^0(  T\mathcal P)$, applying the same arguments in the above, we see that it is induced from elements in $\mathfrak t$, the Lie algebra of the centralizer of $s$ in $G$.   However, as we remark in Remark \ref{rem A}, a vector field in $\coh^0(Y,TY)$ does not have to be extended to a vector field in $\coh^0(T\mathcal P)$. 
  Still, 
by Theorem~\ref{t.autsA}(2), we have $h^0(Y,TY)=r$. 
  Then the equality  $h^1(Y,TY)=r-2$ follows from  Proposition~\ref{p P}(2).

   If $G$ is of type $C_r$, then we cannot apply the same arguments because $\coh^0( T\mathcal P)$ is not isomorphic to $\mathfrak g$ (see \eqref{e.Dcalcpp}). In this case, $\mathcal P$ is  $\mathbb P^{2r-1}$ and $Y$ is a smooth quadric $\mathbb Q^{2r-2}$ in $\mathbb P^{2r-1}$.  Therefore we get $h^0(Y, TY)=\frac{2r(2r-1)}{2}=r(2r-1)$ and $h^i(Y, TY)=0$ for any $i \geq 1$.  
\end{proof}

\section{Additional vanishing}  
\label{alt_coho}
In this section, $G$ is a simple group. 
We maintain the notation of Section~\ref{s.notations}.
In particular, $\Delta$ is a set of simple roots in the root system $\Phi$ 
and $\Phi^{+}$ is the set of positive roots.  All of these are with respect to 
a maximal torus $T$ in $G$.

Let $\height(\beta)$ denote the height of $\beta \in \Phi$
with respect to $\Delta$.  Let $\height(\beta^\vee)$ denote the height of $\beta^\vee$ in the 
coroot system $\Phi^\vee$ with respect to the coroots of $\Delta$.

We generally follow the notation of \cite[\S 1.7]{kummar-lauritzen-thomsen}. 
Recall that our Borel subgroup $B$, containing~$T$, corresponds to $-\Delta$. 
If $P$ is a parabolic subgroup of $G$ and $\mathsf{M}$ is a $P$-module, then 
$\coh^i(G/P,\mathsf{M})$ denotes the sheaf cohomology $\coh^i(G/P, \mathcal L(\mathsf{M}))$ 
of the homogeneous vector bundle $\mathcal L(\mathsf{M})$ on $G/P$.
If ${\bk}_{\lambda}$ is the one-dimensional representation of $P$ corresponding a character $\lambda \in X^*(P)$, 
we write $\coh^i(G/P, \mathsf{M} \otimes \lambda)$ instead of $\coh^i(G/P, \mathsf{M} \otimes {\bk}_{\lambda})$.

Let $\mathfrak{n}_P$ denote the nilradical of the Lie algebra of $P$.
Set $\mathfrak{n} = \mathfrak{n}_B$ and $\mathfrak{n}_\alpha = \mathfrak{n}_{P_\alpha}$ 
where $P_\alpha$ is the parabolic subgroup containing $B$ with Levi factor corresponding to $\alpha \in \Delta$.   
Let $S^n(\mathsf{M})$ denote the $n$-th symmetric power of $\mathsf{M}$ for $n \geq 0$ and equal to zero if $n<0$.

The following lemma is used throughout this section.  
\begin{lemma} \label{lemma: past results cohom}
Let $\alpha \in \Delta$.
Let $\mathsf{M}$ be a $P_{\alpha}$-module and let $\lambda \in X^*(B)$.  
Let $k = \langle \lambda, \alpha^{\vee} \rangle$.  
\begin{enumerate} 
\item If $k=-1$, then $\coh^i(\mathcal B, \mathsf{M}\otimes \lambda)=0$ for all $i \geq 0$. 
\item If $k \leq -2$, then $\coh^i(\mathcal B, \mathsf{M}\otimes \lambda)=\coh^{i-1}(\mathcal B, \mathsf{M} \otimes (s_{\alpha}\lambda - \alpha))$ for all $i \geq 0$.
\item If $k = -1$, then $\coh^i(\mathcal B, S^n \mathfrak{n}^* \otimes \lambda)= \coh^{i}(\mathcal B,  S^{n-1} \mathfrak{n}^* \otimes (\lambda + \alpha))$ for all $i \geq 0$ and all $n \geq 0$.
\item If $k \leq -2$, then
$$\coh^i(\mathcal B, S^n \mathfrak{n}_\alpha^* \otimes \lambda)= \coh^{i-1}(\mathcal B,  S^{n} \mathfrak{n}_\alpha^* \otimes (\lambda + (-k-1)\alpha))$$ for all $i \geq 0$ and all $n \geq 0$.  
\item  For any parabolic subgroup $P$, we have
$\coh^i(\mathcal B, S^n \mathfrak{n}_P^*)= 0$ for all $i \geq 1$ and $n \geq 0.$
\end{enumerate} 
\end{lemma}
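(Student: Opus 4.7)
My plan is to treat the five parts as increasingly elaborate applications of the Borel--Weil--Bott machinery for the projection $\pi: \mathcal{B} \to G/P_\alpha$, whose fibers are copies of $\mathbb{P}^1 = P_\alpha/B$. Parts (1) and (2) will be direct consequences of combining Leray's spectral sequence with the projection formula and the cohomology of $\mathcal{O}_{\mathbb{P}^1}(k)$; parts (3) and (4) will reduce to (1) and (2) via the relative tangent sequence; and part (5) will require an additional geometric input concerning cotangent bundles of partial flag varieties.

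For (1) and (2), I would use that $M$ is a $P_\alpha$-module, so $\mathcal{L}(M)=\pi^*\mathcal{L}_{P_\alpha}(M)$, and the projection formula gives
$$R\pi_*\bigl(\mathcal{L}(M\otimes\lambda)\bigr) = \mathcal{L}_{P_\alpha}(M)\otimes R\pi_*\mathcal{L}(\lambda).$$
On each fiber, $\mathcal{L}(\lambda)$ restricts to $\mathcal{O}_{\mathbb{P}^1}(k)$ with $k=\langle\lambda,\alpha^\vee\rangle$. If $k=-1$, both cohomology groups vanish, immediately proving (1). If $k\leq -2$, only $\coh^1$ is nonzero, and Borel--Weil identifies $R^1\pi_*\mathcal{L}(\lambda)$ with $\pi_*\mathcal{L}(s_\alpha\cdot\lambda)$ on $G/P_\alpha$, since both realize the $L_\alpha$-irrep of highest weight $s_\alpha\cdot\lambda = s_\alpha\lambda-\alpha$. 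The Leray spectral sequence then yields (2).

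For (3), the key input will be the relative tangent sequence for $\pi$, which gives the $B$-module short exact sequence
$$0 \to \mathfrak{g}_\alpha \to \mathfrak{n}^* \to \mathfrak{n}_\alpha^* \to 0.$$
Since $\mathfrak{g}_\alpha$ is one-dimensional and $S^\bullet\mathfrak{n}^*$ is a polynomial ring, multiplication by a nonzero generator of $\mathfrak{g}_\alpha$ is injective, producing the $B$-module sequence
$$0 \to \mathfrak{g}_\alpha\otimes S^{n-1}\mathfrak{n}^* \to S^n\mathfrak{n}^* \to S^n\mathfrak{n}_\alpha^* \to 0.$$
I would tensor this with $\lambda$ and apply part (1) to the $P_\alpha$-module $S^n\mathfrak{n}_\alpha^*$ to make the connecting homomorphisms in the long exact sequence into isomorphisms, after which rewriting $\mathfrak{g}_\alpha\otimes S^{n-1}\mathfrak{n}^*\otimes\lambda$ as $S^{n-1}\mathfrak{n}^*\otimes(\lambda+\alpha)$ yields (3). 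Part (4) will follow from part (2) applied directly to the $P_\alpha$-module $M=S^n\mathfrak{n}_\alpha^*$, noting that $s_\alpha\lambda-\alpha=\lambda+(-k-1)\alpha$.

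For (5), my plan is to use the projection $\rho:\mathcal{B}\to G/P$ with full-flag fibers $P/B$. Since $R\rho_*\mathcal{O}_{\mathcal{B}}=\mathcal{O}_{G/P}$, the projection formula reduces the claim to $\coh^{\geq 1}(G/P,S^nT_{G/P})=0$, and summing over $n$ identifies the graded algebra with the higher cohomology of the structure sheaf on the cotangent bundle $T^*(G/P)$. This vanishes because $T^*(G/P)$ maps properly onto the closure of a Richardson nilpotent orbit in $\mathfrak{g}$, which is an affine variety with rational singularities. The hard part of the whole lemma will be justifying this last appeal cleanly: for $P=B$ the vanishing on $T^*(G/B)$ follows from Kostant's theorem that $\mathcal{N}$ has rational singularities via the Springer resolution, but the general parabolic case requires either a theorem on symplectic resolutions or a direct Kempf-type argument applied to the individual symmetric powers $S^nT_{G/P}$.
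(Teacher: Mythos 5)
Your write‑up for parts (1)–(4) correctly reconstructs the arguments behind the references the paper merely cites: (1)–(2) is exactly Demazure's ``very simple proof'' via the $\mathbb{P}^1$-fibration $\pi:\mathcal{B}\to G/P_\alpha$ and the Leray spectral sequence, (3) is the Koszul sequence $0\to\bk_\alpha\to\mathfrak{n}^*\to\mathfrak{n}_\alpha^*\to 0$ fed into (1), and (4) is (2) applied directly to the $P_\alpha$-module $S^n\mathfrak{n}_\alpha^*$, with $s_\alpha\lambda-\alpha=\lambda+(-k-1)\alpha$. One small slip in (3): the maps that become isomorphisms in the long exact sequence are $\coh^i(S^{n-1}\mathfrak{n}^*\otimes(\lambda+\alpha))\to\coh^i(S^n\mathfrak{n}^*\otimes\lambda)$, because the neighbouring terms $\coh^{i-1}$ and $\coh^i$ of $S^n\mathfrak{n}_\alpha^*\otimes\lambda$ both vanish — not the connecting homomorphisms.

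For part (5) your geometric reduction (project to $G/P$, identify $\mathcal{L}(\mathfrak{n}_P^*)$ with $T_{G/P}$, sum over $n$ to get $\coh^{\geq 1}(T^*(G/P),\mathcal{O})$) is right, but the final inference is not valid as written. Properness of the moment map $\mu:T^*(G/P)\to\overline{O_{\mathrm{Rich}}}$ onto an affine variety with rational singularities does \emph{not} by itself force $\coh^{\geq 1}$ of the source to vanish: what is needed is $R^{\geq 1}\mu_*\mathcal{O}=0$, and since $\mu$ is generically finite but typically not birational, rational singularities of the image is neither sufficient nor the relevant hypothesis (and normality/rational singularities of Richardson orbit closures is itself a hard theorem, going beyond Kostant's $\mathcal{N}$). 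The clean input — and the content of the Elkik/Broer theorem the paper cites — is that $T^*(G/P)$ is holomorphic symplectic, so $\omega_{T^*(G/P)}\cong\mathcal{O}$, and $\mu$ is proper and generically finite onto its (affine) image; Grauert--Riemenschneider then gives $R^{\geq 1}\mu_*\mathcal{O}_{T^*(G/P)}=R^{\geq 1}\mu_*\omega_{T^*(G/P)}=0$, whence $\coh^{\geq 1}(T^*(G/P),\mathcal{O})=\coh^0(\overline{O_{\mathrm{Rich}}},R^{\geq 1}\mu_*\mathcal{O})=0$. This is the ``direct Kempf-type argument'' you allude to, and it requires no information at all about the singularities of the orbit closure; it is also the route that closes the gap you flagged.
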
 

\begin{proof}
The first two parts are due to Demazure \cite{demazure:simple} (see also Lemma 3.1 of \cite{broer2}).  
The third and fourth parts are contained in Lemma 3.3 of \cite{broer2}
(the condition in that lemma should read $s\lambda -\lambda = n\alpha$).
See also Lemma 6 in \cite{kummar-lauritzen-thomsen} for part (3), which
follows from part (1) using a Koszul resolution.  
Part (4) follows immediately from part (2) since $\mathfrak{n}_\alpha$ is
a $P_\alpha$-module.
The last part is due to Elkik (see \cite[Theorem~2.2]{broer2}).
\end{proof}

If $G$ is simply laced, then we adopt the convention that all roots of $G$ are both long and short.
\begin{theorem}\label{prop: alt cohom vanishing short}
Let $\beta \in \Phi^+$ be a short root.
Let $\alpha \in \Delta$ be any short simple root.  
  Then
  \[
    \coh^i(\mathcal{B}, S^n \mathfrak{n}^* \otimes (-\beta)) \cong
    \begin{cases}
      \coh^0 (\mathcal{B}, S^{n-\height(\beta)} \mathfrak{n}^*) & \text{if } $i=0$;\\
      \coh^0 (\mathcal{B}, \mathcal S^{n-\height(\beta)+1} \mathfrak{n}_\alpha^*) & \text{if } $i=1$;\\
      0 & \text{otherwise.}
    \end{cases}
  \]
\end{theorem}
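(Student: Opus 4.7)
The strategy is induction on $\mathrm{ht}(\beta)$. The inductive step will use Lemma~\ref{lemma: past results cohom}(3) to peel off a simple root one at a time, and the base case $\mathrm{ht}(\beta)=1$ will be handled by a single Koszul short exact sequence combined with parts (4) and (5) of Lemma~\ref{lemma: past results cohom}.

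For the inductive step, assume $\mathrm{ht}(\beta)\geq 2$. Because $\beta$ is a short positive non-simple root, some simple $\gamma$ satisfies $\langle\beta,\gamma^\vee\rangle>0$, and the shortness of $\beta$ forces this pairing to equal exactly $1$ (using that $\langle\beta,\gamma^\vee\rangle\cdot\langle\gamma,\beta^\vee\rangle\in\{0,1,2,3\}$ together with the fact that a short root cannot have the larger factor). Then $\beta':=\beta-\gamma$ is again a positive short root of height $\mathrm{ht}(\beta)-1$ (positivity from the standard fact that $\gamma$ must appear in $\beta$ with positive coefficient; shortness from $(\beta-\gamma,\beta-\gamma)=(\beta,\beta)-(\gamma,\gamma)+(\gamma,\gamma)=(\beta,\beta)$). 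Applying Lemma~\ref{lemma: past results cohom}(3) with $\alpha=\gamma$ and $\lambda=-\beta$ (which satisfies $\langle\lambda,\gamma^\vee\rangle=-1$) gives
\[
  \coh^i(\mathcal{B},S^n\mathfrak{n}^*\otimes -\beta)\cong\coh^i(\mathcal{B},S^{n-1}\mathfrak{n}^*\otimes -\beta'),
\]
and the induction hypothesis applied to $(n-1,\beta')$ yields the claimed formula, using $(n-1)-\mathrm{ht}(\beta')=n-\mathrm{ht}(\beta)$.

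For the base case, $\beta$ is itself a short simple root. Because $\beta$ is simple, no lowering operator $e_{-\alpha}$ (with $\alpha\in\Phi^+$) sends $\mathfrak{g}_\beta$ into $\mathfrak{n}^*$, so $\mathfrak{g}_\beta\cong(\beta)$ is a $B$-submodule of $\mathfrak{n}^*$ with quotient $\mathfrak{n}_\beta^*$. Multiplication by a generator of $\mathfrak{g}_\beta$ in the symmetric algebra yields the $B$-module short exact sequence
\[
  0\to(\beta)\otimes S^{n-1}\mathfrak{n}^*\to S^n\mathfrak{n}^*\to S^n\mathfrak{n}_\beta^*\to 0,
\]
and after tensoring with $(-\beta)$,
\[
  0\to S^{n-1}\mathfrak{n}^*\to S^n\mathfrak{n}^*\otimes(-\beta)\to S^n\mathfrak{n}_\beta^*\otimes(-\beta)\to 0.
\]
By Lemma~\ref{lemma: past results cohom}(5), the cohomology of $S^{n-1}\mathfrak{n}^*$ is concentrated in degree $0$. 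By Lemma~\ref{lemma: past results cohom}(4) with $\alpha=\beta$ (using $\langle-\beta,\beta^\vee\rangle=-2$, so $(-k-1)\beta=\beta$ and the twist cancels), followed by Lemma~\ref{lemma: past results cohom}(5), the cohomology of $S^n\mathfrak{n}_\beta^*\otimes(-\beta)$ reduces to $\coh^0(\mathcal{B},S^n\mathfrak{n}_\beta^*)$ concentrated in degree $1$. The resulting long exact sequence therefore collapses to give $\coh^0=\coh^0(\mathcal{B},S^{n-1}\mathfrak{n}^*)$, $\coh^1=\coh^0(\mathcal{B},S^n\mathfrak{n}_\beta^*)$, and vanishing in higher degrees, confirming the claim when $\alpha=\beta$.

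The main obstacle is the combinatorial verification at each inductive step that a simple root with the required properties exists; this is handled by the argument above. The independence of the short simple $\alpha$ on the right-hand side of the $\coh^1$ formula is not proved separately but emerges as a byproduct: running the induction along different chains of simple roots can reach different short simple roots as the base case, thereby forcing the groups $\coh^0(\mathcal{B},S^m\mathfrak{n}_\alpha^*)$ to agree across all short simple $\alpha$ that are connected by such chains (which includes all of them, since short simple roots in a simple root system are mutually Weyl-conjugate through short roots).
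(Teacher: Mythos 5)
Your proof is correct and follows essentially the same route the paper takes: iterate Lemma~\ref{lemma: past results cohom}(3) to descend from $\beta$ to a short simple root, then resolve the base case via the Koszul short exact sequence and parts (4)--(5) of the lemma. The only soft spot is your justification of independence from the choice of short simple $\alpha$ at the end---``mutually Weyl-conjugate through short roots'' isn't quite the operative fact; what one actually needs (and what the paper invokes by ``reversing \eqref{equation: change_of_height_cohomology} to climb to $\theta^+$'') is that every short simple root lies at the bottom of a descending chain from $\theta^+$ in which each step subtracts a simple root with Cartan pairing $1$.
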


\begin{proof}
If $\beta$ is not a
  simple root, then there is some $\gamma \in \Delta$
  such that $\langle \beta, \gamma^\vee \rangle = 1$ since $\beta$ is short.

  Then $\langle -\beta, \gamma^\vee \rangle = -1$,
  so $s_\gamma(-\beta) = -\beta + \gamma$ is a short negative root.
  Moreover,  
  \begin{equation}\label{equation: change_of_height_cohomology}   
  \coh^i(\mathcal{B}, S^j\mathfrak{n}^* \otimes (-\beta)) \cong \coh^i(\mathcal{B}, S^{j-1} \mathfrak{n}^* \otimes
  (-\beta+\gamma))
  \end{equation}
  for all $i \geq 0$, $j \geq 0$ by part (3) of Lemma~\ref{lemma: past results cohom} with $\lambda = -\beta$. 

    We can repeat the above argument with 
    $\beta' = \beta - \gamma$, if it is not simple, and keep repeating until
    $\beta'$ is simple, i.e., $\height(\beta')=1$.
    This will take exactly $\height(\beta)-1$ iterations since $\height(\beta-\gamma) = \height(\beta)-1$.
  We arrive at the identity
\[
\coh^i(\mathcal{B}, S^j\mathfrak{n}^* \otimes (-\beta) ) \cong
  \coh^i(\mathcal{B}, S^{j-{\height}(\beta)+1} \mathfrak{n}^* \otimes
  (-\alpha))
  \]
  for all $i \geq 0$, $j \geq 0$, 
  for some short simple root $\alpha$.

Next, tensor the Koszul sequence of $B$-modules
\begin{equation}\label{equation: Koszul}
0 \to S^{j-1}\mathfrak{n}^* \otimes \bk_{\alpha} \to S^j\mathfrak{n}^* \to
  S^j\mathfrak{n}_\alpha^*  \to 0
  \end{equation}
   by ${\bk}_{-\alpha}$
  to get the exact sequence of $B$-modules
\begin{equation}\label{equation: Koszul_2}
    0 \to S^{j-1}\mathfrak{n}^* \to S^j\mathfrak{n}^* \otimes {\bk}_{-\alpha} \to
    S^j\mathfrak{n}_\alpha^* \otimes {\bk}_{-\alpha} \to 0.
\end{equation}
  Regarding the first nonzero term, 
  $\coh^i(\mathcal{B}, S^{j-1}\mathfrak{n}^*)$ vanishes when $i \neq 0$ 
  by part (5) of Lemma~\ref{lemma: past results cohom}.
  Regarding the third nonzero term, part (4) of the lemma with $\lambda = -\alpha$ and $k= -2$
  implies that
  $$\coh^i(\mathcal{B}, S^j\mathfrak{n}_\alpha^* \otimes (-\alpha) ) \cong \coh^{i-1}(\mathcal{B},
  S^j\mathfrak{n}_\alpha^* ),$$ 
  and the right side vanishes except 
  when $i \neq 1$, again by part (5) of Lemma~\ref{lemma: past results cohom}.  
  
Hence, the long exact
  sequence in  cohomology over $\mathcal{B}$ with respect to \eqref{equation: Koszul_2} degenerates with 
  $$\coh^{0}(\mathcal{B}, S^j\mathfrak{n}^* \otimes (-\alpha)) \cong \coh^{0}(\mathcal{B}, S^{j-1}\mathfrak{n}^*)
  \text{ and } \coh^{1}(\mathcal{B}, S^j\mathfrak{n}^* \otimes (-\alpha)) \cong 
  \coh^{0}(\mathcal{B},  S^j\mathfrak{n}_\alpha^*)$$ 
  for all $j \geq 1$; $\coh^{i}(\mathcal{B}, S^j\mathfrak{n}^* \otimes (-\alpha)) =0$
  for $i \geq 2$ and all $j \geq 0$. 

Hence, the theorem holds for any short positive root $\beta$ and some short root $\alpha \in \Delta$.
Moreover, if $\alpha$ is any short simple root, then \eqref{equation: change_of_height_cohomology} can be reversed to climb to $\beta = \theta^+$, the maximal short root,
so the right-hand side is independent of the choice of short simple root.
\end{proof}

\begin{theorem}\label{prop: alt cohom vanishing long}
Suppose $\Phi$ has two root  lengths.  Let $\beta \in \Phi^+$ be a long root.
Let $\alpha, \delta \in \Delta$ be the unique simple roots 
that determine a Levi subalgebra of type $C_2$ or $G_2$, with $\alpha$ short.  
Denote by $\varpi$ the short dominant root for this subalgebra \textup{(}that is, 
$\varpi = (q-1)\alpha+\delta$ where $q=2$ in the $C_2$ case and $q=3$ in the $G_2$ case\textup{)}.
Then 
  \[
    \coh^i(\mathcal{B}, S^n \mathfrak{n}^* \otimes (-\beta) ) \cong
    \begin{cases}
      \coh^0 (\mathcal{B}, S^{n-\height(\beta) } \mathfrak{n}^*) & \text{if } $i=0$;\\
      \coh^0 (\mathcal{B}, S^{n-\height(\beta)+1} \mathfrak{n}_{\alpha}^*) \oplus 
       \coh^0 (\mathcal{B}, S^{n-\height(\beta^\vee)} \mathfrak{n}_{\alpha}^* \otimes \varpi) & \text{if } $i=1$;\\
      0 & \text{otherwise.}
    \end{cases}
  \]
\end{theorem}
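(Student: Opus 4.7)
My plan is to prove Theorem~\ref{prop: alt cohom vanishing long} by induction on $\text{ht}(\beta)$, paralleling the proof of Theorem~\ref{prop: alt cohom vanishing short} but with additional care at the double or triple bond in the Dynkin diagram.

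The inductive step is the direct analogue of the short case: if the long positive root $\beta$ admits a long simple root $\gamma$ with $\langle \beta, \gamma^\vee \rangle = 1$, then $\beta - \gamma$ is again a long positive root of height one less, and Lemma~\ref{lemma: past results cohom}(3) gives
\[
\coh^i(\mathcal{B}, S^n \mathfrak{n}^* \otimes -\beta) \cong \coh^i(\mathcal{B}, S^{n-1} \mathfrak{n}^* \otimes -(\beta - \gamma)).
\]
The key point is that $\gamma^\vee$ is a simple coroot (since $\gamma$ is long), so $\text{ht}((\beta - \gamma)^\vee) = \text{ht}(\beta^\vee) - 1$. Both height parameters appearing in the claimed formula therefore decrease compatibly, and the inductive hypothesis closes this step.

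The base cases are the long simple roots together with the ``bottleneck'' long roots---long non-simple $\beta$ (such as $\beta = \theta$ in $C_r$ or $B_2$, or $\beta = 3\alpha_1 + \alpha_2$ in $G_2$) for which no long simple $\gamma$ satisfies $\beta - \gamma \in \Phi^+$. A short case-by-case inspection in $B_r$, $C_r$, $F_4$, $G_2$ shows that such bottleneck $\beta$ always admit the bond short simple root $\alpha$, with $\beta - \alpha \in \Phi^+$ and $\langle \beta, \alpha^\vee \rangle = q$. For these $\beta$, I would tensor the Koszul sequence $0 \to S^{n-1}\mathfrak{n}^* \otimes \alpha \to S^n \mathfrak{n}^* \to S^n \mathfrak{n}_\alpha^* \to 0$ by $-\beta$, yielding
\[
0 \to S^{n-1}\mathfrak{n}^* \otimes -(\beta - \alpha) \to S^n \mathfrak{n}^* \otimes -\beta \to S^n \mathfrak{n}_\alpha^* \otimes -\beta \to 0.
\]
Since $\beta - \alpha$ is a short positive root, the cohomology of the first term is given by Theorem~\ref{prop: alt cohom vanishing short} and contributes the summands matching the short-case shape of the answer. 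For the third term, $\langle -\beta, \alpha^\vee \rangle = -q \leq -2$ lets Lemma~\ref{lemma: past results cohom}(4) shift the weight to $-\beta + (q-1)\alpha$; iterating this shift, together with further Koszul sequences carried out inside the $C_2$ or $G_2$ Levi generated by $\alpha$ and $\delta$, should isolate the extra summand, with $\varpi = (q-1)\alpha + \delta$ emerging naturally as the short dominant root of this bond Levi.

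The main obstacle is the bookkeeping in the long exact sequence associated to the above short exact sequence: verifying that the contributions from the first and third terms split as a direct sum rather than forming a nontrivial extension, and identifying the residual piece as exactly $\coh^0(\mathcal{B}, S^{n-\text{ht}(\beta^\vee)}\mathfrak{n}_\alpha^* \otimes \varpi)$. This will require tracking $T$-weights through iterated reflections inside the bond Levi, together with the combinatorial identity relating $\text{ht}(\beta) - \text{ht}(\beta^\vee)$ to the $(q-1)$-weighted count of short simple roots in the support of $\beta$, which controls the precise degree shift between the two summands.
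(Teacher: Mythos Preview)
Your strategy---reduce via long simple reflections using Lemma~\ref{lemma: past results cohom}(3), then handle a distinguished bottleneck via the Koszul sequence for a short simple root together with Lemma~\ref{lemma: past results cohom}(4)---is precisely the paper's. But your case-by-case claim about bottlenecks is false in type $C_r$ for $r\geq 3$. There the only long simple root is $\alpha_r=2e_r$, which is orthogonal to every $2e_i$ with $i<r$, so all of these are bottlenecks in your sense; yet the bond short root $\alpha=\alpha_{r-1}=e_{r-1}-e_r$ satisfies $\beta-\alpha\in\Phi^+$ only for $\beta=2e_{r-1}$. In particular your Koszul step at $\alpha$ does not even apply to $\beta=\theta=2e_1$, the very example you list. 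The paper instead runs the Koszul step at the short simple $\alpha_k$ characterized by $\theta^{+}+\alpha_k\in\Phi$ (so $\alpha_k=\alpha_1$ in $C_r$, not $\alpha_{r-1}$), and then uses the auxiliary Lemmas~\ref{lemma: up to short dominant} and~\ref{lemma: neg to pos} to transport the resulting $\mathfrak{n}_{\alpha_k}^*$ answer back to $\mathfrak{n}_\alpha^*\otimes\varpi$. The long roots $2e_i$ with $2\le i\le r-1$, which are still not reached, lie in a proper $C_{r-1}$ Levi and are handled by a further induction on Levi subalgebras.

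Your base case of a long simple root $\beta$ is also not addressed. The argument of Theorem~\ref{prop: alt cohom vanishing short} goes through and produces $\coh^0(\mathcal{B},S^{n-\mathrm{ht}(\beta)+1}\mathfrak{n}_\beta^*)$ in the $i=1$ slot; matching this with $\coh^0(\mathcal{B},S^{n-\mathrm{ht}(\beta)+1}\mathfrak{n}_\alpha^*)\oplus\coh^0(\mathcal{B},S^{n-\mathrm{ht}(\beta^\vee)}\mathfrak{n}_\alpha^*\otimes\varpi)$ is exactly the content of Lemma~\ref{lemma: functions on subregular cover}, and that is where the $\varpi$-twisted summand first enters the picture. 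Finally, your worry about the long exact sequence not splitting is a non-issue: all terms are finite-dimensional $G$-modules in characteristic zero, so every short exact sequence of them splits.
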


\begin{proof}
Let $\Delta_l$ be the subset of  $\Delta$ consisting of long simple roots and let $\Phi_l$ be the roots supported on $\Delta_l$.  Then $\Phi_l$ is a type $A$, irreducible root system and hence 
$\height(\beta) = \height(\beta^{\vee})$ for $\beta \in \Phi_l$.
Moreover, the proof of Theorem~\ref{prop: alt cohom vanishing short} goes through since any such $\beta$ is conjugate to a simple root using only simple reflections $s_\gamma$ where $\gamma$ is long.
Hence, for these $\beta$ 
the statement of Theorem~\ref{prop: alt cohom vanishing short} holds with the short simple root $\alpha$ there 
replaced by any {\em long} simple root.
This agrees with the statement in this theorem 
since, by Lemma~\ref{lemma: functions on subregular cover}, 
we have for a long simple root $\beta$  that
$$\coh^0 (\mathcal{B}, S^{n-\height(\beta)+1} \mathfrak{n}_{\beta}^*) 
\cong \coh^0 (\mathcal{B}, S^{n-\height(\beta)+1} \mathfrak{n}_{\alpha}^*) \oplus 
       \coh^0 (\mathcal{B}, S^{n-\height(\beta)} \mathfrak{n}_{\alpha}^* \otimes \varpi).$$

Next, consider the remaining $\beta \in \Phi^+$.  These are the long roots $\beta$ 
that are supported on some short simple root.
As noted in Section~\ref{s.notations}, there is a unique short simple root $\alpha_k \in \Delta$ such that 
$\tilde\beta := \theta^{+} {+} \alpha_k$ is a root (and this root is long).  Also, 
$\langle \tilde\beta, \alpha_k^\vee \rangle = q$.

Using the tables in Subsection~\ref{s.notations}, we see that 
$\tilde\beta^\vee = \alpha^{\vee}_1+\alpha^{\vee}_2 + \dots + \alpha^{\vee}_r$ in both $B_r$ and $C_r$, 
equals $\alpha^{\vee}_1 + 2\alpha^{\vee}_2+2\alpha^{\vee}_3 + \alpha^{\vee}_4$ in $F_4$, and
equals $\alpha^{\vee}_1 + \alpha^{\vee}_2$ in $G_2$.
We observe that $\height(\tilde\beta^\vee) = \frac{h}{q}$, where $h$ is the Coxeter number of $G$.

By another case-by-case observation, 
$\beta$ is conjugate via simple reflections $s_\gamma$, where $\gamma$ is long, to $\tilde\beta$.
Hence, \eqref{equation: change_of_height_cohomology} (since only long simple roots $\gamma$ are used)
implies that
$$\coh^i(\mathcal{B}, S^j\mathfrak{n}^* \otimes (-\beta ))
 \cong
  \coh^i(\mathcal{B}, S^{j-{\height}(\beta) + \height(\tilde\beta)} \mathfrak{n}^* \otimes (-\tilde\beta))$$
for all $i \geq 0$, $j \geq 0$.  
Note that $\height(\beta) - \height(\tilde\beta) = \height(\beta^\vee) -  \height(\tilde\beta^\vee)$,
so it is enough to prove the result for $\beta = \tilde\beta$.

Tensoring \eqref{equation: Koszul} (with $\alpha$ replaced by $\alpha_k$)
by $\bk_{-\tilde\beta}$ gives 
\begin{equation}
0 \to S^{j-1}\mathfrak{n}^* \otimes (-\theta^+) \to S^j\mathfrak{n}^* \otimes (-\tilde\beta) \to
  S^j\mathfrak{n}_{\alpha_k}^* \otimes  (-\tilde\beta) \to 0.
\end{equation}
Since $\langle -\tilde\beta, \alpha^\vee \rangle = -q$ as noted above,
part (4) of Lemma~\ref{lemma: past results cohom} yields
$$\coh^i(\mathcal{B}, S^j\mathfrak{n}_{\alpha_k}^* \otimes (-\tilde\beta)) 
\cong \coh^{i-1}(\mathcal{B}, S^j\mathfrak{n}_{\alpha_k}^*\otimes (-\tilde\beta + (q-1)\alpha_k)).$$

If $q=2$, then 
$-\tilde\beta + (q-1)\alpha_k = -\theta^+$ and the latter cohomology equals
$\coh^{i-1}(\mathcal{B}, S^{j-\frac{h}{2}}\mathfrak{n}_{\alpha}^*\otimes \varpi)$
by Lemmas~\ref{lemma: up to short dominant} and \ref{lemma: neg to pos}, where $h$ is the Coxeter number of~$G$.
Then by Lemma~\ref{lemma: functions on subregular cover}, this cohomology vanishes except when $i=1$.

At the same time, the cohomology of $S^{j-1}\mathfrak{n}^* \otimes (-\theta^+)$ is known 
by Theorem~\ref{prop: alt cohom vanishing short} and it vanishes except when $i=0$ or $i=1$.
Hence the long exact sequence in cohomology gives for $i=0$ that
$$\coh^{0}(\mathcal{B}, S^j\mathfrak{n}^* \otimes (-\tilde\beta)) \cong 
\coh^{0}(\mathcal{B}, S^{j-1}\mathfrak{n}^* \otimes (-\theta^+)) \cong 
\coh^{0}(\mathcal{B}, S^{j- \height(\tilde\beta)}\mathfrak{n}^*)$$
since $\height(\tilde\beta) = \height(\theta^+
)+1$.
For $i=1$, we get
$$\coh^{1}(\mathcal{B}, S^j\mathfrak{n}^* \otimes (-\tilde\beta)) \cong 
\coh^{1}(\mathcal{B}, S^{j-1}\mathfrak{n}^* \otimes (-\theta^+)) \oplus 
\coh^{0}(\mathcal{B}, S^{j-\frac{h}{2}}\mathfrak{n}_{\alpha}^*\otimes \varpi).$$
This is the promised result for $i=1$ by Theorem~\ref{prop: alt cohom vanishing short} and 
since  $\height(\tilde\beta^\vee) = \frac{h}{2}$. 
For $i \geq 2$, all cohomology vanishes.  

If $q=3$, then $-\tilde\beta + (q-1)\alpha = -\alpha - \delta$ and the result follows using 
Lemmas ~\ref{lemma: neg to pos} and \ref{lemma: functions on subregular cover}.

If $\beta$ is not covered by the previous argument, then $q=2$ and $\beta$ belongs to the irreducible root system of
a proper Levi subalgebra, and we can appeal to induction.  The two long roots in the $C_2$ case were already covered by the previous results so the base case has been taken care of.
\end{proof}

We can recover another proof 
of Theorem~\ref{prop: cohomology vanishing}, which amounts to 
considering $\coh^i(\mathcal{B}, S^n \mathfrak{n}^* \otimes (-\theta))$ for $n=1$.
In the non-simply-laced cases, $\height(\theta) \geq 3$ and 
$\height(\theta^\vee) \geq 2$, so all three symmetric powers in Theorem~\ref{prop: alt cohom vanishing long}
involve negative powers for $n=1$ and hence vanish.  
In the simply-laced cases, where $\theta$ can also be considered short, $\height(\theta) \geq 3$, except for 
types $A_2$ and $A_1$.  Hence all cohomology groups vanish by Theorem~\ref{prop: alt cohom vanishing short}
except in these two cases.
When $G$ is of type
$A_2$, the $i=0$ case vanishes and the $i=1$ case yields the trivial representation of $G$. 
When $G$ is of type $A_1$, the $i=1$ case vanishes and the $i=0$ case yields the trivial representation.

The following three lemmas were used in the proof of Theorem~\ref{prop: alt cohom vanishing long}.
\begin{lemma} \label{lemma: functions on subregular cover}
Keep the notation of  
Theorem~\ref{prop: alt cohom vanishing long}.  Then
$$\coh^0 (\mathcal{B}, S^{n} \mathfrak{n}_{\delta}^*) \cong
\coh^0 (\mathcal{B}, S^{n} \mathfrak{n}_{\alpha}^*) \oplus \coh^0 (\mathcal{B}, S^{n-1} \mathfrak{n}_{\alpha}^* \otimes \varpi)$$
and 
$$\coh^i (\mathcal{B}, S^{n-1} \mathfrak{n}_{\alpha}^* \otimes \varpi) = 0$$
for all $i \geq 1$ and for all $n \geq 0$.
\end{lemma}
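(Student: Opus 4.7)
The plan is to reduce the entire lemma to a rank-$2$ statement on the flag variety $\mathcal{B}_L=L/B_L$ of the Levi $L$ (of type $C_2$ or $G_2$) of $Q=P_{\alpha,\delta}$, and then handle the rank-$2$ case using the geometry of the partial Springer-type varieties $Y_?=L\times^{P_{?,L}}\mathfrak{n}_{?,L}$ for $?\in\{\alpha,\delta\}$.

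First I would reduce to rank $2$ via the fibration $p\colon\mathcal{B}\to G/Q$. As $B_L$-modules $\mathfrak{n}_\alpha=\mathfrak{n}_{\alpha,L}\oplus\mathfrak{n}_Q$ and $\mathfrak{n}_\delta=\mathfrak{n}_{\delta,L}\oplus\mathfrak{n}_Q$, where $\mathfrak{n}_Q$ is an $L$-module and so contributes trivial bundles on each fiber of $p$. Expanding the symmetric powers via the Kunneth decomposition and applying Elkik's theorem (part~(5) of Lemma~\ref{lemma: past results cohom}) to the parabolics $P_{\alpha,L}\subset L$ and $P_{\delta,L}\subset L$, the higher direct images $R^ip_*$ ($i\geq 1$) vanish on all three sheaves appearing in the lemma. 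A term-by-term bookkeeping of the Leray spectral sequence then shows that the lemma on $\mathcal{B}$ is equivalent to the rank-$2$ analogue
\[\coh^0(\mathcal{B}_L,S^a\mathfrak{n}_{\delta,L}^*)\cong \coh^0(\mathcal{B}_L,S^a\mathfrak{n}_{\alpha,L}^*)\oplus \coh^0(\mathcal{B}_L,S^{a-1}\mathfrak{n}_{\alpha,L}^*\otimes \varpi)\]
together with $\coh^i(\mathcal{B}_L,S^{a-1}\mathfrak{n}_{\alpha,L}^*\otimes\varpi)=0$ for $i\geq 1$, for each $a\geq 0$.

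Second, I would establish the rank-$2$ identity by identifying both sides as $L$-modules. Pushing down the $\mathbb{P}^1$-fibrations $\mathcal{B}_L\to L/P_{?,L}$, the graded ring $\bigoplus_a\coh^0(\mathcal{B}_L,S^a\mathfrak{n}_{?,L}^*)$ is identified with the coordinate ring $\bk[Y_?]$. The geometric content of the ``subregular cover'' is that the Stein factorization of the moment map $Y_\delta\to\mathfrak l$ realizes $Y_\delta$ as a degree-$q$ cyclic cover (factoring through $Y_\alpha$ or its normalization) of the natural resolution of a Richardson orbit closure, the cover being associated with the line bundle $\mathcal L(\varpi)$ pulled back from $L/P_{\alpha,L}$. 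Decomposing the pushforward $\pi_*\mathcal O_{Y_\delta}\cong \mathcal O\oplus \mathcal L(\varpi)^{\vee}\oplus\cdots$ of the structure sheaf and taking global sections produces the right-hand side of the identity; in the $G_2$ case ($q=3$) the summands beyond the first two contribute zero global sections, leaving again only two terms. This decomposition can be verified either geometrically or via a direct Borel--Weil--Bott character computation, e.g., by invoking the Brylinski--Kostant $K$-type formula for graded coordinate rings of nilpotent orbit covers.

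Third, for the vanishing, in the $C_2$ case $\varpi$ is a character of $P_{\alpha,L}$, so $\coh^i(\mathcal B_L,S^{n-1}\mathfrak n_{\alpha,L}^*\otimes \varpi)\cong \coh^i(Y_\alpha,\pi^*\mathcal L(\varpi))$, and vanishing in positive degrees follows from Grauert--Riemenschneider applied to the resolution $Y_\alpha\to\overline{\mathcal O}$, combined with the affineness of $\overline{\mathcal O}$ and the global generation of $\mathcal L(\varpi)$ on $L/P_{\alpha,L}$; in the $G_2$ case $\varpi$ is only a $B_L$-weight, and one uses a $B_L$-equivariant filtration of $S^{n-1}\mathfrak n_{\alpha,L}^*$ by line subquotients combined with Borel--Weil--Bott on $\mathcal B_L$ and the weight arithmetic of Lemma~\ref{lemma: past results cohom} to show vanishing of each twisted graded piece. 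The main obstacle will be making the ``subregular cover'' decomposition rigorous in a uniform manner across the $C_2$ and $G_2$ cases; the cleanest route is likely a direct Weyl-character matching of the $L$-module structures on both sides, since all relevant modules are fully determined by Borel--Weil--Bott on $\mathcal{B}_L$.
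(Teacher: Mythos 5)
Your approach is genuinely different from the paper's and, as written, has real gaps. The paper's proof is short and self-contained: with $V\subset\mathfrak n_\delta$ the sum of all root spaces except those for $\alpha$ and $\delta$, the two Koszul sequences $0\to S^{j-1}\mathfrak n_\delta^*\otimes\bk_\alpha\to S^j\mathfrak n_\delta^*\to S^jV^*\to 0$ and $0\to S^{j-1}\mathfrak n_\alpha^*\otimes\bk_\delta\to S^j\mathfrak n_\alpha^*\to S^jV^*\to 0$ do all the work: part~(1) of Lemma~\ref{lemma: past results cohom} (using $\langle\alpha,\delta^\vee\rangle=-1$ and $P_\delta$-stability) forces $\coh^i(\mathcal B,S^j\mathfrak n_\delta^*)\cong\coh^i(\mathcal B,S^jV^*)$; part~(4) (using $\langle\delta,\alpha^\vee\rangle=-q$ and $P_\alpha$-stability) identifies $\coh^i(\mathcal B,S^{j-1}\mathfrak n_\alpha^*\otimes\delta)$ with $\coh^{i-1}(\mathcal B,S^{j-1}\mathfrak n_\alpha^*\otimes\varpi)$; and Elkik's vanishing (part~(5)) for $\mathfrak n_\alpha$ and $\mathfrak n_\delta$ collapses the long exact sequence of the second Koszul sequence to give both statements at once. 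No Leray reduction, no Springer geometry, no covers.

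In your proposal, at least three steps are unjustified as presented. First, killing $R^ip_*$ of the sheaf $\mathcal L(S^{n-1}\mathfrak n_\alpha^*\otimes\varpi)$ in the Leray step already requires the rank-two version of the vanishing clause of the lemma; the reduction is not a free ``equivalence,'' and that vanishing has to be established independently beforehand. Second, the ``degree-$q$ cyclic cover'' structure of $Y_\delta$ over $Y_\alpha$ (or its normalization) is asserted, not proved; it encodes nontrivial facts about the generic degrees of both moment maps, normality of the Richardson orbit closure, and the Galois structure of the covering, none of which are addressed. The $G_2$ case is the sticking point: the component group of the subregular orbit there is $S_3$, and a degree-$3$ cyclic cover would naturally give a three-term splitting of $\pi_*\mathcal O_{Y_\delta}$, whereas the lemma produces exactly two terms; your patch that ``summands beyond the first two contribute zero global sections'' is undemonstrated and, in light of the $\mathbb G_m$-graded Hilbert-series balance of the two sides, suspect. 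Third, the Grauert--Riemenschneider/Kawamata--Viehweg argument in $C_2$ presumes $\mu_\alpha$ is birational (again unverified), and the $G_2$ vanishing ``via a $B_L$-equivariant filtration and Borel--Weil--Bott'' is left entirely vague. Your own closing remark — that the cleanest rigorous route is a direct Borel--Weil--Bott character computation on $\mathcal B_L$ — is, suitably organized, exactly the paper's short argument.
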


\begin{proof}
The equation
   \begin{equation}\label{equation: Koszul2}
0 \to S^{j-1} \mathfrak{n}_{\delta}^* \otimes \bk_{\alpha} \to S^j\mathfrak{n}_{\delta}^* \to
  S^j V^*  \to 0
  \end{equation}
  holds where $V \subset \mathfrak{n_{\delta}}$ is the sum of all root spaces except for the root spaces 
  for $\alpha$ and for $\delta$.
Since $\mathfrak{n}_{\delta}$ is $P_{\delta}$-stable and $\langle \alpha, \delta^\vee \rangle = -1$ , 
we have 
$\coh^i (\mathcal{B}, S^{j-1} \mathfrak{n}_{\delta}^* \otimes \alpha) =0$ for all $i$
by Lemma~\ref{lemma: past results cohom}.
Hence,
$\coh^i (\mathcal{B}, S^{j} \mathfrak{n}_{\delta}^*) \cong \coh^i (\mathcal{B}, S^{j} V^*)$ 
for all $i \geq 0$ and $j \geq 0$, using the long exact sequence in cohomology.

Next, consider the equation 
  \begin{equation}\label{equation: Koszul3}
0 \to S^{j-1} \mathfrak{n}_{\alpha}^* \otimes \bk_{\delta} \to S^j\mathfrak{n}_{\alpha}^* \to
  S^j V^*  \to 0.
  \end{equation}
Note that $\mathfrak{n}_{\alpha}$ is $P_{\alpha}$-stable and $\langle \delta, \alpha^\vee \rangle = -q$.
Hence, part (4) of Lemma~\ref{lemma: past results cohom} implies
$$\coh^i (\mathcal{B}, S^{j-1} \mathfrak{n}_{\alpha}^* \otimes \delta) \cong 
\coh^{i-1} (\mathcal{B}, S^{j-1} \mathfrak{n}_{\alpha}^* \otimes (q{-}1)\alpha{+}\delta),$$
The weight $(q{-}1)\alpha{+}\delta$ is $\varpi$.
Since 
$\coh^i (\mathcal{B}, S^{j} \mathfrak{n}_{\alpha}^*)$ and $\coh^i (\mathcal{B}, S^{j} \mathfrak{n}_{\delta}^*)$ 
always vanish by part (5) of Lemma~\ref{lemma: past results cohom},
the long exact sequence for \eqref{equation: Koszul3} implies the result.
\end{proof}

\begin{lemma}\label{lemma: neg to pos}
Keep the notation of 
Theorem~\ref{prop: alt cohom vanishing long}.  Then
$$\coh^i (\mathcal{B}, S^{n} \mathfrak{n}_{\alpha}^* \otimes (-\alpha-\delta))
\cong \coh^i (\mathcal{B}, S^{n-2} \mathfrak{n}_{\alpha}^* \otimes \varpi)$$
for all $i \geq 0$, $n \geq 0$.
\end{lemma}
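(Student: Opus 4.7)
The plan is to derive the isomorphism via long-exact-sequence bookkeeping with two Koszul short exact sequences of $B$-modules, using crucially that $\alpha+\delta$ is a short root of $G$ (so Theorem~\ref{prop: alt cohom vanishing short}, which is proved independently of Lemma~\ref{lemma: neg to pos}, is available).

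First I would twist the Koszul short exact sequence
$$0 \to S^{n-1}\mathfrak{n}^* \otimes \bk_\alpha \to S^n\mathfrak{n}^* \to S^n\mathfrak{n}_\alpha^* \to 0$$
by $\bk_{-\alpha-\delta}$ to obtain
$$0 \to S^{n-1}\mathfrak{n}^* \otimes \bk_{-\delta} \to S^n\mathfrak{n}^* \otimes \bk_{-\alpha-\delta} \to S^n\mathfrak{n}_\alpha^* \otimes \bk_{-\alpha-\delta} \to 0$$
and take cohomology on $\mathcal{B}$. Since $\alpha+\delta$ is a short root of height $2$, Theorem~\ref{prop: alt cohom vanishing short} gives $\coh^0(\mathcal{B}, S^n\mathfrak{n}^* \otimes \bk_{-\alpha-\delta}) \cong \coh^0(\mathcal{B}, S^{n-2}\mathfrak{n}^*)$, $\coh^1(\mathcal{B}, S^n\mathfrak{n}^* \otimes \bk_{-\alpha-\delta}) \cong \coh^0(\mathcal{B}, S^{n-1}\mathfrak{n}_\alpha^*)$, with higher cohomology vanishing.

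In parallel, for the left term I would twist the analogous Koszul sequence for $\delta$ by $\bk_{-\delta}$, apply Lemma~\ref{lemma: past results cohom}(4) with $k=-2$ to identify $\coh^i(\mathcal{B}, S^{n-1}\mathfrak{n}_\delta^* \otimes \bk_{-\delta}) \cong \coh^{i-1}(\mathcal{B}, S^{n-1}\mathfrak{n}_\delta^*)$, and combine with part (5) of that lemma to conclude that $\coh^0(\mathcal{B}, S^{n-1}\mathfrak{n}^* \otimes \bk_{-\delta}) \cong \coh^0(\mathcal{B}, S^{n-2}\mathfrak{n}^*)$, $\coh^1(\mathcal{B}, S^{n-1}\mathfrak{n}^* \otimes \bk_{-\delta}) \cong \coh^0(\mathcal{B}, S^{n-1}\mathfrak{n}_\delta^*)$, with higher cohomology zero. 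A short check via the subsheaf $S^{n-2}\mathfrak{n}^* \subset S^n\mathfrak{n}^*\otimes \bk_{-\alpha-\delta}$ (inclusion by multiplication with $\alpha\delta\in S^2\mathfrak{n}^*$) shows that the induced map on $\coh^0$'s is the identity, so that the long exact sequence collapses to
$$0 \to \coh^0(\mathcal{B}, S^n\mathfrak{n}_\alpha^* \otimes \bk_{-\alpha-\delta}) \to \coh^0(\mathcal{B}, S^{n-1}\mathfrak{n}_\delta^*) \xrightarrow{\phi} \coh^0(\mathcal{B}, S^{n-1}\mathfrak{n}_\alpha^*) \to \coh^1(\mathcal{B}, S^n\mathfrak{n}_\alpha^* \otimes \bk_{-\alpha-\delta}) \to 0,$$
together with $\coh^{\geq 2}(\mathcal{B}, S^n\mathfrak{n}_\alpha^* \otimes \bk_{-\alpha-\delta}) = 0$.

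The main obstacle will be identifying $\phi$ with the canonical surjection arising from Lemma~\ref{lemma: functions on subregular cover}, which provides a direct-sum decomposition $\coh^0(\mathcal{B}, S^{n-1}\mathfrak{n}_\delta^*) \cong \coh^0(\mathcal{B}, S^{n-1}\mathfrak{n}_\alpha^*) \oplus \coh^0(\mathcal{B}, S^{n-2}\mathfrak{n}_\alpha^* \otimes \bk_\varpi)$ together with vanishing of higher cohomology of the summand $S^{n-2}\mathfrak{n}_\alpha^* \otimes \bk_\varpi$. I expect $\phi$ to be precisely the projection onto the first summand, an identification that should follow by tracing the connecting homomorphism through the Koszul identifications; this will yield $\coh^0(\mathcal{B}, S^n\mathfrak{n}_\alpha^* \otimes \bk_{-\alpha-\delta}) \cong \coh^0(\mathcal{B}, S^{n-2}\mathfrak{n}_\alpha^* \otimes \bk_\varpi)$ and $\coh^1(\mathcal{B}, S^n\mathfrak{n}_\alpha^* \otimes \bk_{-\alpha-\delta}) = 0$, which matches the right-hand side of the lemma in all degrees.
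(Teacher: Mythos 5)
Your approach is genuinely different from the paper's, and it sets up most of the bookkeeping correctly: twisting the Koszul sequence for $\alpha$ by $\bk_{-\alpha-\delta}$ and the Koszul sequence for $\delta$ by $\bk_{-\delta}$, applying Theorem~\ref{prop: alt cohom vanishing short} (valid, since $\alpha+\delta$ is a short root of height $2$ in both the $C_2$ and $G_2$ subsystems) to the middle term, and handling the left term with Lemma~\ref{lemma: past results cohom}(4),(5). The commutative-square argument showing that the map on $\coh^0$'s is an isomorphism is also sound, since on $\coh^0$ all the relevant Koszul identifications literally are the multiplication-by-root-vector maps, and these commute in the symmetric algebra.

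However, there is a genuine gap at the step you flagged as ``the main obstacle.'' Your long exact sequence reduces the lemma to showing that the cohomology map
\[
\phi:\coh^1(\mathcal{B},S^{n-1}\mathfrak{n}^*\otimes\bk_{-\delta})\longrightarrow\coh^1(\mathcal{B},S^n\mathfrak{n}^*\otimes\bk_{-\alpha-\delta}),
\]
induced by multiplication by the weight-$\alpha$ vector of $\mathfrak{n}^*$, is surjective --- equivalently that, under the identifications $\coh^1(\mathcal{B},S^{n-1}\mathfrak{n}^*\otimes\bk_{-\delta})\cong\coh^0(\mathcal{B},S^{n-1}\mathfrak{n}_\delta^*)$ and $\coh^1(\mathcal{B},S^n\mathfrak{n}^*\otimes\bk_{-\alpha-\delta})\cong\coh^0(\mathcal{B},S^{n-1}\mathfrak{n}_\alpha^*)$, the map $\phi$ becomes the projection furnished by Lemma~\ref{lemma: functions on subregular cover}. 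On $\coh^1$ these identifications are \emph{not} multiplication maps: each factors through a restriction map to $S^{n-1}\mathfrak{n}_\delta^*$ or to $S^{n-1}\mathfrak{n}_\alpha^*$ respectively (via the vanishing of $\coh^i(S^{n-2}\mathfrak{n}^*)$ for $i\geq1$ and subsequent use of Lemma~\ref{lemma: past results cohom}(4)), so the two columns of your diagram need not commute with multiplication by $e_\alpha$ in any visible way; the naive multiplicativity argument that worked on $\coh^0$ is not available here. Note also that a direct application of Lemma~\ref{lemma: past results cohom}(1) with $\gamma=\delta$ to $S^n\mathfrak{n}_\alpha^*\otimes\bk_{-\alpha-\delta}$ fails since $S^n\mathfrak{n}_\alpha^*$ is a $P_\alpha$-module, not a $P_\delta$-module. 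Without surjectivity of $\phi$, you only obtain the Euler-characteristic consequence $\dim\coh^0 - \dim\coh^1 = \dim\coh^0(\mathcal{B},S^{n-2}\mathfrak{n}_\alpha^*\otimes\varpi)$, not the individual isomorphisms asserted by the lemma.

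The paper's proof takes a different and cleaner route that avoids this connecting-map problem entirely. It uses the four-term Koszul resolution
\[
0 \to S^{j-2}\mathfrak{n}_\alpha^*\otimes\bk_{\alpha+2\delta} \to S^{j-1}\mathfrak{n}_\alpha^*\otimes M^* \to S^j\mathfrak{n}_\alpha^* \to S^jV^* \to 0,
\]
where $V\subset\mathfrak{n}_\alpha$ omits the root spaces for $\delta$ and $\alpha+\delta$ and $M=\mathfrak{n}_\alpha/V$, then twists by $\bk_{-\alpha-\delta}$. The whole point is that the two interior modules in the twisted sequence have \emph{all} cohomology vanishing (the third term because $V$ is $P_\delta$-stable and $\langle -\alpha-\delta,\delta^\vee\rangle=-1$; the second term because $M^*$ factors through a $P_\alpha$-module tensored with a character pairing to $-1$ against $\alpha^\vee$). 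Splitting the four-term sequence in two then gives an isomorphism $\coh^{i+1}(\mathcal{B},S^{n-2}\mathfrak{n}_\alpha^*\otimes\bk_\delta)\cong\coh^i(\mathcal{B},S^n\mathfrak{n}_\alpha^*\otimes\bk_{-\alpha-\delta})$ outright in every degree, without ever needing to identify a connecting homomorphism, and the conclusion follows by one more application of Lemma~\ref{lemma: past results cohom}(4). So your plan can likely be completed, but not without real work on the map $\phi$ that you have not done; the paper's choice of resolution is precisely what makes that work unnecessary.
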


\begin{proof}
Let $V$ be the subspace of $\mathfrak{n}_{\alpha}$ omitting the root spaces for $\delta$ and $\alpha+\delta$.
Then $V$ is $P_{\delta}$-stable.  
Set $\mathsf{M} = \mathfrak{n}_{\alpha}/V$, which is a $B$-module. 
Consider the Koszul resolution
  \begin{equation}\label{equation: Koszul4}
0 \to S^{j-2} \mathfrak{n}_{\alpha}^* \otimes \bk_{\alpha+2\delta} 
\to S^{j-1}\mathfrak{n}_{\alpha}^* \otimes \mathsf{M}^*
\to S^j\mathfrak{n}_{\alpha}^* \to
  S^j V^*  \to 0.
  \end{equation} 
Now $\mathsf{M}^*$ is a $B$-module that can be written as the tensor product of $B$-modules $\mathsf{M}'$ and $\bk_\lambda$ 
for 
$\lambda \in X^*(T)$ with $\langle \lambda, \alpha \rangle = -q+1$. 
In fact, $\mathsf{M}'$ is just the two-dimensional irreducible representation when restricted to the $SL_2$ Levi subgroup
for $\alpha$.  

Let $\mu = -\alpha{-}\delta$ and tensor \eqref{equation: Koszul4} with $\bk_\mu$, giving
  \begin{equation}\label{equation: Koszul4.1}
0 \to S^{j-2} \mathfrak{n}_{\alpha}^* \otimes \bk_{\delta} 
\to S^{j-1}\mathfrak{n}_{\alpha}^* \otimes \mathsf{M}^* \otimes \bk_\mu
\to S^j\mathfrak{n}_{\alpha}^* \otimes \bk_\mu \to
  S^j V^* \otimes \bk_\mu \to 0.
  \end{equation} 
Since $\langle \mu, \delta^{\vee} \rangle = -1$, 
the final non-zero term  in \eqref{equation: Koszul4.1} has vanishing cohomology on~$\mathcal B$ in all degrees
by part (1) of Lemma~\ref{lemma: past results cohom}.
Moreover, the second non-zero term $S^{j-1}\mathfrak{n}_{\alpha}^* \otimes \mathsf{M}' \otimes \bk_{\lambda+ \mu}$ 
also has vanishing cohomology in all degrees by part (1) of Lemma~\ref{lemma: past results cohom},
using the fact that 
$S^{j-1}\mathfrak{n}_{\alpha}^* \otimes \mathsf{M}'$ is a $P_{\alpha}$-module
and $\langle \lambda+\mu, \alpha^{\vee} \rangle = -1$.  The latter is a consequence of the fact
that $\langle \mu, \alpha^\vee \rangle = q-2$.

Breaking \eqref{equation: Koszul4.1} into two short exact sequences
and using their long exact sequences in cohomology now implies that 
$$\coh^{i+1} (\mathcal{B}, S^{n-2} \mathfrak{n}_{\alpha}^* \otimes \delta)
\cong \coh^i (\mathcal{B}, S^{n} \mathfrak{n}_{\alpha}^* \otimes \mu).$$
The result follows since
$\coh^{i+1} (\mathcal{B}, S^{n-2} \mathfrak{n}_{\alpha}^* \otimes \delta) \cong 
\coh^{i} (\mathcal{B}, S^{n-2} \mathfrak{n}_{\alpha}^* \otimes (q-1)\alpha+\delta)$
as in the proof of Lemma~\ref{lemma: functions on subregular cover}.
\end{proof}

\begin{lemma}\label{lemma: up to short dominant}
Keep the notation of Lemma~\ref{lemma: neg to pos}
and assume that $q=2$,
so that $\mu = -\varpi = -\alpha{-}\delta$.
As before, $\alpha_k$ is the unique \textup{(}short\textup{)} simple root such that $\theta^+ + \alpha_{k}$ is a root
and $h$ is the Coxeter number of $G$.
Then 
$$\coh^i (\mathcal{B}, S^{n} \mathfrak{n}_{\alpha}^* \otimes \varpi)
\cong
\coh^i (\mathcal{B}, S^{n-(\frac{h}{2}-2)} \mathfrak{n}_{\alpha_k}^* \otimes \theta^+)$$
and 
$$\coh^i (\mathcal{B}, S^{n-(\frac{h}{2}-2)} \mathfrak{n}_{\alpha}^* \otimes (-\varpi))
\cong
\coh^i (\mathcal{B}, S^{n} \mathfrak{n}_{\alpha_k}^* \otimes (-\theta^+))$$
for all $i \geq 0$, $n \geq 0$.
\end{lemma}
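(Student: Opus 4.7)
The plan is to establish each of the two isomorphisms by walking from one side to the other through a chain of cohomology-preserving identifications, combining Lemma~\ref{lemma: past results cohom} with Koszul-type resolutions analogous to those deployed in the proofs of Lemmas~\ref{lemma: functions on subregular cover} and~\ref{lemma: neg to pos}. Since $\langle \varpi, \alpha^\vee \rangle = \langle \theta^+, \alpha_k^\vee \rangle = 0$, both $S^n \mathfrak{n}_\alpha^* \otimes \bk_\varpi$ and $S^{n-m}\mathfrak{n}_{\alpha_k}^* \otimes \bk_{\theta^+}$ (with $m=\frac{h}{2}-2$) are in fact $P$-modules for the corresponding parabolics, so both sides are computable as cohomology on the partial flag varieties $G/P_\alpha$ and $G/P_{\alpha_k}$; the proof amounts to identifying these two cohomology groups degree by degree.

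First I would handle type $B_r$, in which $\alpha = \alpha_k = \alpha_r$ and no parabolic switch is needed. Here the walk from $\varpi = \alpha_{r-1} + \alpha_r$ to $\theta^+ = \alpha_1 + \cdots + \alpha_r$ proceeds by successively adding the long simple roots $\alpha_{r-2}, \alpha_{r-3}, \ldots, \alpha_1$. At each step the current weight pairs to $-1$ with the next simple root $\gamma \neq \alpha$, so an analogue of part (3) of Lemma~\ref{lemma: past results cohom} for $\mathfrak{n}_\alpha^*$ in place of $\mathfrak{n}^*$ reduces the symmetric-power exponent by one; this analogue follows from part (1) of Lemma~\ref{lemma: past results cohom} combined with a suitable Koszul resolution of $S^n \mathfrak{n}_\alpha^*$, in the same spirit as~\eqref{equation: Koszul}. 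The walk has precisely $r - 2 = \frac{h}{2}-2$ steps, matching the claimed drop in exponent.

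For types $C_r$ and $F_4$ the situation is more intricate: the height difference $\mbox{ht}(\theta^+) - \mbox{ht}(\varpi)$ strictly exceeds $\frac{h}{2}-2$, and in $C_r$ one additionally has $\alpha \neq \alpha_k$. To absorb the discrepancy, I would interleave part (3) walks with composite moves that combine parts (2) and (4) of Lemma~\ref{lemma: past results cohom} in pairs whose cohomological degree shifts cancel, effectively packaging multiple weight-additions into a single decrease of the exponent. The parabolic switch in $C_r$ can be carried out inductively by passing through the short dominant roots of a nested family of Levi subalgebras interpolating between the initial $C_2$ Levi and $G$ itself. The second isomorphism follows by dualizing the chain of identifications: walk from $-\varpi$ to $-\theta^+$, with the exponent now increasing by $\frac{h}{2}-2$ instead of decreasing.

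The hard part will be the case-by-case bookkeeping in types $C_r$ and $F_4$: at each step one must verify that the pairing conditions of Lemma~\ref{lemma: past results cohom} are met, that the auxiliary terms appearing in the Koszul resolutions have vanishing cohomology (so that the resolutions produce clean isomorphisms rather than merely connecting maps), and that the net changes in exponent, weight, parabolic, and cohomological degree tally correctly to yield the claimed identifications.
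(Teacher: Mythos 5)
The paper's proof of this lemma is a single sentence: it cites Theorem~2.2 of \cite{Johnson-Sommers} with $\Omega=\{\alpha\}$ and $\Omega'=\{\alpha_k\}$, and notes that the same argument (with $\lambda$ and $\phi=\theta^+$ swapped and negated) gives the second isomorphism. You took a genuinely different route: you tried to reprove the result from scratch by walking between weights with Koszul-type resolutions and Lemma~\ref{lemma: past results cohom}, in the style used elsewhere in Section~\ref{alt_coho}. That is in fact the style of argument behind the Johnson--Sommers theorem, so your instinct about what kind of proof is needed is sound, and your preliminary observations are correct: $\langle\varpi,\alpha^\vee\rangle=\langle\theta^+,\alpha_k^\vee\rangle=0$ (so both sides are genuinely parabolic objects), $\alpha=\alpha_k$ in types $B_r$ and $F_4$ but not in $C_r$, and the height gap $\mathrm{ht}(\theta^+)-\mathrm{ht}(\varpi)$ matches $\tfrac{h}{2}-2$ in $B_r$ but strictly exceeds it in $C_r$ and $F_4$.

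That said, as submitted this is a plan, not a proof, and the gap is exactly where you flag it. In $B_r$ the argument is essentially complete once one writes out the analogue of part~(3) of Lemma~\ref{lemma: past results cohom} for $\mathfrak{n}_\alpha^*$, which works because at each step the weight being added ($\alpha_{r-2},\dots,\alpha_1$) is a long simple root not adjacent to $\alpha=\alpha_r$, so the relevant subspace $\mathfrak{n}_\alpha\cap\mathfrak{n}_\gamma$ is $P_\gamma$-stable and one term in the Koszul resolution vanishes cleanly.  But in $C_r$ and $F_4$ you say one should ``interleave part~(3) walks with composite moves that combine parts~(2) and~(4) \dots in pairs whose cohomological degree shifts cancel,'' ``effectively packaging multiple weight-additions into a single decrease of the exponent,'' and in $C_r$ additionally interpolate through a nested family of Levi subalgebras to change the parabolic from $P_\alpha$ to $P_{\alpha_k}$.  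None of these moves is spelled out; you do not say which simple reflection is applied at which step, which Koszul term is being killed by which vanishing statement, or why the net shift in cohomological degree is zero rather than $\pm 1$.  Those are precisely the verifications that make this lemma nontrivial, and they are the content of the cited Johnson--Sommers theorem.  Since you neither carry them out nor cite that theorem, the proof as written has a genuine gap: the $C_r$ and $F_4$ cases (and the second isomorphism, which you dispose of with a single sentence about ``dualizing the chain'' without checking that each move in the forward walk has an admissible inverse) are not established.
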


\begin{proof}
This is interpreting Theorem 2.2 in \cite{Johnson-Sommers} for $\Omega= \{ \alpha \}$
and observing that $m = \frac{h}{2}-2$ and $\Omega' = \{ \alpha_k \}$.   
The same proof
works if $\lambda$ and $\phi= \theta^+$ in that theorem are replaced by $-\phi$ and $-\lambda$, respectively.
\end{proof}

\subsection{Parabolic setting}

Let $P$ be the parabolic subgroup of $G$ stabilizing $\theta$. 

\begin{theorem}\label{prop: type A parabolic cohomology}
  Let $G$ be of type $A_r$ with $r \geq 2$.   Then
  \[
    \coh^i(\mathcal{B}, S^n \mathfrak{n}_P^* \otimes (-\theta)) \cong
    \begin{cases}
      \coh^0 (\mathcal{B}, S^{n-2} \mathfrak{n}_P^*) & \text{if } $i=0$;\\
      \coh^0 (\mathcal{B}, S^{n-1} \mathfrak{n}_Q^*) & \text{if } $i=1$;\\
      0 & \text{otherwise,}
    \end{cases}
  \]
  where $Q$ is the parabolic containing $B$ whose Levi factor corresponds to all simple roots except $\alpha_2$.
\end{theorem}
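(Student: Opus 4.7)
My plan is to mimic the strategy of the proofs of Theorems~\ref{prop: alt cohom vanishing short} and~\ref{prop: alt cohom vanishing long}, working with the $P$-module $\mathfrak{n}_P^*$ rather than the $B$-module $\mathfrak{n}^*$. First, since $\bk_{-\theta}$ extends to a character of $P$ (because $P$ stabilizes the highest root space), the $B$-equivariant sheaf $S^n\mathfrak{n}_P^*\otimes \bk_{-\theta}$ on $\mathcal B$ descends from a $P$-equivariant sheaf on $\mathcal P$; pushing down along $\pi\colon \mathcal B\to \mathcal P$ reduces the computation to
\[
\coh^i(\mathcal B, S^n\mathfrak{n}_P^*\otimes \bk_{-\theta})\cong \coh^i(\mathcal P, S^n T\mathcal P\otimes \mathcal I_Y).
\]

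Next, I would exploit the short exact sequence of $P$-modules
\[
0\to V\to \mathfrak{n}_P^*\to \bk_\theta\to 0,
\]
where $V=(\mathfrak{n}_P/\mathfrak{g}_{-\theta})^*$, using that the lowest root space $\mathfrak{g}_{-\theta}$ is $P$-stable inside $\mathfrak{n}_P$. In type $A_r$, since $\mathfrak{n}_P$ is two-step nilpotent with centre $\mathfrak{g}_{-\theta}$, the unipotent radical of $P$ acts trivially on the quotient $\mathfrak{n}_P/\mathfrak{g}_{-\theta}$; hence $V$ is actually an $L$-module, and it splits as $V\cong W\oplus W^*$ where $W$ is the standard module of the $\mathrm{GL}_{r-1}$ Levi factor (up to a central twist). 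This short exact sequence yields the Koszul resolution
\[
0\to \wedge^{2r-2}V\otimes S^{n-2r+2}\mathfrak{n}_P^* \to \cdots \to V\otimes S^{n-1}\mathfrak{n}_P^* \to S^n\mathfrak{n}_P^* \to \bk_{n\theta}\to 0,
\]
which after tensoring with $\bk_{-\theta}$ becomes a resolution of $\bk_{(n-1)\theta}$.

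By Borel--Weil--Bott, $\coh^\bullet(\mathcal B, \bk_{(n-1)\theta})$ is concentrated in degree zero (for $n\geq 1$), so the hypercohomology spectral sequence of the resolution together with its differentials will control $\coh^\bullet(\mathcal B, S^n\mathfrak{n}_P^*\otimes\bk_{-\theta})$ in terms of cohomology of the lower Koszul terms $\wedge^k V\otimes S^{n-k}\mathfrak{n}_P^*\otimes \bk_{-\theta}$. These terms should be accessible through Lemma~\ref{lemma: past results cohom}, Theorems~\ref{prop: alt cohom vanishing short} and~\ref{prop: alt cohom vanishing long}, and the ancillary short exact sequence $0\to (\mathfrak{n}/\mathfrak{n}_P)^*\to \mathfrak{n}^*\to \mathfrak{n}_P^*\to 0$ that relates $\mathfrak{n}_P^*$ back to $\mathfrak{n}^*$, where the previously established vanishing and identifications are available.

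The main obstacle is the identification of the surviving $i=1$ contribution with $\coh^0(\mathcal B, S^{n-1}\mathfrak{n}_Q^*)$, where $Q$ has Levi corresponding to all simple roots except $\alpha_2$. The weight sets of $\mathfrak{n}_Q^*$ and of $V$ (or any obvious subquotient of the Koszul complex) are genuinely different, so this identification will require an analysis specific to type $A$. I expect it will amount to recognizing that a particular subquotient of the Koszul complex, after accounting for the weight shifts and the Koszul differentials, has global sections canonically isomorphic to those of $S^{n-1}\mathfrak{n}_Q^*$. Arranging the bookkeeping so that this $Q$-structure becomes visible is where the type-$A$ hypothesis is essential, and where the technical heart of the argument will lie.
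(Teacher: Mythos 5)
There is a genuine gap: what you have written is a plan, not a proof, and the hard part is exactly what you defer. You correctly identify that in type $A_r$ the nilradical $\mathfrak n_P$ is two-step nilpotent with centre $\mathfrak g_{-\theta}$, that $V=(\mathfrak n_P/\mathfrak g_{-\theta})^*$ is an $L$-module splitting as $W\oplus W^*$, and that there is a Koszul resolution of $\bk_{n\theta}$. But that complex has length $2(r-1)+1$, and its middle terms $\wedge^k V\otimes S^{n-k}\mathfrak n_P^*\otimes(-\theta)$ are not of a form the cited tools handle: $\wedge^k V$ decomposes into many $L$-irreducibles, and since $\langle-\theta,\alpha_j^\vee\rangle=0$ for $2\le j\le r-1$, parts (1)--(2) of Lemma~\ref{lemma: past results cohom} never engage on the simple roots where $\mathfrak n_P^*$ is stable. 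A spectral sequence whose abutment is $\coh^\bullet(\mathcal B,\bk_{(n-1)\theta})$ tells you almost nothing about the single $E_1$-column you care about without also knowing the rest of the $E_1$ page and its differentials. And the identification of the surviving $\coh^1$ with $\coh^0(\mathcal B, S^{n-1}\mathfrak n_Q^*)$, which you flag yourself as the ``technical heart,'' is not even begun: $\mathfrak n_Q$ appears nowhere in your complex.

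The paper's proof is different and much shorter. It applies Theorem~1.1 of \cite{Sommers:cohomology} to the $A_{r-1}$ subsystem spanned by $\alpha_2,\dots,\alpha_r$ (using $\langle-\theta,\alpha_j^\vee\rangle=0$ for $2\le j\le r-1$ and $\langle-\theta,\alpha_r^\vee\rangle=-1$), reducing directly to $\coh^i(\mathcal B, S^{n-1}\mathfrak n_R^*\otimes(-\alpha_1))$, where $R$ is the parabolic with Levi on $\alpha_3,\dots,\alpha_r$. It then takes the single Koszul step $0\to S^{j-1}\mathfrak n_R^*\to S^j\mathfrak n_R^*\otimes(-\alpha_1)\to S^j\mathfrak n_Q^*\otimes(-\alpha_1)\to 0$, where $Q$ has Levi omitting only $\alpha_2$; since $\langle-\alpha_1,\alpha_1^\vee\rangle=-2$ and $\mathfrak n_Q$ is $P_{\alpha_1}$-stable, the $\mathfrak n_Q^*$-piece shifts up one cohomological degree by Lemma~\ref{lemma: past results cohom}, and the long exact sequence gives the statement immediately. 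That initial reduction via Sommers' theorem is where $\mathfrak n_Q^*$ comes from; you would need to rediscover something equivalent to close your argument.
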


\begin{proof}

Let $\alpha_1, \dots, \alpha_r$ be the usual numbering of the simple roots of $G$.
Then $\langle -\theta, \alpha_r^\vee \rangle = -1$ and $\langle -\theta, \alpha_j^\vee \rangle = 0$  for 
$2 \leq j \leq r-1$.  
Since $\mathfrak{n}_P$ is stable for $P$, we 
can use Theorem 1.1 in \cite{Sommers:cohomology} relative to the $A_{r-1}$ subsystem for $\alpha_2, \dots, \alpha_{r}$ to deduce
\[
    \coh^i(\mathcal{B}, S^n \mathfrak{n}_P^* \otimes (-\theta )) \cong
    \coh^i (\mathcal{B}, S^{n-1} \mathfrak{n}_R^* \otimes (-\alpha_1))
\]
for all $i \geq 0$ and $n\geq 0$,
where $R$ is the parabolic subgroup with Levi factor whose simple roots are $\alpha_3, \dots, \alpha_{r}$.  

Next, as in the proof of Theorem~\ref{prop: alt cohom vanishing short}, 
we have a short exact sequence
$$0 \to S^{j-1}\mathfrak{n}_R^* \otimes \bk_{\alpha_1} \to S^j\mathfrak{n}_R^* \to S^j\mathfrak{n}_Q^*  \to 0,$$
which we can tensor with $\bk_{-\alpha_1}$ to get the short exact sequence
$$0 \to S^{j-1}\mathfrak{n}_R^*  \to S^j\mathfrak{n}_R^* \otimes \bk_{-\alpha_1}\to 
S^j\mathfrak{n}_Q^* \otimes \bk_{-\alpha_1} \to 0.$$
As in the proof of Theorem~\ref{prop: alt cohom vanishing short}, we have
$\coh^i (\mathcal{B}, S^{j} \mathfrak{n}_Q^* \otimes (-\alpha_1)) \cong
\coh^{i-1} (\mathcal{B}, S^{j} \mathfrak{n}_Q^*)$
and the latter cohomology vanishes except when $i=0$ by part (5) of Lemma~\ref{lemma: past results cohom}.
Similarly $\coh^{i} (\mathcal{B}, S^{j-1} \mathfrak{n}_R^*)$ vanishes except when $i=0$.
So the long exact sequence in cohomology yields 
\[
\coh^i (\mathcal{B}, S^{n-1} \mathfrak{n}_R^* \otimes (-\alpha_1)) \cong
 \begin{cases}
      \coh^0 (\mathcal{B}, S^{n-2} \mathfrak{n}_R^*) & \text{if } i=0;\\
      \coh^0 (\mathcal{B}, S^{n-1} \mathfrak{n}_Q^*) & \text{if } i=1;\\
      0 & \text{otherwise.}
    \end{cases}
\]
The result follow since 
 $\coh^i (\mathcal{B}, S^{n-2} \mathfrak{n}_R^*) 
 \cong 
 \coh^i (\mathcal{B},S^{n-2} \mathfrak{n}_P^*)$
 for all $i \geq 0$ by loc. cit.
\end{proof}

As discussed in, for example, \cite{kummar-lauritzen-thomsen}, there is no difference between computing
cohomologies of $P$-modules on $\mathcal{P}$ or $\mathcal{B}$. 
The theorem yields another proof of Theorem~\ref{prop: cohomology vanishing p}, which is the case when $n=1$ above.  In that case, the cohomology vanishes for $i=0$ and equals the trivial representation of $G$ when $i=1$.

There is a conjectural version of Theorem \ref{prop: type A parabolic cohomology} for $G$ outside of type $A$ and type~$C$ (so it holds for type $B_r$ when $r \geq 3$).  
In these types, there is a Levi subalgebra $\mathfrak{m}$ of  $\mathfrak{g}$ (or in type $G_2$, a pseudo-Levi subalgebra) of type $A_2$ consisting of long roots
of $\mathfrak{g}$.  
Although this also holds in type $A_r$ for $r \geq 2$, in these other types, the unique orbit $\mathcal O$ in $\mathfrak{g}$ whose intersection with $\mathfrak{m}$ is the regular nilpotent orbit in $\mathfrak{m}$ has a $2$-fold cover.  It is expected that the global sections in the conjecture 
compute the $G$-module of functions on this cover corresponding to the non-trivial local system on $\mathcal O$.
The following conjecture is a special case of one proposed in \cite{sommers:functions}.
\begin{conjecture}\label{prop: non A/C parabolic coh}
  Let $G$ be simple of type different from type $A$ and type $C$. 
  Then
  \[
    \coh^i(\mathcal{B}, S^n \mathfrak{n}_P^* \otimes (-\theta)) \cong
      \coh^i( \mathcal{B}, S^{n-4} \mathfrak{n}_P^* \otimes \theta),
  \]
  which vanishes for $i>0$.
\end{conjecture}

It seems possible that in type $C$, a similar statement holds, but with a different degree shift: 
  \[
    \coh^i(\mathcal{B}, S^n \mathfrak{n}_P^* \otimes (-\theta)) \cong
      \coh^i( \mathcal{B}, S^{n-2} \mathfrak{n}_P^* \otimes \theta).
  \]\label{maybe type C parabolic coh}
 
\section{Moduli stack in type \texorpdfstring{$A$}{A}}\label{s.moduli}

Fix $n\geq 4$. Let $G=\GL_n$ and $\grs=\mathfrak{gl}_n^\rs$. 
Let $T\subset G$ be a maximal torus.
Let $\Sch$ and $\Grpd$ denote the categories of schemes over $\bk$ and groupoids, respectively. 
\begin{definition}\label{def:moduli.stack} 
Define the moduli pseudofunctor $\fM_n:\Sch^{\mathrm{op}}\to \Grpd$ by
\[\fM_n(U)=\left\{
 \begin{tabular}{c}
\mbox{families of regular semisimple Hessenberg varieties $\cX\to U$} \\[2pt]
\mbox{whose geometric fibers are isomorphic to $X(s)$ for some $s\in \grs$}
\end{tabular}
\right\}\]
where isomorphisms of $\cX_1,\cX_2\in \fM_n(U)$ are the isomorphisms of families over $U$.
\end{definition}

In this section, we prove the following. 
We use the group actions defined in Subsection~\ref{ss:group.actions}. 

\begin{theorem}\label{thm:moduli.stack}
	The moduli pseudofunctor $\fM_n$ is an algebraic stack represented by the quotient stack
 \beq\label{eq:isom.quotient.stacks}[\grs/\cG]\cong [\PP(\pgl_n)^{\rs}/\Aut\Fl_n]\cong [M_{0,n+1}/((T/\mathbb{G}_m)\rtimes (S_n\times \mu_2))]\eeq
		where $(T/\mathbb{G}_m)\rtimes\mu_2$ acts trivially on $M_{0,n+1}$.
		
		In particular, the rigidification of $\fM_n$ along $(T/\mathbb{G}_m)\rtimes \mu_2$ is the Deligne--Mumford stack $[M_{0,n+1}/S_n]$, and the good moduli space of $\fM_n$ is $M_{0,n+1}/S_n$.
\end{theorem}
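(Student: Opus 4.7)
The plan is to construct a natural morphism $\Phi\colon [\grs/\cG]\to \fM_n$ from a universal family, show it is an equivalence of stacks, and then identify $[\grs/\cG]$ with the other two presentations by comparing group actions. The universal family is
\[
\cX := \{(s,F_\bullet)\in \grs\times \Fl_n : sF_1\subset F_{n-1}\},
\]
smooth over $\grs$ with fiber $X(s)$. The $\cG$-action on $\grs$ from Definition~\ref{def:action.X} lifts to $\cX$: the subgroup $\Aut\Fl_n=\PGL_n\rtimes\mu_2$ acts diagonally on both factors (using Lemma~\ref{l.inv} for $\iota$), and $\Aff$ acts only on the $\grs$-coordinate, which preserves each fiber since $X(s)=X(as+b)$. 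This equivariant family determines $\Phi$.

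Full faithfulness of $\Phi$ is essentially a restatement of Corollaries~\ref{cor:restatement} and~\ref{cor:aut.ext.X}: the set of $\cG$-translates from $s$ to $s'$ is naturally in bijection with $\Isom(X(s),X(s'))$, and $\Stab_\cG(s)\cong\Aut X(s)$. Extending to $B$-points uses only that these identifications are cut out by flat-local conditions and that the isomorphism sheaf of a flat family of projective Hessenberg varieties is representable by standard Hilbert scheme arguments.

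Essential surjectivity of $\Phi$ is the main obstacle, and reduces to showing that the representable morphism $\grs\to\fM_n$ is smooth. The Kodaira--Spencer map of $\cX\to\grs$ at $s$ factors as $\mathfrak{g}=T_s\grs\to \coh^0(X(s),\mathscr N)\to \coh^1(X(s),TX(s))$, where the first arrow comes from the identification $\mathfrak{g}=\coh^0(\mathcal B,\mathcal L(\theta))$ followed by restriction and is surjective by Proposition~\ref{seqs}, and the second arrow is the connecting homomorphism from~\eqref{e.tn}, surjective by Theorem~\ref{t_comp}. Combined with $\coh^i(X(s),TX(s))=0$ for $i\geq 2$ (unobstructedness), this gives smoothness of $\grs\to\fM_n$, and together with surjectivity on geometric points (which is Theorem~\ref{t.isoA}(\ref{t.isoAX})), it shows $\Phi$ is an equivalence. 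A byproduct is that $\fM_n$ is an algebraic stack; the main technical step inside this paragraph is the infinitesimal-to-local passage, i.e., verifying that representability of the diagonal of $\fM_n$ (via the automorphism sheaf) allows the Kodaira--Spencer surjectivity to imply étale-local liftability of classifying maps.

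Finally, the three presentations are matched by comparing group actions. Since $\Aff$ acts freely on $\grs$ with scheme quotient $U$, and the $\Aff$- and $\Aut\Fl_n$-actions commute, one gets $[\grs/\cG]\cong [U/\Aut\Fl_n]$. Restricting to the Cartan slice $\mathfrak{t}^{\rs}\hookrightarrow \grs$ gives a $\PGL_n$-slice meeting every orbit with stabilizer $N_G(T)/\mathbb{G}_m\cong (T/\mathbb{G}_m)\rtimes S_n$, where $T/\mathbb{G}_m$ acts trivially on $\mathfrak{t}^{\rs}$ (centralizer) and $\mu_2=\langle\iota\rangle$ acts trivially as well (diagonal matrices are symmetric); combined with $\mathfrak{t}^{\rs}/\Aff\cong M_{0,n+1}$ from~\eqref{eq:isom.quotients2}, this yields the third presentation. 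The rigidification and good moduli space claims follow immediately: the generic inertia is exactly $(T/\mathbb{G}_m)\rtimes\mu_2$, so the rigidification is the Deligne--Mumford stack $[M_{0,n+1}/S_n]$, and its good moduli space is the categorical quotient $M_{0,n+1}/S_n$ of the finite $S_n$-action.
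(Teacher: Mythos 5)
Your overall structure — universal family over $\grs$, Kodaira--Spencer surjectivity via Theorem~\ref{t_comp}, matching presentations by slicing along $\mathfrak t^{\rs}$, then reading off rigidification and good moduli from generic inertia — runs parallel to the paper. The last two paragraphs of your argument (matching presentations; rigidification and good moduli space) are essentially identical to what the paper does and are fine.

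The gap is in the middle. You propose to build an equivalence $\Phi\colon[\grs/\cG]\to\fM_n$ directly and acknowledge that the ``main technical step'' is the infinitesimal-to-local passage proving that $\fM_n$ is an algebraic stack with representable diagonal, and that your full-faithfulness claim needs to be upgraded from the $\bk$-point statements Corollary~\ref{cor:restatement} and Corollary~\ref{cor:aut.ext.X} to families over arbitrary $B$. But you leave both of these as appeals to ``standard Hilbert scheme arguments.'' That is precisely the substantive work of this theorem. The paper avoids it with a different device: it first establishes (Theorem~\ref{thm:MFano}, citing Koll\'ar, Koll\'ar--Miyaoka--Mori, Olsson) that the moduli stack $\fMFano_{d}$ of $d$-dimensional smooth Fano varieties is a smooth algebraic stack of finite type, and then proves (Theorem~\ref{thm:moduli.stack.open}) that the classifying morphism $[\grs/\cG]\to\fMFano_{d(n)}$ is an \emph{open immersion} — representable and injective on closed points by Corollary~\ref{cor:restatement}, smooth by the Kodaira--Spencer calculation. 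Openness is what makes the factorization criterion work on all $B$-points: a family in $\fM_n(B)$ lands in the open substack because it does so on geometric fibers, so $\fM_n$ and $[\grs/\cG]$ have the same universal property, and algebraicity of $\fM_n$ comes for free. Without routing through $\fMFano_{d(n)}$ (or some equivalent scaffold), you would have to reprove representability of $\IIsom$ and check étale-local liftability from scratch; your sketch does not settle how the $\bk$-point bijection $\cG\text{-orbit}\leftrightarrow\Isom$ is promoted to an isomorphism of sheaves over $B$, which is not automatic (checking on geometric fibers suffices only after flatness of both sides is known). In short, your outline has the right shape but is missing the lemma that makes it close: the open immersion into a known algebraic stack.
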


We refer the reader to \cite{olsson}, \cite{acv,romagny} and \cite{alper} for references for stacks, their rigidification and good moduli spaces respectively.

\medskip

\subsection{Moduli stack of smooth Fano varieties}
\begin{definition}
	Let $\fMFano_d$ denote the moduli pseudofunctor which assigns to a $\bk$-scheme $U$ the groupoid of smooth proper families $\cX\to U$ of relative dimension~$d$ whose relatively anticanonical line bundle $\omega_{\cX/U}^{-1}$ is ample.
\end{definition}

The following fact is well-known to experts.
\begin{theorem}\label{thm:MFano}
	The moduli pseudofunctor $\fMFano_d$ is represented by a smooth algebraic stack of finite type.
\end{theorem}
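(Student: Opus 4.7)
The plan is to realize $\fMFano_d$ as a quotient stack of an open subscheme of a Hilbert scheme by a projective linear group, and then verify smoothness via Kodaira--Akizuki--Nakano vanishing applied to the tangent sheaf.

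First I would invoke the boundedness of smooth Fano varieties of dimension $d$, a classical consequence of the work of Koll\'ar--Miyaoka--Mori. This yields a positive integer $N$, depending only on $d$, such that for every smooth Fano variety $X$ of dimension $d$ the line bundle $\omega_X^{-N}$ is very ample, produces a projectively normal embedding, and is acyclic. In particular the Hilbert polynomials $P(t)=\chi(X,\omega_X^{-Nt})$ take only finitely many values, say $P_1,\dots,P_k$.

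Next, for each $P_j$, set $M_j+1=P_j(1)$ and consider the Hilbert scheme $\Hilb^{P_j}(\mathbb{P}^{M_j})$, which is projective and of finite type. Inside it I would carve out the locus $U_j$ parameterizing smooth closed subschemes $X \subset \mathbb{P}^{M_j}$ that are Fano and for which $\cO(1)|_X \cong \omega_X^{-N}$; smoothness and being Fano are open conditions, and the isomorphism $\cO(1)|_X \cong \omega_X^{-N}$ carves out an open subscheme by standard constancy arguments for relative Picard schemes. The natural $\PGL_{M_j+1}$-action on $U_j$ then gives an identification
\[
    \fMFano_d \;\cong\; \bigsqcup_{j} \bigl[\,U_j \,/\, \PGL_{M_j+1}\,\bigr],
\]
because any family $\cX \to B$ of smooth Fanos produces a locally free sheaf $\pi_*\omega_{\cX/B}^{-N}$ of constant rank, which yields, Zariski-locally on $B$, an embedding $\cX \hookrightarrow \mathbb{P}^{M_j}_B$ unique up to $\PGL$. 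This presents $\fMFano_d$ as an algebraic stack of finite type.

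Finally, smoothness would follow from the infinitesimal lifting criterion together with the vanishing of obstructions. For a smooth Fano variety $X$ of dimension $d$, the canonical isomorphism $TX \cong \Omega_X^{d-1} \otimes \omega_X^{-1}$ combined with Kodaira--Akizuki--Nakano vanishing (applied to the ample line bundle $\omega_X^{-1}$) gives $\coh^i(X, TX)=0$ for all $i \geq 2$. Since obstructions to extending a family of smooth Fanos across a square-zero thickening of the base lie in $\coh^2$ of the tangent sheaf of a fiber, they vanish; thus $\fMFano_d$ is formally smooth, and together with the presentation above this yields the theorem. The main obstacle is the first step: boundedness of smooth Fano varieties of fixed dimension rests on nontrivial birational geometry (historically Koll\'ar--Miyaoka--Mori, and in far greater generality Birkar). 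Once boundedness is granted, the Hilbert quotient construction and the cohomological smoothness argument are essentially formal.
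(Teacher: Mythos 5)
Your proposal follows essentially the same plan as the paper's proof: boundedness (Koll\'ar--Miyaoka--Mori) to get a uniform power $N$ with $\omega_X^{-N}$ very ample, presentation of $\fMFano_d$ as a finite union of PGL-quotients of loci in Hilbert schemes, and unobstructedness of deformations via Kodaira--Akizuki--Nakano vanishing (the identification $TX\cong\Omega_X^{d-1}\otimes\omega_X^{-1}$ you write out is the same vanishing the paper cites as Nakano). The one small inaccuracy is your claim that the condition $\cO(1)|_X\cong\omega_X^{-N}$ cuts out an \emph{open} locus in the Hilbert scheme: it is a closed condition (triviality of $\cO(1)|_X\otimes\omega_X^{N}$ is detected by semicontinuity of $h^0$ of a line bundle and its inverse), so the relevant loci are locally closed as the paper states, but this does not affect the argument.
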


\begin{proof}
	We sketch the proof. For smooth proper families $\cX\to U$ of Fano varieties of dimension $d$ and a positive integer $m>0$, the pushforwards of $\omega_{\cX/U}^{-m}$ to $U$ are locally free of locally constant rank and commute with arbitrary base changes, and $\omega_{\cX/U}^{-m}$ have no higher direct images, by the Kodaira vanishing theorem. Moreover, the condition of $\omega_{\cX/U}^{-m}$ being very ample is open, as it is equivalent to being strongly ample in this case (\cite[\S8.38]{kollar-modulibook}). In particular, due to the boundedness of smooth Fano varieties of a given dimension (\cite{kollar-miyaoka-mori}), one can show that there exists $m>0$ such that $\omega_X^{-m}$ is very ample for every smooth Fano variety $X$ of dimension $d$. 
	
	Using a similar argument as in \cite[\S8.4.3]{olsson}, $\fMFano_d$ is the finite union of quotient stacks of locally closed subschemes in the Hilbert schemes of the projective spaces by the actions of the projective linear groups. This shows that $\fMFano_d$ is an algebraic stack of finite type. The smoothness follows from the Nakano vanishing theorem, which implies that deformations of smooth Fano varieties are unobstructed.
\end{proof}

\medskip

Consider the family of regular semisimple Hessenberg varieties 
\beq \label{eq:fam.over.grs}\{(s,F_\bullet)\in \grs \times \Fl_n:sF_1\subset F_{n-1}\}\lra \grs\eeq
over $\grs$, whose fiber over $s\in \grs$ is $X(s)$ in a fixed $\Fl_n$. 
The map \eqref{eq:fam.over.grs} is equivariant with respect to the actions of $\cG=\Aut \Fl_n \times \Aff$ on $\grs \times \Fl_n$ and $\grs$ (see Subsection~\ref{ss:group.actions}), where $\Aff$ acts trivially on $\Fl_n$. Moreover, there is a natural strong polarization given by the relative anticanonical line bundle of \eqref{eq:fam.over.grs}.

\begin{lemma}\label{lem:XFano}
	For $s\in \grs$, $X(s)$ is a smooth Fano variety of dimension $d(n)=\frac{n(n-1)}{2}-1$. In particular, \eqref{eq:fam.over.grs} is an element of $\fMFano_{d(n)}(\grs)$.
\end{lemma}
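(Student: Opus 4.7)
The plan is to verify in turn that $X(s)$ is smooth, of the stated dimension, and Fano, and then to observe that these properties are uniform in $s\in\grs$. Smoothness of $X(s)$ for regular semisimple $s$ is standard: it is recorded in~\cite{demari-procesi-shayman} and also follows from the cellularity via Bia\l ynicki-Birula noted in the introduction. For the dimension, Claims~\ref{c_act} and~\ref{c_div} identify $X(s)$ with the divisor $D(\varphi(s))$ of a nonzero global section of $\mathcal L(\theta)$ on $\Fl_n$, so $X(s)$ is a hypersurface and
\[
\dim X(s) \;=\; \dim\Fl_n - 1 \;=\; \binom{n}{2} - 1 \;=\; d(n).
\]

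For the Fano property I would use adjunction. Since $X(s)\in |\mathcal L(\theta)|$ and $K_{\Fl_n} = \mathcal L(-2\rho)$, the adjunction formula gives
\[
\omega_{X(s)}^{-1} \;\cong\; \mathcal L(2\rho - \theta)\big|_{X(s)}.
\]
In type $A_{n-1}$, with $\rho=(n-1,n-2,\ldots,1,0)$ and $\theta=e_1-e_n$, the weight
\[
2\rho - \theta \;=\; (2n-3,\; 2n-4,\; 2n-6,\; 2n-8,\; \ldots,\; 4,\; 2,\; 1)
\]
has strictly decreasing entries, so it is strictly dominant. Therefore $\mathcal L(2\rho - \theta)$ is ample on $\Fl_n$, and its restriction to the closed subvariety $X(s)$ is ample as well, which proves that $X(s)$ is Fano.

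Finally, for the family statement, the morphism~\eqref{eq:fam.over.grs} is a closed subscheme of $\grs\times\Fl_n$ cut out fiberwise by $\varphi(s)$; it is therefore proper over $\grs$, and smooth over $\grs$ by the fiberwise smoothness together with constancy of the fiber dimension. Pulling $\mathcal L(2\rho - \theta)$ back from $\Fl_n$ to the total space and restricting, one sees by the same adjunction computation (now in families) that the result agrees fiberwise with $\omega_{\cX/\grs}^{-1}$; it is relatively ample because it restricts to an ample bundle on every fiber of a proper family with connected base. Hence $\cX\to\grs$ defines an object of $\fMFano_{d(n)}(\grs)$. There is no substantial obstacle here: the only piece of genuine content is the explicit check that $2\rho - \theta$ is strictly dominant in type $A$, which is transparent from the formula above.
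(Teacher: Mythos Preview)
Your argument is correct and follows essentially the same route as the paper: adjunction for the divisor $X(s)\in|\mathcal L(\theta)|$ gives $\omega_{X(s)}^{-1}\cong\mathcal L(2\rho-\theta)|_{X(s)}$, and one checks directly that $2\rho-\theta=(2n-3,2n-4,2n-6,\ldots,2,1)$ is strictly dominant. Your additional remarks on smoothness, dimension, and the family statement are fine elaborations that the paper leaves implicit.
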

\begin{proof} 
	By the adjunction formula,  the anticanonical line bundle $\omega_{X(s)}^{-1}$ is the restriction of $L(2\rho)\otimes (\cL_1\otimes \cL_n)^\vee\cong L(2\rho-e_1+e_n)$ to $X(s)$. Since $2\rho-e_1+e_n$ is regular dominant, $\omega_{X(s)}^{-1}$ is ample.
\end{proof}
\begin{remark}
	For a complete classification of Fano and weak Fano regular semisimple Hessenberg varieties in type $A$, see \cite{abe-fujita-zeng}.
\end{remark}

\subsection{Moduli stacks of \texorpdfstring{$X(s)$}{X(s)} and \texorpdfstring{$Y(s)$}{Y(s)}} 
We prove Theorem~\ref{thm:moduli.stack}. 
\medskip

Using our main theorems in type $A$ and Theorem~\ref{t_comp}, we obtain the following. 
\begin{theorem}\label{thm:moduli.stack.open}
	Let $n\geq 4$. The morphism 
	\beq\label{eq:immersion.Fano}[\grs/\cG]\lra \fMFano_{d(n)}\eeq
	induced by \eqref{eq:fam.over.grs} is an open immersion. Moreover, for every $\bk$-scheme $U$ and a family in $\fM_n(U)$, the induced morphism $U\to \fMFano_{d(n)}$ factors through \eqref{eq:immersion.Fano}. In particular, $\fM_n$ is represented by the quotient stack $[\grs/\cG]$.
\end{theorem}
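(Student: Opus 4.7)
The plan is to verify that the morphism $\Phi\colon [\grs/\cG]\to\fMFano_{d(n)}$ induced by~\eqref{eq:fam.over.grs} is a representable étale monomorphism of finite type, hence an open immersion of algebraic stacks. Representability is equivalent to $\Phi$ being faithful on automorphism groups at geometric points; for $s\in\grs(\bk)$, the automorphism group of $s$ in $[\grs/\cG]$ is $\Stab_{\cG}(s)$, which Corollary~\ref{cor:restatement}(2) identifies canonically with $\Aut X(s)$. The monomorphism property also follows from Corollary~\ref{cor:restatement}(1), which gives injectivity on isomorphism classes of geometric points.

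The central computation is étaleness, which I will check at each geometric point $s$ by comparing tangent-obstruction theories. Obstructions vanish on both sides: on the source because $\grs$ and $\cG$ are smooth, and on the target because $\coh^2(X(s),TX(s))=0$ by Theorem~\ref{t_comp}. For the tangent map, a direct computation of the infinitesimal $\cG$-action yields $T_s(\cG\cdot s)=[\fg,s]+\langle s,\id\rangle$. Since $s$ is regular semisimple, $\fg=Z_\fg(s)\oplus[\fg,s]$ with $Z_\fg(s)$ a Cartan subalgebra $\mathfrak{h}$ of dimension $n$ containing $\langle s,\id\rangle$, so $T_{[s]}[\grs/\cG]\cong\mathfrak{h}/\langle s,\id\rangle$ has dimension $n-2=r-1$. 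This matches $\dim\coh^1(X(s),TX(s))$ from Theorem~\ref{t_comp}, and since the Kodaira-Spencer map $\fg\to\coh^1(X(s),TX(s))$ is surjective and factors through $T_{[s]}[\grs/\cG]$, the induced map is an isomorphism. Combined with finite type (granted by Theorem~\ref{thm:MFano}), this yields étaleness.

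For the factorization claim, any family $\cX\to B$ in $\fM_n(B)$ has every geometric fiber lying in the image of $\Phi$ by the defining property of $\fM_n$, so the induced morphism $B\to\fMFano_{d(n)}$ factors set-theoretically through the image of the open substack $[\grs/\cG]$. Openness of the substack upgrades this to a unique scheme-theoretic factorization. The resulting morphism $\fM_n\to[\grs/\cG]$ is inverse to the tautological map coming from~\eqref{eq:fam.over.grs}, yielding the final representability assertion. The main obstacle will be setting up a clean formal-étaleness criterion for morphisms of algebraic stacks and identifying the kernel of the Kodaira-Spencer map with the infinitesimal $\cG$-stabilizer; once this is matched with the dimension count, the remaining inputs are supplied by Theorems~\ref{t_comp} and~\ref{thm:MFano} together with Corollary~\ref{cor:restatement}.
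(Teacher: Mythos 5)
Your proposal is correct and follows essentially the same route as the paper: Corollary~\ref{cor:restatement} supplies representability and injectivity on geometric points (including matching automorphism groups), and the Kodaira--Spencer surjectivity from Theorem~\ref{t_comp} supplies the infinitesimal criterion, after which a general lemma upgrades a representable smooth (or \'etale) monomorphism of finite type to an open immersion. The only variation is that you verify full \'etaleness by computing $T_{[s]}[\grs/\cG]\cong \mathfrak h/\langle s,\mathrm{id}\rangle$ and matching its dimension $r-1$ with $\dim\coh^1(X(s),TX(s))$, whereas the paper stops at smoothness (surjectivity of the Kodaira--Spencer map alone) and lets the monomorphism property do the rest via \cite[Lemma~2.3.9]{abramovich-hassett}; this extra dimension count is harmless but unnecessary, since a smooth monomorphism is automatically \'etale.
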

\begin{proof}
	For the first assertion, we prove that \eqref{eq:immersion.Fano} is representable, injective on closed points, and smooth. The first two properties are direct consequences of Corollary~\ref{cor:interpret.AX1} (\cite[Lemma~2.3.9]{abramovich-hassett}). 
	
	For the smoothness of \eqref{eq:immersion.Fano}, it suffices to show that the Kodaira--Spencer map $\mathfrak{g}\cong \tang_{s}\grs \to \coh^1(X(s),\tang{X(s)})$ induced by \eqref{eq:fam.over.grs} is surjective. This map can be identified with the composition
	\beq\label{eq:differential}\coh^0(\Fl_n,\cL_1\otimes \cL_n)\lra \coh^0(X(s),\cL_1\otimes \cL_n)\lra \coh^1(X(s),\tang X(s))\eeq
	of two maps which are parts of the long exact sequences induced by
	\[\begin{split}
		0\lra \cO_{\Fl_n}\lra \cL_1\otimes \cL_n\lra \cL_1\otimes \cL_n|_{X(s)}\lra 0,&\\
		0\lra \tang X(s)\lra \tang \Fl_n|_{X(s)}\lra \cL_1\otimes\cL_n|_{X(s)}\lra 0,&
	\end{split}\]
	respectively, where $\cL(\theta)=\cL_1\otimes \cL_n$ on $X(s)$ is the normal bundle $\mathscr{N}$ of $X(s)\subset \Fl_n$. The first map in \eqref{eq:differential} is surjective since $\coh^1(\Fl_n,\cO_{\Fl_n})=0$, and the second map in \eqref{eq:differential} is surjective for $n\geq 4$ by Theorem~\ref{t_comp}.

	The second assertion on factorization follows from the first since it is true on closed points. 
	The last assertion follows from the second assertion, which implies that $\fM_n$ and $[\grs/\cG]$
	have the same universal property.
\end{proof}

\begin{proof}[Proof of Theorem~\ref{thm:moduli.stack}]
	It remains to prove the isomorphisms in \eqref{eq:isom.quotient.stacks} and the assertions on the rigidification and the good moduli of $\fM_n$. The first two stacks in~\eqref{eq:isom.quotient.stacks} are isomorphic since $\Aff$ freely acts on $\grs$ and $\PP(\pgl_n)^{\rs}=\grs/\Aff$. Moreover, the $\Aff$-equivariant isomorphisms \eqref{eq:isom.quotients2} yield isomorphisms of stacks
	\[\begin{split}
		[\grs/\cG]&\cong [\mathfrak{t}^\rs/(\Aff\times ((N(T)/\mathbb{G}_m)\rtimes \mu_2))]\\
		&\cong [(\mathbb{A}^n-\Delta)/(\Aff\times ((N(T)/\mathbb{G}_m)\rtimes \mu_2))]\\
		&\cong [M_{0,n+1}/((N(T)/\mathbb{G}_m)\rtimes \mu_2)].
	\end{split}
	\]
	The first isomorphism is induced by the inclusion $\mathfrak{t}^{\rs}\hookrightarrow \grs$ which is equivariant with respect to the adjoint actions of $N(T)/\mathbb{G}_m$ and $G/\mathbb{G}_m\cong \PGL_n$ together with the homomorphism $N(T)/\mathbb{G}_m\hookrightarrow G/\mathbb{G}_m$. The natural isomorphism $\mathfrak{t}^{\rs}\cong \mathbb{A}^n-\Delta$ for the diagonal $\Delta$ defined in~\eqref{eq:M0n+1.Aff} induces the second isomorphism. The last isomorphism follows from \eqref{eq:M0n+1.Aff}, since the $\Aff$-action in \eqref{eq:M0n+1.Aff} is free. 
 
	Moreover, the action of $(T/\mathbb{G}_m)\rtimes(S_n\times\mu_2)$ on $M_{0,n+1}$ factors through the quotient map $(T/\mathbb{G}_m)\rtimes(S_n\times\mu_2)\to S_n$, since $(T/\mathbb{G}_m)\rtimes\mu_2$ acts trivially on $M_{0,n+1}$. This induces the desired rigidification map $\fM_n\to [M_{0,n+1}/S_n]$. 
	
	The rigidification map is a good moduli space morphism, since $(T/\mathbb{G}_m)\rtimes \mu_2$ is reductive.
	Since the map $\fM_n\to M_{0,n+1}/S_n$ is the composition of the rigidification and the coarse moduli map of the (tame) Deligne--Mumford
    stack $[M_{0,n+1}/S_n]$, it is a good moduli space.
\end{proof}

The same arguments apply to $Y(s)$. For $s\in \grs$, $Y(s)$ is a smooth Fano variety of dimension $2n-4$, since it is the transverse intersection of two $(1,1)$-divisors in $\PP\times \PP^\vee$ cut out by the conditions $F_1\subset F_{n-1}$ and $sF_1\subset F_{n-1}$ respectively.

We define $\fM^Y_n$ analogously to $\fM_n$ as the moduli pseudofunctor parametrizing families whose geometric fibers are isomorphic to $Y(s)$ for some $s\in\grs$.
\begin{theorem}\label{thm:moduli.stack.Y}
	The moduli pseudofunctor $\fM^Y_n$ is an algebraic stack represented by the quotient stack
	\beq\label{eq:isom.quotient.stacks.Y}
	\begin{split}
		[\tgrs/\tcG]&\cong [\Gr_2(\coh^0(\PP\times\PP^\vee,\cO(1,1)))^{\rs}/\Aut\PP\times\PP^\vee]\\
		&\cong  [M_{0,n}/((T/\mathbb{G}_m)\rtimes (S_n\times \mu_2))].
	\end{split}
	\eeq
		In particular, its rigidification along $(T/\mathbb{G}_m)\rtimes\mu_2$ is the Deligne--Mumford stack $[M_{0,n}/S_n]$, and its good moduli space is $M_{0,n}/S_n$.
\end{theorem}

\begin{proof}
The proof of that the moduli pseudofunctor  is represented by $[\tgrs/\tcG]$ is essentially the same as that of Theorem~\ref{thm:moduli.stack.open}, by using Theorem~\ref{t_comp p} instead of Theorem~\ref{t_comp}, and by replacing $X(s)$, $d(n)$ and the family \eqref{eq:fam.over.grs} over $\grs$ by $Y(s)$, $2n-4$ and the family
\beq \label{eq:fam.over.Utilde}\{([s:u],F_1,F_{n-1})\in \tgrs\times \PP\times\PP^\vee :sF_1, uF_1\subset F_{n-1}\}\lra \tgrs\eeq
over $\tgrs$ respectively. Every fiber of \eqref{eq:fam.over.Utilde} is isomorphic to $Y(s')$ for some $s'\in \grs$, since by definition $[s:u]\in \tgrs$ lies in the $\widetilde{\cG}$-orbit of $\grs\times\{\id\}$ and \eqref{eq:fam.over.Utilde} is $\tcG$-equivariant where the last factor $\PGL_2$ of $\tcG$ acts trivially on $\PP\times\PP^\vee$.

The first isomorphism in \eqref{eq:isom.quotient.stacks.Y} 
holds because $\Gr_2(\coh^0(\PP\times\PP^\vee,\cO(1,1)))^{\rs}$ is the quotient of $\PP(\fg\oplus\fg)^{\rs}$ by the free $\PGL_2$-action.
Moreover, we have isomorphisms
\beq\label{eq:isom.quotient.stacks.Y2}
\begin{split}
	[\tgrs/\tcG]&\cong [\PP(\mathfrak{t}\oplus\mathfrak{t})^\rs/(\PGL_2\times ((N(T)/\mathbb{G}_m)\rtimes \mu_2))]\\
	&\cong [((\PP^1)^n-\Delta)/(\PGL_2\times ((N(T)/\mathbb{G}_m)\rtimes \mu_2))]\\
	&\cong [M_{0,n}/((N(T)/\mathbb{G}_m)\rtimes \mu_2)]
\end{split}\eeq
of stacks, where
the first isomorphism in \eqref{eq:isom.quotient.stacks.Y2} corresponds to (simultaneous) diagonalization. There is a natural isomorphism $\PP(\mathfrak{t}\oplus\mathfrak{t})^\rs\cong (\PP^1)^n-\Delta$ sending the pair of $i$-th diagonal entries to an element of the $i$-th $\PP^1$ for $1\leq i\leq n$.
The second isomorphism in \eqref{eq:isom.quotient.stacks.Y2} follows from this. 
The last isomorphism follows from 
the definition of $M_{0,n}$ in \eqref{eq:M0n.def} since the $\PGL_2$-action is free.
These clarify the isomorphisms in \eqref{eq:isom.quotient.stacks.Y}. 
The proof for the rest assertions on the rigidification and the good moduli space is the same as in the proof of Theorem~\ref{thm:moduli.stack}.
\end{proof}

\begin{corollary}
	There exists a representable morphism from the moduli stack $\fM_n$ of $X(s)$ to the moduli stack of $Y(s)$ induced by either \eqref{eq:inclusion.pgln} or the \textup{(}$S_n$-equivariant\textup{)} forgetful morphism $M_{0,n+1}\to M_{0,n}$. This morphism sends the isomorphism class of $X(s)$ to the isomorphism class of $Y(s)$ on the level of closed points.
\end{corollary}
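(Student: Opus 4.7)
The plan is to build the morphism directly from the quotient-stack presentations supplied by Theorems~\ref{thm:moduli.stack} and \ref{thm:moduli.stack.Y}, and then verify representability by a criterion on geometric stabilizers. Concretely, I would take the diagram \eqref{eq:comm.diag.actions}: the group homomorphism $\cG=\Aut\Fl_n\times\Aff\hookrightarrow \tcG=\Aut(\PP\times\PP^\vee)\times\PGL_2$ (diagonal on $\PGL_n$, identity on $\mu_2$, and the standard $\Aff\hookrightarrow\PGL_2$) together with the equivariant inclusion $\grs\hookrightarrow\tgrs$, $s\mapsto[s:\id]$, induces a morphism of quotient stacks $[\grs/\cG]\to[\tgrs/\tcG]$, which under the two representability theorems is the morphism $\fM_n\to$ (moduli of $Y(s)$). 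Equivalently, under the isomorphisms with $[M_{0,n+1}/((T/\mathbb{G}_m)\rtimes(S_n\times\mu_2))]$ and $[M_{0,n}/((T/\mathbb{G}_m)\rtimes(S_n\times\mu_2))]$, the same morphism is induced by the $S_n$-equivariant forgetful map $M_{0,n+1}\to M_{0,n}$ (forgetting the marked point at $\infty$), with the trivially-acting factor $(T/\mathbb{G}_m)\rtimes\mu_2$ mapping identically. Checking that these two descriptions coincide is a direct unwinding of the isomorphisms in the proofs of Theorems~\ref{thm:moduli.stack} and \ref{thm:moduli.stack.Y}.

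Next I would interpret the morphism on families: given $\cX\to B$ in $\fM_n(B)$, the fiberwise projection $\pi\colon X(s)\to Y(s)$ of \eqref{eq:projection.pi} should globalize to a flag-bundle morphism $\cX\to\cY$ over $B$. The candidate $\cY$ is the relative image of the evaluation map of $\cX$ into $\PP(R\pi_*\cL_1)\times_B\PP(R\pi_*\cL_n)$, noting that $\cL_1,\cL_n$ on fibers descend to a rank-two subsheaf of the relative Picard (up to the $\iota$-swap) by Pullback Lemma~\ref{l.pullback}. Since the construction is canonical up to the swap $\cL_1\leftrightarrow\cL_n$, which is absorbed into the $\mu_2$-action on both quotient stacks, the morphism of pseudofunctors is well-defined.

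For representability, I would invoke the standard criterion that a morphism of algebraic stacks is representable if and only if it induces a monomorphism on geometric stabilizers. At a closed point $[X(s)]\in\fM_n$, the induced map $\Aut X(s)\to\Aut Y(s)$ is exactly the group homomorphism \eqref{eq:aut.ext.X}, which is injective by Proposition~\ref{prop:unique.ext.X} (an automorphism of $X(s)$ is uniquely determined by the induced automorphism of $Y(s)$, since $\pi$ is surjective and the image determines the morphism on the projective factors). Applied fiber-by-fiber, this gives representability. The statement on closed points is then immediate from the construction: the fiber of the universal family \eqref{eq:fam.over.grs} over $s\in\grs$ is $X(s)$, which maps under $\pi$ to the fiber $Y(s)$ of \eqref{eq:fam.over.Utilde} over $[s:\id]\in\tgrs$.

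The main obstacle I anticipate is the globalization step in the second paragraph: one must show that for an arbitrary base $B$ the rank-two system $\{\cL_1,\cL_n\}$ on $\cX$ descends appropriately and that the relative image defining $\cY$ is flat over $B$ with fibers $Y(s)$. This can be bypassed entirely by working only with the quotient-stack descriptions of the first paragraph — the functor of points of $\fM_n$ is already determined by Theorem~\ref{thm:moduli.stack.open}, and the morphism at the level of quotient stacks is tautologically induced by \eqref{eq:comm.diag.actions} — so the proof ultimately reduces to the equivariance of the inclusions in \eqref{eq:comm.diag.actions} together with the injectivity of \eqref{eq:aut.ext.X}, both of which are already established.
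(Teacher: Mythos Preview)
Your approach is essentially the same as the paper's: construct the morphism from the equivariant inclusions in \eqref{eq:comm.diag.actions}, then verify representability by checking that the induced map on stabilizers at closed points is injective. The paper does this in one line, citing Theorem~\ref{t.autsA} together with the containment $H(s)\subset K(s)$.

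There is, however, a small gap in your justification of the stabilizer injectivity. You cite Proposition~\ref{prop:unique.ext.X} and argue that ``an automorphism of $X(s)$ is uniquely determined by the induced automorphism of $Y(s)$, since $\pi$ is surjective.'' But Proposition~\ref{prop:unique.ext.X} only asserts that the induced automorphism of $Y(s)$ is uniquely determined by the given $\varphi$ (i.e., the map \eqref{eq:aut.ext.X} is \emph{well-defined}), not the converse. Surjectivity of $\pi$ alone does not give injectivity of \eqref{eq:aut.ext.X}: a nontrivial automorphism of $X(s)$ lying over the identity of $Y(s)$ would be a fiberwise automorphism of the $\Fl_{n-2}$-bundle, and ruling this out requires the computation $\Hom(\cE,\cE)=\bk$ from Lemma~\ref{l.hom}. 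The injectivity you need is established in Corollary~\ref{cor:aut.ext.X}(1) (via that lemma), or equivalently follows from Theorem~\ref{t.autsA} and $H(s)\subset K(s)$, which is the reference the paper uses. Replace your citation accordingly and the argument is complete.
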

\begin{proof}
	This follows from Theorems~\ref{thm:moduli.stack} and~\ref{thm:moduli.stack.Y} and the fact that 
	the morphism $\PP\coh^0(\Fl_n,\cL_1\otimes\cL_n)^{\rs}\hookrightarrow \Gr_2(\coh^0(\PP\times\PP^\vee,\cO(1,1)))^{\rs}$ in~\eqref{eq:inclusion.pgln} is equivariant under the actions of $\Aut \Fl_n$ and $\Aut\PP\times\PP^\vee$ compatibly with the group homomorphism $\Aut \Fl_n\hookrightarrow \Aut\PP\times\PP^\vee$ in \eqref{eq:aut.ext.Fl}, which is injective. 
	Similarly, the forgetful morphism $M_{0,n+1}\to M_{0,n}$ which forgets the last marking is $S_n$-equivariant.
\end{proof}

\subsection{GIT compactifications}\label{ss:compact}

    One of the simplest $S_n$-equivariant compactifications of $M_{0,n+1}$ is given by the projective space $\PP^{n-2}$ (cf.~\cite{kapranov-jag,hassett}). The $S_n$-action on $\PP^{n-2}$ is 
    characterized by the fact that any permutation of $n$ points in general position extends uniquely to an automorphism of $\PP^{n-2}$.
	
	By the invariant theory of the adjoint action of $G$ on $\fg$, the GIT quotient
	\[\PP\coh^0(\Fl_n,\cL_1\otimes\cL_n)/\!\!/\Aut\Fl_n\;\cong\;\PP(\pgl_n)/\!\!/\SL_n\]
	is canonically isomorphic to $\PP^{n-2}/S_n \cong \PP(2,3,\dots,n)$
	(cf.~\cite[Example~6.4]{dolgachev-invariant}).
	Here $\PP(2,3,\dots,n)$ denotes the weighted projective space.
	Under this identification, a nonzero (traceless) matrix is sent to its characteristic polynomial.
	This provides a natural compactification of the moduli space $M_{0,n+1}/S_n$ of $\fM_n$.
	For other $S_n$-equivariant compactifications of $M_{0,n+1}$, see \cite{Knu,Kap,hassett,choi-kiem-lee}.

	Similarly, the GIT quotient
	\[\Gr_2(\coh^0(\PP\times \PP^\vee, \cO(1,1)))/\!\!/\Aut(\PP\times \PP^\vee)\]
	gives a natural projective GIT model for $M_{0,n}/S_n$, as the moduli space of pencils of $(1,1)$-divisors in $\PP\times \PP^\vee$.
	
	On the other hand, it is well known that the GIT quotient $\PP\Sym^n(\bk^2)^\vee/\!\!/\SL_2$, parametrizing binary forms (cf.~\cite{kirwan}), gives a classical compactification of $M_{0,n}/S_n$, with $M_{0,n}/S_n$ identified as the open locus of forms with distinct roots.
	
	The following proposition shows that these two GIT compactifications coincide.
	\begin{proposition}
		The following GIT quotients are naturally isomorphic:
		\beq\label{eq:GIT.Y2}
		\begin{split}
			&\Gr_2(\coh^0(\PP\times \PP^\vee, \cO(1,1)))/\!\!/\Aut\PP\times \PP^\vee\\
			&\cong\PP(\End(\bk^n)\oplus \End(\bk^n))/\!\!/(H\times\SL_2) 
			\cong \PP\Sym^n(\bk^2)^\vee/\!\!/ \SL_2
		\end{split}\eeq
		where $H=\SL_n\times \SL_n$. Here the action of $H\times\SL_2$  
		is defined as in Subsection~\ref{ss:group.actions.Y}.
		Moreover, the composition of the isomorphisms restricts to the identity on $M_{0,n}/S_n$.
	\end{proposition}
\begin{proof}
	Let $V=H^0(\PP\times\PP^\vee,\cO(1,1))$.
	First note that the involution exchanging the two factors of $\PP\times\PP^\vee$ acts trivially on the GIT quotient $\Gr_2(V)/\!\!/H$.
	
	Let $U\subset \PP(V\oplus V)$ be the locus of linearly independent pairs.
	Then the natural projection $U\to \Gr_2(V)$ is a $\PGL_2$-torsor.
	Moreover, $U$ contains the GIT semistable locus.
	Indeed, if $(A,B)$ is proportional then, after the $\SL_2$-action, it is of the form $(A,0)$,
	which is unstable by the Hilbert--Mumford criterion \cite[Theorem~2.1]{git} for the one-parameter subgroup $t\mapsto \mathrm{diag}(t,t^{-1})\in \SL_2$.
	Hence, we have
	\[\PP(V\oplus V)/\!\!/(H\times \SL_2)=U/\!\!/(H\times \SL_2).\]

	Since the $\PGL_2$-action on $U$ commutes with $H$ and $U\to \Gr_2(V)$ is a principal $\PGL_2$-bundle,
	we may take the GIT quotient first with respect to $\SL_2$
	and then with respect to $H$, which yields
	\[U/\!\!/(H\times \SL_2)\cong \Gr_2(V)/\!\!/H.\]
	This proves the first isomorphism in \eqref{eq:GIT.Y2}, since $\Gr_2(V)/\!\!/\Aut\PP\times\PP^\vee=\Gr_2(V)/\!\!/H$. 
	
	For the second isomorphism, consider the natural action of $H\times\SL_2$ on $V\oplus V=\End(\bk^n)\oplus\End(\bk^n)$.
	For $(A,B)\in V\oplus V$, we set
	\[D(A,B)(u,v)=\det(uA+vB)\in \Sym^n(\bk^2)^\vee.\]
	It is well known that the invariant ring for the $H$-action on $V\oplus V$ is a polynomial algebra generated by the coefficients of $\det(uA+vB)$;
	see \cite[Example~10.5.2]{derksen-weyman-book}.
	
	Taking $\SL_2$-invariant parts then induces an isomorphism 
	\[\bk[V\oplus V]^{H\times\SL_2}\cong \bk[\Sym^n(\bk^2)^\vee]^{\SL_2}\]
	of graded invariant rings, and taking $\Proj$ gives the desired isomorphism.
	
	In each GIT quotient, $Y(s)$ corresponds to the point $\langle \id, s\rangle$, $(\id,s)$ and $D(\id,s)$, respectively, where $D(\id,s)$ is the homogenized characteristic polynomial of $s$ whose roots are precisely the eigenvalues of $s$. 
	Hence the composition of the isomorphisms in \eqref{eq:GIT.Y2} restricts to the identity on $M_{0,n}/S_n$.
\end{proof}


\def\cprime{$'$}


\end{document}